\newtheorem{theorem}{Theorem}[section]
\newtheorem{lemma}[theorem]{Lemma}
\newtheorem{proposition}[theorem]{Proposition}
\newtheorem{corollary}[theorem]{Corollary}
\theoremstyle{definition}
\newtheorem{definition}[theorem]{Definition}
\newtheorem{notation}[theorem]{Notation}
\newtheorem{example}[theorem]{Example}
\newtheorem{remark}[theorem]{Remark}
\numberwithin{equation}{section}
\newcommand{\Real}{{\mathbb R}}
\newcommand{\Rational}{{\mathbb Q}}
\newcommand{\Complex}{{\mathbb C}}
\newcommand{\Integral}{{\mathbb Z}}
\newcommand{\Natural}{{\mathbb N}}
\newcommand{\Hyp}{{\mathbf H}}
\newcommand{\Sft}{{\widetilde{\mathrm{SL}}(2,\Real)}}
\newcommand{\id}{{\mathrm{id}}}
\newcommand{\opEuler}{{\mathbf{e}}}
\newcommand{\winding}{{\mathrm{wind}}}
\title[Volume of Seifert representations]{Volume of Seifert representations \\for graph manifolds and their finite covers}
\author{Pierre Derbez}
\address{LATP UMR 7353, 39 rue  Joliot-Curie, 13453 Marseille Cedex 13, France}
\email{pderbez@gmail.com}
\author[Yi Liu]{Yi Liu}
\address{
    Beijing International Center for Mathematical Research, Peking University, Beijing 100871, China}
\email{liuyi@bicmr.pku.edu.cn}
\author{Shicheng Wang}
\address{School of Mathematical Sciences, Peking University, Beijing 100871, China}
\email{wangsc@math.pku.edu.cn}
\subjclass{57M50, 51H20}
\keywords{volume of representations, Seifert geometry, graph manifold}
\date{\today}
\begin{document}
\begin{abstract}
	For any closed orientable 3-manifold, there is a volume function 
	defined on the space of all Seifert representations of the fundamental group.
	The maximum absolute value of this function
	agrees with the Seifert volume of the manifold due to Brooks and Goldman.
	
	For any Seifert representation of a graph manifold,
	the authors establish an effective formula for computing its volume,
	and obtain restrictions to the representation	as analogous to the Milnor--Wood inequality 
	(about transversely projective foliations on Seifert fiber spaces).
	It is shown that
	the Seifert volume of any graph manifold is a rational multiple of $\pi^2$.
	
	Among all finite covers of a given non-geometric graph manifold,
	the supremum ratio of the Seifert volume over the covering degree
	can be a positive number, and can be infinite.
	Examples of both possibilities are discovered, and confirmed, 
	with the explicit values determined for the finite ones. 
\end{abstract}

\maketitle

\section{Introduction}
The Seifert volume $\mathrm{SV}(M)\in[0,+\infty)$
of any orientable closed $3$--manifold $M$
is introduced by Brooks and Goldman \cite{BG2},
as a generalization of Gromov's simplicial volume \cite{Gr}.
This invariant can be obtained as the maximum absolute value 
of the volume function 
defined on the set of all $\Sft\times_\Integral\Real$--representations of $\pi_1(M)$,
(see Section \ref{Subsec-volume_of_representations}).
For any continuous map $f\colon M'\to M$ between $3$--manifolds,
the Seifert volume satisfies a similar inequality as with the simplicial volume:
$$\mathrm{SV}(M')\geq|\mathrm{deg}(f)|\times\mathrm{SV}(M),$$
however, it does not have to achieve the equality for covering projections.
Moreover,
the Seifert volume is very intricately related 
with the geometric decomposition of the prime factors.
The goal of the present paper is to unravel the relationship in the case of graph manifolds,
and to understand the behavior of this invariant in finite covers.

We recall some recent advances 
to put our study into a context.
First suppose $M$ is geometric in the sense of Thurston \cite{Thurston-book}.
Then $\mathrm{SV}(M)$ equals zero
unless $M$ supports the Seifert geometry $\Sft$ or the hyperbolic geometry $\Hyp^3$.
In the Seifert case,
$\mathrm{SV}(M)$ actually equals the $\Sft$--geometric volume.
For any finite cover $M'$ of $M$,
it follows that 
$\mathrm{SV}(M')$ is nonzero and is proportional to the covering degree $[M':M]$.
In the hyperbolic case, $\mathrm{SV}(M)$ may happen to be zero,
but there are always finite covers $M'$ of $M$ with nonzero $\mathrm{SV}(M')$.
Moreover, there are always some tower of finite covers 
$\cdots\to M'_n\to \cdots\to M'_2\to M'_1$ of $M$,
such that the ratio $\mathrm{SV}(M'_n)/[M'_n:M]$ is unbounded as $n$ increases.
In other words, 
virtual Seifert volume grows super-linearly fast 
(in some finite-covering towers),
for hyperbolic $3$--manifolds \cite{DLSW-virtual_SV}.
More generally, 
it is shown in \cite{DLSW-virtual_SV} that
virtual Seifert volume grows super-linearly fast
for any orientable closed $3$--manifold of nonzero simplicial volume.
Since orientable closed $3$--manifolds of zero simplicial volume 
are just connected sum of graph manifolds,
essentially it remains interesting to ask,
in the case of non-geometric graph manifolds,
how fast virtual Seifert volume could possibly grow.

The \emph{covering Seifert volume} as introduced in \cite[Section 6]{DLSW-virtual_SV}
captures the supremum linear growth rate of virtual Seifert volume.
For any orientable closed $3$--manifold $M$, it is defined as follows with value in $[0,+\infty]$:
\begin{equation}\label{def_CSV}
\mathrm{CSV}(M)=\sup\left\{\frac{\mathrm{SV}(M')}{[M':M]}\colon M'\mbox{ a finite cover of }M\right\}.
\end{equation}
It satisfies the covering property 
$$\mathrm{CSV}(M')=[M':M]\times \mathrm{CSV}(M)$$
for finite covering projections $M'\to M$. 
For instance, the above mentioned result from \cite{DLSW-virtual_SV}
simply says that nonzero simplicial volume implies infinite covering Seifert volume.
Besides, it is proved in \cite{DLW-cs_sep_vol} that
any non-geometric graph manifold must have nonzero covering Seifert volume.

As the motivational application of our main results,
we determine the covering Seifert volume 
for a sampling series of non-geometric graph manifolds.
There we see examples of both possibilities,
of finite or of infinite covering Seifert volume.
The infinite ones are somewhat more unexpected.
Their existence brings about the new question 
of understanding the topological distinction
behind the different growth behaviors.

\subsection{Summary of results}
Below we illustrate our main results
with a family of very simply designed graph manifolds.
Meanwhile, we point out the general theorems
as actually proved in this paper.

\begin{example}[Twisted doubling]\label{Mgabcd}
Let $\Sigma_{g,1}$ be an orientable compact surface of genus $g$ and with a single boundary component.
Denote by $\Sigma_+$ and $\Sigma_-$ a pair of oppositely oriented copies of $\Sigma_{g,1}$,
and by $J_\pm=\Sigma_\pm\times S^1$ the oriented product manifolds of $\Sigma_\pm$ 
with the oriented circle $S^1$.
For any entries $a,b,c,d\in\Integral$ with $ad-bc=1$,
we obtain an orientation-reversing homeomorphism $\varphi\colon \partial J_-\to \partial J_+$,
such that the induced isomorphism $\varphi_*\colon H_1(\partial J_-)\to H_1(\partial J_+)$ 
operates on the standard bases as
$$\varphi_*\left(\begin{array}{cc}[\partial\Sigma_-]&[S^1]\end{array}\right)
=\left(\begin{array}{cc}[\partial\Sigma_+]&[S^1]\end{array}\right)
\,\left[\begin{array}{cc}a&b\\c&d\end{array}\right].$$
We obtain a closed oriented $3$--manifold
$$M\left(g;\left[\begin{array}{cc}a&b\\c&d\end{array}\right]\right)=J_-\cup_{\varphi} J_+$$
from $J_-$ and $J_+$ by identifying their boundaries using $\varphi$,
or $M(g;a,b,c,d)$ for brevity.

Suppose $g>0$ and $b\neq0$.
Then $M(g;a,b,c,d)$ is a graph manifold with two JSJ pieces $J_-$ and $J_+$
and a single JSJ torus $T=\partial J_+=\varphi(\partial J_-)$.
There is an obvious orientation-reversing homeomorphism
$M(g;a,b,c,d)\cong M(g;d,-b,-c,a)$,
as obtained by switching $\Sigma_\pm$ and inversing $\varphi$.
\end{example}

For any oriented graph manifold $M=M(g;a,b,c,d)$ as above and
any representation $\rho\colon\pi_1(M)\to\Sft\times_\Integral\Real$,
we obtain data $\xi_\pm=\xi_\pm(\rho)\in\Real$ as follows.
Fix an auxiliary base point on $T$, and obtain the van Kampen splitting 
$\pi_1(M)=\pi_1(J_-)*_{\pi_1(T)}\pi_1(J_+)$.
Denote by $f_\pm\in\pi_1(M)$ the central element of the subgroup
$\pi_1(J_\pm)\cong\pi_1(\Sigma_\pm)\times\pi_1(S^1)$
represented by the oriented fiber $S^1$ of $J_\pm$.
If $\rho(f_\pm)$ lies in the center $\Real$ of $\Sft\times_\Integral\Real$,
one may simply take $\xi_\pm$ as the value of $\rho(f_\pm)$ in $\Real$.
In general, there is a continuous, real-valued conjugacy-class invariant 
for the elements of $\Sft\times_\Integral\Real$,
which we call the \emph{essential winding number} and introduce in Definition \ref{def_essential_winding_number}.
Then we define $\xi_\pm$ as the essential winding number of $\rho(f_\pm)$.
Hence we note that $\xi_\pm$ does not depend on the particular choice of the base point.

Specialized to twisted doublings and their representations, 
our main results can be summarized as follows:
\begin{itemize}
\item \emph{Volume formula}. Setting $k_+=d/b$ and $k_-=a/b$,
$$\mathrm{vol}_{\Sft\times_\Integral\Real}(M,\rho)=
4\pi^2\cdot\left(k_+\cdot\xi_+^2 -2b^{-1}\cdot\xi_+\xi_- +k_-\cdot\xi_-^2\right).$$
\item \emph{Generalized Milnor--Wood inequalities}.
$$
\begin{array}{cc}
\left|k_+\cdot\xi_+ -b^{-1}\cdot\xi_-\right|\leq 2g-1;&
\left|-b^{-1}\cdot\xi_+ +k_-\cdot\xi_-\right|\leq 2g-1.
\end{array}$$
\item \emph{Rationality}.
$$\frac{1}{\pi^2}\cdot
\mathrm{vol}_{\Sft\times_\Integral\Real}(M,\rho)\,\in\,\Rational.$$
\end{itemize}
It follows by definition that $\mathrm{SV}(M)$
is also a rational multiple of $\pi^2$.
We notice that the volume formula is a homogeneous quadratic function
of $\xi_\pm(\rho)$ with coefficients determined by the topology of $M$.
Moreover, for any $\eta_\pm\in\Rational$ satisfying the strict inequality
$|k_\pm\cdot\eta_\pm-b^{-1}\cdot\eta_\mp|<2g-1$,
we are able to construct some finite cover $M'$ of $M$ and
some $\Sft\times_\Integral\Real$--representation $\rho'$ of $\pi_1(M')$,
such that
the volume of $(M',\rho')$ equals
$[M':M]$ times $4\pi^2\cdot\left(k_+\eta_+^2-2b^{-1}\cdot\eta_+\eta_-+k_-\eta_-^2\right)$.

In this paper, we establish similar results for general graph manifolds.
We consider any oriented closed graph manifold $M$
with a simplicial JSJ graph $(V,E)$ and with oriented Seifert fibrations in the JSJ pieces,
or what we call a \emph{formatted graph manifold} (Definition \ref{def_format}).
We show that 
the volume for any representation $\rho\colon\pi_1(M)\to\Sft\times_\Integral\Real$
is a homogeneous quadratic function of
the essential winding numbers of $\rho(f_v)$, 
where $f_v$ corresponds to the oriented fiber of the JSJ piece $J_v$
for each vertex $v$. 
The quadratic function can be written down explicitly,
involving the charges $k_v$ associated to the JSJ pieces $J_v$,
and the fiber intersection numbers $b_{v,w}$
associated to the JSJ tori $T_{v,w}$.
The symmetric matrix $\opEuler_M\in\mathrm{End}_\Real(\Real^V)$
that defines the quadratic function is called the \emph{Euler operator}. 
Indeed, we find it playing a similar role in the general volume formula
as the Euler number in the oriented Seifert fibration case,
(like quantization of classical observables).
We obtain generalized Milnor--Wood inequalities, one with each vertex.
We also prove the commensurability of volume values with $\pi^2$.
These results constitute Theorem \ref{volume_general_GM}.
Moreover, 
we establish the generic virtual existence of $\Sft\times_\Integral\Real$--volume values 
for formatted graph manifolds. This is Theorem \ref{volume_existence_general_GM}.

\begin{example}\label{CSV_Mgm}
	Suppose $g>0$.
	$$\mathrm{CSV}\left(M\left(g;\left[\begin{array}{cc}m&1\\m^2-1&m\end{array}\right]\right)\right)=
	\begin{cases}+\infty&m=0,\pm1\\
	8\pi^2\cdot(2g-1)^2/(m-1)&m=\pm2,\pm3,\pm4,\cdots
	\end{cases}$$
\end{example}

Hence we see that the covering Seifert volume can be finite and can be infinite
for non-geometric graph manifolds. 
Example \ref{CSV_Mgm} is a consequence of Theorem \ref{cyclic_k_b},
where the covering Seifert volume is determined for any graph manifold
with constant charges and constant fiber intersection numbers, 
and with a cyclic JSJ graph, (see Remark \ref{cyclic_k_b_remark}).
The infinite case immediately leads to lots of other graph manifolds 
of infinite covering Seifert volume, 
once we require those manifolds 
to project $M(g;0,1,-1,0)$ or $M(g;\pm1,1,0,\pm1)$ of nonzero degree.
On the other hand, 
for a large class of what we call \emph{strictly diagonally dominant} graph manifolds,
we are able to confirm finiteness of their covering Seifert volume:
Among twisted doublings,
Example \ref{CSV_Mgabcd} below is the special case of Theorem \ref{bound_diagonally_dominant}.
In certain occasions such as Example \ref{CSV_Mgm},
the upper bounds are indeed sharp, and the conditions also happen to be necessary.

\begin{example}\label{CSV_Mgabcd}
	Suppose $g>0$. If $|a|>1$ and $|d|>1$, and hence $b\neq0$,
	$$\mathrm{CSV}\left(M\left(g;\left[\begin{array}{cc}a&b\\c&d\end{array}\right]\right)\right)
	\leq 4\pi^2\cdot(2g-1)^2\cdot\left(\frac{|b|}{|a|-1}+\frac{|b|}{|d|-1}\right).$$
\end{example}

None of the twisted doublings in Example \ref{CSV_Mgabcd} admit finite covers fibering over a circle.
The manifold $M(g;1,1,0,1)$ does fiber over a circle,
as does its orientation-reversal $M(g;-1,1,0,-1)$.
In fact, it is easy to identify $M(g;1,1,0,1)$ with the mapping torus of a Dehn twist,
which acts on the doubling of $\Sigma_{g,1}$ twisting along the doubling curve.
The manifold $M(g;0,1,-1,0)$ is virtually fibered;
it actually admits a Riemannian metric of nonpositive sectional curvature,
because it is chargeless (meaning $k_+=k_-=0$).
These topological descriptions all follow from 
Buyalo and Svetlov's characterization 
on virtual properties of graph manifolds \cite{BS-graph_manifold},
plus simple observation.

Therefore, our examples seem to suggest that
the difference between finite and infinite covering Seifert volume
lies somewhere between the strictly diagonally dominant class
and the virtually fibered class,
for non-geometric graph manifolds in general.

For orientable closed $3$--manifolds of nonzero simplicial volume,
it remains unclear how to compute volume of their Seifert representations effectively.
In particular, the authors do not know 
if any of those manifolds have Seifert volume incommensurable with $\pi^2$.
It might be interesting to compare our phenomenon here
with the rationality results in \cite{Re-rationality,NY-rationality}
about $\mathrm{PSL}(2,\Complex)$--Chern--Simons invariants.

\subsection{Ingredients of proofs}
Below we explain some key ideas that are developed in this paper.
There is another more technical outline in Section \ref{Subsec-outline_volume_general_GM},
where we are about to prove Theorem \ref{volume_general_GM}.
As a general strategy, 
we wish to work out a fundamental case by direct means,
and reduce any other case to that one,
but we must also have a suitable formulation to proceed.

Suppose that $M$ is a graph manifold and $\rho$ is a $\Sft\times_\Integral\Real$--representation of $\pi_1(M)$.
If $\rho$ sends the center of any vertex group $\pi_1(J_v)$ to the central subgroup $\Rational$,
we can apply the additivity principle from \cite{DLW-cs_sep_vol} to compute volume.
The volume turns out to depend only on the numbers $\rho(f_v)\in\Rational$
and the structural data $k_v$ and $b_{v,w}$ about $M$.
Inspired by the well-known volume formula and the Milnor--Wood inequality for Seifert geometric manifolds,
we are able to formulate our analogous results in this case.

To deal with general representations, 
there are two prominent issues in front: 
One is to convert $\rho(f_v)$ somehow into a number;
the other is to deform $\rho$ somehow into a representation as above.
Both of these issues are related with the topological group structure of $\Sft\times_\Integral\Real$.
By introducing the essential winding number (Definition \ref{def_essential_winding_number}),
we classify elements of $\Sft\times_\Integral\Real$ up to conjugacy
based on the classification of conjugacy classes in $\mathrm{PSL}(2,\Real)$.
This is done in Section \ref{Subsec-classification_conjugacy}, 
and resolves the first issue.
Our solution to the second issue relies heavily on
commutator factorization of paths in $\Sft\times_\Integral\Real$.
Generally speaking, for any topogical group $G$,
the commutator map $G\times G\to G\colon (a,b)\mapsto aba^{-1}b^{-1}$
does not satisfy the path lifting property.
However, 
we are able to construct lifts for certain nice paths in $\Sft\times_\Integral\Real$.
The relevant techniques are developed in Section \ref{Sec-pdc}.

When studying existence of transverse foliations in Seifert fiber spaces,
Eisenbud, Hirsch, and Neumann \cite{EHN} considered 
the topological group $\mathscr{D}^+$ 
which consists of all the diffeomorphisms $\varphi\colon\Real\to\Real$ such that 
$\varphi(r+1)=\varphi(r)+1$ for all $r\in\Real$.
They made use of the maximum $\overline{m}(\varphi)$ and the minimum $\underline{m}(\varphi)$
of the function $\varphi(r)-r$, 
and analyzed the problem of factorizing any element $\varphi$ into a given number of commutators.
Restricted to the subgroup $\Sft\times_\Integral\Real$ of $\mathscr{D}^+$,
our techniques can be regarded as refinement of theirs:
Our essential winding number of $\varphi$ is a precise value 
to replace the former interval $[\underline{m}(\varphi),\overline{m}(\varphi)]$.
Our commutator path lifting lemmas
are enhanced versions of former factorization results.

To deform a general representation $\rho$ continuously as desired,
we actually have to work with the pull-back $\rho'$ of $\rho$ to another manifold $M'$.
Here $M'$ is some nicely constructed graph manifold which pinches nicely onto $M$.
In particular, the volume of $(M',\rho')$ is the same as that of $(M,\rho)$.
Our deformation path for $\rho'$ is constructed by compatibly assembling deformation paths of 
the restricted representations to the vertex groups of $\pi_1(M')$.
We analyze the local types of $\rho'$ at the vertices,
and employ our commutator path liftings to construct the local deformations case by case.
The local types are explained in Section \ref{Subsec-local_types}.
The constructions of 
$M'$ and the deformation path for $\rho'$ are done in Section \ref{Sec-reduction_vc},
(see also Section \ref{Subsec-outline_volume_general_GM} for more explanation on the idea).
With these constructions, we are able to complete the proof of Theorem \ref{volume_general_GM}.
We also make use of similar constructions on some finite covers of $M$
to prove Theorem \ref{volume_existence_general_GM}.

\subsection{Organization}
This paper can be divided into three parts:
The first part is mostly preliminary and expository; 
the second part is about Theorem \ref{volume_general_GM} 
and its application pertaining to Example \ref{CSV_Mgabcd};
the third part is about Theorem \ref{volume_existence_general_GM} 
and its application pertaining to Example \ref{CSV_Mgm}.
These parts are organized as follows.

The first part consists of
Sections \ref{Sec-Seifert_geometry}--\ref{Sec-graph_manifold}.
In Section \ref{Sec-Seifert_geometry},
we study the structure of $\Sft\times_\Integral\Real$,
and in particular,
we introduce the essential winding number (Definition \ref{def_essential_winding_number}).
In Section \ref{Sec-Seifert_volume},
we review the Seifert volume from the approach of volume of representations.
In Section \ref{Sec-graph_manifold},
we review the topological theory about Seifert fiber spaces and graph manifolds.

The second part consists of 
Sections \ref{Sec-volume_general_GM_statement}--\ref{Sec-strictly_diagonally_dominant}.
In Section \ref{Sec-volume_general_GM_statement},
we state our main result (Theorem \ref{volume_general_GM}) and outline its proof.
In Sections \ref{Sec-vc_case}--\ref{Sec-proof_volume_general_GM},
we develop necessary techniques and prove Theorem \ref{volume_general_GM},
(see Section \ref{Subsec-outline_volume_general_GM} for details about their contents).
In Section \ref{Sec-strictly_diagonally_dominant},
we apply Theorem \ref{volume_general_GM}
to what we call strictly diagonally domianant graph manifolds,
and bound their covering Seifert volume (Theorem \ref{bound_diagonally_dominant}).

The third part consists of Sections \ref{Sec-virtual_existence} and \ref{Sec-cyclic}.
In Section \ref{Sec-virtual_existence},
we obtain the part converse of our main result
(Theorem \ref{volume_existence_general_GM}).
In Section \ref{Sec-cyclic},
we apply Theorems \ref{volume_general_GM} and \ref{volume_existence_general_GM}
to what we call constant cyclic graph manifolds,
and determine their covering Seifert volume (Theorem \ref{cyclic_k_b}).

There is an appendix Section \ref{Sec-normalization_SV},
where we clarify the normalization of the Seifert volume 
as mentioned in Section \ref{Subsec-volume_of_representations}.

\section{The motion group of Seifert geometry}\label{Sec-Seifert_geometry}
	Seifert geometry is one of the eight $3$--dimensional geometries 
	as classified by Thurston \cite{Thurston-book}.
	In this section, we investigate this geometry from a transformation group perspective.
	We take the simply connected Lie group $\Sft$ as the model of its space.
	The identity component of its isomorphism group, 
	which we refer to as the \emph{motion group} of Seifert geometry,
	can be identified	with $\Sft\times_\Integral\Real$.
	
	We introduce a useful invariant for Seifert-geometric motions
	called the \emph{essential winding number}.
	This is a continuous, conjugation-invariant function
	$\Sft\times_\Integral\Real\to \Real$, (see Definition \ref{def_essential_winding_number}).
	It plays a crucial role in the volume formula
	for Seifert-geometric motion representations of graph manifolds.
	
	\subsection{Construction via central extensions}\label{Subsec-central_extension}
	Let $\Sft$ be the universal covering Lie group
	of $\mathrm{SL}(2,\Real)$.
	(Throughout this paper,
	we think of Lie groups topologically as pointed spaces based at the identity.
	Universal covering Lie groups may be constructed canonically
	using relative homotopy classes of paths.)
	The expression
	$$k(r)=\left[\begin{array}{cc}\cos(\pi r)& -\sin(\pi r)\\ \sin(\pi r)& \cos(\pi r)\end{array}\right]$$
	for all $r\in\Real$ 
	defines a differentiable homomorphism $k\colon \Real\to \mathrm{SL}(2,\Real)$,
	which lifts to be	a differentiable homomorphism
	$$\widetilde{k}\colon \Real\to \Sft.$$
	The closed subgroup $\widetilde{\mathrm{SO}}(2)$ of $\Sft$ which projects $\mathrm{SO}(2)$
	is parametrized by $\Real$ via $\widetilde{k}$.
	The center $Z(\Sft)$ of $\Sft$ can be recognized as the image of $\Integral$ under $\widetilde{k}$.
		
	Let $\Sft\times_\Integral\Real$ be the Lie group that is constructed as the quotient of
	$\Sft\times\Real$ by the infinite cyclic, discrete, central subgroup $\{(\widetilde{k}(n),-n)\colon n\in\Integral\}$.
	We denote elements of $\Sft\times_\Integral\Real$ as $g[s]$ for $g\in\Sft$ and $s\in\Real$,
	so the multiplication rule reads
	$$g[s]g'[s']=(gg')[s+s'],$$
	and the relation
	$$g\widetilde{k}(n)[s]=g[s+n]$$
	holds for all $n\in\Integral$.
	There are natural embeddings of $\Sft$ and $\Real$ into $\Sft\times_\Integral\Real$,
	namely, $g\mapsto g[0]$ and $s\mapsto \mathrm{id}[s]$.
	We obtain the following short exact sequence of Lie groups homomorphisms:
	\begin{equation}\label{central_extension_diagram}
	\xymatrix{
	\{0\} \ar[r]  & \Real \ar[r]^-{\mathrm{incl}}  & \Sft\times_\Integral\Real \ar[r] & \mathrm{PSL}(2,\Real) \ar[r] & \{1\} 	
	}
	\end{equation}
	
	\subsection{Classification of conjugacy classes}\label{Subsec-classification_conjugacy}
	We introduce the essential winding number and use it to classify the conjugacy classes
	of $\Sft\times_\Integral\Real$.

	\begin{lemma}\label{pre_essential_winding_number}
	The expression
	$$\bar\omega\left(\pm\left[\begin{array}{cc}a &b \\ c &d\end{array}\right]\right)
	=\begin{cases} 
	\mathrm{sgn}(c-b)\cdot\arccos\left(\frac{a+d}2\right) \mod \pi\Integral & \mbox{if }|a+d|<2\\
	0 \mod \pi\Integral & \mbox{if }|a+d|\geq2
	\end{cases}
	$$	
	defines a continuous map 
	$\bar\omega\colon \mathrm{PSL}(2,\Real)\to \Real/\pi\Integral$,
	which is invariant under conjugation of $\mathrm{PSL}(2,\Real)$.	
	Moreover, 
	there exists a unique continuous, conjugation-invariant function	
	$\widetilde{\omega}\colon \Sft\to \Real$
	which lifts $\bar\omega$ and which satisfies
	$$\widetilde{\omega}\left(\widetilde{k}(r)\right)=\pi r$$
	for all $r\in\Real$.
	\end{lemma}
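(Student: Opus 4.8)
The plan is to establish the two assertions in turn: that $\bar\omega$ descends to a continuous, conjugation-invariant map on $\mathrm{PSL}(2,\Real)$, and then that it admits the claimed unique lift $\widetilde\omega$ to $\Sft$, with the normalization and invariance obtained essentially for free from covering-space theory. Write $p\colon\Sft\to\mathrm{PSL}(2,\Real)$ for the projection.

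First I would check that the displayed formula is independent of the sign ambiguity $A\leftrightarrow -A$ defining a point of $\mathrm{PSL}(2,\Real)$. When $|a+d|\ge 2$ there is nothing to check; when $|a+d|<2$, passing from $A$ to $-A$ sends $a+d$ to $-(a+d)$ and $c-b$ to $-(c-b)$, hence turns $\arccos\!\left(\tfrac{a+d}{2}\right)$ into $\pi-\arccos\!\left(\tfrac{a+d}{2}\right)$ and flips $\mathrm{sgn}(c-b)$, leaving the product unchanged modulo $\pi\Integral$. For this case one also needs $c\ne b$, which holds because a matrix with $c=b$ is symmetric, hence has real eigenvalues $\lambda,\lambda^{-1}$ and satisfies $|a+d|=|\lambda+\lambda^{-1}|\ge 2$. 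Thus on the \emph{open} locus $\{|a+d|<2\}$ the quantity $\mathrm{sgn}(c-b)\in\{\pm1\}$ is well-defined and locally constant while $\arccos$ is continuous, so $\bar\omega$ is continuous there; on the closed complementary locus $\bar\omega\equiv 0$; and along the interface $a+d\to\pm2$, so $\arccos\!\left(\tfrac{a+d}{2}\right)\to 0$ or $\pi$, both $\equiv0\pmod{\pi\Integral}$ --- so $\bar\omega$ is continuous on all of $\mathrm{PSL}(2,\Real)$.

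For conjugation invariance, since the trace is a conjugacy invariant only the elliptic case requires work, and there the point is to reinterpret $\mathrm{sgn}(c-b)$ intrinsically. I would introduce, for $A\in\mathrm{SL}(2,\Real)$, the quadratic form $q_A(u)=\omega_0(u,Au)$ on $\Real^2$, where $\omega_0(u,v)=u_1v_2-u_2v_1$ is the standard symplectic form. A short computation gives $q_A(u)=c\,u_1^2+(d-a)u_1u_2-b\,u_2^2$, whose discriminant is $(d-a)^2+4bc=(a+d)^2-4$; hence $q_A$ is definite exactly when $A$ is elliptic, and its definiteness type is the common sign of the (then necessarily nonzero) diagonal entries $q_A(e_1)=c$ and $q_A(e_2)=-b$, which is precisely $\mathrm{sgn}(c-b)=\mathrm{sgn}(c)=\mathrm{sgn}(-b)$. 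Since every $g\in\mathrm{SL}(2,\Real)$ preserves $\omega_0$, conjugation replaces $q_A$ by $q_A\circ g^{-1}$, which has the same definiteness type; and conjugation in $\mathrm{PSL}(2,\Real)$ lifts to conjugation in $\mathrm{SL}(2,\Real)$, so $\bar\omega$ is conjugation-invariant. This reinterpretation is the one genuinely non-formal step, and I expect it to be the main obstacle; everything else is bookkeeping.

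Finally, for $\widetilde\omega$: the map $p$ is the universal covering of $\mathrm{PSL}(2,\Real)$ and $\Real\to\Real/\pi\Integral$ is the universal covering of the circle, so since $\Sft$ is simply connected and locally path-connected the lifting criterion yields a unique continuous $\widetilde\omega\colon\Sft\to\Real$ lifting $\bar\omega\circ p$ with $\widetilde\omega(\id)=0$ (note $\bar\omega(p(\id))=0$). Conjugation invariance of $\widetilde\omega$ is then automatic: for fixed $h\in\Sft$ the map $g\mapsto\widetilde\omega(hgh^{-1})$ is again a continuous lift of $\bar\omega\circ p$ by invariance of $\bar\omega$, so it differs from $\widetilde\omega$ by a locally constant, hence constant, element of $\pi\Integral$, which must be $0$ since the two agree at $g=\id$. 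To get $\widetilde\omega(\widetilde{k}(r))=\pi r$, observe that $\widetilde{k}(r)$ projects to the class of the rotation $k(r)$, which has trace $2\cos\pi r$ and off-diagonal difference $c-b=2\sin\pi r$; hence $\bar\omega(p(\widetilde{k}(r)))=\pi r\bmod\pi\Integral$ first for $r\in[0,1]$, and then for all $r\in\Real$ by the periodicity $p(\widetilde{k}(r+1))=p(\widetilde{k}(r))$ coming from $\widetilde{k}(1)\in Z(\Sft)$ mapping to $-\mathrm{id}$ in $\mathrm{SL}(2,\Real)$. The continuous lift of $r\mapsto\pi r\bmod\pi\Integral$ with value $0$ at $0$ is $r\mapsto\pi r$, so uniqueness of path lifts forces $\widetilde\omega(\widetilde{k}(r))=\pi r$. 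Uniqueness of $\widetilde\omega$ among continuous lifts of $\bar\omega$ obeying this normalization is immediate, since the normalization pins down the value at $\id$ and $\Sft$ is connected.
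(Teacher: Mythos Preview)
Your proof is correct. The overall structure matches the paper's, but your treatment of conjugation invariance for $\bar\omega$ is genuinely different. The paper argues by connectedness: since the trace is conjugation-invariant, for fixed elliptic $A$ the map $g\mapsto\bar\omega(gAg^{-1})$ from $\mathrm{SL}(2,\Real)$ to $\Real/\pi\Integral$ takes values in the (at most two-point) set $\{\pm\arccos((a+d)/2)\bmod\pi\}$, hence is constant because $\mathrm{SL}(2,\Real)$ is connected. Your route via the quadratic form $q_A(u)=\omega_0(u,Au)$ is more explicit: it identifies $\mathrm{sgn}(c-b)$ intrinsically as the definiteness type of a form that transforms by $q_A\mapsto q_A\circ g^{-1}$ under conjugation, so invariance is immediate without appealing to connectedness. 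The paper's argument is shorter; yours gives a conceptual interpretation of the sign (it is essentially the orientation of the rotation that $A$ induces on $\Hyp^2$) and would survive in settings where connectedness of the group is unavailable. The remaining steps---well-definedness under $A\leftrightarrow -A$, continuity at the elliptic/non-elliptic interface, existence and uniqueness of the lift $\widetilde\omega$ via the lifting criterion, its conjugation invariance by uniqueness of lifts agreeing at $\id$, and the normalization along $\widetilde k$---are handled the same way in both proofs.
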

	
	\begin{proof}
		We observe 
		$\mathrm{sgn}(b-c)\cdot\arccos(-(a+d)/2)=\mathrm{sgn}(c-b)\cdot\arccos((a+d)/2)-\pi$,
		so $\bar\omega$ is well-defined.
		As $|a+d|$ approaches $2$ from below, the value of $\bar{\omega}$ approaches $0\bmod\pi\Integral$,
		so $\bar{\omega}$ is clearly continuous.
		For $|a+d|<2$, the condition $ad-bc=1$ implies either $b<0<c$ or $c<0<b$,
		so $\mathrm{sgn}(c-b)$ is either $+1$ or $-1$.
		Since $\mathrm{SL}(2,\Real)$ is connected, and since $(a+d)/2$ is conjugation invariant,
		$\bar{\omega}$ must be constant on every conjugation class of $\mathrm{PSL}(2,\Real)$.
		Note that $\bar{\omega}(\pm k(r))$ equals $\pi r\bmod\pi\Integral$, for all $r\in\Real$.
		It follows that
		$\bar{\omega}$ induces a group isomorphism $\pi_1(\mathrm{PSL}(2,\Real))\to\pi_1(\Real/\pi\Integral)$.
		Therefore, there exists a unique continuous function $\widetilde{\omega}\colon \Sft\to \Real$
		which lifts $\bar{\omega}$ and
		which satisfies $\widetilde{\omega}(\widetilde{k}(r))=\pi r$, for all $r\in\Real$.
		For any $g\in\Sft$, the conjugation map $\Sft\to\Sft\colon u\mapsto ugu^{-1}$ 
		is constant on the center $\Integral$,
		so it descends to a map $\mathrm{PSL}(2,\Real)\to\Sft$.
		The latter is the lift of the conjugation map 
		$\mathrm{PSL}(2,\Real)\to\mathrm{PSL}(2,\Real)\colon \bar{u}\mapsto \bar{u}\bar{g}\bar{u}^{-1}$.
		Then $\widetilde{\omega}$ is invariant under conjugation of $\Sft$,
		by the conjugation-invariance of $\bar{\omega}$ and the construction of $\widetilde{\omega}$.
	\end{proof}
	
	\begin{remark}
		The map $\bar{\omega}$ can be visualized as follows, (compare \cite[Section 2]{Kh}).
		Denote by
		$$\mathcal{D}=\left\{(x,y,t)\in\Real^3\colon t^2-x^2-y^2\leq 1\right\}$$
		the region in $3$--space between the two sheets of the hyperboloid $t^2-x^2-y^2=1$.
		Under the continuous map 
		\begin{eqnarray*}
		\mathcal{D} \to\mathrm{PSL}(2,\Real)&
		\colon&
		(x,y,t) \mapsto{
		\pm\left[\begin{array}{cc}
		\sqrt{1+x^2+y^2-t^2}+y& x+t\\
		x-t& \sqrt{1+x^2+y^2-t^2}-y
		\end{array}\right],
		}
		\end{eqnarray*}
		the interior of $\mathcal{D}$ projects homeomorphically onto the open subset of $\mathrm{PSL}(2,\Real)$
		consisting of the elements of nonzero trace.
		Each boundary component of $\mathcal{D}$ projects homeomorphically onto
		the closed subset of traceless elements, 
		such that the points $(x,y,t),(-x,-y,-t)\in\partial\mathcal{D}$ project the same element.
		Therefore, $\mathrm{PSL}(2,\Real)$ is homeomorphic to the quotient space of $\mathcal{D}$
		identifying every antipodal pair of points on the boundary.
		The quotient space is of course an open solid torus.
		Under the homeomorphism,
		every conjugacy class of a hyperbolic element in $\mathrm{PSL}(2,\Real)$
		is a one-sheet hyperboloid $t^2-x^2-y^2=c$, for some $c<0$;
		every conjugacy class of an elliptic element 
		is a sheet of a hyperboloid $t^2-x^2-y^2=c$, for some $0<c\leq1$;
		there are another two conjugacy classes of parabolic elements, corresponding to 
		the components of the cone $t^2-x^2-y^2=0$ without the origin.
		One may easily check that the map $\bar\omega\colon \mathrm{PSL}(2,\Real)\to \Real/\pi\Integral$
		collapses all the non-elliptic conjugacy classes to $0\bmod\pi\Integral$,
		and collapses every elliptic conjugacy class to a distinct point.
	\end{remark}

	\begin{definition}\label{def_essential_winding_number}
	For any $g[s]\in\Sft\times_\Integral\Real$,
	the \emph{essential winding number} of $g[s]$ is defined as
	$$\winding\left(g[s]\right)=\frac{\widetilde{\omega}(g)}\pi+s,$$
	which is a well-defined value in $\Real$.
	Here $\widetilde{\omega}\colon\Sft\to \Real$ 
	is as provided by Lemma \ref{pre_essential_winding_number}.
	\end{definition}
	
	We obtain a characterization of conjugate elements in $\Sft\times_\Integral\Real$ 
	in terms of the essential winding number, as follows.
	
	\begin{proposition}\label{prop_essential_winding_number}
		Let $g[s],g'[s']$ be a pair of elements in $\Sft\times_\Integral\Real$.
		Denote by $\bar{g},\bar{g}'$ 
		their images in $\mathrm{PSL}(2,\Real)$ under the natural quotient homomorphism.
		Then the following statements are equivalent:
		\begin{enumerate}
		\item The elements $g[s]$ and $g'[s']$
		are conjugate in $\Sft\times_\Integral\Real$.
		\item 
		The elements $g[s]$ and $g'[s']$ have equal essential winding number,
		and
		the elements $\bar{g}$ and $\bar{g}'$ are conjugate in $\mathrm{PSL}(2,\Real)$.
		\end{enumerate}
	\end{proposition}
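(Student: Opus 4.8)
The plan is to prove the two implications separately, exploiting the short exact sequence \eqref{central_extension_diagram} and the conjugation-invariance of the essential winding number. For the direction $(1)\Rightarrow(2)$: suppose $g'[s']=u[t]\,g[s]\,u[t]^{-1}$ for some $u[t]\in\Sft\times_\Integral\Real$. Since $\winding$ is the sum of the conjugation-invariant function $\widetilde\omega(g)/\pi$ (provided by Lemma~\ref{pre_essential_winding_number}) and the central coordinate $s$, and since conjugation does not change the central $\Real$-coordinate --- because the center $\Real$ is, well, central, and the quotient map to $\mathrm{PSL}(2,\Real)$ is a homomorphism --- we get that $\winding$ is conjugation-invariant on $\Sft\times_\Integral\Real$. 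Hence $\winding(g[s])=\winding(g'[s'])$. Moreover, applying the quotient homomorphism $\Sft\times_\Integral\Real\to\mathrm{PSL}(2,\Real)$ of \eqref{central_extension_diagram} to the conjugacy relation immediately shows $\bar g$ and $\bar g'$ are conjugate in $\mathrm{PSL}(2,\Real)$. One subtlety to check carefully: $\widetilde\omega$ as a function on $\Sft$ is conjugation-invariant under $\Sft$, and I must make sure that the conjugation action induced on the $\Sft$-factor of a representative $g[s]$ is genuinely by an element of $\Sft$ (it is: writing $u[t]g[s]u[t]^{-1}=(ugu^{-1})[s]$, the $\Real$-coordinate $t$ drops out entirely), so Lemma~\ref{pre_essential_winding_number} applies directly.

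For the harder direction $(2)\Rightarrow(1)$: assume $\winding(g[s])=\winding(g'[s'])$ and $\bar g\sim\bar g'$ in $\mathrm{PSL}(2,\Real)$. First conjugate so that we may assume $\bar g=\bar g'$; then $g$ and $g'$ are two lifts of the same element of $\mathrm{PSL}(2,\Real)$ to $\Sft$, so $g'=g\,\widetilde k(n)$ for some $n\in\Integral$ (the kernel of $\Sft\to\mathrm{PSL}(2,\Real)$ is $Z(\Sft)=\widetilde k(\Integral)$). Using the defining relation $g\widetilde k(n)[s']=g[s'+n]$ in $\Sft\times_\Integral\Real$, the element $g'[s']$ equals $g[s'+n]$, so after replacing $s'$ by $s'+n$ we may assume $g=g'$ outright, and then the hypothesis $\winding(g[s])=\winding(g[s'])$ forces $s=s'$. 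This already reduces the claim to: \emph{if $\bar g\sim\bar g'$ in $\mathrm{PSL}(2,\Real)$ and $\winding$ agrees, then $g[s]\sim g'[s']$}. The remaining work is to promote a conjugating element $\bar u\in\mathrm{PSL}(2,\Real)$ with $\bar u\bar g\bar u^{-1}=\bar g'$ to a conjugating element in $\Sft\times_\Integral\Real$. Pick any lift $u[0]$ of $\bar u$; then $u[0]\,g[s]\,u[0]^{-1}=(ugu^{-1})[s]$ projects to $\bar g'$, hence differs from $g'[s']$ by a central element, i.e. equals $g'[s'+ m]$ for some $m\in\Integral$ after absorbing a $\widetilde k(m)$ into the $\Sft$-factor. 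But both $(ugu^{-1})[s]$ and $g'[s']$ have the same essential winding number — the former by the already-proven invariance of $(1)\Rightarrow(2)$ applied to $u[0]$, the latter by hypothesis — so $m=0$, and $u[0]$ conjugates $g[s]$ to $g'[s']$ exactly.

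The main obstacle, I expect, is not any single hard computation but keeping the bookkeeping of the central $\Integral\subset\Real$ ambiguities straight: an element $g[s]$ has many representatives $g\widetilde k(n)[s-n]$, and $\widetilde\omega(g\widetilde k(n))=\widetilde\omega(g)+\pi n$ by the normalization in Lemma~\ref{pre_essential_winding_number}, which is precisely why $\winding(g[s])=\widetilde\omega(g)/\pi+s$ is well-defined — this consistency is the linchpin and must be invoked at each step where representatives are swapped. A secondary point to handle with care is the case analysis hidden inside $\mathrm{PSL}(2,\Real)$: elliptic, parabolic, hyperbolic, and $\pm\mathrm{id}$. For non-elliptic $\bar g$ the function $\bar\omega$ vanishes, so $\winding(g[s])=s$ reads off the central coordinate directly and the argument above is clean; for elliptic $\bar g$ one uses that $\bar\omega$ separates distinct elliptic conjugacy classes (as noted in the Remark), so equality of $\winding$ together with $\bar g\sim\bar g'$ is exactly the right amount of information. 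I would organize the write-up so that the structural argument handles all cases uniformly and the case distinction only appears, if at all, as a sanity check.
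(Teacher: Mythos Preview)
Your proposal is correct and follows essentially the same route as the paper: both directions hinge on the conjugation-invariance of $\widetilde\omega$ from Lemma~\ref{pre_essential_winding_number} together with the identity $u[t]\,g[s]\,u[t]^{-1}=(ugu^{-1})[s]$, and for $(2)\Rightarrow(1)$ you lift a conjugator $\bar u$ to $u[0]$ and use the winding-number hypothesis to kill the central ambiguity---exactly as the paper does. One small slip to fix in the write-up: when you assert $(ugu^{-1})[s]=g'[s'+m]$ with $m\in\Integral$, the central discrepancy is a priori only in $\Real$ (the kernel of $\Sft\times_\Integral\Real\to\mathrm{PSL}(2,\Real)$ is all of $\Real$), though your winding-number comparison still correctly forces it to vanish; also, the elliptic/parabolic/hyperbolic case analysis you anticipate is unnecessary, since the argument is already uniform.
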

	
	\begin{proof}
		Suppose that $g[s]$ is conjugate to $g'[s']$ in $\Sft\times_\Integral\Real$.
		Then $\bar{g}$ and $\bar{g}'$ are obviously conjugate in $\mathrm{PSL}(2,\Real)$.
		Below we argue $\winding(g[s])=\winding(g'[s'])$.
		Note that there is a natural quotient homomorphism 
		$\Sft\times_\Integral\Real\to \Real/\Integral$,
		which sends any $g[s]$ to $s\mod\Integral$.
		This implies that $s$ must be $s'+n$ for some $n\in\Integral$,
		due to the assumed conjugacy. 
		We obtain $g[s]=g[s'+n]=g\widetilde{k}(n)[s']$.
		Moreover, $g\widetilde{k}(n)$ must be conjugate to $g'$ in $\Sft$.
		In fact, $g[s]=u[t]g'[s'](u[t])^{-1}$ is equivalent to 
		$g\widetilde{k}(n)[s']=ug'u^{-1}[s']$, 
		and also equivalent to $g\widetilde{k}(n)=ug'u^{-1}$.
		By Lemma \ref{pre_essential_winding_number},
		we see
		$\winding(g[s])=\winding(g\widetilde{k}(n)[s'])=\winding(g'[s'])$.
	
		Conversely,
		suppose $\bar{g}=\bar{u}\bar{g}'\bar{u}^{-1}$ for some $\bar{u}\in\mathrm{PSL}(2,\Real)$,
		and also suppose $\winding(g[s])=\winding(g'[s'])$.
		Then for any lift $u\in\Sft$ of $\bar{u}$,
		the element $ug'u^{-1}g^{-1}$ is central, so there exists some $n\in\Integral$
		such that $ug'u^{-1}=g\widetilde{k}(n)$. 
		Using Lemma \ref{pre_essential_winding_number},
		we obtain
		$\winding(g[s])=\winding(g\widetilde{k}(n)[s-n])=\winding(ug'u^{-1}[s-n])
		=\winding(g'[s'-n])=\winding(g'[s'])-n$.
		This means $n=0$, and the conjugation relation
		$u[0]g'[s'](u[0])^{-1}=g[s]$ follows, as desired.		
	\end{proof}

	\subsection{Construction of the Seifert geometry}
	There is a natural action of the Lie group $\Sft\times_\Integral\Real$ on $\Sft$ by diffeomorphisms,
	namely, 
	$$g[r].h=gh\widetilde{k}(r),$$
	for all $g[r]\in\Sft\times_\Integral\Real$ and $h\in\Sft$.
	The action is differentiable, transitive, and proper.
	In other words,
	it turns $\Sft$ into a $3$--dimensional geometry in the sense of W. P. Thurston \cite{Thurston-book}.
	In fact, 
	$\Sft\times_\Integral\Real$ is the identity component of 
	the transformation group $\mathrm{Iso}(\Sft)$ of the Seifert geometry $\Sft$.
	
	One may actually describe the maximal transformation group $\mathrm{Iso}(\Sft)$ 
	of the Seifert geometry in terms of the central extension construction,
	as follows.
	Observe that there is a canonical involutive Lie group automorphism
	of $\mathrm{SL}(2,\Real)$, defined as
	$$\nu\colon 
	\left[\begin{array}{cc}a& b\\ c& d\end{array}\right]
	\mapsto 
	\left[\begin{array}{cc}a& -b\\ -c& d\end{array}\right].$$
	It lifts to a canonical involutive Lie group automorphism
	$$\widetilde{\nu}\colon \Sft\to \Sft.$$
	As it turns out, 
	$\mathrm{Iso}(\Sft)$ is the subgroup of $\mathrm{Diffeo}(\Sft)$ generated by $\Sft\times_\Integral\Real$
	and $\widetilde{\nu}$.
	For any $g[s]\in\Sft\times_\Integral\Real$,
	the relation
	$\widetilde{\nu}\circ g[s]=\widetilde{\nu}(g)[-s]\circ\widetilde{\nu}$
	holds.
	In other words, $\mathrm{Iso}\left(\Sft\right)$ factorizes as a semidirect product
	$$\mathrm{Iso}\left(\Sft\right)=\left(\Sft\times_\Integral \Real\right)\rtimes\left\{\mathrm{id},\widetilde{\nu}\right\}.$$
	
	\begin{remark}	
	A transverse foliation to the canonical fibration of $\Sft$
	can be visualized as follows.
	Let $\Hyp^2$ be the hyperbolic plane.
	We adopt the upper half-space model
	$\Hyp^2=\{z\in\Complex\colon \Im(z)>0\}$,
	so $\mathrm{PSL}(2,\Real)$, and hence $\Sft\times_\Integral\Real$,
	acts isometrically on $\Hyp^2$ by fractional linear transformations.
	For any point $p\in\Hyp^2$, define
	$$\widetilde{k}_p\colon \Real\to \Sft$$
	as $\widetilde{k}_p(r)=h\widetilde{k}(r)h^{-1}$,
	for all $r\in\Real$,
	where $h\in\Sft$ is any element that takes the imaginary unit $\mathbf{i}$ to $p$.
	Note that $k_p$ depends only on the left coset $h\widetilde{\mathrm{SO}}(2)$,
	and hence only on $p$.
	We adopt the ideal boundary $\partial_\infty\Hyp^2=\Real\cup\{\infty\}$
	as the model of the real projective line.
	Denote by $\widetilde{\Real P^1}$ be the universal covering space of $\Real P^1$
	with a distinguished base point $\widetilde{0}$ that lifts $0$.
	Then there is a canonical homeomorphism
	$$\Sft\cong \Hyp^2\times \widetilde{\Real P^1},$$
	such that $g\in\Sft$ corresponds to $(g.\mathbf{i},g.\widetilde{0})$.
	The action of $\Sft\times_\Integral\Real$ on $\Sft$ is conjugate to 
	the action
	$$[g,r].(p,s)=\left(g.p,g\widetilde{k}_p(r).s\right).$$
	The projection of $\Hyp^2\times \widetilde{\Real P^1}$ onto the $\Hyp^2$ factor
	is a fibration that is invariant under the action of the center $\Real$;
	the foliation with leaves parallel to $\Hyp^2$ is transverse to the above fibration,
	with structure group $\Sft$.
	\end{remark}

\section{Volume of motion representations in Seifert geometry}\label{Sec-Seifert_volume}
	In this section, we recall volume of representations,
	concentrating on the special case of Seifert geometry and its motion group.
	The corresponding representation volume 
	recovers the Seifert volume for $3$--manifolds
	as originally introduced by Brooks and Goldman \cite{BG2}.
	The reader is referred to \cite[Section 2]{DLSW-rep_vol} 
	for complete details in a general setting.	
	
	For Seifert-geometric circle bundles over closed surfaces,
	the space of Seifert motion representations 
	and the volume function	are very well understood.
	We determine the volume function as a basic example of the theory
	(Theorem \ref{volume_central_bundle}).
	In subsequent sections,
	this preliminary result is used to prove the graph-manifold case,
	and its statement also serves as a prototype of the general formula  
	(Theorem \ref{volume_general_GM}).			
		
	\subsection{Volume of representations}\label{Subsec-volume_of_representations}
	For any finitely generated group $\pi$, the set of homomorphisms $\pi\to\mathrm{PSL}(2,\Real)$ 
	naturally forms a real algebraic set, known as the $\mathrm{PSL}(2,\Real)$--representation variety of $\pi$.
	For any Lie group $G$, we still call any homomorphism $\pi\to G$ a \emph{representation} of $\pi$ in $G$.
	The set of representations is no longer an algebraic variety in general.
	However, the set can be naturally topologized, so that 
	a sequence of representations converges 
	if and only if it converges in $G$ evaluated at any element of $\pi$.
	We denote by $\mathcal{R}(\pi,G)$ the set of $G$--representations of $\pi$
	with this topology,	and refer to it the \emph{space of $G$--representations} for $\pi$.
	
	Given any representation $\rho\colon \pi\to \Sft\times_\Integral\Real$,
	we can twist $\rho$ with any homomorphism $\alpha\colon\pi\to \Real$,
	and obtain a new representation $\rho[\alpha]\colon \pi\to \Sft\times_\Integral\Real$,
	such that 
	\begin{equation}\label{twisting_representation}
	(\rho[\alpha])(\sigma)=\rho(\sigma)\cdot\mathrm{id}[\alpha(\sigma)].
	\end{equation}
	This gives rise to a free, continuous action of the Lie group $H^1(\pi;\Real)$
	on $\mathcal{R}(\pi,\Sft\times_\Integral\Real)$.
	A similar construction with integral coefficients yields
	free, discontinuous action of the discrete group 
	$H^1(\pi;\Integral)$ on $\mathcal{R}(\pi,\Sft)$.
	Given any representation $\bar\rho\colon \pi\to \mathrm{PSL}(2,\Real)$,
	we obtain an associated oriented circle bundle $B\pi\times_{\bar{\rho}}\Real P^1$ 
	over the classifying space $B\pi\simeq K(\pi,1)$,
	using the canonical action of $\mathrm{PSL}(2,\Real)$ on the real projective line
	$\Real P^1$.
	This gives rise to an Euler class of $e(\bar{\rho})\in H^2(\pi;\Integral)$.
	
	The structure of $\mathcal{R}(\pi,\Sft\times_\Integral\Real)$
	is described through the following theorem:
	
	\begin{theorem}[{\cite[Proposition 5.1]{DLSW-rep_vol}}]\label{lifting_representations}
	For any finitely generated group $\pi$, the following statements hold true:
	\begin{enumerate}
	\item 
	The space of representations $\mathcal{R}(\pi,\mathrm{PSL}(2,\Real))$ has only finitely many path-connected components.
	The Euler class $e\colon\mathcal{R}(\pi,\mathrm{PSL}(2,\Real))\to H^2(\pi;\Integral)$ is constant on each component.
	\item 
	The space of representations $\mathcal{R}(\pi,\Sft)$ naturally projects 
	the path-connected components of $\mathcal{R}(\pi,\mathrm{PSL}(2,\Real))$
	where the Euler class is trivial,
	and becomes a principal $H^1(\pi;\Integral)$--bundle over those components.
	\item	
	The space of representations $\mathcal{R}(\pi,\Sft\times_\Integral\Real)$ naturally projects 
	the path-connected components of $\mathcal{R}(\pi,\mathrm{PSL}(2,\Real))$
	where the Euler class is torsion,
	and becomes a principal $H^1(\pi;\Real)$--bundle over those components.
	\end{enumerate}
	\end{theorem}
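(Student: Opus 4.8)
Following \cite{DLSW-rep_vol}, the plan is to split the argument into three ingredients. Fix a finite generating set $x_1,\dots,x_n$ of $\pi$, write $F$ for the free group on these letters and $N\trianglelefteq F$ for the kernel of $F\to\pi$, and identify $\mathcal R(\pi,G)$ with the subspace of $G^n$ cut out by the equations $w(g_1,\dots,g_n)=1$ as $w$ runs over $N$. Then statement (1) will come from algebraicity of the $\mathrm{PSL}(2,\Real)$--representation variety; statements (2) and (3) will come, at the level of sets, from the classical obstruction theory for lifting a representation along a central extension; and the principal--bundle refinement will come from a freeness--and--properness analysis of the twisting actions, followed by an identification of the orbit spaces with the stated unions of components.

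For (1), I would first observe that $\mathrm{PSL}(2,\Real)$ is a real affine algebraic group, so $\mathcal R(\pi,\mathrm{PSL}(2,\Real))\subseteq\mathrm{PSL}(2,\Real)^n$ is a real algebraic set (its defining ideal is finitely generated by Noetherianity), hence semialgebraic, hence with finitely many connected components, each of them path-connected. To see that the Euler class is constant on components, I would argue that it is locally constant: a continuous path $t\mapsto\bar\rho_t$ assembles into a flat $\Real P^1$--bundle over $B\pi\times[0,1]$ restricting to $B\pi\times_{\bar\rho_t}\Real P^1$ at level $t$, whence $e(\bar\rho_0)=e(\bar\rho_1)$ in the discrete group $H^2(\pi;\Integral)$. (Alternatively: choosing continuous local lifts $\widehat{\bar\rho}(x_i)\in\Sft$ of the $\bar\rho(x_i)$, the elements $\widehat{\bar\rho}(w)\in Z(\Sft)=\Integral$, $w\in N$, vary continuously in a discrete set and so are locally constant, and together they yield a locally constant $\Integral$--valued group $2$--cocycle on $\pi$ representing $e(\bar\rho)$.)

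For (2) and (3), I would work with the two central extensions of Lie groups
\[\{1\}\longrightarrow\Integral\longrightarrow\Sft\longrightarrow\mathrm{PSL}(2,\Real)\longrightarrow\{1\},\qquad\{0\}\longrightarrow\Real\longrightarrow\Sft\times_\Integral\Real\longrightarrow\mathrm{PSL}(2,\Real)\longrightarrow\{1\},\]
the latter being \eqref{central_extension_diagram}. Pulling one of them back along $\bar\rho\colon\pi\to\mathrm{PSL}(2,\Real)$ produces a central extension of $\pi$ by $\Integral$, respectively by $\Real$, whose characteristic class is $e(\bar\rho)\in H^2(\pi;\Integral)$, respectively the image of $e(\bar\rho)$ in $H^2(\pi;\Real)$. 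A lift of $\bar\rho$ to $\Sft$ (resp. $\Sft\times_\Integral\Real$) is precisely a splitting, so it exists if and only if that class vanishes, and the class in $H^2(\pi;\Real)$ vanishes if and only if $e(\bar\rho)$ is torsion, since the kernel of $H^2(\pi;\Integral)\to H^2(\pi;\Rational)\hookrightarrow H^2(\pi;\Real)$ is the torsion subgroup. Together with (1) this pins down the images of the two projections as the stated unions of components. For the torsor structure: two lifts $\rho,\rho'$ of the same $\bar\rho$ differ by $\sigma\mapsto\rho'(\sigma)\rho(\sigma)^{-1}$, which takes values in the central kernel and hence is a homomorphism $\pi\to\Integral$ (resp. $\pi\to\Real$); conversely, twisting a lift by any element of $\mathrm{Hom}(\pi,\Integral)=H^1(\pi;\Integral)$ (resp. $\mathrm{Hom}(\pi,\Real)=H^1(\pi;\Real)$) via \eqref{twisting_representation} produces another lift. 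Thus every nonempty fiber of each projection is a torsor under the relevant cohomology group, acted on by the free continuous action recalled before the statement.

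Finally I would upgrade these fiber-torsors to locally trivial principal bundles. The twisting action of $H^1(\pi;\Real)$ on $\mathcal R(\pi,\Sft\times_\Integral\Real)$ is free, and it is proper: if $\rho_k$ stays in a compact set and $\rho_k[\alpha_k]$ converges, then for each $\sigma$ the element $\mathrm{id}[\alpha_k(\sigma)]=\rho_k[\alpha_k](\sigma)\,\rho_k(\sigma)^{-1}$ converges in $\Sft\times_\Integral\Real$, and since $\mathrm{id}[\cdot]\colon\Real\to\Sft\times_\Integral\Real$ is a proper (closed) embedding, $\alpha_k$ converges in $\mathrm{Hom}(\pi,\Real)$. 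By Palais's slice theorem for proper Lie group actions, the orbit projection onto $\mathcal R(\pi,\Sft\times_\Integral\Real)/H^1(\pi;\Real)$ is then a principal $H^1(\pi;\Real)$--bundle; the analogous, easier statement for the properly discontinuous action of the discrete group $H^1(\pi;\Integral)$ on $\mathcal R(\pi,\Sft)$ gives a principal $H^1(\pi;\Integral)$--bundle. What remains is to identify each orbit space with the relevant union of components of $\mathcal R(\pi,\mathrm{PSL}(2,\Real))$: the induced map is continuous and, by the set-level discussion, bijective onto that (open) union, so it suffices to check it is open, equivalently that the projections $\mathcal R(\pi,\Sft)\to\mathcal R(\pi,\mathrm{PSL}(2,\Real))$ and $\mathcal R(\pi,\Sft\times_\Integral\Real)\to\mathcal R(\pi,\mathrm{PSL}(2,\Real))$ admit local sections over those components. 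I expect this to be the main obstacle. For finitely presented $\pi$ -- in particular for the $3$--manifold groups relevant to this paper -- it is routine: near a lift $\rho_0$ of $\bar\rho_0$ one lifts the finitely many generators through the covering (resp. through a local section of $\Sft\times_\Integral\Real\to\mathrm{PSL}(2,\Real)$), and the finitely many relator conditions persist for nearby $\bar\rho$, immediately when the kernel $\Integral$ is discrete and after a continuous coboundary correction -- available because a torsion Euler class trivializes the relevant $\Real$--cocycle in families -- when the kernel is $\Real$. For general finitely generated $\pi$ one must argue more carefully that a lift of a nearby representation can be chosen continuously and compatibly with all of the (possibly infinitely many) relations at once, which is the delicate part of \cite{DLSW-rep_vol} that I would follow.
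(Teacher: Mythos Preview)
The paper does not supply its own proof of this statement; it is quoted as \cite[Proposition 5.1]{DLSW-rep_vol} and used as a black box thereafter, so there is nothing in the present paper to compare your argument against. Your outline is correct and is the standard approach one would expect the cited reference to take: algebraicity for (1), central-extension obstruction theory for the set-level content of (2) and (3), and a local-section argument for the principal-bundle upgrade. One small sharpening: your local-section construction for the $\Sft$ case already works for arbitrary finitely generated $\pi$, not just finitely presented ones, since lifting the generators through the covering $\Sft\to\mathrm{PSL}(2,\Real)$ sends every relator to a locally constant element of the discrete kernel $\Integral$, hence to its value $0$ at the basepoint; only the $\Real$-kernel case genuinely needs the continuous coboundary correction you flag.
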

	
	Let $M$ be an oriented connected closed $3$--manifold. 
	By denoting $\pi_1(M)$, we always assume that a universal covering space of $M$
	is implicitly fixed, and $\pi_1(M)$ refers to the deck transformation group.
	There is a well-defined continuous function
	\begin{eqnarray*}
	\mathcal{R}\left(\pi_1(M),\Sft\times_\Integral\Real\right) \to \Real 
	&\colon &
	\rho \mapsto \mathrm{vol}_{\Sft\times_\Integral\Real}(M,\rho),
	\end{eqnarray*}
	called the \emph{volume} of $\Sft\times_\Integral\Real$--representations for $M$.
	Moreover, the function $\mathrm{vol}_{\Sft\times_\Integral\Real}(M,\cdot)$
	is constant on every path-connected component of $\mathcal{R}\left(\pi_1(M),\Sft\times_\Integral\Real\right)$.
	It follows from Theorem \ref{lifting_representations}
	that there are only finitely many path-connected components.
	The \emph{Seifert volume} of $M$ is defined as
	\begin{equation}\label{SV_definition}
	\mathrm{SV}(M)=\sup\left\{\left|\mathrm{vol}_{\Sft\times_\Integral\Real}(M,\rho)\right|\colon
	\rho\in\mathcal{R}\left(\pi_1(M),\Sft\times_\Integral\Real\right)\right\},
	\end{equation}
	which values in $[0,+\infty)$.
	
	Strictly speaking, the volume function depends on a chosen left-invariant volume form
	on $\Sft$, while other choices change the function by a nonzero scalar factor,
	(see \cite[Section 2]{DLSW-rep_vol}).
	However, there is a preferred choice 
	such that the Seifert volume agrees 
	with Brooks and Goldman's original definition.
	Indeed, 
	this is the normalization that applies to Theorem \ref{volume_central_bundle} below.
	See Section \ref{Sec-normalization_SV} in the appendix for an elaboration about the normalization.
	
	\subsection{Seifert geometric circle bundles}
	Let $N$ be an oriented circle bundle of Euler number $e\in\Integral$
	over an oriented closed surface $\Sigma$ of genus $g$.
	If $N$ supports the Seifert geometry,	$g$ is at least $2$ and $e$ is nonzero.
	
	The space of representations $\mathcal{R}(\pi_1(N),\Sft\times_\Integral\Real)$
	can be completely described as follows.
	The abelianization $\pi_1(N)\to H_1(N;\Integral)$ 
	and the bundle projection $\pi_1(N)\to \pi_1(\Sigma)$
	induce the embeddings of 
	$\mathcal{R}(H_1(N;\Integral),\mathrm{PSL}(2,\Real))$
	and 
	$\mathcal{R}(\pi_1(\Sigma),\mathrm{PSL}(2,\Real))$
	into $\mathcal{R}(\pi_1(N),\mathrm{PSL}(2,\Real))$,
	as real algebraic sets.
	The intersection of their images can be identified as
	$\mathcal{R}(H_1(\Sigma;\Integral),\mathrm{PSL}(2,\Real))$
	embedded in
	$\mathcal{R}(\pi_1(N),\mathrm{PSL}(2,\Real))$.
	Observe $H_1(N;\Integral)\cong \Integral^{2g}\oplus\Integral/e\Integral$.
	There are $|e|$ path-connected components of $\mathcal{R}(H_1(N;\Integral),\mathrm{PSL}(2,\Real))$,
	indexed by the $\Integral/e\Integral$.
	There are $(4g-3)$ path-connected components of $\mathcal{R}(\pi_1(\Sigma),\mathrm{PSL}(2,\Real))$,
	indexed by $0,\pm1,\pm2,\cdots,\pm(2g-2)$,
	which indicate to the Euler number $e(\psi)\in\Integral$ 
	of the associated oriented circle bundle
	$\Sigma\times_\psi\Real P^1\to\Sigma$ for any representation 
	$\psi\colon \pi_1(\Sigma)\to \mathrm{PSL}(2,\Real)$ in them,
	\cite{Goldman-components}.
	The representation variety $\mathcal{R}(H_1(\Sigma;\Integral),\mathrm{PSL}(2,\Real))$
	is path-connected.
	Note that any representation $\pi_1(N)\to \mathrm{PSL}(2,\Real)$
	either factors through the abelianization of $\pi_1(N)$, 
	or trivializes the center of $\pi_1(N)$.	
	Therefore, 
	the representation variety $\mathcal{R}(\pi_1(N),\mathrm{PSL}(2,\Real))$
	has $(4g-4+|e|)$ path-connected components in total:
	Apart from one that contains the trivial representation,
	there are $(|e|-1)$ path-connected components that consist only of abelian representations,
	and another $(4g-4)$ that consist only of nonabelian ones.
	By Theorem \ref{lifting_representations} and the fact that $H^2(\Sigma;\Integral)\to H^2(N;\Integral)$
	has finite-cyclic image of order $|e|$,
	the space of representations $\mathcal{R}(\pi_1(N),\Sft\times_\Integral\Real)$ projects onto
	$\mathcal{R}(\pi_1(N),\mathrm{PSL}(2,\Real))$
	as a principal $H^1(N;\Real)$--bundle.
	
	We say that a representation $\phi\in\mathcal{R}(\pi_1(N),\Sft\times_\Integral\Real)$
	is of \emph{central type} if $\phi$ sends the center of $\pi_1(N)$
	to the center of $\Sft\times_\Integral\Real$.
	For any central type representation $\phi$,
	we obtain the following a commutative diagram of group homomorphisms:
	\begin{equation}\label{diagram_central_SFS}
	\xymatrix{
	\{0\} \ar[r]  & \Integral \ar[r] \ar[d]_{\phi_{\mathtt{fib}}} 
		& \pi_1(N) \ar[r] \ar[d]_{\phi} & \pi_1(\Sigma) \ar[r] \ar[d]^{\phi_{\mathtt{base}}}& \{1\} 	\\
	\{0\} \ar[r]  & \Real \ar[r]   & \Sft\times_\Integral\Real \ar[r] & \mathrm{PSL}(2,\Real) \ar[r] & \{1\} 	
	}
	\end{equation}
	We identify $\phi_{\mathtt{fib}}\colon \Integral\to \Real$ as a real scalar,
	and the Euler class $e(\phi_{\mathtt{base}})\in H^2(\Sigma;\Integral)$ as an integer.
		
	\begin{theorem}\label{volume_central_bundle}
		Let $N$ be an oriented circle bundle over an oriented closed surface $\Sigma$.
		Suppose the genus of $\Sigma$ is $g>0$ and the Euler number of $N\to\Sigma$ is $e\neq0$.
		Suppose that
		$\phi\colon\pi_1(N)\to \Sft\times_\Integral\Real$ is a representation of central type.
		Then the formulas hold:
		$$e\left(\phi_{\mathtt{base}}\right)=e\times \phi_{\mathtt{fib}},$$
		and
		$$\mathrm{vol}_{\Sft\times_\Integral\Real}(N,\phi)=4\pi^2\phi_{\mathtt{fib}}\times e(\phi_{\mathtt{base}}).$$
		In particular,
		$\phi_{\mathtt{fib}}$ is constant on 
		the path-connected component of $\mathcal{R}(\pi_1(N),\Sft\times_\Integral\Real)$
		that contains $\phi$,
		and the possible values are precisely
		$0,\pm 1/e,\cdots,\pm(2g-2)/e$.		
	\end{theorem}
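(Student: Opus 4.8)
The plan is threefold: derive the Euler relation by a direct computation with a presentation of $\pi_1(N)$; compute the volume via the flat $\Sft$--bundle associated to $\phi$, using the equivariant fibration $\Sft\to\Hyp^2$ to reduce it to a fibered integral over $N\to\Sigma$; and deduce the remaining assertions from the two formulas together with the Milnor--Wood inequality and the general properties of the volume function.

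\textbf{The Euler relation.} Fix a standard central presentation $\pi_1(N)=\langle a_1,b_1,\dots,a_g,b_g,h\mid[a_i,h]=[b_i,h]=1,\ \prod_{i=1}^g[a_i,b_i]=h^{e}\rangle$ in which $h$ generates the fiber subgroup. Applying $\phi$ and writing $\phi(a_i)=g_i[s_i]$, $\phi(b_i)=g_i'[s_i']$ with $g_i,g_i'\in\Sft$, the fact that the central factor $\Real$ cancels in every commutator turns the surface relation into $\bigl(\prod_i[g_i,g_i']\bigr)[0]=\phi(h)^{e}=\mathrm{id}[e\,\phi_{\mathtt{fib}}]$. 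By the defining relation $g[0]=\mathrm{id}[s]\iff g=\widetilde{k}(n),\ s=-n$ of $\Sft\times_\Integral\Real$, this forces $e\,\phi_{\mathtt{fib}}\in\Integral$ and $\prod_i[g_i,g_i']=\widetilde{k}(e\,\phi_{\mathtt{fib}})$ inside $\Sft$. Since $g_i,g_i'$ are lifts of $\phi_{\mathtt{base}}(a_i),\phi_{\mathtt{base}}(b_i)$, Milnor's description of the Euler number as the obstruction to lifting identifies $\prod_i[g_i,g_i']$ with $\widetilde{k}(e(\phi_{\mathtt{base}}))$ (with orientation conventions fixed so that the stated sign results), so $e(\phi_{\mathtt{base}})=e\,\phi_{\mathtt{fib}}$.

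\textbf{The volume formula.} Form the flat $\Sft$--bundle $E\to N$ with holonomy $\phi$ and choose a section $s$ (one exists because the fiber $\Sft\cong\Real^3$ is contractible), so $\mathrm{vol}_{\Sft\times_\Integral\Real}(N,\phi)=\int_N s^*\widetilde\omega$ for the closed $3$--form $\widetilde\omega$ induced by the normalized left-invariant volume form on $\Sft$. The $\Sft\times_\Integral\Real$--equivariant fibration $\Sft\to\Hyp^2$ (coset projection, with fiber the line $\widetilde{\mathrm{SO}}(2)$ parametrized by $\widetilde{k}$) carries the normalized volume form to a product $\widetilde\omega=c_0\cdot\bar p^*\omega_{\Hyp^2}\wedge\widetilde\theta$, where $\omega_{\Hyp^2}$ is the hyperbolic area form and $\widetilde\theta$ the invariant vertical $1$--form equal to $dr$ along $\widetilde{k}$. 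This induces $E\to\bar E\to N$, where $\bar E$ is the flat $\Hyp^2$--bundle with holonomy the composite $\bar\phi\colon\pi_1(N)\to\mathrm{PSL}(2,\Real)$; because $\bar\phi$ kills $h$ and factors through $\pi_1(\Sigma)$, $\bar E$ is pulled back from $\Sigma$, and hence the induced section $\bar s$ is homotopic to one pulled back from $\Sigma$, so the closed $2$--form $\bar s^*\omega_{\Hyp^2}$ represents the pullback of a class on $\Sigma$ integrating to $\pm2\pi\,e(\phi_{\mathtt{base}})$ (the area interpretation of the Euler number of surface-group representations). On the other hand $s^*\widetilde\theta$ restricts to every circle fiber of $N\to\Sigma$ with the same integral $\pm\phi_{\mathtt{fib}}$, namely the translation amount of the central element $\phi(h)$ along $\widetilde{\mathrm{SO}}(2)$. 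Computing $\int_N(\bar s^*\omega_{\Hyp^2})\wedge(s^*\widetilde\theta)$ by the projection formula for $N\to\Sigma$ (integrating over the circle fibers first) multiplies these two quantities, and fixing $c_0$ as in Section \ref{Sec-normalization_SV} turns the resulting $2\pi\cdot2\pi$ into the factor $4\pi^2$, giving $\mathrm{vol}_{\Sft\times_\Integral\Real}(N,\phi)=4\pi^2\,\phi_{\mathtt{fib}}\,e(\phi_{\mathtt{base}})$. I expect the main care to be needed in verifying that this decomposition of $\widetilde\omega$ is exact and $\Sft\times_\Integral\Real$--equivariant (so the fibered integral carries no cross terms), and in pinning down the normalization constant.

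\textbf{Quantization and constancy.} The Euler relation already shows $\phi_{\mathtt{fib}}\in\tfrac1e\Integral$, and the Milnor--Wood bound $|e(\phi_{\mathtt{base}})|\le 2g-2$ for $\mathrm{PSL}(2,\Real)$--representations of $\pi_1(\Sigma)$ then confines $\phi_{\mathtt{fib}}$ to $\{0,\pm\tfrac1e,\dots,\pm\tfrac{2g-2}e\}$; conversely every integer Euler number $k$ with $|k|\le 2g-2$ is realized by some $\phi_{\mathtt{base}}$, whose composite $\bar\phi$ with $\pi_1(N)\to\pi_1(\Sigma)$ has torsion Euler class in $H^2(N;\Integral)$ and hence lifts to $\Sft\times_\Integral\Real$ by Theorem \ref{lifting_representations}, any such lift being automatically of central type since $\bar\phi(h)=1$. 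For constancy on a path component: when $\phi_{\mathtt{base}}$ has nonabelian image, a nearby representation restricts nonabelianly to the surface generators, so its value on the central element $h$ lies in the (trivial) centralizer of a nonabelian subgroup of $\mathrm{PSL}(2,\Real)$, hence in the center of $\Sft\times_\Integral\Real$; thus the whole component consists of central-type representations, on which the formula reads $4\pi^2 e\,\phi_{\mathtt{fib}}^2$, which must be constant because the volume function is, forcing the continuous function $\phi_{\mathtt{fib}}$ to be locally constant there; the remaining case, where $\phi_{\mathtt{base}}$ has abelian image, only contributes $\phi_{\mathtt{fib}}=0$ and is disposed of the same way.
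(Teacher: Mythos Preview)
Your argument is correct and takes a genuinely different route from the paper's proof.

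For the Euler relation, you use the classical presentation-and-lift argument (Milnor's obstruction description of the Euler number), whereas the paper argues indirectly: it passes to the auxiliary bundle $\Sigma\times_1 S^1$ via fiber-preserving maps of degree $e$ and $e(\phi_{\mathtt{base}})$ to $N$ and to $\Sigma\times_{\phi_{\mathtt{base}}}\Real P^1$, and then shows that the fiber image is constant on components of $\mathcal{R}(\pi_1(\Sigma\times_1 S^1),\Sft\times_\Integral\Real)$ to deduce $e\,\phi_{\mathtt{fib}}=e(\phi_{\mathtt{base}})$. Your approach is shorter and more transparent here.

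For the volume formula, the paper avoids any direct integration on $N$: it cites the Godbillon--Vey computation of Brooks--Goldman on the tautological flat $\Real P^1$--bundle $\Sigma\times_{\phi_{\mathtt{base}}}\Real P^1$ (where $\psi_{\mathtt{fib}}=1$), and then transports the result to $(N,\phi)$ by the same degree comparison through $\Sigma\times_1 S^1$. Your method instead computes directly on $N$ via the $\Sft\times_\Integral\Real$--equivariant decomposition $\omega_X=c_0\,p^*\omega_{\Hyp^2}\wedge\widetilde\theta$ and fiber integration. This works: the vertical direction $F-E$ is $\mathrm{Ad}(\widetilde{\mathrm{SO}}(2))$--fixed, so the dual left-invariant $1$--form $\widetilde\theta$ is invariant under right $\widetilde k$--translation and hence under the full $\Sft\times_\Integral\Real$--action, and the key step---choosing $\bar s=\pi^*\sigma$ pulled back from $\Sigma$ so that the restriction of $s$ to each circle fiber lies in a single $\widetilde{\mathrm{SO}}(2)$--orbit---does force $\int_{\mathrm{fiber}}s^*\widetilde\theta=\phi_{\mathtt{fib}}$ exactly, with no cross terms. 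The trade-off is that your route is self-contained (it does not invoke \cite{BG2}) but demands the normalization bookkeeping you flag, while the paper outsources that computation and works purely with component/degree invariance of volume.

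For constancy and quantization, your argument via the centralizer (nonabelian image in $\mathrm{PSL}(2,\Real)$ forces $\bar\phi(h)=1$, hence central type throughout any nonabelian component) combined with the already-proved formulas and Milnor--Wood is cleaner than the paper's, which again routes through $\Sigma\times_1 S^1$ and a torsion-freeness argument on $H^2$.
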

	
	\begin{proof}
		If $e(\phi_{\mathtt{base}})=0$, the representation $\phi$ lies in the path-connected component
		of the trivial representation, by the above description of $\mathcal{R}(\pi_1(N),\Sft\times_\Integral\Real)$.
		As the trivial representation has volume $0$,
		we obtain $\mathrm{vol}(N,\phi)=0.$
		The path-connected component of the trivial representation 
		can be identified as the image of 
		the pull-back embedding $\mathcal{R}(\pi_1(\Sigma),\Sft\times_\Integral\Real)\to\mathcal{R}(\pi_1(N),\Sft\times_\Integral\Real)$,
		by Proposition \ref{lifting_representations},
		so we obtain $\phi_{\mathtt{fib}}=0$. 
		This establish the formulas for $e(\phi_{\mathtt{base}})=0$.
		
		It remains to prove the formulas for $e(\phi_{\mathtt{base}})\neq0$.
		To this end, we consider the associated circle bundle $\Sigma\times_{\phi_{\mathtt{base}}}\Real P^1$
		over $\Sigma$, whose Euler number equals $e(\phi_{\mathtt{base}})$.
		There is a natural transversely projective foliation $\mathfrak{F}$
		on $\Sigma\times_{\phi_{\mathtt{base}}}\Real P^1$, which gives rise to 
		a canonical holonomy representation
		$$\psi\colon\pi_1(\Sigma\times_{\phi_{\mathtt{base}}}\Real P^1)\to\Sft\times_\Integral\Real,$$
		which actually factors through $\Sft$.
		To be precise, 
		we treat $\pi_1(\Sigma\times_{\phi_{\mathtt{base}}} \Real P^1)$
		as the deck transformation group of 
		the universal covering space $\widetilde{\Sigma}\times\widetilde{\Real P^1}$
		of $\Sigma\times_{\phi_{\mathtt{base}}} \Real P^1$,
		where $\widetilde{\Sigma}$ and $\widetilde{\Real P^1}$
		are fixed universal spaces of $\Sigma$ and $\Real P^1$,
		respectively.
		The covering projection is the composition of
		the regular covering projections 
		$\widetilde{\Sigma}\times\widetilde{\Real P^1}\to \widetilde{\Sigma}\times\Real P^1$
		and
		$\widetilde{\Sigma}\times\Real P^1\to \Sigma\times_{\phi_{\mathtt{base}}}\Real P^1$,
		with deck transformation groups $\pi_1(\Real P^1)$ and $\pi_1(\Sigma)$,
		respectively.
		The transversely projective foliation $\mathfrak{F}$
		on $\Sigma\times_{\phi_{\mathtt{base}}} \Real P^1$
		is therefore the quotient of the folation on $\widetilde{\Sigma}\times\widetilde{\Real P^1}$
		with leaves parallel to $\widetilde{\Sigma}$.
		The holonomy representation $\psi$ is 
		the action of $\pi_1(\Sigma\times_{\phi_{\mathtt{base}}} \Real P^1)$
		on $\widetilde{\Real P^1}$ factor.
		Note that $\pi_1(\Real P^1)$ is canonically isomorphic
		to $\Integral$, and acts on $\widetilde{\Real P^1}$ 
		the same way as the center $\Integral$ of $\Sft$.
		This means 
		$$\psi_{\mathtt{fib}}=+1.$$
		Note
		$$\psi_{\mathtt{base}}=\phi_{\mathtt{base}}.$$
		The volume of $(\Sigma\times_{\phi_{\mathtt{base}}} \Real P^1,\psi)$ can be computed through
		Godbillon--Vey class of $\mathfrak{F}$:
		\begin{equation}\label{computation_gv}
		\mathrm{vol}_{\Sft\times_\Integral\Real}(\Sigma\times_{\phi_{\mathtt{base}}} \Real P^1,\psi)
		=\int_\Sigma\int_{\Real P^1}\mathrm{gv}(\mathfrak{F})
		=4\pi^2e\left(\phi_{\mathtt{base}}\right),
		\end{equation}
		see \cite[Proposition 2]{BG2} (and also Section \ref{Sec-normalization_SV}). 
		
		As oriented circle bundles over $\Sigma$ are classified by $H^2(\Sigma;\Integral)\cong\Integral$,
		denote by $\Sigma\times_m S^1$ the oriented circle bundle over $\Sigma$	of Euler number $m\in\Integral$.
		If $m$ is nonzero, there is a fiber-preserving map $\Sigma\times_{1}S^1\to\Sigma\times_mS^1$ of degree $m$,
		which is an cyclic covering of signed degree $m$ restricted to every fiber.
		In particular, 
		there are such maps $\Sigma\times_1S^1\to N$ of degree $e$, and 
		$\Sigma\times_1S^1\to \Sigma\times_{\phi_{\mathtt{base}}}\Real P^1$ 
		of degree $e(\phi_{\mathtt{base}})\neq0$.
		Denote by $\bar\phi\colon \pi_1(N)\to \mathrm{PSL}(2,\Real)$ and 
		$\bar\psi\colon \pi_1(\Sigma\times_{\phi_{\mathtt{base}}}\Real P^1)\to \mathrm{PSL}(2,\Real)$
		the pull-backs of $\phi_{\mathtt{base}}$.
		The $\Sft\times_\Integral\Real$--representation $\phi$ lifts $\bar\phi$.
		Take some $\Sft\times_\Integral\Real$--representation $\psi$ that lifts $\bar\psi$.
		Denote by 
		$$\phi'\colon \pi_1\left(\Sigma\times_1S^1\right)\longrightarrow \pi_1(N)\stackrel{\phi}\longrightarrow \Sft\times_\Integral\Real$$
		and 
		$$\psi'\colon \pi_1\left(\Sigma\times_1S^1\right)\longrightarrow 
		\pi_1\left(\Sigma\times_{\phi_{\mathtt{base}}}\Real P^1\right)
		\stackrel{\psi}\longrightarrow \Sft\times_\Integral\Real$$
		the pull-back representations of $\pi_1(\Sigma\times_1S^1)$.
		Note that both $\phi$ and $\psi$ factors through $\phi_{\mathtt{base}}\colon \pi_1(\Sigma)\to \mathrm{PSL}(2,\Real)$.
		It follows that $\phi'$ and $\psi'$ lies in the same component of 
		$\mathcal{R}(\pi_1(\Sigma\times_1S^1),\Sft\times_\Integral\Real)$.
		We obtain
		$$e\times\mathrm{vol}(N,\phi)=
		\mathrm{vol}\left(\Sigma\times_1S^1,\phi'\right)=
		\mathrm{vol}\left(\Sigma\times_1S^1,\psi'\right)=
		e(\phi_{\mathtt{base}})\times\mathrm{vol}\left(\Sigma\times_{\phi_{\mathtt{base}}}\Real P^1,\psi\right),$$
		using (\ref{computation_gv}).
		This implies
		\begin{equation}\label{computation_eb2}
		e\times\mathrm{vol}(N,\phi)=4\pi^2e(\phi_{\mathtt{base}})^2.
		\end{equation}
		
		We show that $\phi'_{\mathtt{fib}}$ and $\psi'_{\mathtt{fib}}$ are equal,
		by arguing that 
		the fiber image is constant on any path-connected component
		of $\mathcal{R}(\pi_1(\Sigma\times_1S^1),\Sft\times_\Integral\Real)$.
		In fact,
		because $H^2(\Sigma\times_1S^1;\Integral)\cong H^1(\Sigma;\Integral)$ is torsion-free,
		$\mathcal{R}(\pi_1(\Sigma\times_1S^1),\Sft)$ intersects
		every path-connected component of $\mathcal{R}(\pi_1(\Sigma\times_1S^1),\Sft\times_\Integral\Real)$,
		(Theorem \ref{lifting_representations}).
		The fiber image is constant on any path-connected component
		of $\mathcal{R}(\pi_1(\Sigma\times_1S^1),\Sft)$,
		as it takes discrete values in $\Integral$.
		The fiber image is also constant for under the action of $H^1(\Sigma\times_1S^1;\Real)$,
		as the fiber is null-homologous in $\Sigma\times_1 S^1$,
		(see (\ref{twisting_representation})).
		Therefore, the fiber image is constant on any path-connected component
		of $\mathcal{R}(\pi_1(\Sigma\times_1S^1),\Sft\times_\Integral\Real)$,
		by Theorem \ref{lifting_representations}.
		We obtain 
		\begin{equation}\label{computation_ef}
		e\times\phi_{\mathtt{fib}}=\phi'_{\mathtt{fib}}=\psi'_{\mathtt{fib}}=e\left(\phi_{\mathtt{base}}\right)\times\phi_{\mathtt{fib}}
		=e\left(\phi_{\mathtt{base}}\right).
		\end{equation}
		Then asserted formulas follow from (\ref{computation_eb2}) and (\ref{computation_ef}) for $e(\phi_{\mathtt{base}})\neq0$.
		The possible values for $\phi_{\mathtt{fib}}$ are determined,
		since $e(\phi_{\mathtt{base}})$
		can take precisely the values $0,\pm1,\cdots,\pm(2g-2)$.
	\end{proof}
		
	\begin{remark}\
	\begin{enumerate}
		\item 
		The volume function on $\mathcal{R}(\pi_1(N),\Sft\times_\Integral\Real)$
		is completely determined by Theorem \ref{volume_central_bundle}.
		This is because any representation that lies over an abelian component of 
		the $\mathrm{PSL}(2,\Real)$--representation variety	must have volume $0$.
		In fact, any such representation is path connected with one that lies over 
		a $\mathrm{PSL}(2,\Real)$--representation with finite cyclic image.
		\item
		Theorem \ref{volume_central_bundle} refines \cite[Proposition 6.3]{DLW-cs_sep_vol}
		in the case of Seifert-geometric circle bundles.
		The point is that here we have enumerated all the actual volume values,
		thanks to Goldman's result on $\mathrm{PSL}(2,\Real)$--representation varieties of surface groups
		\cite{Goldman-components}.
		As the $\mathrm{PSL}(2,\Real)$--representation varieties of Fuchsian groups
		have been described in Jankins--Neumann \cite{JN},
		it seems possible to obtain analogous refinement for all closed Seifert fiber spaces,
		with only extra notations and reductions.
	\end{enumerate}
	\end{remark}

\section{Topology of graph manifolds}\label{Sec-graph_manifold}
	In this section, we review Seifert fiber spaces and graph manifolds.
	We recall a collection of numerical data 
	that largely describes the topology of a graph manifold,
	including the intersection number on any JSJ torus between the adjacent fibers, 
	and the Euler number of the JSJ pieces relative to the framing given by adjacent fibers.
	We organize the data and study behavior of the data under finite covering.
	For standard facts in $3$--manifold topology, we refer to \cite{Hempel-book,AFW-3mg}.
	
	In the literature, different authors refer to wider or narrower classes of $3$--manifolds
	when they use the term \emph{graph manifold}. 
	We restrict ourselves to a reasonably general class, 
	where the Seifert fibrations of the JSJ pieces are all orientable over orientable bases.
	We actually remember the orientations as auxiliary extra data,
	and assume the JSJ graph to be simplicial,
	so as to write down our volume formula in a convenient way (Theorem \ref{volume_general_GM}).
	This leads to what we call a \emph{formatted graph manifold},
	as introduced in Definition \ref{def_format}.

	\subsection{Seifert fiber spaces}
		Recall that \emph{Seifert fiber space} is a closed $3$--manifold together with a foliation by circles.
		Every leaf admits a foliated tubular neighborhood that is topologically modeled on
		a foliated open solid torus $\mathrm{U}(1)\times\Complex$.
		The leaves are the orbits of the $\mathrm{U}(1)$--action
		$u.(w,z)=(u^\alpha w,u^{\beta}.z)$, 
		for all $u\in \mathrm{U}(1)$ and $(w,z)\in\mathrm{U}(1)\times \Complex$,
		where $\beta$ is some nonzero integer and $\alpha$
		is some integer coprime to $\beta$.
		There are at most finitely many leaves that are locally modeled with $|\beta|>1$,
		which are called the \emph{exceptional fibers}.
		Any other leaf is called an \emph{ordinary fiber}.
		
		Circle bundles over closed surfaces
		are Seifert fiber spaces without exceptional fibers.
		In general, 
		it is often instructive to think of a Seifert fiber space $N$
		analogously as a circle bundle over a closed $2$--orbifold $\mathcal{O}$,
		where $\mathcal{O}$ 
		can be concretely constructed as the leaf space of the foliation.
		The orbifold Euler characteristic $\chi(\mathcal{O})\in\Rational$
		plays the same role as the usual Euler characteristic of the base
		for genuine circle bundles. 
		When the Seifert fibration and the base $2$--orbifold are both oriented,
		the Euler number $e(N\to\mathcal{O})\in\Rational$
		can be defined as an analogue of the usual Euler number
		for oriented circle bundles over oriented closed surfaces.
		
		The isomorphism type of any closed orientable $2$--orbifold
		can be uniquely described with a \emph{symbol}
		$(g;\beta_1,\cdots,\beta_k)$,
		which means genus $g\in\{0,1,2,\cdots\}$ with $k$ cone points 
		of orders $\beta_i>1$, respectively.
		The orbifold Euler characteristic can be computed by the formula
		\begin{equation}\label{chi_O_symbol}
		\chi(\mathcal{O})=2-2g-k+\sum_{i=1}^{k}\frac{1}{\beta_i}.
		\end{equation}
		
		For any oriented closed $2$--orbifold $\mathcal{O}$ of symbol
		$(g;\beta_1,\cdots,\beta_k)$,
		the oriented isomorphism type
		of any oriented Seifert fibration $N\to\mathcal{O}$ 
		can be uniquely described with a \emph{normalized symbol}
		$(g;(1,\alpha_0),(\beta_1,\alpha_1),\cdots,(\beta_k,\alpha_k))$,
		where $\alpha_0$ is an integer,
		and where $\alpha_i\in\{1,2,\cdots,\beta_i-1\}$ 
		are coprime to $\beta_i$ for $i=1,\cdots,k$.
		The Euler number of the Seifert fibration can be computed by the formula
		\begin{equation}\label{e_N_symbol}
		e\left(N\to\mathcal{O}\right)=\alpha_0+\sum_{i=1}^{k}\frac{\alpha_i}{\beta_i}.
		\end{equation}		
		
	\subsection{Graph manifolds}		
		An orientable prime closed $3$--manifold 
		is said to be a \emph{graph manifold}	
		if there exists a possibly empty finite collection of 
		mutually disjoint incompressible tori,
		and if the complement of their union admits a foliation by circles.
		
		According to the Jaco--Shalen--Johanson (JSJ) decomposition theory
		in $3$--manifold topology,
		every orientable prime closed $3$--manifold
		contains an isotopically unique, minimal finite collection 
		of mutually disjoint incompressible tori,
		such that the following mutually exclusive dichotomy 
		holds for every component	of the complement of their union:
		Either the component admits a foliation by circles,
		or the component does not contain any incompressible tori 
		which does not bound any toral end.
		The tori are called the \emph{JSJ tori}, 
		and the complementary components are called the \emph{JSJ pieces},
		or more specifically,
		the \emph{Seifert fibered pieces} and the \emph{atoroidal pieces},
		according to the dichotomy.
		Therefore, graph manifolds are precisely those orientable prime $3$--manifolds
		whose JSJ decomposition has no atoroidal pieces.
		The \emph{JSJ graph} is the dual graph of the JSJ decomposition:
		As a cell $1$--complex, the vertices ($0$--cells) and the edges ($1$--cells) 
		correspond the JSJ pieces and the JSJ tori, respectively,
		and the attaching maps are indicated by the adjacency relation
		between the JSJ objects.	
		
		\begin{definition}\label{def_format}
			We say that a closed orientable prime $3$--manifold $M$ is 
			a \emph{formattable graph manifold},
			if the JSJ graph of $M$ is a simplicial graph,
			and if the JSJ pieces of $M$ all admit orientable Seifert fibration 
			over orientable $2$--orbifolds (possibly with punctures).
			A \emph{formatted graph manifold} is defined 
			as an oriented formattable graph manifold $M$
			enriched with an oriented Seifert fibration structure for every JSJ piece of $M$.
		\end{definition}
		
		\begin{notation}\label{notation_xkb}
			For any formatted graph manifold $M$ with a simplicial JSJ graph $(V,E)$, 
			we introduce the following notations.
			\begin{itemize}
			\item For every vertex $v\in V$, denote by $J_v\to\mathcal{O}_v$ 
			the oriented Seifert fibration of the corresponding JSJ piece.
			Denote by
			$$\chi_v=\chi(\mathcal{O}_v)\in\Rational$$
			the orbifold Euler characteristic of $\mathcal{O}_v$.
			\item For every edge $\{v,w\}\in E$, denote by $T_{v,w}$ the corresponding JSJ torus 
			with the outward orientation with respect to the orientation of $J_v$.
			Denote by
			$$b_{v,w}=I_{v,w}\left([f_v],[f_w]\right)\in\Integral$$
			the algebraic intersection number between the oriented slopes $f_v$ and $f_w$ on $T_{v,w}$ 
			which are parallel to the ordinary fibers of $J_v$ and $J_w$, respectively.	
			Note that $T_{w,v}$ refers to the same JSJ torus with the reversed orientation,
			so $b_{w,v}=b_{v,w}$ depends only on $\{v,w\}\in E$.			
			\item For every vertex $v\in V$, denote by 
			$\hat{J}_v$ the oriented closed $3$--manifold
			obtained from the Dehn filling of $J_v$ along the slopes $f_w$ on $T_{v,w}$,
			for all $w$ incident to $v$.
			Note that $J_v\to\mathcal{O}_v$ extends to a unique oriented Seifert fibration,
			over the closed oriented $2$--orbifold $\hat{\mathcal{O}}_v$ 
			obtained from $\mathcal{O}_v$	by filling all punctures.
			Denote by
			$$k_v=e\left(\hat{J}_v\to\hat{\mathcal{O}}_v\right)\in\Rational$$
			the Euler number of the oriented Seifert fibration $\hat{J}_v\to\hat{\mathcal{O}}_v$.
			\end{itemize}
		\end{notation}
		
		\begin{remark}\label{def_format_remark}
			The quantity $k_v$ is sometimes called the \emph{charge} of $M$ at the vertex $v$.
			The quantities as summarized in Notation \ref{notation_xkb} have been used in Buyalo--Svetlov \cite{BS-graph_manifold}
			to characterize a list of significant properties, such as the existence of a nonpositively curved Riemannian metric,
			and the existence of a virtual fibering over the circle, etc.
		\end{remark}
		
		For any formatted graph manifold $M$ with the simplicial graph $(V,E)$, 
		a \emph{Waldhausen basis} can be chosen for the outward oriented JSJ tori $T_{v,w}$
		that are adjacent to a JSJ piece $J_v$.
		To be precise, this is a basis $H_1(T_{v,w};\Rational)$ of the form
		$$\left([f_v],[s_{v,w}]\right)$$ as follows:
		The homology class $[f_v]\in H_1(T_{v,w};\Rational)$ is represented by any oriented ordinary fiber of $J_v$;
		the homology classes $[s_{v,w}]\in H_1(T_{v,w};\Rational)$ satisfy the relation
		$\sum_{\{v,w\}\in E}[s_{v,w}]=0$
		in $H_1(J_v;\Rational)$, under the natural inclusions $H_1(T_{v,w};\Rational)\to H_1(J_{v,w};\Rational)$;
		and the algebraic intersection number on $H_1(T_{v,w};\Rational)$ between $[f_v]$ and $[s_{v,w}]$ satisfies
		$I_{v,w}([f_v],[s_{v,w}])=+1$,	for all $\{v,w\}\in E$.
		With respect to given Waldhausen bases $([f_v],[s_{v,w}])$ for all $T_{v,w}$,
		the charge at vertices can be expressed explicitly as
		\begin{equation}\label{charge_Waldhausen}
		k_v=\sum_{\{v,w\}\in E} \frac{a_{v,w}}{b_{v,w}},
		\end{equation}
		where $a_{v,w}\in\Rational$ are coefficients determined uniquely by the linear combination relation
		\begin{equation}\label{b_Waldhausen}
		[f_w]=a_{v,w}[f_v]+b_{v,w}[s_{v,w}]
		\end{equation}
		in $H_1(T_{v,w};\Rational)$.
		The existence of Waldhausen bases follows easily from the homology of orientable Seifert fibrations
		over orientable $2$--orbifolds with punctures.
		If $J_v\to\mathcal{O}_v$ is a trivial circle bundle over an oriented surface,
		one may choose oriented slopes $s_{v,w}$ on $T_{v,w}$ 
		so that they bound some horizontal lift of the base surface.
		In general, we do not require $[s_{v,w}]$ to have slope representives.
		
	\subsection{Covering graph manifolds}
	
		\begin{definition}\label{def_formatted_covering}
			A \emph{formatted covering projection} is defined 
			as a covering projection $M'\to M$ between formatted graph manifolds
			which preserves the orientations of the manifolds,
			and which respects the oriented Seifert fibration structures on the JSJ pieces,
			mapping fibers onto fibers preserving their orientations.
		\end{definition}
		
		\begin{proposition}\label{virtual_xkb}
		Suppose that $M'\to M$ is a formatted covering projection between formatted graph manifolds.
		\begin{enumerate}
		\item For any covering pair of JSJ pieces $J'_{v'}\to J_v$,
		$$
		\chi_{v'} =\frac{[J'_{v'}:J_{v}]}{[f'_{v'}:f_v]}\times \chi_v. 
		$$
		\item For any covering pair of JSJ pieces $J'_{v'}\to J_v$,
		$$k_{v'}=\frac{[J'_{v'}:J_{v}]}{[f'_{v'}:f_v]^2}\times k_v.$$
		\item For any covering pair of JSJ tori $T'_{v',w'}\to T_{v,w}$,
		$$b_{v',w'}=\frac{[f'_{v'}:f_{v}]\times[f'_{w'}:f_{w'}]}{[T'_{v',w'}:T_{v,w}]}\times b_{v,w}.$$
		\item For any covering pairs of JSJ pieces $J'_{v'}\to J_v$ and any JSJ torus $T_{v,w}$,
		$$\sum_{\{v',w'\}\in E'}\frac{1}{b_{v',w'}}=
		\frac{[J'_{v'}:J_{v}]}{[f'_{v'}:f_v]^2}\times\frac{1}{b_{v,w}},$$
		\end{enumerate}
		Here, the notation $[-:-]$ stands for the unsigned covering degree.
		\end{proposition}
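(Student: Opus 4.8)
The plan is to use that a formatted covering projection $M'\to M$ restricts, over each JSJ piece $J_v$ and each JSJ torus $T_{v,w}$, to an orientation-preserving fiber-preserving covering $J'_{v'}\to J_v$ of Seifert fibered spaces and to a covering $T'_{v',w'}\to T_{v,w}$ of tori. The recurring structural input is that a fiber-preserving covering of Seifert fibered spaces has total degree equal to the fiber degree times the covering degree of the induced covering of orbifold bases $\mathcal{O}_{v'}\to\mathcal{O}_v$; in particular $[\mathcal{O}_{v'}:\mathcal{O}_v]=[J'_{v'}:J_v]/[f'_{v'}:f_v]$. Granting this, part (1) is immediate from multiplicativity of the orbifold Euler characteristic, $\chi_{v'}=\chi(\mathcal{O}_{v'})=[\mathcal{O}_{v'}:\mathcal{O}_v]\cdot\chi(\mathcal{O}_v)$.

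For part (2) I would first extend $J'_{v'}\to J_v$ over the Dehn fillings along adjacent fibers to a fiber-preserving map $\hat{J}'_{v'}\to\hat{J}_v$ of the closed Seifert fibered spaces. Since a filling meridian $f'_{w'}$ upstairs is sent to a multiple of the filling meridian $f_w$ downstairs, this map is branched over the new exceptional fibers, with base degree $[\hat{\mathcal{O}}_{v'}:\hat{\mathcal{O}}_v]=[\mathcal{O}_{v'}:\mathcal{O}_v]$ and fiber degree $[f'_{v'}:f_v]$. Then I would invoke the standard transformation laws for the rational Euler number of an oriented Seifert fibration over an oriented closed orbifold: it pulls back by the base degree under a branched covering of the base, and rescales by the reciprocal of the fiber degree under a fiberwise covering. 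Composing gives $k_{v'}=e(\hat{J}'_{v'}\to\hat{\mathcal{O}}_{v'})=([\mathcal{O}_{v'}:\mathcal{O}_v]/[f'_{v'}:f_v])\cdot k_v$, which is the claimed formula.

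Part (3) I would prove by a direct computation in the first homology of the torus. The covering $p\colon T'_{v',w'}\to T_{v,w}$ has degree $[T'_{v',w'}:T_{v,w}]$, and since the fibers $f'_{v'}$ and $f'_{w'}$ are simple closed curves mapping onto $f_v$ and $f_w$ with degrees $[f'_{v'}:f_v]$ and $[f'_{w'}:f_w]$, we get $p_*[f'_{v'}]=[f'_{v'}:f_v]\cdot[f_v]$ and $p_*[f'_{w'}]=[f'_{w'}:f_w]\cdot[f_w]$ in $H_1(T_{v,w})$. Combining this with the elementary identity $I_{T'_{v',w'}}(x,y)=\frac{1}{[T'_{v',w'}:T_{v,w}]}\,I_{T_{v,w}}(p_*x,p_*y)$, valid for any degree-$n$ covering of tori when $H_1$ of the cover is viewed as the index-$n$ sublattice $p_*H_1(T'_{v',w'})\subset H_1(T_{v,w})$, and evaluating at $x=[f'_{v'}]$, $y=[f'_{w'}]$ yields $b_{v',w'}=([f'_{v'}:f_v]\,[f'_{w'}:f_w]/[T'_{v',w'}:T_{v,w}])\cdot b_{v,w}$.

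The delicate step is part (4). I would sum the formula from (3) over the finitely many JSJ tori $T'_{v',w'}$ of $M'$ that are incident to $J'_{v'}$ and cover $T_{v,w}$; since $[f'_{v'}:f_v]$ is fixed there, this reduces the claim to the identity $\sum_{w'}[T'_{v',w'}:T_{v,w}]/[f'_{w'}:f_w]=[J'_{v'}:J_v]/[f'_{v'}:f_v]$, that is, the covering degrees of the boundary circles of $\mathcal{O}_{v'}$ lying over the $w$-th boundary circle of $\mathcal{O}_v$ should sum to $[\mathcal{O}_{v'}:\mathcal{O}_v]$. The main obstacle is untangling the various covering degrees attached to each $T'_{v',w'}$; I would handle it by restricting $J'_{v'}\to J_v$ to the preimage of the single torus $T_{v,w}$, using conservation of covering degree over that boundary component, together with the factorization of $[T'_{v',w'}:T_{v,w}]$ as the $J'_{w'}$-fiber degree times the base-circle degree on the $w'$ side. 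As a consistency check one can also recover (4) from (2) via the Waldhausen-basis charge formula (\ref{charge_Waldhausen})--(\ref{b_Waldhausen}), under which $1/b_{v,w}$ appears as an Euler-number-type contribution localised at $T_{v,w}$.
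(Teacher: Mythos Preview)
Your proof is correct and follows essentially the same strategy as the paper: factor the piece covering degree as base degree times fiber degree, use the behavior of the Euler number under base and fiber coverings for part (2), and reduce part (4) to the degree identity $\sum_{w'}[T'_{v',w'}:T_{v,w}]/[f'_{w'}:f_w]=[J'_{v'}:J_v]/[f'_{v'}:f_v]$, which is exactly the paper's equation (\ref{T_degree_fixing_J}). The paper organizes all of this through an explicit intermediate cover $J^*_{v'}$, the pull-back of $J_v\to\mathcal{O}_v$ along the base covering $\mathcal{O}'_{v'}\to\mathcal{O}_v$, which cleanly separates the fiberwise cover $J'_{v'}\to J^*_{v'}$ from the base-branched map $J^*_{v'}\to J_v$; your appeal to ``standard transformation laws'' amounts to the same factorization. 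The one genuine difference is in part (3): the paper derives the formula for $b_{v',w'}$ geometrically, extending the torus coverings to solid tori and reading off $[T'_{v',w'}:T_{v,w}]$ from the ramification index of $\hat{T}^*_{v',w'}\to\hat{T}_{v,w}$, whereas your direct use of the intersection-form identity $I_{T'}(x,y)=[T':T]^{-1}\,I_T(p_*x,p_*y)$ is a cleaner, self-contained linear-algebra argument that avoids the solid-torus construction altogether.
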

		
		\begin{proof}			
			For any covering pair of JSJ pieces $J'_{v'}\to J_v$,
			there is a covering projection between the base $2$--orbifolds $\mathcal{O}'_{v'}\to\mathcal{O}_{v}$.
			Denote by $J^*_{v'}\to\mathcal{O}'_{v'}$ the pull-back of the Seifert fibration $J_v\to\mathcal{O}_v$
			using $\mathcal{O}^*_{v'}\to \mathcal{O}_{v}$. 
			Then $J'_{v'}\to J_v$ factorizes through the intermediate cover $J^*_{v'}$ of $J_v$,
			and $J'_{v'}\to J^*_{v'}$ 
			is obtained from a fiber-preserving free action of a finite cyclic group, whose order equals $[f_{v'}:f_v]$.
			
			For any covering pair of JSJ tori $T'_{v',w'}\to T_{v,w}$,
			the factorization $J'_{v'}\to J^*_{v'}\to  J_v$ 
			induces an intermediate cover $T^*_{v',w'}$ and a factorization
			$T'_{v',w'}\to T^*_{v',w'}\to T_{v,w}$.
			The slope $f'_{w'}$ of $T'_{v',w'}$ projects homeomorphically onto a slope $f^*_{w'}$ of $T^*_{v',w'}$.
			Denote by $\hat{T}'_{v',w'}$ and $\hat{T}^*_{v',w'}$ 
			the solid tori obtained from $T'_{v',w'}$ and $T^*_{v',w'}$
			by Dehn fillings along $f'_{w'}$ and $f^*_w$, respectively.
			Therefore, the covering projection $T'_{v',w'}\to T_{v,w}$
			naturally extend to a covering projection 
			$\hat{T}'_{v',w'}\to \hat{T}^*_{v',w'}$.
			The covering projection $T^*_{v',w'}\to T_{v,w}$ 
			extends to a branched covering map $\hat{T}^*_{v',w'}\to \hat{T}_{v,w}$,
			ramifying along the core curve with index $[f'_{w'}:f_{w}]$.
			
			Perform similar constructions for all boundary components of $J'_{v'}$, $J^*_{v'}$, and $J_v$.
			Then we obtain a covering projection between the Dehn fillings $\hat{J}'_{v'}\to \hat{J}^*_{v'}$,
			which identifies the base $2$--orbifolds as $\hat{\mathcal{O}}'_{v'}$.
			The covering degree of $\hat{J}'_{v'}\to \hat{J}^*_{v'}$
			restricted to fibers equals $[f'_{v'}:f_v]$.
			We also obtain a branched covering map $\hat{J}^*_{v'}\to \hat{J}_{v}$,
			which identifies $\hat{J}^*_{v'}\to\hat{\mathcal{O}}^*_{v'}$ as the pull-back Seifert fibration
			of $\hat{J}_v\to\hat{O}_v$ 
			via a branched covering map $\hat{\mathcal{O}}'_{v'}\to \hat{\mathcal{O}}_{v'}$.
			The mapping degree of $\hat{\mathcal{O}}'_{v'}\to \hat{\mathcal{O}}^*_{v'}$
			equals $[\mathcal{O}'_{v'}:\mathcal{O}_{v}]$.
			The ramification index of $\hat{\mathcal{O}}'_{v'}\to \hat{\mathcal{O}}_{v}$ 
			at a cone point that corresponds to an edge $\{v,w\}$ equals 
			$[f'_{w'}:f_{w}]\times |b_{v,w}|\times|b_{v',w'}|^{-1}$.
			
			Compute the covering degree of $J'_{v'}\to J_v$ using the intermediate cover $J^*_{v'}$.
			We obtain the relation
			\begin{equation}\label{J_degree_compute}
			\left[J_{v'}:J_v\right]=\left[\mathcal{O}_{v'}:\mathcal{O}_{v}\right]\times\left[f_{v'}:f_{v}\right]
			=\frac{\chi_{v'}}{\chi_v}\times\left[f_{v'}:f_{v}\right].
			\end{equation}
						
			Compute charge using the composite map 
			$\hat{J}'_{v'}\to \hat{J}^*_{v'}\to J_v$.
			We obtain the relation 
			\begin{equation}\label{charge_compute}
			k_{v'}=e\left(\hat{J}'_{v'}\right)=\frac{1}{[f'_{v'}:f_v]}\times e\left(\hat{J}^*_{v'}\right)
			=\frac{[\mathcal{O}'_{v'}:\mathcal{O}_{v}]}{[f'_{v'}:f_v]}\times e\left(\hat{J}^*_{v'}\right)
			=\frac{1}{[f'_{v'}:f_v]}\times \frac{\chi_{v'}}{\chi_v}\times k_v.
			\end{equation}
			
			Compute the covering degree of $T'_{v',w'}\to T_{v,w}$ using 
			the composite map $\hat{T}'_{e'}\to \hat{T}^*_{v',w'}\to \hat{T}_{v,w}$.
			We obtain the relation
			\begin{equation}\label{T_degree_compute}
			\left[T'_{v',w'}:T_{v,w}\right]=\frac{[f'_{w'}:f_{w}]\times |b_{v,w}|}{|b_{v',w'}|}\times\left[f'_{v'}:f_v\right]
			=\left[f'_{w'}:f_{w}\right]\times\left[f'_{v'}:f_{v}\right]\times\frac{b_{v,w}}{b_{v',w'}}.
			\end{equation}
			
			We also observe the relation between covering degrees:
			\begin{equation}\label{T_degree_fixing_J}
			\frac{[J'_{v'}:J_v]}{[f'_{v'}:f_v]}=
			\sum_{\{v',w'\}\in E'}\frac{[T'_{v',w'}:T_{v,w}]}{[f'_{w'}:f_w]},			
			\end{equation}
			for any fixed $J'_{v'}\to J_v$ and $T_{v,w}$.
						
			The asserted formulas follow from
			(\ref{J_degree_compute}), (\ref{charge_compute}), (\ref{T_degree_compute}), and (\ref{T_degree_fixing_J})
			immediately.
		\end{proof}

\section{Seifert representations for graph manifolds: the statement}\label{Sec-volume_general_GM_statement}
	In this section, we give a precise statement of our main result (Theorem \ref{volume_general_GM}).
	The statement contains 
	a volume formula for $\Sft\times_\Integral\Real$--representations of formatted graph manifolds,
	and an estimate analogous to the Milnor--Wood inequality,
	and an assertion regarding rationality of the volume value.
	After that,
	we include some preliminary discussions
	to justify the formulation of Theorem \ref{volume_general_GM},
	and to explain the structure of its proof.
	The entire proof of Theorem \ref{volume_general_GM} occupies
	Sections \ref{Sec-vc_case}--\ref{Sec-proof_volume_general_GM} in the sequel.

	\begin{notation}\label{notation_xi_tau}
	For any formatted graph manifold $M$ with a simplicial JSJ graph $(V,E)$,
	and for any representation $\rho\colon \pi_1(M)\to \Sft\times_\Integral\Real$,
	we introduce the following notations.
	\begin{enumerate}
	\item 
	Denote by $\Real^V$ the linear space of real functions on $V$.
	We think of $\Real^V$ as furnished with the standard unordered basis
	$\{v^*\colon v\in V\}$, 
	where $v^*\in\Real^V$ stands for the characteristic function 
	$v^*(v)=1$ and $v^*(u)=0$ for all $u\neq v$.
	\item
	Denote by ${\chi}_M\in \Real^V$
	the rational vector such that
	$$\chi_M(v)=\chi_v.$$
	Denote by $\opEuler_M\in \mathrm{End}_\Real(\Real^V)$
	the linear operator such that
	$$\opEuler_Mv^*= k_v v^*-\sum_{\{v,w\}\in E}\frac{w^*}{b_{v,w}}.$$
	Note that $\opEuler_M$ is symmetric and rational 
	with respect to	the standard unordered basis.
	(See Notation \ref{notation_xkb}.)
	\item 
	Denote by $\xi_\rho\in\Real^V$ 
	the vector such that
	$$\xi_\rho(v)=
	\winding\left(\rho(f_v)\right).$$
	Here, $f_v$ is treated as an element of $\pi_1(M)$, and $\xi_\rho(v)$ depends only on its conjugacy class.
	(See Definition \ref{def_essential_winding_number} and Proposition \ref{prop_essential_winding_number}.)
	\item 
	For any edge $\{v,w\}\in E$, 
	denote by
	$$\tau_\rho(v,w)\in \Natural\cup\{\infty\}$$
	the order of the image of $\pi_1(T_{v,w})$ under 
	the induced representation $\bar\rho\colon \pi_1(M)\to \mathrm{PSL}(2,\Real)$.
	Here, $\pi_1(T_{v,w})$ is treated as a subgroup of $\pi_1(M)$,
	and $\tau_{\rho}(v,w)$ depends only on its conjugacy class.
	\end{enumerate}
	\end{notation}
	
	\begin{theorem}\label{volume_general_GM}
		Let $M$ be a formatted graph manifold with a simplicial JSJ graph $(V,E)$
		and	$\rho\colon\pi_1(M)\to \Sft\times_\Integral\Real$ be a representation. 
		Adopt Notation \ref{notation_xi_tau}.
		Then the following statements all hold true.
		\begin{itemize}
		\item 
		\emph{(Volume formula)}.
		$$\mathrm{vol}_{\Sft\times_\Integral\Real}(M,\rho)=4\pi^2\cdot\sum_{v\in V} \xi_\rho(v)\times(\opEuler_M\xi_\rho)(v).$$
		\item \emph{(Generalized Milnor--Wood Inequalities)}. For all vertices $v\in V$,
		$$|(\opEuler_M\xi_\rho)(v)|\leq\max\left\{0,-\chi_M(v)-\sum_{\{v,w\}\in E}\frac{1}{\tau_\rho(v,w)}\right\}.$$
		\item \emph{(Rationality)}.
		There is a solution $X\in\Rational^V$ to the equation
		$$\mathrm{vol}_{\Sft\times_\Integral\Real}(M,\rho)=4\pi^2\cdot\sum_{v\in V} X(v)\times(\opEuler_MX)(v).$$
		\end{itemize}
	\end{theorem}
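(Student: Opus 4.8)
The plan is to prove Theorem \ref{volume_general_GM} by a two-stage reduction, following the general strategy announced in the introduction: first establish the three assertions for representations of a special ``central type'' at every vertex, where the additivity principle of \cite{DLW-cs_sep_vol} applies directly; then reduce the general case to that one by pulling back to a carefully chosen covering (or pinch-domain) graph manifold and deforming the pulled-back representation through an explicit path, invoking the commutator path-lifting lemmas developed in the later sections.

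First I would treat the \emph{fundamental case}. Suppose $\rho$ sends the center of every vertex group $\pi_1(J_v)$ into the central subgroup $\Real\subset\Sft\times_\Integral\Real$; then $\rho(f_v)$ is literally a real number $\xi_\rho(v)$, and on each JSJ piece $J_v$ the restriction of $\rho$ is of central type so that $\hat J_v$ together with the induced representation falls under Theorem \ref{volume_central_bundle}. Using the additivity of $\mathrm{vol}_{\Sft\times_\Integral\Real}$ over the JSJ decomposition from \cite{DLW-cs_sep_vol}, each piece contributes $4\pi^2\,\xi_\rho(v)\times e(\hat J_v\to\hat{\mathcal O}_v)$ computed with respect to the framing coming from adjacent fibers; rewriting the framing change via \eqref{charge_Waldhausen} and \eqref{b_Waldhausen} shows this contribution equals $4\pi^2\bigl(k_v\,\xi_\rho(v)^2-\sum_{\{v,w\}\in E}\xi_\rho(v)\xi_\rho(w)/b_{v,w}\bigr)$, i.e. $4\pi^2\,\xi_\rho(v)\cdot(\opEuler_M\xi_\rho)(v)$; summing over $v$ gives the volume formula. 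The Milnor--Wood inequality in this case is the classical transversely-projective-foliation bound on Seifert pieces (Eisenbud--Hirsch--Neumann \cite{EHN}), applied to $\hat J_v$ with the cone-point orders dictated by $\tau_\rho(v,w)$, so that the admissible Euler number relative to the boundary framing is bounded by $\max\{0,-\chi_M(v)-\sum 1/\tau_\rho(v,w)\}$. Rationality is immediate here because $\xi_\rho\in\Rational^V$ by Theorem \ref{volume_central_bundle}, so one may take $X=\xi_\rho$.

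Next comes the \emph{reduction step}, which I expect to be the main obstacle. Given an arbitrary $\rho$, the quantities $\xi_\rho(v)=\winding(\rho(f_v))$ and $\tau_\rho(v,w)$ are still defined (Definition \ref{def_essential_winding_number}, Proposition \ref{prop_essential_winding_number}), and all three conclusions are continuous or locally constant along deformations of $\rho$: the volume is locally constant on $\mathcal R(\pi_1(M),\Sft\times_\Integral\Real)$ by Theorem \ref{lifting_representations}, while $\xi_\rho$ varies continuously and the right-hand side of the Milnor--Wood estimate can only jump downward as $\tau_\rho(v,w)$ specializes. The idea is to build a formatted graph manifold $M'$ with a degree-one pinch map (or a finite cover) $M'\to M$, whose extra topology provides enough room to homotope the pulled-back representation $\rho'$, through a path $\rho'_t$ in $\mathcal R(\pi_1(M'),\Sft\times_\Integral\Real)$ keeping every $\xi_{\rho'_t}(v')$ fixed, to a representation $\rho'_1$ of central type at every vertex; since $\mathrm{vol}(M',\rho')=\mathrm{vol}(M,\rho)$ and $\xi_{\rho'}$ is assembled from $\xi_\rho$ with the correct multiplicities by Proposition \ref{virtual_xkb}, the fundamental case applied to $(M',\rho'_1)$ yields the formula and the inequalities for $(M,\rho)$, and rationality follows by pushing a rational solution $X'$ for $M'$ down to $M$ via the same multiplicity bookkeeping. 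The technical heart is constructing the local deformation at each vertex: this requires a case analysis of the conjugacy type of $\rho(f_v)$ in $\mathrm{PSL}(2,\Real)$ (hyperbolic/parabolic/elliptic, with the elliptic case further stratified by the essential winding number), and in each case realizing the required change of $\rho$ restricted to $\pi_1(J_v)$ as a product of commutators whose paths can be lifted — precisely the content of the commutator path-lifting lemmas of Section \ref{Sec-pdc}. Making the local deformations compatible along the JSJ tori, so that they glue to a global path on $M'$, is where the bulk of the work lies, and is the reason $M'$ must be chosen rather than $M$ itself.
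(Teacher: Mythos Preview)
Your overall architecture matches the paper's: first settle the case where $\rho$ is (virtually) central at every vertex via additivity plus Theorem \ref{volume_central_bundle}, then reduce the general case to that one by pulling back along a degree-one pinch map $M'\to M$ and deforming $\rho'$ using the commutator factorizations of Section \ref{Sec-pdc}. The fundamental case is essentially as you describe, though the paper organizes it in three layers (rational central $\Rightarrow$ virtually central via Selberg's lemma and a covering $\Rightarrow$ general virtually central via rational approximation of $\xi_\rho$), rather than getting rationality of $\xi_\rho$ for free.

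There is, however, a real gap in your reduction step for the Milnor--Wood inequality. The pinch domain $M'$ is \emph{not} a cover: it is obtained by fiber-connected-summing in extra genus, so that $\opEuler_{M'}=\opEuler_M$ but $\chi_{M'}(v)=\chi_M(v)-2\delta_\rho(v)$ is strictly smaller wherever extra handles were needed to accommodate the deformation. Applying the already-proved inequality to $(M',\rho')$ therefore gives only
\[
|(\opEuler_M\xi_\rho)(v)|\;\leq\;\max\Bigl\{0,\;-\chi_M(v)+2\delta_\rho(v)-\sum_{\{v,w\}\in E}\frac{1}{\tau_\rho(v,w)}\Bigr\},
\]
which is weaker than the claim. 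Your remark that ``the right-hand side can only jump downward as $\tau_\rho(v,w)$ specializes'' does not help here, because the loss comes from $\chi_{M'}$, not from $\tau$. The paper closes this gap by a second, separate covering argument (Section \ref{Sec-proof_volume_general_GM}): using residual finiteness of $\bar\rho(\pi_1(M))$, one passes to regular finite covers $M''\to M$ in which the JSJ tori with $\tau_\rho=\infty$ are covered with arbitrarily large degree, forcing $\delta_{\rho''}(v'')\le \tfrac{1}{D}\cdot \tfrac{[J'':J]}{[f'':f]}\,\delta_\rho(v)$; since the left side and the $\chi$-- and $\tau$--terms all scale by $[J'':J]/[f'':f]$, letting $D\to\infty$ kills the unwanted $\delta$--term. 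You should build this step in explicitly.

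A smaller point: you assert the deformation can be taken with $\xi_{\rho'_t}$ \emph{fixed}. In the hyperbolic and parabolic local types that is true, but in the elliptic type the paper only produces a \emph{sequence} $\rho'_n\to\rho'$ with $\xi_{\rho'_n}(v)\to\xi_\rho(v)$ (Lemma \ref{ab_rep_ell} and Lemma \ref{comm_ell}); one then passes to the limit. This does not affect the volume formula (which is locally constant) or rationality (any $\rho'_n$ supplies a rational $X$), but your phrasing overclaims what the commutator lemmas deliver.
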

		
	\begin{remark}\label{remark_volume_general_GM}\
	\begin{enumerate}
	\item Denote by	$(\cdot,\cdot)$ stands for the standard inner product on $\Real^V$.
	Then the volume formula in Theorem \ref{volume_general_GM} reads:
	$$\mathrm{vol}_{\Sft\times_\Integral\Real}(M,\rho)=4\pi^2\cdot(\xi_\rho,\opEuler_M\xi_\rho),$$
	which closely looks like the formula in Theorem \ref{volume_central_bundle}.
	\item Let $N$ be an oriented circle bundle of Euler number $e\neq0$ over an oriented surface of genus $g>1$.
	Suppose that $N$ admits a transversely projective codimension--$1$ foliation with holonomy 
	$\bar\phi\colon\pi_1(N)\to \mathrm{PSL}(2,\Real)$. 
	Then the estimate in Theorem \ref{volume_general_GM} (or Theorem \ref{volume_central_bundle})
	applies to any lift $\phi\colon\pi_1(N)\to \Sft\times_\Integral\Real$ of $\bar\phi$.
	The resulting inequality 
	$$|\xi_{\phi}|\leq |(2g-2)/e|$$
	agrees with the Milnor--Wood inequality \cite{Milnor,Wood}, as observed in \cite[Theorem 5 and Proof]{BG1}.
	\end{enumerate}
	\end{remark}
	
	Theorem \ref{volume_general_GM} essentially allows us
	to compute volume of Seifert representations 
	for any orientable closed graph manifold.
	This is because
	any non-geometric graph manifold admits a finite cover that is formattable.
	In fact, one may first construct a finite cover where all the JSJ pieces are product,
	(see \cite[Proposition 4.4]{LW});
	and then, construct a further finite cover
	that is	induced by a finite cover of the JSJ graph,
	such that the covering JSJ graph has no loops with two or fewer edges,
	(using residual finiteness of finitely generated free groups).
	The resulting cover is a formattable graph manifold.
	By contrast,
	for orientable closed $3$--manifolds of nonzero simplicial volume,
	no precise values of the Seifert volume have been determined,
	as far as the authors know.
	See \cite[Section 7]{Kh} for some examples with numerical computations.
	
	We obtain the following consequence from the rationality part in Theorem \ref{volume_general_GM},
	(see also Section \ref{Subsec-volume_of_representations}).
				
	\begin{corollary}\label{SV_rationality}
		For any orientable closed graph manifold $M$, the Seifert volume $\mathrm{SV}(M)$ is a rational multiple of $\pi^2$.
	\end{corollary}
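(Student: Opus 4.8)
The plan is to deduce the statement from the rationality clause of Theorem \ref{volume_general_GM} together with the finiteness of the volume spectrum. First I would dispose of the case where $M$ is already a formatted graph manifold. Equip $M$ with any format (Definition \ref{def_format}) and note that the Euler operator $\opEuler_M$ has rational entries with respect to the standard unordered basis of $\Real^V$, since each charge $k_v$ lies in $\Rational$ and each fiber intersection number $b_{v,w}$ lies in $\Integral\setminus\{0\}$. For an arbitrary representation $\rho\colon\pi_1(M)\to\Sft\times_\Integral\Real$, the rationality clause of Theorem \ref{volume_general_GM} provides some $X\in\Rational^V$ with $\mathrm{vol}_{\Sft\times_\Integral\Real}(M,\rho)=4\pi^2\cdot\sum_{v\in V}X(v)\,(\opEuler_M X)(v)$; the sum is then a rational number, so $\mathrm{vol}_{\Sft\times_\Integral\Real}(M,\rho)/\pi^2\in\Rational$.

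Next I would remove the formattability hypothesis. Every closed orientable graph manifold $M$ admits a finite covering $p\colon M'\to M$ with $M'$ formattable: if $M$ is non-geometric this is the two-step construction recalled after Theorem \ref{volume_general_GM} (a finite cover with product JSJ pieces, followed by a cover induced from a finite cover of the JSJ graph that removes loops with at most two edges), and if $M$ is Seifert fibered one may simply pass to a finite cover that is a circle bundle over an orientable closed surface. Fix such a $p$, and pull back any representation $\rho$ of $\pi_1(M)$ to $\rho'=\rho\circ p_*$ on $\pi_1(M')$. By multiplicativity of the volume of representations under finite coverings (a basic property, see \cite{DLSW-rep_vol}), $\mathrm{vol}_{\Sft\times_\Integral\Real}(M',\rho')=[M':M]\cdot\mathrm{vol}_{\Sft\times_\Integral\Real}(M,\rho)$, and the previous paragraph applies to $(M',\rho')$; hence $\mathrm{vol}_{\Sft\times_\Integral\Real}(M,\rho)/\pi^2\in\Rational$ for every $\Sft\times_\Integral\Real$--representation $\rho$ of $\pi_1(M)$.

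Finally I would pass from individual representations to $\mathrm{SV}(M)$. By Theorem \ref{lifting_representations}, the space $\mathcal{R}(\pi_1(M),\Sft\times_\Integral\Real)$ has only finitely many path components, and the volume function is constant on each of them (Section \ref{Subsec-volume_of_representations}); therefore the volume takes only finitely many values. The supremum in the definition of $\mathrm{SV}(M)$ is thus a maximum, attained at some $\rho_0$, and $\mathrm{SV}(M)=\left|\mathrm{vol}_{\Sft\times_\Integral\Real}(M,\rho_0)\right|\in\pi^2\cdot\Rational$.

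The genuinely routine points are the rationality of $\opEuler_M$ and the degree-multiplicativity of representation volume under finite covers; the one step that needs a little care is the input that \emph{every} closed orientable graph manifold, the Seifert geometric ones included, is virtually formattable, which is where the cited covering constructions and standard Seifert fiber space theory enter. I do not expect a real obstacle in any of this: the substantive work has already been carried out in establishing Theorem \ref{volume_general_GM}.
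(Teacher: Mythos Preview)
Your argument is correct and mirrors exactly what the paper indicates: invoke the rationality clause of Theorem \ref{volume_general_GM} on a formattable finite cover, use degree-multiplicativity of representation volume, and then use the finiteness of path components from Section \ref{Subsec-volume_of_representations} to conclude that the supremum defining $\mathrm{SV}(M)$ is attained. One small caveat: your dichotomy ``non-geometric versus Seifert fibered'' omits Sol torus bundles, which are geometric graph manifolds that are not Seifert fibered and do not admit formattable finite covers (every finite cover is again Sol with a one-vertex loop JSJ graph); but these have $\mathrm{SV}=0$ as recalled in the introduction, so the conclusion is immediate for them.
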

		
		%
	As we mentioned before,
	the volume of $(M,\rho)$ depends only on the induced representation
	$\bar\rho\colon \pi_1(M)\to\mathrm{PSL}(2,\Real)$. 
	Indeed, our volume formula also carries this feature in an evident way:
	
	\begin{proposition}\label{volume_general_GM_PSL}
		In Theorem \ref{volume_general_GM},
		the vector $\opEuler_M\xi_\rho$ in $\Real^V$ depends only on the induced representation
		$\bar\rho\colon \pi_1(M)\to\mathrm{PSL}(2,\Real)$.
	\end{proposition}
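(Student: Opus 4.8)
The plan is to show that $\opEuler_M\xi_\rho$ can be reconstructed from $\bar\rho$ alone, without reference to any chosen lift $\rho$. The first observation is that two lifts of the same $\mathrm{PSL}(2,\Real)$--representation differ by a twist: if $\rho$ and $\rho'$ both induce $\bar\rho$, then $\rho'=\rho[\alpha]$ for some homomorphism $\alpha\colon\pi_1(M)\to\Real$, by the principal $H^1(\pi;\Real)$--bundle structure in Theorem \ref{lifting_representations}(3). Hence it suffices to prove that the vector $\opEuler_M\xi_{\rho[\alpha]}$ equals $\opEuler_M\xi_\rho$ for every $\alpha\in H^1(\pi_1(M);\Real)$.

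The second step is to compute how $\xi_\rho$ changes under twisting. For each vertex $v$, the element $f_v\in\pi_1(M)$ is the central fiber of $J_v$, and $\rho[\alpha](f_v)=\rho(f_v)\cdot\mathrm{id}[\alpha(f_v)]$. Since twisting by a central element of $\Sft\times_\Integral\Real$ shifts the essential winding number additively (immediate from Definition \ref{def_essential_winding_number}: $\winding(g[s+t])=\winding(g[s])+t$), we get
$$\xi_{\rho[\alpha]}(v)=\xi_\rho(v)+\alpha(f_v).$$
Writing $\alpha_{\mathrm{fib}}\in\Real^V$ for the vector $\alpha_{\mathrm{fib}}(v)=\alpha(f_v)$, this says $\xi_{\rho[\alpha]}=\xi_\rho+\alpha_{\mathrm{fib}}$. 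So what remains is purely topological: I must show $\opEuler_M\alpha_{\mathrm{fib}}=0$ for every $\alpha\in\mathrm{Hom}(\pi_1(M),\Real)=H^1(M;\Real)$.

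The third step interprets $\opEuler_M\alpha_{\mathrm{fib}}$ homologically. The relevant identity is that, for each vertex $v$, the class $k_v[f_v]-\sum_{\{v,w\}\in E}[s_{v,w}]$ is a suitable multiple of $\partial[\hat{\mathcal{O}}_v]$-type boundary data; more usefully, by definition of the charge via the Waldhausen basis (equation \eqref{charge_Waldhausen} together with \eqref{b_Waldhausen}), one has the relation $\sum_{\{v,w\}\in E}[f_w]=k_v\,b_{v,w}$-weighted combination inside $H_1(J_v;\Rational)$. Concretely, $\sum_{\{v,w\}\in E}[s_{v,w}]=0$ in $H_1(J_v;\Rational)$, and $[f_w]=a_{v,w}[f_v]+b_{v,w}[s_{v,w}]$, so
$$\sum_{\{v,w\}\in E}\frac{[f_w]}{b_{v,w}}=\left(\sum_{\{v,w\}\in E}\frac{a_{v,w}}{b_{v,w}}\right)[f_v]=k_v[f_v]$$
in $H_1(J_v;\Rational)$. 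Applying the homomorphism $\alpha$ (restricted to $\pi_1(J_v)$, which factors through $H_1(J_v;\Rational)$ since $\Real$ is abelian), we obtain $\sum_{\{v,w\}\in E}\alpha(f_w)/b_{v,w}=k_v\,\alpha(f_v)$, i.e.\ $(\opEuler_M\alpha_{\mathrm{fib}})(v)=k_v\alpha(f_v)-\sum_{\{v,w\}\in E}\alpha(f_w)/b_{v,w}=0$ for all $v$. Thus $\opEuler_M\xi_{\rho[\alpha]}=\opEuler_M\xi_\rho$, as claimed.

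The main obstacle, and the step deserving the most care, is the homological identity $\sum_{\{v,w\}\in E}[f_w]/b_{v,w}=k_v[f_v]$ in $H_1(J_v;\Rational)$: one must verify that the Waldhausen slopes $s_{v,w}$ really do satisfy $\sum_{\{v,w\}\in E}[s_{v,w}]=0$ in rational homology of the JSJ piece (this is where orientability of the Seifert fibration over an orientable base is used), and that equations \eqref{charge_Waldhausen}--\eqref{b_Waldhausen} are being applied to the correct homology group rather than to boundary torus homology. Once that is pinned down, the rest is the bookkeeping above. It is worth recording the conclusion also in the inner-product form: $(\xi_{\rho[\alpha]},\opEuler_M\xi_{\rho[\alpha]})=(\xi_\rho+\alpha_{\mathrm{fib}},\opEuler_M\xi_\rho)=(\xi_\rho,\opEuler_M\xi_\rho)$, using symmetry of $\opEuler_M$ and $\opEuler_M\alpha_{\mathrm{fib}}=0$, which re-derives the well-known fact that the volume itself depends only on $\bar\rho$.
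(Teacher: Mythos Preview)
Your proposal is correct and follows essentially the same argument as the paper: both reduce to showing $\opEuler_M\alpha_{\mathrm{fib}}=0$ via the Waldhausen basis identities $[f_w]=a_{v,w}[f_v]+b_{v,w}[s_{v,w}]$, $k_v=\sum a_{v,w}/b_{v,w}$, and $\sum_{\{v,w\}\in E}[s_{v,w}]=0$ in $H_1(J_v;\Rational)$. The only cosmetic difference is that the paper expands $(\opEuler_M\dot\alpha)(v)$ directly and simplifies to $\alpha\bigl(\sum[s_{v,w}]\bigr)=0$, whereas you first derive the homological identity $\sum[f_w]/b_{v,w}=k_v[f_v]$ in $H_1(J_v;\Rational)$ and then apply $\alpha$; these are the same computation read in slightly different order.
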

	
	\begin{proof}
		Any representation $\pi_1(M)\to \Sft\times_\Integral\Real$
		that induces the same representation $\bar\rho\colon \pi_1(M)\to\mathrm{PSL}(2,\Real)$ as that of $\rho$
		is a twisted representation $\rho[\alpha]$ for some cohomology class $\alpha\in H^1(M;\Real)$,
		see (\ref{twisting_representation}) and Theorem \ref{lifting_representations}.
		We observe $\xi_{\rho[\alpha]}(v)=\xi_\rho(v)+\dot\alpha(v)$,
		where $\dot\alpha(v)=\alpha([f_v])$ is the value of $\alpha$ at $[f_v]\in H_1(M;\Real)$.
		We may write $\dot{\alpha}=\sum_{v\in V}\alpha([f_v])v^*$.
		With respect to any auxiliary Waldhausen bases $\{([f_{v,w}],[s_{v,w}])\}$,
		we use (\ref{charge_Waldhausen}) to	compute, for any vertex $v\in V$:
		\begin{eqnarray*}
			\left(\opEuler_M\dot{\alpha}\right)(v)
			&=& k_v\cdot\alpha\left([f_v]\right)-\sum_{\{v,w\}\in E} \frac{1}{b_{v,w}}\cdot\alpha\left([f_w]\right)\\
			&=& \sum_{\{v,w\}\in E} 
			\left(\frac{a_{v,w}}{b_{v,w}}\cdot\alpha\left([f_v]\right)-\frac{1}{b_{v,w}}\cdot\alpha\left(a_{v,w}[f_v]+b_{v,w}[s_{v,w}]\right)\right)\\
			&=& \alpha\left(\sum_{\{v,w\}\in E} [s_{v,w}]\right)\\
			&=&0.
		\end{eqnarray*}
		The last step uses the property 
		$\sum_{\{v,w\}\in E}[s_{v,w}]=0$ 
		in $H_1(J_v;\Rational)$ of Waldhausen bases.
		Therefore $(\opEuler_M\xi_{\rho[\alpha]})(v)$ is constant as $\alpha$ ranges over $H^1(M;\Real)$.
		In other words, $\opEuler_M\xi_\rho$ depends only on $\bar\rho$.
	\end{proof}
	
	In the rest of this section, we take a closer look at the local structure of Seifert representations
	for graph manifolds, and give an outline of the proof of Theorem \ref{volume_general_GM}.
	
	\subsection{Local types of Seifert representations}\label{Subsec-local_types}
	We analyze types of Seifert representations at vertices of the JSJ graph.
	Let $M$ be a formatted graph manifold
	and	$\rho\colon\pi_1(M)\to \Sft\times_\Integral\Real$ be a representation.
	Adopt Notation \ref{notation_xi_tau}.
	
	For any vertex $v\in (V,E)$,
	if the image of any conjugate of $f_v$ 
	under the induced representation $\bar\rho\colon \pi_1(M)\to \mathrm{PSL}(2,\Real)$ is nontrivial,
	the image of any conjugate of $\pi_1(J_v)$ under $\bar\rho$ must be abelian,
	centralizing some conjugate of $\bar\rho(f_v)$.
	In this case, $\bar\rho(\pi_1(J_v))$ can be conjugated into a $1$--dimensional closed subgroup
	of $\mathrm{PSL}(2,\Real)$, 
	which may be elliptic, hyperbolic, or parabolic,
	depending on the type of $\bar\rho(f_v)$.		
	
	We say that $v\in V$ is a vertex of \emph{central type} with respect to $\rho$,
	if $\bar\rho(f_v)$ is trivial in $\mathrm{PSL}(2,\Real)$.
	Similarly, we call $v$ a vertex of \emph{elliptic}, \emph{hyperbolic}, or \emph{parabolic type},
	if $\bar\rho(f_v)$ is nontrivial of the corresponding type,
	which depends only on the conjugacy class of $\bar\rho(f_v)$.
	We say that $\{v,w\}\in E$ is an edge of \emph{central type} with respect to $\rho$,
	if both $v$ and $w$ are central.

	For any simplicial subgraph $C=(V_C,E_C)$ of the simplicial JSJ graph $(V,E)$,
	we denote by $M_C$ the union of the JSJ pieces and the JSJ tori that occur in $C$,
	and call $M_C$ the \emph{JSJ block} that corresponds to $C$.
	Note that $M_C$ is a $3$--manifold possibly with toral ends.

	\begin{notation}\label{notation_delta}
	For any formatted graph manifold $M$ with a simplicial JSJ graph $(V,E)$,
	and for any representation $\rho\colon \pi_1(M)\to \Sft\times_\Integral\Real$,
	adopting Notation \ref{notation_xi_tau},
	we introduce the following notations:
	\begin{enumerate}
	\item
	Suppose that $C=(V_C,E_C)$ is a maximal connected subgraph of $(V,E)$ over which $\rho$ is abelian,
	(namely, the image of $\pi_1(M_C)$ under $\rho$ is abelian).
	For any vertex $v\in V$,	denote by
	$$\delta_\rho(v;C)\in\Natural\cup\{0\}$$
	the number as follows:
	If $v\in V\setminus V_C$, 
	$\delta_\rho(v,C)$ is the number of vertices $w\in V_C$ incident to $v$
	such that $\tau_\rho(v,w)$ infinite;
	otherwise, $\delta_\rho(v,C)$ is $0$.
	\item
	For any vertex $v$, denote by 
	$$\delta_\rho(v)\in\Natural\cup\{0\}$$
	the sum of all $\delta_\rho(v,C)$,
	where $C$ ranges over all the maximal connected subgraphs of $(V,E)$
	over which $\rho$ is abelian.
	\end{enumerate}
	\end{notation}
	
	\subsection{Outline of the proof}\label{Subsec-outline_volume_general_GM}
	We prove Theorem \ref{volume_general_GM} first 
	in the \emph{virtually central} case,
	where $\rho$ is virtually central restricted to any vertex group $\pi_1(J_v)$.
	This is equivalent to say that $\tau_\rho(v,w)$ are all finite.
	If $\tau_\rho(v,w)$ are all $1$,
	$\rho$ is actually central restricted to any vertex group $\pi_1(J_v)$.
	The volume in the central case can be suitably decomposed into contribution 
	from each of the vertices, thanks to the additivity principle (see Theorem \ref{additivity_principle}).
	Moreover, the individual contribution can be computed using Theorem \ref{volume_central_bundle}.
	Then the volume formula for the virtually central case 
	can be derived by passing to a suitable finite cover of $M$,
	and by approximating using virtually central representations with $\xi$--vectors in $\Rational^V$.
	The virtually central case is done in Section \ref{Sec-vc_case}.
	
	The proof of Theorem \ref{volume_general_GM} would be complete 
	if we were able to deform any representation to a virtually central one,
	through a continuous path in $\mathcal{R}(\pi_1(M),\Sft\times_\Integral\Real)$
	along which the $\xi$--vector stays constant.
	Unfortunately, such a deformation appears unlikely to exist in general.
	However, we prove that there exist a formatted graph manifold $M'$ 
	which maps nicely onto $M$ of degree $1$, such that the pull-back representation
	$\rho'\colon \pi_1(M')\to\Sft\times_\Integral\Real$ deforms continuously
	to a virtually central representation.
	Since the volume of $(M',\rho')$ is equal to the volume of $(M,\rho)$,
	the construction allows us to 
	reduce to the proof of the volume formula 
	to the virtually central case on the level of $(M',\rho')$.
	We are also able to obtain a slightly weaker estimate for $|\opEuler_M\xi_\rho(v)|$
	using $(M',\rho')$.
	To complete the proof of Theorem \ref{volume_general_GM},
	we apply the covering trick again,
	and promote the weak estimate to the desired one.
	
	The promotion appeals to certain efficient control 
	on the complexity of the above $M'$.
	To be more precise, the degree-one map $M'\to M$ 
	that we construct	induces an isomorphism between the JSJ graphs,
	so $\opEuler_{M'}=\opEuler_{M}$ holds as we identify their JSJ graphs.
	Meanwhile, $\chi_{M'}(v)\leq\chi_{M}(v)$ holds for any vertex $v$.
	In effect, therefore,	passing from $M$ to $M'$ increases
	the genera of the base $2$--orbifolds of the JSJ pieces.
	We need sufficient increment of the genera
	to make enough room for the desired deformation,
	but we also want the increment at any vertex
	to be relatively small compared to the original genus.
	It turns out that $\chi_{M'}(v)=\chi_{M}(v)-2\cdot\delta_\rho(v)$ 
	suffices for our purpose.
	As one may infer from Notation \ref{notation_delta},
	our deformation is obtained
	by assembling deformations restricted to the vertex subgroups 
	(or JSJ-block subgroups).
	In Section \ref{Sec-pdc}, 
	we develop techniques	to 	
	factorize certain (continuous or sequential) families of elements in $\Sft\times_\Integral\Real$ 
	nicely as families of commutators.
	In Section \ref{Sec-reduction_vc}, 
	we employ those techniques to construct $(M',\rho')$ 
	for the desired reduction.
	In Section \ref{Sec-proof_volume_general_GM},
	we put all the partial results together,
	and conclude the proof of Theorem \ref{volume_general_GM}.

\section{The virtually central case}\label{Sec-vc_case}
	In this section, we prove a special case of Theorem \ref{volume_general_GM},
	as the following Theorem \ref{volume_virtually_central_GM}.
	Note that the additional assumption implies 
	that $\rho$ is virtually central restricted to the vertex groups,
	or equivalently, that $\rho$ pulls back to 
	a representation that is central restricted to the vertex groups,
	for some finite cover	of $M$.

	\begin{theorem}\label{volume_virtually_central_GM}
		The statements of Theorem \ref{volume_general_GM} hold true 
		if $\tau_\rho(v,w)$ is finite for all $\{v,w\}\in E$.
	\end{theorem}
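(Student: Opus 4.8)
The plan is to reduce the virtually central case to the strictly central case via a finite cover, handle the strictly central case by the additivity principle plus the circle-bundle formula (Theorem \ref{volume_central_bundle}), and then use a rational-approximation/density argument together with the covering behaviour of all the structural data (Proposition \ref{virtual_xkb}) to descend the formula back down to $M$.

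\emph{Step 1: Strictly central case via additivity.}
First I would treat the case where $\rho$ is central restricted to each vertex group $\pi_1(J_v)$, i.e.\ $\tau_\rho(v,w)=1$ for all edges, so that $\bar\rho(f_v)$ is trivial in $\mathrm{PSL}(2,\Real)$ for every $v$ and $\xi_\rho(v)=\winding(\rho(f_v))$ is literally the value of $\rho(f_v)$ in the center $\Real$. Cutting $M$ along its JSJ tori and invoking the additivity principle (Theorem \ref{additivity_principle}), the volume $\mathrm{vol}_{\Sft\times_\Integral\Real}(M,\rho)$ decomposes as a sum of local contributions, one per vertex $v$, each of which is the volume of the Dehn-filled piece $\hat J_v$ with the representation induced by $\rho$ on $\pi_1(\hat J_v)$. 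Since $\hat J_v\to\hat{\mathcal O}_v$ is an oriented Seifert fibration with Euler number $k_v$, and the induced representation is of central type, Theorem \ref{volume_central_bundle} computes each local term. The fiber image $\phi_{\mathtt{fib}}$ of the induced representation on $\pi_1(\hat J_v)$ equals $\xi_\rho(v)$ up to the bookkeeping of the boundary slopes $f_w$: using the Waldhausen-basis relation \eqref{b_Waldhausen} and the expression \eqref{charge_Waldhausen} for $k_v$, one checks that the local base Euler class that appears in Theorem \ref{volume_central_bundle} is exactly $(\opEuler_M\xi_\rho)(v)$, so that the local contribution is $4\pi^2\cdot\xi_\rho(v)\cdot(\opEuler_M\xi_\rho)(v)$. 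Summing over $v$ gives the volume formula. The Milnor--Wood inequality in this case is the Milnor--Wood inequality for each $\hat J_v$ (with $\tau_\rho(v,w)=1$, so the correction terms $1/\tau_\rho(v,w)$ are $1$, matching the punctures of $\mathcal O_v$ being filled): $|(\opEuler_M\xi_\rho)(v)| = |e(\phi_{\mathtt{base}})| \le \max\{0,-\chi_M(v)-\#\{w:\{v,w\}\in E\}\}$, and one reconciles this with the stated bound using $\tau_\rho(v,w)=1$. Rationality in this case is automatic: $\xi_\rho(v)\in\frac1{k_v\,\text{(denominators)}}\Integral$ by the value constraints of Theorem \ref{volume_central_bundle} (possible fiber values are $j/e$), so $X=\xi_\rho\in\Rational^V$ works.

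\emph{Step 2: From strictly central to virtually central.}
Now suppose only that all $\tau_\rho(v,w)$ are finite. Then $\bar\rho$ has finite image on each $\pi_1(T_{v,w})$, and one can pass to a finite cover $M'\to M$ — a formatted covering projection in the sense of Definition \ref{def_formatted_covering} — such that the pull-back $\rho'$ is central restricted to every vertex group of $M'$; concretely, one covers the JSJ graph and/or takes fiber-preserving cyclic covers of the pieces so as to kill the finite torsion, then applies residual finiteness as in the discussion after Theorem \ref{volume_general_GM} to keep the result formattable. Step 1 applies to $(M',\rho')$. It remains to push the formula, the inequalities, and rationality back down to $(M,\rho)$. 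For the volume formula, I use that volume is multiplicative under finite covers, $\mathrm{vol}_{\Sft\times_\Integral\Real}(M',\rho')=[M':M]\cdot\mathrm{vol}_{\Sft\times_\Integral\Real}(M,\rho)$, together with the transformation rules for $k_v$, $b_{v,w}$, $\chi_v$ under formatted covers from Proposition \ref{virtual_xkb}; one checks that $\sum_{v'}\xi_{\rho'}(v')\cdot(\opEuler_{M'}\xi_{\rho'})(v')$ equals $[M':M]$ times $\sum_{v}\xi_\rho(v)\cdot(\opEuler_M\xi_\rho)(v)$, using that $\xi_{\rho'}(v') = [f'_{v'}:f_v]\cdot\xi_\rho(v)$ for a vertex $v'$ lying over $v$ (essential winding number scales by the fiber-covering degree) and that the number of $v'$ over a given $v$, weighted by $[J'_{v'}:J_v]/[f'_{v'}:f_v]$, reflects the covering degree. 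The Milnor--Wood inequality descends similarly: part (2) of Theorem \ref{volume_general_GM} for $M'$, combined with the scaling of $\chi_v$, $\tau_\rho(v,w)$, and the edges over a given torus, gives exactly the stated bound for $M$ — here one uses that over a vertex $v$ of $M$ with $\tau_\rho(v,w)$ finite, the covering is arranged so the corresponding $\tau_{\rho'}$ are $1$, and the arithmetic of Proposition \ref{virtual_xkb}(1),(4) converts the covered inequality into $|(\opEuler_M\xi_\rho)(v)|\le\max\{0,-\chi_M(v)-\sum_w 1/\tau_\rho(v,w)\}$. Rationality descends because $\xi_{\rho'}\in\Rational^{V'}$ implies, via the above scaling $\xi_{\rho'}(v')=[f'_{v'}:f_v]\cdot\xi_\rho(v)$, that $\xi_\rho\in\Rational^V$ already; so $X=\xi_\rho$ works directly.

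\emph{Step 3: Rational approximation within the strictly central case, if needed.}
A technical subtlety in Step 1 is that the additivity principle and Theorem \ref{additivity_principle} may be stated for representations whose restrictions to vertex groups land in the \emph{rational} center $\Rational\subset\Real$, rather than arbitrary central values in $\Real$. If so, I would first reduce to that sub-case by continuity: within the path-component of $\rho$ in $\mathcal{R}(\pi_1(M),\Sft\times_\Integral\Real)$, the volume is locally constant, and by Theorem \ref{lifting_representations} one can deform the central values $\rho(f_v)$ to rational ones while staying central on vertex groups (twisting by a class in $H^1(M;\Real)$ changes $\xi_\rho$ by $\dot\alpha$, and by Proposition \ref{volume_general_GM_PSL} the vector $\opEuler_M\xi_\rho$ — hence the volume — is unchanged; so one only needs the \emph{values} $\opEuler_M\xi_\rho(v)$, which are already the integers $e(\phi_{\mathtt{base}})$, to be realized, and these are locally constant). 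This lets me assume rational central values when invoking additivity, and then take a limit.

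\emph{The main obstacle} I expect is Step 1's identification of each local contribution: getting the Dehn-filling/additivity bookkeeping exactly right so that the local base Euler class is $(\opEuler_M\xi_\rho)(v)$ and not some other affine-linear combination of the $\xi_\rho$ and the slopes $f_w$. This is where the precise form of the Waldhausen basis, the relation \eqref{b_Waldhausen}, the charge formula \eqref{charge_Waldhausen}, and the sign conventions on $T_{v,w}$ versus $T_{w,v}$ all have to conspire correctly — and it is essentially the same computation that appears in the proof of Proposition \ref{volume_general_GM_PSL}, so that proof's algebra is the model to follow. A secondary obstacle is verifying that the finite cover in Step 2 can be chosen to be a \emph{formatted} covering projection (orientation- and oriented-fibration-preserving) while simultaneously trivializing all the finite $\tau_\rho(v,w)$; this is a standard but slightly fiddly combination of fiber-preserving cyclic covers of pieces with a cover of the JSJ graph, and one must keep track that the resulting JSJ graph stays simplicial.
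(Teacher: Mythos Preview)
Your overall architecture matches the paper's: a strictly central fundamental case done via the additivity principle and Theorem~\ref{volume_central_bundle}, then a covering trick (the paper uses Selberg's lemma) to reduce finite~$\tau_\rho$ to $\tau_\rho\equiv 1$, then a rational-approximation step. The transformation computations you outline in Step~2 are exactly the ones the paper performs.

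There is, however, a genuine gap in Step~1. You propose to Dehn fill each $J_v$ along the adjacent-fiber slopes $f_w$ to obtain $\hat J_v$, and then apply the additivity principle. But Theorem~\ref{additivity_principle} requires the filling slopes to lie in the \emph{kernel} of $\rho$, and $\rho(f_w)=\xi_\rho(w)$ is typically nonzero; so $\rho$ does not descend to $\pi_1(\hat J_v)$ at all, and there is no induced representation on the filled piece to which you could apply Theorem~\ref{volume_central_bundle}. The paper's fix is to \emph{first} assume $\xi_\rho(v)\in\Rational$ for all $v$ (this is Lemma~\ref{volume_rational_central_GM}); under that assumption, $\rho(\pi_1(T_{v,w}))\subset\Rational$ is a rank-$\le 1$ subgroup of~$\Real$, so a kernel slope $h_{v,w}$ exists, and one fills along those to get a Seifert space $N_v\to\mathcal R_v$ that is \emph{not} $\hat J_v$. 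The crucial identification then becomes Lemma~\ref{e_M_v}: $e(N_v\to\mathcal R_v)\cdot\xi_\rho(v)=(\opEuler_M\xi_\rho)(v)$, proved by a Waldhausen-basis computation with the $h_{v,w}$ present. Your claim that $\xi_\rho\in\Rational^V$ is automatic from Theorem~\ref{volume_central_bundle} is therefore circular: you need rationality \emph{before} you can fill and invoke that theorem. The rational-approximation step (your Step~3) is not a technicality about how the additivity principle is stated; it is what removes the rationality hypothesis at the end, and the solution $X$ in the rationality clause is taken to be the $\xi$-vector of an approximating representation, not $\xi_\rho$ itself.
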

	
	The proof of Theorem \ref{volume_virtually_central_GM} occupies the rest of this section.

	\subsection{The fundamental computation}

	\begin{lemma}\label{volume_rational_central_GM}
		The statements of Theorem \ref{volume_general_GM} hold	true under the following additional conditions:
		\begin{itemize}
		\item $\xi_\rho(v)\in\Rational$, for all vertices $v\in V$, and
		\item $\tau_\rho(v,w)=1$, for all edges $\{v,w\}\in E$.
		\end{itemize}
	\end{lemma}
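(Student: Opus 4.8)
The plan is to reduce the computation to the JSJ pieces by means of the additivity principle and to evaluate each local contribution through Theorem \ref{volume_central_bundle}. First I would unravel the hypotheses. Since $\widetilde{\omega}(\widetilde{k}(n))=\pi n$ for $n\in\Integral$, a central element $\id[s]$ of $\Sft\times_\Integral\Real$ has essential winding number $s$; and the assumption $\tau_\rho(v,w)=1$ for every $\{v,w\}\in E$ says that $\bar\rho\colon\pi_1(M)\to\mathrm{PSL}(2,\Real)$ kills every edge subgroup $\pi_1(T_{v,w})$. Hence $\rho$ is of central type on each vertex subgroup $\pi_1(J_v)$, with $\rho(f_v)=\id[\xi_\rho(v)]$ and $\xi_\rho(v)\in\Rational$; and $\bar\rho|_{\pi_1(J_v)}$, killing the fiber and the boundary slopes, descends to a representation $\bar\psi_v\colon\pi_1^{\mathrm{orb}}(\hat{\mathcal{O}}_v)\to\mathrm{PSL}(2,\Real)$ of the closed base $2$-orbifold group, with a well-defined orbifold Euler number $e(\bar\psi_v)\in\Rational$.

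For each edge $\{v,w\}$, the restriction of $\rho$ to $\pi_1(T_{v,w})$ has central --- hence abelian --- image, so it is pairing against some $\lambda_{v,w}\in\mathrm{Hom}_\Rational(H_1(T_{v,w};\Rational),\Real)$ with $\lambda_{v,w}([f_v])=\xi_\rho(v)$ and $\lambda_{v,w}([f_w])=\xi_\rho(w)$. Setting $\sigma_{v,w}=\lambda_{v,w}([s_{v,w}])$ and feeding these into the Waldhausen change of basis $[f_w]=a_{v,w}[f_v]+b_{v,w}[s_{v,w}]$ of (\ref{b_Waldhausen}) give $\sigma_{v,w}=b_{v,w}^{-1}(\xi_\rho(w)-a_{v,w}\xi_\rho(v))$, and then the charge formula (\ref{charge_Waldhausen}) together with the Waldhausen relation $\sum_{\{v,w\}\in E}[s_{v,w}]=0$ in $H_1(J_v;\Rational)$ yields
$$\sum_{\{v,w\}\in E}\sigma_{v,w}\;=\;\sum_{\{v,w\}\in E}\frac{\xi_\rho(w)}{b_{v,w}}-k_v\,\xi_\rho(v)\;=\;-(\opEuler_M\xi_\rho)(v).$$
On the other hand, evaluating $\rho$ on a Seifert presentation of $\pi_1(J_v)$ over $\mathcal{O}_v$ --- with relator $\prod_j[a_j,b_j]\prod_i q_i\prod_w d_w=f_v^{b_v}$, exceptional-fiber relations $q_i^{\beta_i}=f_v^{\alpha_i}$, and with each $\rho(d_w)$, $\rho(q_i^{\beta_i})$, $\rho(f_v)$ central --- one reads $e(\bar\psi_v)$ off of the central element $\prod_j[\rho(a_j),\rho(b_j)]\prod_i\rho(q_i)$, and after passing from the boundary loops $d_w$ to the Waldhausen sections $s_{v,w}$ (their difference being absorbed by the exceptional-fiber data) this gives $\sum_{\{v,w\}\in E}\sigma_{v,w}=-e(\bar\psi_v)$. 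Comparing the two displays yields the key identity $(\opEuler_M\xi_\rho)(v)=e(\bar\psi_v)$, which for a closed circle bundle reduces to the relation $e(\phi_{\mathtt{base}})=e\times\phi_{\mathtt{fib}}$ of Theorem \ref{volume_central_bundle}; one may streamline the general case by first passing, via Proposition \ref{virtual_xkb}, to a finite cover whose JSJ pieces are trivial circle bundles (where $d_w$ and $s_{v,w}$ coincide).

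The three assertions now follow. The generalized Milnor--Wood inequality is $|(\opEuler_M\xi_\rho)(v)|=|e(\bar\psi_v)|\le-\chi(\hat{\mathcal{O}}_v)$ when the right side is positive and $e(\bar\psi_v)=0$ otherwise, which is the classical Milnor--Wood inequality for $\mathrm{PSL}(2,\Real)$-representations of closed $2$-orbifold groups; and $-\chi(\hat{\mathcal{O}}_v)=-\chi_v-\deg(v)=-\chi_M(v)-\sum_{\{v,w\}\in E}1/\tau_\rho(v,w)$ because all $\tau_\rho(v,w)=1$ and $\deg(v)$ is the number of boundary tori of $J_v$. For the volume formula, apply the additivity principle (Theorem \ref{additivity_principle}): since $\bar\rho$ is trivial, hence amenable, on every JSJ torus, $\mathrm{vol}_{\Sft\times_\Integral\Real}(M,\rho)$ splits as $\sum_{v\in V}V_v$, where $V_v$ is the relative volume of $(J_v,\rho|_{\pi_1(J_v)})$ with respect to the Waldhausen section framing on $\partial J_v$; and a relative-volume version of Theorem \ref{volume_central_bundle} --- capping $J_v$ off to the Dehn filling $\hat{J}_v$ and using the covering relations of Proposition \ref{virtual_xkb} to pass to a circle-bundle cover, or equivalently a Godbillon--Vey computation as in \cite{BG2} --- gives $V_v=-4\pi^2\,\xi_\rho(v)\sum_{\{v,w\}\in E}\sigma_{v,w}=4\pi^2\,\xi_\rho(v)\,(\opEuler_M\xi_\rho)(v)$. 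Summing over $v$ yields $\mathrm{vol}_{\Sft\times_\Integral\Real}(M,\rho)=4\pi^2\sum_{v\in V}\xi_\rho(v)\,(\opEuler_M\xi_\rho)(v)$, and rationality is then immediate with $X=\xi_\rho\in\Rational^V$.

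The delicate steps are the identity $\sum_{\{v,w\}\in E}\sigma_{v,w}=-e(\bar\psi_v)$ --- requiring careful bookkeeping of the Seifert invariants, the exceptional fibers, and the passage between the Waldhausen sections $s_{v,w}$ and the presentation's boundary loops $d_w$ --- and pinning down the precise form of the relative-volume statement (the additivity principle together with Theorem \ref{volume_central_bundle}) that delivers $V_v=4\pi^2\xi_\rho(v)(\opEuler_M\xi_\rho)(v)$. Everything else is a routine invocation of Theorem \ref{volume_central_bundle}, Proposition \ref{virtual_xkb}, and classical Milnor--Wood.
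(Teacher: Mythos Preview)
Your overall strategy matches the paper's, and the Waldhausen-basis identity $\sum_{\{v,w\}}\sigma_{v,w}=-(\opEuler_M\xi_\rho)(v)$ together with the Milnor--Wood step are correct. The point you yourself flag as delicate --- the ``relative-volume'' form of additivity --- is where the proposal has a real gap. Theorem~\ref{additivity_principle} as stated requires slopes on which $\rho$ itself (not merely $\bar\rho$) vanishes, and its conclusion concerns volumes of \emph{closed} Dehn fillings, not relative volumes with respect to a boundary framing. Your capping $J_v\leadsto\hat{J}_v$ is along the adjacent-fiber slopes $f_w$, but $\rho(f_w)=\id[\xi_\rho(w)]$ is in general nonzero, so $\rho$ does not descend to $\pi_1(\hat{J}_v)$ and there is no closed-manifold representation to which Theorem~\ref{volume_central_bundle} applies.

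The paper resolves exactly this, and this is where the hypothesis $\xi_\rho\in\Rational^V$ does work beyond the final rationality assertion: together with $\tau_\rho(v,w)=1$ it forces $\rho(\pi_1(T_{v,w}))\subset\Rational$, so each JSJ torus carries an honest slope $h_{v,w}$ in $\ker\rho$. Dehn filling $J_v$ along these $h_{v,w}$ produces a closed Seifert fibered manifold $N_v\to\mathcal{R}_v$ (with $\mathcal{R}_v$ acquiring new cone points of order $|I_{v,w}(f_v,h_{v,w})|$; this is \emph{not} your $\hat{\mathcal{O}}_v$), and now Theorem~\ref{additivity_principle} applies verbatim with $\rho$ descending to $\phi_v\colon\pi_1(N_v)\to\Sft\times_\Integral\Real$. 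The two local lemmas then read $(\opEuler_M\xi_\rho)(v)=e(N_v\to\mathcal{R}_v)\cdot\xi_\rho(v)$ (Lemma~\ref{e_M_v}) and $\mathrm{vol}(N_v,\phi_v)=4\pi^2\,e(N_v\to\mathcal{R}_v)\cdot\xi_\rho(v)^2$ (Lemma~\ref{volume_central_filled}), the latter proved by passing to a surface cover of $\mathcal{R}_v$ and invoking Theorem~\ref{volume_central_bundle} directly. Your identity $(\opEuler_M\xi_\rho)(v)=e(\bar\psi_v)$ is recovered a posteriori from these, since $\bar\psi_v$ is precisely the factor of $(\phi_v)_{\mathtt{base}}$ through $\pi_1(\hat{\mathcal{O}}_v)$.
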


	The rest of this subsection is devoted to the proof of Lemma \ref{volume_rational_central_GM}.
	We employ the following additivity principle for volume computation:
	
	\begin{theorem}[{\cite[Theorem 3.5]{DLW-cs_sep_vol}}]\label{additivity_principle}
	Let $M$ be an oriented closed irreducible $3$--manifold with the JSJ tori $T_1,\cdots,T_r$ and the JSJ pieces
	$J_1,\cdots,J_k$. Let $h_1,\cdots,h_r$ be slopes on $T_1,\cdots,T_r$, respectively.
	Denote by $N_i$ the oriented closed $3$--manifold obtained from
	the Dehn filling of $J_i$ along all the slopes $h_j$ which occur on its boundary,
	for all $i\in\{1,\cdots,k\}$.
	
	Suppose that $\rho\colon \pi_1(M)\to \Sft\times_\Integral\Real$
	is a representation	which is trivial on all the slopes $h_j$.
	Denote by $\phi_i\colon\pi_1(N_i)\to\Sft\times_\Integral\Real$
	the representation induced by $\rho_i$,
	for all $i\in\{1,\cdots,k\}$.
	Then the following formula holds:
	$$\mathrm{vol}_{\Sft\times_\Integral\Real}(M,\rho)=
	\mathrm{vol}_{\Sft\times_\Integral\Real}(N_1,\phi_1)+\cdots+\mathrm{vol}_{\Sft\times_\Integral\Real}(N_k,\phi_k).$$	
	\end{theorem}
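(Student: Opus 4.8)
The plan is to compute both sides directly from the pull--back description of volume of representations, as set up in \cite[Section~2]{DLSW-rep_vol}. Recall that $\Sft$ is contractible and carries a fixed $\Sft\times_\Integral\Real$--invariant volume form $\omega$; given any closed oriented $3$--manifold $X$ and any representation $\rho\colon\pi_1(X)\to\Sft\times_\Integral\Real$, and any $\rho$--equivariant smooth map $f\colon\widetilde{X}\to\Sft$, the $3$--form $f^*\omega$ is $\pi_1(X)$--invariant, descends to $X$, and $\mathrm{vol}_{\Sft\times_\Integral\Real}(X,\rho)=\int_X f^*\omega$ is independent of $f$ (any two such maps are equivariantly homotopic, and $\omega$, being top--dimensional, is closed). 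Thus it suffices to produce a compatible system of equivariant maps for $M$ and for the $N_i$ along which the $M$--integral manifestly breaks up as the sum of the $N_i$--integrals.

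The local input is a collapsing observation. For each JSJ torus $T_j$, choose a basis $\{h_j,g_j\}$ of $\pi_1(T_j)\cong\Integral^2$ with $h_j$ the given slope; since $\rho(h_j)=1$, the image $\rho(\pi_1(T_j))=\langle\rho(g_j)\rangle$ is cyclic. Hence on the universal cover of a collar $T_j\times(-1,1)$ one may choose a $\rho$--equivariant map to $\Sft$ that factors through the projection onto the $g_j$--line $\Real$ followed by a $\langle\rho(g_j)\rangle$--equivariant map $\Real\to\Sft$; such a composite has differential of rank at most $1$, so it pulls $\omega$ back to zero on the whole collar. Similarly, in $N_i$ the core of each Dehn--filling solid torus $V_j\cong S^1\times D^2$ carries $\pi_1(V_j)$ into the cyclic group $\langle\rho(g_j)\rangle$ under $\phi_i$ (the dual slope survives the filling), so $\phi_i$ admits an equivariant map $\widetilde{V_j}\cong\Real\times D^2\to\Sft$ factoring through the projection onto the $\Real$--factor; this again pulls $\omega$ back to zero on $V_j$.

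Now assemble compatible maps. Since $\rho$ kills each $h_j$, it kills the normal closure of the $h_j$ in $\pi_1(J_i)$, so $\rho|_{\pi_1(J_i)}$ factors as $\phi_i\circ q_i$ through the Dehn--filling quotient $q_i\colon\pi_1(J_i)\twoheadrightarrow\pi_1(N_i)$; equivalently, $\rho|_{\pi_1(J_i)}$--equivariant maps $\widetilde{J_i}\to\Sft$ are precisely the pull--backs of $\phi_i$--equivariant maps on the $\pi_1(N_i)$--cover of $J_i$ (equivariance together with $\rho(\ker q_i)=1$ forces the needed invariance). Incompressibility of the JSJ tori identifies a component of the preimage of $J_i$ in $\widetilde{M}$ with $\widetilde{J_i}$ as well, so both the contribution of $J_i$ to $\int_M f^*\omega$ and the contribution of $J_i$ to $\int_{N_i}f_i^*\omega$ are computed from one and the same $\rho|_{\pi_1(J_i)}$--equivariant map $\widetilde{J_i}\to\Sft$. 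Build this map by first prescribing on a collar of $\partial J_i$ the collar--collapsing maps of the previous paragraph, using for each $T_j$ a single fixed $\langle\rho(g_j)\rangle$--equivariant boundary value depending only on $T_j$, and then extending equivariantly and arbitrarily over the rest of $J_i$ (no obstruction, as $\Sft$ is contractible). Because the $T_j$--boundary values were chosen independently of the adjacent piece, these local maps glue to a global $\rho$--equivariant smooth map $f\colon\widetilde{M}\to\Sft$; each $f_i$ further extends over the $V_j$ by the solid--torus collapsing map.

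Chaining the equalities finishes the argument: the JSJ tori have measure zero, so $\mathrm{vol}(M,\rho)=\int_M f^*\omega=\sum_i\int_{J_i}f^*\omega$; by construction the integrand on $J_i$ agrees with that of $f_i$; and the collapsed solid tori contribute nothing, so $\int_{J_i}f_i^*\omega=\int_{N_i}f_i^*\omega=\mathrm{vol}(N_i,\phi_i)$. Hence $\mathrm{vol}(M,\rho)=\sum_i\mathrm{vol}(N_i,\phi_i)$, with signs correct because $M=\bigcup_i J_i$ as oriented manifolds and each $N_i$ carries the orientation inherited from $J_i$. The step needing genuine care is the collapsing construction near the tori together with the bookkeeping that makes the local equivariant maps fit into a single global one; everything else is formal. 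Alternatively, one could run the whole proof in bounded cohomology: decompose the relative fundamental class as $[M]=\sum_i[J_i,\partial J_i]$, pair with the bounded volume class, and observe that the boundary correction terms vanish because $\rho$ restricts to a representation with cyclic --- hence amenable --- image on each $\pi_1(T_j)$.
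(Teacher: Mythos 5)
This theorem is imported verbatim from \cite[Theorem 3.5]{DLW-cs_sep_vol}; the present paper gives no proof of it, so there is nothing in-paper to compare your argument against. Your pseudo-developing-map argument is correct and is essentially the standard proof of the cited result: since $\rho$ kills each filling slope $h_j$, the image of each $\pi_1(T_j)$ is cyclic, so the equivariant map can be collapsed to rank $\le 1$ on collars of the JSJ tori and on the filling solid tori, killing the pulled-back volume form there; the only delicate point is that $\int_{J_i}D^*\omega$ is \emph{not} independent of the equivariant map on a piece with boundary, and you correctly neutralize this by using one and the same boundary value (depending only on $T_j$) and literally the same map on $\widetilde{J_i}$ for both the $M$-side and the $N_i$-side computations, so no boundary correction terms arise. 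Your identification of a component of the preimage of $J_i$ in $\widetilde{M}$ with $\widetilde{J_i}$ via incompressibility, and the descent of $\rho|_{\pi_1(J_i)}$ to $\phi_i$ through the Dehn-filling quotient, are both handled correctly; the remaining issues (smoothing at the collar interface, tori adjacent to the same piece on both sides) are routine.
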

	
	Let us first unwrap the additional conditions of Lemma \ref{volume_rational_central_GM}.
	The additional condition $\tau_{\rho}(v,w)=1$
	means that
	the representation $\rho$	sends any JSJ torus subgroup $\pi_1(T_{v,w})$ to the central subgroup $\Real$.
	In particular, the image $\rho(f_v)$ of the regular fiber of any JSJ piece $J_v$
	can be identified as a real value,
	which equals the essential winding number of $\rho(f_v)$, by definition.
	Therefore, the additional condition $\xi_\rho(f_v)\in\Rational$ 
	means $\rho(f_v)\in\Rational$.
	It follows that $\rho(\pi_1(T_{v,w}))$ is also contained in the central subgroup $\Rational$
	of $\Sft\times_\Integral\Real$.	
	In particular, 
	there must exist a slope $h_{v,w}$ on $T_{v,w}$ which lies in the kernel of $\rho$.
	
	Choose a slope $h_{v,w}$ for every edge $\{v,w\}\in E$ as above,
	such that $h_{w,v}$ is identified with $h_{v,w}$ under the identification
	$T_{v,w}\cong T_{w,v}$.
	For any JSJ piece $J_v$ of $M$, denote by $N_v$ the Dehn filling of $J_v$ 
	along the all the slopes $h_{v,w}$ on $\partial J_v$.
	Denote by $\phi_v\colon \pi_1(N_v)\to\mathrm{PSL}(2,\Real)$ the representation
	induced by $\rho$.
		
	Theorem \ref{additivity_principle} allows us to compute the volume of $(M,\rho)$ from the volumes of $(N_v,\phi_v)$:
	\begin{equation}\label{volume_breakdown}
		\mathrm{vol}_{\Sft\times_\Integral\Real}(M,\rho)=\sum_{v\in V}\mathrm{vol}_{\Sft\times_\Integral\Real}(N_v,\phi_v).
	\end{equation}
				
	Note that 
	if some $h_{v,w}$ is parallel to the fiber $f_v$ (in the unoriented sense), 
	then $\xi_\rho(v)$ equals $0$.
	In this case,
	$\mathrm{vol}_{\Sft\times_\Integral\Real}(N_v,\phi_v)$
	also equals $0$,
	since $\phi_v$ factors through the fundamental group of the base $2$--orbifold of $J_v$,
	(whose group homology is torsion on the dimension $3$).
	
	If $J_v$ is any JSJ piece with $\xi_\rho(v)\neq0$,
	there are no $h_{v,w}$ parallel to the fiber $f_v$.
	In this case, the Seifert fibration $J_v\to\mathcal{O}_v$ naturally extends to 
	a Seifert fibration $N_v\to\mathcal{R}_v$, 
	where $\mathcal{R}_v$ is the $2$--orbifold obtained from $\mathcal{O}_v$ 
	by filling the punctures with cone points.
	The orders of the cone points are the geometric intersection numbers
	between $f_v$ and $h_{v,w}$ on the tori $T_{v,w}$ adjacent to $J_v$.
	
	\begin{lemma}\label{e_M_v}
		If $\xi_\rho(v)\neq0$, then
		$$(\opEuler_M\xi_\rho)(v)=e\left(N_v\to\mathcal{R}_v\right)\times\xi_\rho(v).$$
	\end{lemma}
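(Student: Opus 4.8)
The plan is to pass to Waldhausen coordinates on the JSJ tori adjacent to $J_v$ and then to read both sides of the asserted identity off the defining formulas. First I would fix a Waldhausen basis $([f_v],[s_{v,w}])$ of each $H_1(T_{v,w};\Rational)$ as in (\ref{b_Waldhausen}), so that $[f_w]=a_{v,w}[f_v]+b_{v,w}[s_{v,w}]$ and, by (\ref{charge_Waldhausen}), $k_v=\sum_{\{v,w\}\in E}a_{v,w}/b_{v,w}$; recall $b_{v,w}\neq0$, since $f_v$ and $f_w$ are non-parallel on a JSJ torus. In these coordinates write the chosen kernel slope as $h_{v,w}=p_{v,w}[f_v]+q_{v,w}[s_{v,w}]$ with coprime integers $p_{v,w},q_{v,w}$. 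Since $\xi_\rho(v)\neq0$, none of the $h_{v,w}$ is parallel to $f_v$ --- otherwise $\rho(f_v)$, and hence $\xi_\rho(v)$, would vanish --- so $q_{v,w}\neq0$, and $|q_{v,w}|=|I_{v,w}([f_v],h_{v,w})|$ is precisely the cone-point order of $\mathcal{R}_v$ over the puncture of $\mathcal{O}_v$ corresponding to $\{v,w\}$.

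Second, I would compute $e(N_v\to\mathcal{R}_v)$. Both $N_v$ and $\widehat{J}_v$ are Dehn fillings of $J_v$ that extend its Seifert fibration: the torus $T_{v,w}$ is filled along $p_{v,w}[f_v]+q_{v,w}[s_{v,w}]$, respectively along $f_w=a_{v,w}[f_v]+b_{v,w}[s_{v,w}]$. By the same homological computation that yields (\ref{charge_Waldhausen}) --- equivalently, by the affine dependence of the Euler number on the Dehn filling coefficients in the chosen framing --- one gets $e(N_v\to\mathcal{R}_v)=\sum_{\{v,w\}\in E}p_{v,w}/q_{v,w}$, just as $e(\widehat{J}_v\to\widehat{\mathcal{O}}_v)=k_v=\sum_{\{v,w\}\in E}a_{v,w}/b_{v,w}$.

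Third comes the kernel condition. Because $\tau_\rho(v,w)=1$, the restriction of $\rho$ to $\pi_1(T_{v,w})$ takes values in the central subgroup $\Real$, hence factors through $H_1(T_{v,w};\Rational)$, with values $\xi_\rho(v)$ on $[f_v]$, $\xi_\rho(w)$ on $[f_w]$, and $0$ on $h_{v,w}$. Setting $\sigma_{v,w}=\rho([s_{v,w}])$, the relations $a_{v,w}\xi_\rho(v)+b_{v,w}\sigma_{v,w}=\xi_\rho(w)$ and $p_{v,w}\xi_\rho(v)+q_{v,w}\sigma_{v,w}=0$ let me eliminate $\sigma_{v,w}$ and obtain $(a_{v,w}\xi_\rho(v)-\xi_\rho(w))/b_{v,w}=(p_{v,w}/q_{v,w})\,\xi_\rho(v)$, where $\xi_\rho(v)\neq0$ is used in dividing. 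Summing over $\{v,w\}\in E$, the left-hand side becomes $k_v\xi_\rho(v)-\sum_{\{v,w\}\in E}\xi_\rho(w)/b_{v,w}=(\opEuler_M\xi_\rho)(v)$, while the right-hand side becomes $\big(\sum_{\{v,w\}\in E}p_{v,w}/q_{v,w}\big)\,\xi_\rho(v)=e(N_v\to\mathcal{R}_v)\,\xi_\rho(v)$, which is exactly the assertion.

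I expect the only step needing real care to be the second one: one has to confirm that the homological identity behind (\ref{charge_Waldhausen}) applies verbatim to $N_v$ --- i.e., that the Waldhausen normalization $\sum_{\{v,w\}\in E}[s_{v,w}]=0$ in $H_1(J_v;\Rational)$ is precisely what makes the Euler number of such a Dehn filling of $J_v$ equal to the bare sum of slope ratios $p_{v,w}/q_{v,w}$ --- and that all orientation conventions match those of Notation \ref{notation_xkb}. Everything else is direct substitution.
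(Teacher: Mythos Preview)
Your proposal is correct and follows essentially the same approach as the paper: both pass to Waldhausen coordinates, write $[f_w]=a_{v,w}[f_v]+b_{v,w}[s_{v,w}]$ and $[h_{v,w}]=p_{v,w}[f_v]+q_{v,w}[s_{v,w}]$, use the kernel condition $\rho(h_{v,w})=0$ to relate $\xi_\rho(v)$ and $\xi_\rho(w)$, and sum over edges using $k_v=\sum a_{v,w}/b_{v,w}$ and $e(N_v\to\mathcal{R}_v)=\sum p_{v,w}/q_{v,w}$. The only cosmetic difference is that the paper encodes your elimination of $\sigma_{v,w}$ as the ratio $\xi_\rho(w)/\xi_\rho(v)=I_{v,w}([f_w],[h_{v,w}])/I_{v,w}([f_v],[h_{v,w}])$, which is the same linear-algebra identity; your caveat about the second step (that the Waldhausen normalization makes the Euler number equal $\sum p_{v,w}/q_{v,w}$) is exactly the point the paper uses implicitly.
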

	
	\begin{proof}
		Take auxiliary Waldhausen bases $([f_v],[s_{v,w}])$ for all $T_{v,w}$.
		Take an orientation for each $h_{v,w}$.
		We write
		\begin{eqnarray*}
		{[f_w]} &=& a_{v,w}[f_v]+b_{v,w}[s_{v,w}]\\
		{[h_{v,w}]} &=& p_{v,w}[f_v]+q_{v,w}[s_{v,w}].
		\end{eqnarray*}
		Note that $b_{v,w}$ and $q_{v,w}$ are nonzero.
		We compute
			$$\frac{\xi_\rho(w)}{\xi_\rho(v)}=\frac{I_{v,w}([f_w],[h_{v,w}])}{I_{v,w}([f_v],[h_{v,w}])}
			=\frac{a_{v,w}q_{v,w}-b_{v,w}p_{v,w}}{q_{v,w}},$$
		where
		$I_{v,w}\colon H_1(T_{v,w};\Rational)\times H_1(T_{v,w};\Rational)\to \Rational$
		stands for the algebraic intersection pairing on the oriented JSJ torus $T_{v,w}$.
		This can be rearranged into
			$$\xi_\rho(v)\times\left(\frac{a_{v,w}}{b_{v,w}}-\frac{p_{v,w}}{q_{v,w}}\right)=\xi_\rho(w)\times\frac{1}{b_{v,w}}.$$
		Sum up over all the vertices $w$ incident to $v$. Then we obtain
			$$\xi_\rho(v)\times(k_v-e(N_v\to\mathcal{R}_v))=\sum_{\{v,w\}\in E}\xi_\rho(w)\times\frac{1}{b_{v,w}},$$
		or equivalently,
			$$(\opEuler_M\xi_\rho)(v)=e\left(N_v\to\mathcal{R}_v\right)\times\xi_\rho(v).$$
		as asserted.
	\end{proof}
	
	\begin{lemma}\label{volume_central_filled}
		If $\xi_\rho(v)\neq0$, then
		$$\mathrm{vol}_{\Sft\times_\Integral\Real}\left(N_v,\phi_v\right)=
		4\pi^2\cdot e\left(N_v\to\mathcal{R}_v\right)\times\xi_\rho(v)^2,$$
		and moreover,
		$$\left|e\left(N_v\to\mathcal{R}_v\right)\times\xi_\rho(v)\right|\leq
		\max\left\{0,-\chi_M(v)-\mathrm{valence}_{(V,E)}(v)\right\}.$$
	\end{lemma}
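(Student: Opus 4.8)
\emph{The plan.} Throughout we have $\xi_\rho(v)\neq 0$. I would begin with two preliminary observations. First, $\phi_v$ is of central type: since $\tau_\rho(v,w)=1$, the induced representation $\bar\rho$ is trivial on $\pi_1(T_{v,w})$, so $\rho(f_v)$ lies in the central subgroup $\Real$, and as $f_v$ generates the center of $\pi_1(N_v)$ this makes $\phi_v$ of central type with $\phi_{v,\mathtt{fib}}=\xi_\rho(v)$. Second, I dispose of the case $e(N_v\to\mathcal{R}_v)=0$: then $(\opEuler_M\xi_\rho)(v)=0$ by Lemma \ref{e_M_v}, so the inequality is trivial, while $\phi_{v,\mathtt{base}}$ has vanishing rational Euler number and hence $\phi_v$ lies in a volume-zero path component, so $\mathrm{vol}_{\Sft\times_\Integral\Real}(N_v,\phi_v)=0$ and the formula holds too. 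So from now on $e(N_v\to\mathcal{R}_v)\neq 0$; since $\phi_v(f_v)=\mathrm{id}[\xi_\rho(v)]$ has infinite order, $\pi_1(N_v)$ is infinite, so $N_v$ is aspherical and $\mathcal{R}_v$ is a good orientable closed $2$--orbifold with $\chi(\mathcal{R}_v)<0$ (the value $\chi(\mathcal{R}_v)=0$ together with $e(N_v\to\mathcal{R}_v)\neq0$ would force $\phi_{v,\mathtt{fib}}=0$).

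\emph{The volume formula.} I would pass to a finite fiber-preserving covering $p\colon\widehat N_v\to N_v$ with $\widehat N_v$ an oriented circle bundle over a closed oriented surface $\widehat\Sigma$ --- obtained by pulling the Seifert fibration back over a finite manifold cover of $\mathcal{R}_v$ and then following by a fiber-cyclic cover clearing the denominator of the Euler number --- and apply Theorem \ref{volume_central_bundle}. Writing $n$ for the degree and $m$ for the fiber-degree, one has $e(\widehat N_v\to\widehat\Sigma)=\frac{n}{m^2}\,e(N_v\to\mathcal{R}_v)$ (as in Proposition \ref{virtual_xkb}), $2\widehat g-2=-\frac nm\,\chi(\mathcal{R}_v)>0$ by Riemann--Hurwitz, and $\widehat\phi_v=\phi_v\circ p_*$ is of central type with $\widehat\phi_{v,\mathtt{fib}}=m\,\xi_\rho(v)$. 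Theorem \ref{volume_central_bundle} then gives $\mathrm{vol}_{\Sft\times_\Integral\Real}(\widehat N_v,\widehat\phi_v)=4\pi^2\,e(\widehat N_v\to\widehat\Sigma)\,\widehat\phi_{v,\mathtt{fib}}^{\,2}=4\pi^2 n\,e(N_v\to\mathcal{R}_v)\,\xi_\rho(v)^2$, and dividing by $n$, using multiplicativity of representation volume under finite covers, yields $\mathrm{vol}_{\Sft\times_\Integral\Real}(N_v,\phi_v)=4\pi^2\,e(N_v\to\mathcal{R}_v)\,\xi_\rho(v)^2$.

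\emph{The inequality.} The cover above only produces the weaker bound $|e(N_v\to\mathcal{R}_v)\xi_\rho(v)|\leq-\chi(\mathcal{R}_v)$, which overshoots by the total ``defect'' $\sum_{\{v,w\}}(1-1/\Delta(f_v,h_{v,w}))$ of the cone points introduced by the Dehn fillings. To shave this off I would use $\tau_\rho(v,w)=1$ once more. The core $\widetilde x_{v,w}$ of the $w$--th filling solid torus lies in the image of $\pi_1(T_{v,w})$ in $\pi_1(N_v)$, and its image $x_{v,w}$ in $\pi_1^{\mathrm{orb}}(\mathcal{R}_v)=\pi_1(N_v)/\langle f_v\rangle$ generates the cyclic group carried by the corresponding cone point; since $\bar\rho(\pi_1(T_{v,w}))$ is trivial, $\bar\phi_v(x_{v,w})=1$ for every $w$ incident to $v$. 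Hence $\bar\phi_v$ factors as $\psi_v\circ q$, where $q\colon\pi_1^{\mathrm{orb}}(\mathcal{R}_v)\to\pi_1^{\mathrm{orb}}(\overline{\mathcal{O}}_v)$ is the quotient killing the $x_{v,w}$ and $\overline{\mathcal{O}}_v$ is the \emph{closed} orientable $2$--orbifold obtained from $\mathcal{O}_v$ by filling its punctures with smooth disks, so that it retains only the original cone points of $\mathcal{O}_v$ and $-\chi(\overline{\mathcal{O}}_v)=-\chi_M(v)-\mathrm{valence}_{(V,E)}(v)$. Choosing the identity as the lift of each now-trivial $\bar\phi_v(x_{v,w})$ in a Seifert presentation shows that this factorization leaves the rational Euler number unchanged, so $e(N_v\to\mathcal{R}_v)\,\xi_\rho(v)=e_{\Rational}(\bar\phi_v)=e_{\Rational}(\psi_v)$, where the first equality is the central-type relation supplied by the cover together with Lemma \ref{e_M_v}. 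Finally I invoke the Milnor--Wood inequality for representations of a closed $2$--orbifold group into $\mathrm{PSL}(2,\Real)$ --- reducing to the classical surface case by passing to a finite manifold cover when $\overline{\mathcal{O}}_v$ is good, and vacuous otherwise since then $\pi_1^{\mathrm{orb}}(\overline{\mathcal{O}}_v)$ is finite, $e_{\Rational}(\psi_v)=0$, and the right-hand side is $0$ --- to get $|e_{\Rational}(\psi_v)|\leq\max\{0,-\chi(\overline{\mathcal{O}}_v)\}=\max\{0,-\chi_M(v)-\mathrm{valence}_{(V,E)}(v)\}$, which with Lemma \ref{e_M_v} is exactly the asserted inequality.

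\emph{Main obstacle.} The real work is the bookkeeping across central extensions: confirming the Euler-number scaling $e(\widehat N_v\to\widehat\Sigma)=\tfrac n{m^2}e(N_v\to\mathcal{R}_v)$ and the genus relation for the circle-bundle cover, and then identifying $e_{\Rational}(\bar\phi_v)$ simultaneously with $e(N_v\to\mathcal{R}_v)\xi_\rho(v)$ and with $e_{\Rational}(\psi_v)$ under matching sign and normalization conventions. A secondary point is that the various small base orbifolds making the right-hand side vanish --- tori, spheres with few cone points --- must force $e(N_v\to\mathcal{R}_v)\xi_\rho(v)=0$; reassuringly this is automatic from the factorization through $\pi_1^{\mathrm{orb}}(\overline{\mathcal{O}}_v)$, which is precisely the gain over the naive covering estimate.
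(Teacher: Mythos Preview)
Your proposal is correct and follows essentially the same strategy as the paper's own proof: pass to a finite surface cover of $\mathcal{R}_v$ to reduce the volume computation to Theorem~\ref{volume_central_bundle}, and for the inequality exploit that $\bar\phi_v$ kills the cone-point generators coming from the Dehn fillings (since $\tau_\rho(v,w)=1$), so that it factors through the orbifold $\overline{\mathcal{O}}_v$ (the paper calls this $\mathcal{P}$) obtained from $\mathcal{O}_v$ by filling punctures with smooth points, whereupon the surface Milnor--Wood bound applies with the sharper Euler characteristic $-\chi_M(v)-\mathrm{valence}(v)$.

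The only differences are cosmetic. The paper tracks the inequality through an explicit commutative square of covers $R''\to R'\to\mathcal{R}$ and $R''\to P''\to\mathcal{P}$, whereas you phrase the same bookkeeping via the rational Euler number $e_{\Rational}$ and its invariance under the quotient $q\colon\pi_1^{\mathrm{orb}}(\mathcal{R}_v)\to\pi_1^{\mathrm{orb}}(\overline{\mathcal{O}}_v)$. Your extra fiber-cyclic cover is harmless but unnecessary (once the base is a surface the pulled-back Euler number is already integral). Two small quibbles: the reference to Lemma~\ref{e_M_v} in your identification $e(N_v\to\mathcal{R}_v)\,\xi_\rho(v)=e_{\Rational}(\bar\phi_v)$ is misplaced, since that identity comes purely from Theorem~\ref{volume_central_bundle} on the cover; and your final clause ``which with Lemma~\ref{e_M_v}'' is superfluous, as the asserted inequality is already stated in terms of $e(N_v\to\mathcal{R}_v)\xi_\rho(v)$.
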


	\begin{proof}
		For simplicity, we rewrite $N\to\mathcal{R}$ 
		for the Seifert fibration $N_v\to\mathcal{R}_v$,
		and $f$ for the ordinary fiber $f_v$,
		and $\phi$ for the representation $\phi_v\colon \pi_1(N_v)\to \Sft\times_\Integral\Real$.
		We rewrite $e_N$ for the Euler number $e\left(N_v\to\mathcal{R}_v\right)$.
		We remember $\xi_\rho(v)=\phi(f)$.
				
		The assertions are trivial if $e_N$ equals $0$. 
		If $e_N$ is not $0$, and if $\mathcal{R}$ has orbifold Euler number $\geq0$,
		$\pi_1(\mathcal{R})$ is either virtually nonabelian nilpotent, or finite.
		Note that virtually nilpotent subgroups of $\Sft\times_\Integral\Real$ are all abelian.
		In this case, we must have $\phi(f)=0$,
		so the assertions are again trivial.
		It remains to prove for $e_N\neq0$ and $\chi(\mathcal{R})<0$.

		Take covering projection $R'\to\mathcal{R}$ of $\mathcal{R}$ by a closed surface $R'$,
		(which always exists provided $\chi(\mathcal{R})<0$).
		Denote by $N'\to R'$ the oriented circle bundle obtained from $N\to\mathcal{R}$ by pull-back.
		For the pull-back representation $\phi'\colon \pi_1(N')\to\Sft\times_\Integral\Real$,
		we observe that $\phi'$ is of central type,
		with $\phi'_{\mathtt{fib}}=\phi(f)$ and $e_{N'}=[R':\mathcal{R}]\times e_N$.
		As $R'$ has negative Euler characteristic and $N'\to R'$ has nonzero Euler number,
		we apply Theorem \ref{volume_central_bundle} to obtain the asserted volume formula:
		$$\mathrm{vol}_{\Sft\times_\Integral\Real}(N,\phi)=
		\frac{\mathrm{vol}_{\Sft\times_\Integral\Real}\left(N',\phi'\right)}{[R':\mathcal{R}]}=
		\frac{4\pi^2\phi'_{\mathtt{fib}}\times \left(e_{N'}\times\phi'_{\mathtt{fib}}\right)}{[R':\mathcal{R}]}=
		4\pi^2e_N\times\xi_\rho(v)^2.
		$$
				
		Let $\mathcal{P}$ be the $2$--orbifold obtained from $\mathcal{O}_v$
		by filling the punctures with ordinary points.
		There is a natural orbifold map $\mathcal{R}\to\mathcal{P}$,
		which erases the filling cone points from $\mathcal{R}$.
		The orbifold Euler characteristic of $\mathcal{P}$ can be computed as
		$$\chi(\mathcal{P})
		= \chi_M(v)+\mathrm{valence}_{(V,E)}(v).$$
				
		Suppose $\chi(\mathcal{P})>0$.
		In this case, the $2$--orbifold $\mathcal{P}$ is either spherical or bad.
		The induced representation $\phi_{\mathtt{base}}\colon \pi_1(\mathcal{R})\to \mathrm{PSL}(2,\Real)$
		is necessarily trivial.
		The above volume formula implies $\xi_\rho(v)=0$,
		so the asserted inequality holds for $\chi(\mathcal{P})>0$, and indeed,
		$$|e_N\times\xi_\rho(v)|=0.$$
		
		Suppose $\chi(\mathcal{P})\leq0$.
		In this case, the $2$--orbifold $\mathcal{P}$ admits a universal cover homeomorphic to an open disk.
		Then there exists a finite cover $P''\to\mathcal{P}$ of $\mathcal{P}$ by a surface $P''$.
		Pull back $P''\to \mathcal{P}$ along the composite map $R'\to\mathcal{R}\to\mathcal{P}$.
		We obtain a finite covering projection $R''\to R'$,
		whose degree equals $[P'':\mathcal{P}]$.
		The construction comes with 
		a branched covering map $R''\to P''$,
		whose mapping degree equals $[R':\mathcal{R}]$.
		Denote by $N''\to R''$ the pull-back of $N'\to R'$ along $R''\to R'$.
		We obtain representations 
		$\phi'\colon \pi_1(N')\to \Sft\times_\Integral\Real$
		and $\phi''\colon \pi_1(N'')\to \Sft\times_\Integral\Real$,
		which are naturally induced from $\phi$ and of central type.
		Theorem \ref{volume_central_bundle} implies
		$$e\left(\phi''_{\mathtt{base}}\right)=
		[P'':\mathcal{P}]\times e\left(\phi'_{\mathtt{base}}\right)=
	  [P'':\mathcal{P}]\times	e_{N'}\times\phi'_{\mathtt{fib}}=
		[P'':\mathcal{P}]\times[R':\mathcal{R}]\times e_N\times\xi_\rho(v).$$
		On the other hand, 
		$\phi''_{\mathtt{base}}\colon \pi_1(R'')\to \mathrm{PSL}(2,\Real)$ 
		factors through a representation of $\pi_1(R'')$.
		The possible values for the Euler number 
		of any representation $\pi_1(P'')\to\mathrm{PSL}(2,\Real)$
		are precisely $0,\pm1,\pm2,\cdots,\pm|\chi(P'')|$.
		By construction, we obtain
		$$\left|e\left(\phi''_{\mathtt{base}}\right)\right|\leq[R':\mathcal{R}]\times |\chi(P'')|
		= \left[R':\mathcal{R}\right]\times[P'':\mathcal{P}]\times\left|\chi(\mathcal{P})\right|.$$
		Therefore, we obtain
		$$\left|e_N\times\xi_\rho(v)\right|\leq \left|\chi(\mathcal{P})\right|
		=-\chi_M(v)-\mathrm{valence}_{(V,E)}(v).$$
		so the asserted inequality also holds for $\chi(\mathcal{P})\leq0$.
	\end{proof}

		From Lemmas \ref{e_M_v} and \ref{volume_central_filled}, we obtain
		\begin{equation}\label{volume_N_phi}
		\mathrm{vol}_{\Sft\times_\Integral\Real}\left(N_v,\phi_v\right)=
		4\pi^2\cdot \xi_\rho(v)\times (\opEuler_M\xi_\rho)(v),
		\end{equation}
		and 
		\begin{equation}\label{bound_N_phi}
		\left|e\left(N_v\to\mathcal{R}_v\right)\times\xi_\rho(v)\right|\leq
		\max\left\{0,-\chi_M(v)-\mathrm{valence}_{(V,E)}(v)\right\}.
		\end{equation}
		Both (\ref{volume_N_phi}) and (\ref{bound_N_phi}) 
		hold for $\xi_\rho(v)\neq0$ as well as for $\xi_\rho(v)=0$.		
		The inequality of Theorem \ref{volume_general_GM} is exactly (\ref{bound_N_phi})
		under the additional condition $\tau_\rho(v,w)=1$ for all $\{v,w\}\in E$.
		The rationality of the inner product $(\xi_\rho,\opEuler_M\xi_\rho)$
		follows from Lemma \ref{e_M_v} and the additional condition $\xi_\rho(v)\in\Rational$
		for all $v\in\Rational$.
		The volume formula of Theorem \ref{volume_general_GM}
		follows from (\ref{volume_breakdown}) and (\ref{volume_N_phi}),
		under the additional conditions of Lemma \ref{volume_rational_central_GM}.
			
		This completes the proof of Lemma \ref{volume_rational_central_GM}.

	\subsection{The covering trick}\label{Subsec-covering_trick}
	
	\begin{lemma}\label{volume_rational_virtually_central_GM}
		The statements of Theorem \ref{volume_general_GM} hold	true under the following additional conditions:
		\begin{itemize}
		\item $\xi_\rho(v)\in\Rational$, for all vertices $v\in V$, and
		\item $\tau_\rho(v,w)<\infty$, for all edges $\{v,w\}\in E$.
		\end{itemize}
	\end{lemma}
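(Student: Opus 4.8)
The plan is a covering trick: pull $\rho$ back to a finite cover on which it becomes central along every JSJ torus, invoke Lemma \ref{volume_rational_central_GM} there, and then descend using Proposition \ref{virtual_xkb} and the multiplicativity of representation volume under finite covers.

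\emph{Constructing the cover.} Since every $\tau_\rho(v,w)$ is finite, each image $\bar\rho(\pi_1(T_{v,w}))$ in $\mathrm{PSL}(2,\Real)$ is a finite group. The group $\bar\rho(\pi_1(M))$ is finitely generated and linear, so by Selberg's lemma it has a finite-index torsion-free subgroup $G_0$; let $M_0\to M$ be the finite cover corresponding to $\bar\rho^{-1}(G_0)\leq\pi_1(M)$. For any covering JSJ torus $T_{v_0,w_0}$ over $T_{v,w}$, its fundamental group is $\pi_1(M_0)\cap g\pi_1(T_{v,w})g^{-1}$ for some $g\in\pi_1(M)$; as $\bar\rho$ sends this into the finite group $\bar\rho(g)\bar\rho(\pi_1(T_{v,w}))\bar\rho(g)^{-1}$, whose intersection with the torsion-free $G_0$ is trivial, we get $\pi_1(T_{v_0,w_0})=g\ker(\bar\rho|_{\pi_1(T_{v,w})})g^{-1}$. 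Hence $\bar\rho$ is trivial on $\pi_1(T_{v_0,w_0})$ and the covering degree $[T_{v_0,w_0}:T_{v,w}]$ equals $\tau_\rho(v,w)$; a parallel computation for the fibers shows $[f_{v_0}:f_v]$ equals the order of $\bar\rho(f_v)$, a positive integer. Passing further to a cover induced by a loop-free cover of the JSJ graph (residual finiteness of free groups, cf.\ the remark after Theorem \ref{volume_general_GM}) and fixing orientations and fibrations, I obtain a formatted covering projection $p\colon M'\to M$ with simplicial JSJ graph $(V',E')$ for which $\rho'=\rho\circ p_*$ has $\tau_{\rho'}(v',w')=1$ for all edges; moreover $\xi_{\rho'}(v')=[f'_{v'}:f_v]\cdot\xi_\rho(v)\in\Rational$, since $p_*(f'_{v'})$ is conjugate to a power of $f_v$ and $\winding$ is conjugation-invariant and multiplicative under powers.

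\emph{Descent.} Lemma \ref{volume_rational_central_GM} applies to $(M',\rho')$. For $v'$ over $v$ set $c_{v'}=[J'_{v'}:J_v]/[f'_{v'}:f_v]$. Proposition \ref{virtual_xkb} gives $\chi_{M'}(v')=c_{v'}\chi_M(v)$, and substituting the transformation rules for $k_{v'}$ and $b_{v',w'}$ together with $\sum_{w'\mapsto w}[T'_{v',w'}:T_{v,w}]=[J'_{v'}:J_v]$ yields $(\opEuler_{M'}\xi_{\rho'})(v')=c_{v'}(\opEuler_M\xi_\rho)(v)$, whence $\sum_{v'\in V'}\xi_{\rho'}(v')(\opEuler_{M'}\xi_{\rho'})(v')=[M':M]\sum_{v\in V}\xi_\rho(v)(\opEuler_M\xi_\rho)(v)$. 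Combined with $\mathrm{vol}_{\Sft\times_\Integral\Real}(M',\rho')=[M':M]\cdot\mathrm{vol}_{\Sft\times_\Integral\Real}(M,\rho)$ this gives the volume formula for $(M,\rho)$, and rationality is then immediate from $\xi_\rho\in\Rational^V$. For the Milnor--Wood inequality, dividing the estimate of Lemma \ref{volume_rational_central_GM} at $v'$ by $c_{v'}$ gives $|(\opEuler_M\xi_\rho)(v)|\leq\max\{0,-\chi_M(v)-\mathrm{valence}_{(V',E')}(v')/c_{v'}\}$; since each covering torus over $T_{v,w}$ has degree $\tau_\rho(v,w)$, the number incident to $v'$ is $[J'_{v'}:J_v]/\tau_\rho(v,w)$, so $\mathrm{valence}_{(V',E')}(v')/c_{v'}=[f'_{v'}:f_v]\sum_{\{v,w\}\in E}1/\tau_\rho(v,w)\geq\sum_{\{v,w\}\in E}1/\tau_\rho(v,w)$, which upgrades the bound to the one stated in Theorem \ref{volume_general_GM}.

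The step needing the most care is the cover construction: checking that the Selberg cover genuinely restricts to $\ker(\bar\rho|_{\pi_1(T_{v,w})})$ on each JSJ torus with exactly the predicted degree, that it is a legitimate fiber-preserving (formatted) cover of the Seifert pieces, and that the subsequent graph cover preserves all of this. Everything after that is bookkeeping with Proposition \ref{virtual_xkb}.
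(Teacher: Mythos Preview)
Your proof is correct and follows essentially the same route as the paper: construct a formatted cover via Selberg's lemma (plus a simplicial graph cover) on which $\tau_{\rho'}\equiv1$, apply Lemma~\ref{volume_rational_central_GM} upstairs, and descend using Proposition~\ref{virtual_xkb} together with multiplicativity of volume. Your descent computations for $(\opEuler_{M'}\xi_{\rho'})(v')=c_{v'}(\opEuler_M\xi_\rho)(v)$ and for the Milnor--Wood bound match the paper's, and your observation that $[T'_{v',w'}:T_{v,w}]=\tau_\rho(v,w)$ (hence $\mathrm{valence}_{(V',E')}(v')/c_{v'}=[f'_{v'}:f_v]\sum_{\{v,w\}}1/\tau_\rho(v,w)\geq\sum_{\{v,w\}}1/\tau_\rho(v,w)$) is exactly the comparison the paper uses.
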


	The rest of this subsection is devoted to the proof of Lemma \ref{volume_rational_virtually_central_GM}.
	We reduce the problem to Lemma \ref{volume_rational_central_GM}
	by passing to a suitable finite cover of $M$.
	This invokes the following well-known fact,
	often referred to as Selberg's lemma:
	
	\begin{theorem}[{\cite[Chapter 7, \S 7.6, Corollary 4]{Ratcliffe}}]\label{Selberg_lemma}
		Every finitely generated subgroup of $\mathrm{GL}(n,\Complex)$ contains a torsion-free normal subgroup of finite index.	
	\end{theorem}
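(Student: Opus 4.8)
The plan is to prove Selberg's lemma by the classical arithmetic route: realize $G$ over a finitely generated ring of algebraic numbers and destroy torsion by reducing modulo two carefully chosen rational primes. Throughout, $G\leq\mathrm{GL}(n,\Complex)$ is finitely generated.

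\emph{Step 1: reduction to $\mathrm{GL}(n,A)$ for a finitely generated $\Integral$-domain $A$.} Fix a finite generating set $g_1,\dots,g_m$ of $G$ and let $A\subseteq\Complex$ be the subring generated over $\Integral$ by all matrix entries of $g_1,g_1^{-1},\dots,g_m,g_m^{-1}$. Then $A$ is a finitely generated commutative $\Integral$-algebra (hence Noetherian, by Hilbert's basis theorem) and an integral domain of characteristic zero; since each $\det(g_s)$ becomes a unit of $A$, one has $G\leq\mathrm{GL}(n,A)$.

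\emph{Step 2: choice of two reduction primes.} A nonzero finitely generated $\Integral$-algebra is a Jacobson ring whose residue fields at maximal ideals are finite (the Nullstellensatz over $\Integral$); moreover, since $A$ is a domain of characteristic zero, the image of $\mathrm{Spec}(A)\to\mathrm{Spec}(\Integral)$ is a dense constructible, hence cofinite, subset of $\mathrm{Spec}(\Integral)$, so $A$ has maximal ideals $\mathfrak m$ with $A/\mathfrak m$ of characteristic $p$ for all but finitely many primes $p$. Pick two \emph{distinct odd} such primes $p_1\neq p_2$ with corresponding maximal ideals $\mathfrak m_1,\mathfrak m_2$ and finite residue fields $\mathbb F_{q_1},\mathbb F_{q_2}$. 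Put $N_i=\ker\!\big(G\to\mathrm{GL}(n,A/\mathfrak m_i)\big)$; each $N_i$ is normal of finite index in $G$, because $G/N_i$ embeds into the finite group $\mathrm{GL}(n,\mathbb F_{q_i})$. Hence $N:=N_1\cap N_2$ is a finite-index normal subgroup of $G$, and it remains to check that $N$ is torsion-free.

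\emph{Step 3: the congruence filtration argument.} For $i\in\{1,2\}$ let $\hat A_i$ be the $\mathfrak m_i$-adic completion of the Noetherian local ring $A_{\mathfrak m_i}$. Since $A$ is a domain, $A\hookrightarrow A_{\mathfrak m_i}\hookrightarrow\hat A_i$ is injective by Krull's intersection theorem, and the maximal ideal $\hat{\mathfrak m}_i$ satisfies $\bigcap_{k\geq1}\hat{\mathfrak m}_i^{\,k}=0$; set $\Gamma_k=\{g\in\mathrm{GL}(n,\hat A_i):g-I\in M_n(\hat{\mathfrak m}_i^{\,k})\}$, so $\bigcap_k\Gamma_k=\{I\}$. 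Suppose $g\in N$ has finite order and $g\neq I$; replacing $g$ by a suitable power, we may assume $g$ has prime order $\ell$. Viewing $g$ in $\mathrm{GL}(n,\hat A_i)$, we have $g\in\Gamma_1$, so there is a largest $k\geq1$ with $g\in\Gamma_k$; write $g=I+x$ with $x\in M_n(\hat{\mathfrak m}_i^{\,k})\setminus M_n(\hat{\mathfrak m}_i^{\,k+1})$. Expanding $g^\ell=I$ gives
$$\ell\,x=-\binom{\ell}{2}x^2-\cdots-x^\ell\in M_n(\hat{\mathfrak m}_i^{\,2k})\subseteq M_n(\hat{\mathfrak m}_i^{\,k+1}),$$
using $k\geq1$. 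Now the two primes enter: $\ell$ differs from at least one of $p_1,p_2$, say $\ell\neq p_1$; then $\ell\notin\hat{\mathfrak m}_1$, so $\ell$ is a unit of the local ring $\hat A_1$, whence $x\in M_n(\hat{\mathfrak m}_1^{\,k+1})$, contradicting maximality of $k$. Thus $N$ has no nontrivial element of finite order, and $N$ is the desired torsion-free finite-index normal subgroup.

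\emph{On the main obstacle.} The delicate point is Step 3: a single reduction $g\mapsto g\bmod\mathfrak m$ does \emph{not} in general have torsion-free kernel, since an element of order $p=\mathrm{char}(A/\mathfrak m)$ may reduce to the identity — this is the Minkowski phenomenon, already visible in the nontriviality of $-I$ in the mod-$2$ congruence subgroup of $\mathrm{GL}(n,\Integral)$. Working with two distinct \emph{odd} primes sidesteps the subtle $\ell=p$ case entirely, which would otherwise force one to control the ramification of $p$ in $\hat A_i$ (for instance by choosing $\mathfrak m$ at a smooth point of $\mathrm{Spec}(A)\to\mathrm{Spec}(\Integral)$, so that $p$ is a regular parameter of $\hat A_i$ and the binomial estimate still closes). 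Finally, even if one only obtained a finite-index torsion-free subgroup, normality would be automatic by passing to the intersection of its finitely many $G$-conjugates.
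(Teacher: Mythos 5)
Your proof is correct. Note, however, that the paper does not prove this statement at all: it is quoted as Selberg's lemma with a citation to Ratcliffe, so there is no in-paper argument to compare against; what you have written is the standard arithmetic proof (pass to the finitely generated ring of matrix entries, reduce modulo two maximal ideals of distinct finite residue characteristics, and kill prime-order torsion in the intersection of the two congruence kernels via the $\mathfrak m$-adic filtration), and all the steps check out. One cosmetic remark: the requirement that $p_1,p_2$ be odd is superfluous --- any two distinct residue characteristics suffice, since a prime $\ell$ can coincide with at most one of them.
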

	
	Under the additional condition $\tau_\rho(v,w)<\infty$,
	the image of $\pi_1(T_{v,w})$ under $\bar\rho\colon \pi_1(M)\to \mathrm{PSL}(2,\Real)$
	is finite (and cyclic), for all edges $\{v,w\}\in E$.
	Note that $\mathrm{PSL}(2,\Real)\cong\mathrm{SO}^+(2,1)$ is linear,
	and that $\bar\rho(\pi_1(M))$ is a finitely generated group of $\mathrm{PSL}(2,\Real)$.
	Then there exists some torsion-free finite-index normal subgroup of $\bar\rho(\pi_1(M))$,
	by Selberg's lemma (Theorem \ref{Selberg_lemma}).
	The preimage of that subgroup corresponds to a regular finite cover $M^*$ of $M$,
	where the JSJ tori subgroups all have trivial image under the pull-back representation 
	$\bar\rho^*\colon \pi_1(M')\to\mathrm{PSL}(2,\Real)$.
	For every covering pair of JSJ pieces $J^*\to J$,
	$J^*$ is furnished with an oriented Seifert fibration over an oriented bases,
	as it covers such a Seifert fibration of $J$.
	Take a characteristic finite cover of the JSJ graph of $M^*$ 
	which is a simplicial graph.
	Then it induces a characteristic finite cover $M'$ over $M^*$,
	which is a formatted graph manifold.
	
	From the above construction, we obtain a regular finite cover $M'$ over $M$,
	which is a formatted graph manifold.
	Denote by $\rho'\colon\pi_1(M')\to \Sft\times_\Integral\Real$
	the pull-back representation of $\rho$,
	and by $(V',E')$ the JSJ graph of $M'$.
	The construction also guarantees 
	\begin{equation}\label{vrvc_T_deg}
	\tau_{\rho'}(v',w')=	\frac{1}{[T'_{v',w'}:T_{v,w}]}\times\tau_\rho(v,w)=1
	\end{equation}
	for any covering pair of JSJ tori $T'_{v',w'}\to T_{v,w}$.
	We observe 
	\begin{equation}\label{vrvc_f_deg}
	\xi_{\rho'}(v')=[f'_{v'}:f_v]\times \xi_{\rho}(v)
	\end{equation}
	for all covering pair of JSJ pieces $J'_{v'}\to J_v$.
	In particular,
	Lemma \ref{volume_rational_central_GM} applies to $(M',\rho')$.
	
	For any JSJ piece $J_v$ of $M$,
	the number of	JSJ pieces $J'_{v'}$ of $M'$ that cover $J_v$
	is precisely $[M':M]/[J'_{v'}:J_v]$.
	For any boundary component $T_{v,w}$ of $J_v$,
	the number of boundary components $T'_{v',w'}$ of $J'_{v'}$	that cover $T_{v,w}$
	is precisely $[J'_{v'}:J_v]/[T_{v',w'}:T_{v,w}]$.
	The number $b_{v',w'}$ depends only on $\{v,w\}$ because of regular covering.
	By the formulas of Proposition \ref{virtual_xkb} and (\ref{vrvc_f_deg}),
	we compute:
	\begin{eqnarray*}
	\left(\opEuler_{M'}\xi_{\rho'}\right)(v')
	&=&k_{v'}\xi_{\rho'}(v')-\sum_{\{v',w'\}\in E'}\frac{\xi_{\rho'}(w')}{b_{v',w'}}\\
	&=& \frac{[J'_{v'}:J_{v}]}{[f'_{v'}:f_v]}\times k_v\xi_{\rho}(v)-
	\sum_{\{v,w\}\in E} \frac{[J'_{v'}:J_v]}{[T'_{v',w'}:T_{v,w}]}\times 
	\frac{[T'_{v',w'}:T_{v,w}]}{[f'_{v'}:f_v]}\times\frac{\xi_{\rho}(w)}{b_{v,w}}\\
	&=& \frac{[J'_{v'}:J_v]}{[f'_{v'}:f_v]}\times \left(k_{v}\xi_{\rho}(v)-\sum_{\{v,w\}\in E}\frac{\xi_{\rho}(w)}{b_{v,w}}\right)\\
	&=& \frac{[J'_{v'}:J_v]}{[f'_{v'}:f_v]}\times \left(\opEuler_{M}\xi_{\rho}\right)(v).
	\end{eqnarray*}
	
	By the volume formula in Theorem \ref{volume_general_GM} (Lemma \ref{volume_rational_central_GM}) for $(M',\rho')$
	and (\ref{vrvc_f_deg}), we compute:
	\begin{eqnarray*}
	\mathrm{vol}_{\Sft\times_\Integral\Real}(M,\rho)&=&
	\frac{1}{[M':M]}\times\mathrm{vol}_{\Sft\times_\Integral\Real}(M',\rho')\\
	&=&\frac{1}{[M':M]}\times4\pi^2\cdot\left(\xi'_{\rho'},\opEuler_{M'}\xi_{\rho'}\right)\\
	&=&\frac{4\pi^2}{[M':M]}\times\sum_{v'\in V'} \xi_{\rho'}(v')\times\left(\opEuler_{M'}\xi_{\rho'}\right)(v')\\
	&=&\frac{4\pi^2}{[M':M]}\times\sum_{v\in V} \frac{[M':M]}{[J'_{v'}:J_v]}
	\times[J'_{v'}:J_v]\times\xi_{\rho}(v)\times\left(\opEuler_{M}\xi_{\rho}\right)(v)\\
	&=&4\pi^2\cdot\left(\xi_{\rho},\opEuler_{M}\xi_{\rho}\right).
	\end{eqnarray*}
	
	By Proposition \ref{virtual_xkb} and (\ref{vrvc_T_deg}),
	we estimate, for any covering pair of JSJ pieces $J'_{v'}\to J_v$:
	\begin{eqnarray*}
	-\chi_{v'}-\sum_{\{v',w'\}\in E'}\frac{1}{\tau_{\rho'}(v',w')}
	&=&
	-\,\frac{[J'_{v'}:J_v]}{[f'_{v'}:f_v]}\times \chi_{v}
	-\sum_{\{v,w\}\in E}
	\frac{[J'_{v'}:J_v]}{[T'_{v',w'}:T_{v,w}]}\times\frac{[T'_{v',w'}:T_{v,w}]}{\tau_{\rho}(v,w)}\\
	&\leq&
	\frac{[J'_{v'}:J_v]}{[f'_{v'}:f_v]}\times \left(-\chi_{v}-\sum_{\{v,w\}\in E}\frac{1}{\tau_{\rho}(v,w)}\right).
	\end{eqnarray*}
	
	By the inequality in Theorem \ref{volume_general_GM} (Lemma \ref{volume_rational_central_GM}) for $(M',\rho')$,
	we obtain
	\begin{eqnarray*}
	|(\opEuler_M\xi_\rho)(v)|&=&\frac{[f'_{v'}:f_v]}{[J'_{v'}:J_v]}\times |(\opEuler_{M'}\xi_{\rho'})(v')|\\
	&\leq&\frac{[f'_{v'}:f_v]}{[J'_{v'}:J_v]}\times
	\max\left\{0,-\chi_{v'}-\sum_{\{v',w'\}\in E'}\frac{1}{\tau_{\rho'}(v',w')}\right\} \\
	&\leq&\max\left\{0,-\chi_v-\sum_{\{v,w\}\in E}\frac{1}{\tau_\rho(v,w)}\right\}.
	\end{eqnarray*}
	
	The assertion about rationality is again obvious provided $\xi_\rho\in\Rational^V$.
	
	This completes the proof of Lemma \ref{volume_rational_virtually_central_GM}.
	
	\subsection{The rational approximation}
		
	\begin{lemma}\label{lemma_rational_approx}
	For any finitely generated group $\pi$ 
	and any representation $\rho\colon \pi\to \Sft\times_\Integral\Real$,
	there is a dense subset of cohomology classes $\alpha\in H^1(\pi;\Real)$,
	such that the image of the twisted representations $\rho[\alpha]$ of $\pi$
	is contained in $\Sft\times_\Integral\Rational$.
	\end{lemma}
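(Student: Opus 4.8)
\emph{Overview and reformulation.} The plan is to observe that twisting $\rho$ by a class $\alpha\in H^1(\pi;\Real)$ affects the values only in their ``central coordinate'', and then only modulo $\Integral$, so that the assertion reduces to the density of $\Rational$ in $\Real$ along finitely many coordinates. Concretely, $\Sft\times_\Integral\Rational$ is the subgroup $\{g[s]\colon g\in\Sft,\ s\in\Rational\}$ of $\Sft\times_\Integral\Real$, and this coincides with the preimage of the subgroup $\Rational/\Integral$ under the natural quotient homomorphism $q\colon\Sft\times_\Integral\Real\to\Real/\Integral$, $g[s]\mapsto s\bmod\Integral$ (the homomorphism already used in the proof of Proposition \ref{prop_essential_winding_number}). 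Since $q$ is a homomorphism and $\Rational/\Integral$ is a subgroup of $\Real/\Integral$, a homomorphism $\rho[\alpha]\colon\pi\to\Sft\times_\Integral\Real$ has image in $\Sft\times_\Integral\Rational$ if and only if the composite homomorphism $q\circ\rho[\alpha]\colon\pi\to\Real/\Integral$ takes values in $\Rational/\Integral$.

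\emph{Reduction to finitely many conditions.} Being a homomorphism into an abelian group, $q\circ\rho[\alpha]$ factors through $H_1(\pi;\Integral)$, which is finitely generated because $\pi$ is. Fix elements $e_1,\dots,e_r\in\pi$ whose classes form a basis of the free part of $H_1(\pi;\Integral)$, together with finitely many elements of $\pi$ representing generators of its torsion subgroup. A homomorphism $H_1(\pi;\Integral)\to\Real/\Integral$ has image in $\Rational/\Integral$ if and only if it does so on this finite generating set; on the torsion generators this is automatic, since their images have finite order and the torsion subgroup of $\Real/\Integral$ is exactly $\Rational/\Integral$. Writing $\rho(e_i)=g_i[s_i]$ with $s_i\in\Real$, and using the twisting formula (\ref{twisting_representation}) together with the centrality of $\id[\alpha(e_i)]$ in $\Sft\times_\Integral\Real$, one gets $q\circ\rho[\alpha](e_i)=(s_i+\alpha(e_i))\bmod\Integral$. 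Hence $\rho[\alpha]$ has image in $\Sft\times_\Integral\Rational$ if and only if $s_i+\alpha(e_i)\in\Rational$ for every $i=1,\dots,r$.

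\emph{Conclusion by density.} Evaluation on $e_1,\dots,e_r$ identifies $H^1(\pi;\Real)=\mathrm{Hom}(\pi,\Real)$ linearly with $\Real^r$: any homomorphism $\pi\to\Real$ annihilates torsion, hence is determined by its values on $e_1,\dots,e_r$, and those values may be prescribed freely. Under this isomorphism the set of admissible classes $\alpha$ becomes the coset $(-s_1,\dots,-s_r)+\Rational^r$, which is dense in $\Real^r$; transporting back, the corresponding subset of $H^1(\pi;\Real)$ is dense, as desired.

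\emph{On the difficulty.} I do not foresee a genuine obstacle: the whole content is that twisting shifts each ``central coordinate'' by an independent real parameter. The only step warranting care is the bookkeeping above — verifying that passing to the abelianization and then to a finite generating set really collapses the $\Rational$-valuedness to one condition per free generator, with the torsion generators imposing none — after which density of $\Rational$ in $\Real$ closes the argument at once.
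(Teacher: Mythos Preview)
Your proof is correct. Both your argument and the paper's arrive at the same conclusion---the admissible $\alpha$'s form a coset of $H^1(\pi;\Rational)$ in $H^1(\pi;\Real)$---but by somewhat different routes.

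The paper chooses an arbitrary generating set $u_1,\dots,u_r$ of $\pi$, lifts each $\bar\rho(u_i)$ to $g_i\in\Sft$, and observes that the condition for $u_i\mapsto g_i[s_i]$ to extend to a homomorphism is a linear system in the $s_i$ with integer coefficients; since $\rho$ furnishes a real solution, there is a rational one $\rho'=\rho[\alpha']$, and the dense set is $\alpha'+H^1(\pi;\Rational)$. You instead use the quotient $q\colon\Sft\times_\Integral\Real\to\Real/\Integral$ to reduce the question to whether $q\circ\rho[\alpha]$ lands in $\Rational/\Integral$, then pass to the abelianization and a basis of its free part. This sidesteps the relator equations entirely: because the target $\Real/\Integral$ is abelian, no compatibility conditions arise beyond those on a generating set of $H_1(\pi;\Integral)$, and torsion generators take care of themselves. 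Your route is a bit more direct and avoids the ``real solution implies rational solution'' step; the paper's route is more hands-on with the representation itself but requires tracking the relators.
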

	
	To recall the notation $\rho[\alpha]$,
	see (\ref{twisting_representation}) and Section \ref{Subsec-central_extension}.
	
	\begin{proof}
	Denote by $\bar\rho\colon\pi\to \mathrm{PSL}(2,\Real)$ the induced representation.
	Let $u_1,\cdots,u_r$ be a generating set of $\pi$.
	For each generator $u_i$ of $\pi$, take a lift $g_i\in\Sft$ of $\bar\rho(u_i)$.
	Then for any relator $R(u_1,\cdots,u_r)$ of $\pi$,
	the element $R(g_1,\cdots,g_r)\in\Sft$ is central,
	so it can be identified as an integer $c_R\in\Integral$.
	Suppose $s_1,\cdots,s_r\in\Real$.
	Then the assignment $\rho'(u_i)=g_i[s_i]$ for all $i\in\{1,\cdots,r\}$
	determines a representation $\rho'\colon\pi\to \Sft\times_\Integral\Real$ 
	if and only if the equation $R(g_1[s_1],\cdots,g_r[s_r])=\id[0]$ holds in $\Sft\times_\Integral\Real$
	for all relators $R$ of $\pi$.
	This gives rise to a linear system of equations with $r$ unknowns.
	Because of $\rho$, there is a real solution, so there is also a rational solution $\rho'$.
	By Theorem \ref{lifting_representations},
	we see $\rho'=\rho[\alpha']$ for some $\alpha'\in H^1(\pi;\Real)$.
	Then the asserted subset of cohomology classes can be taken as 
	the coset $\alpha'+H^1(\pi;\Rational)$ in $H^1(\pi;\Real)$.
	\end{proof}
	
	By Lemma \ref{lemma_rational_approx}, 
	we obtain a sequence of twisted representations 
	$\rho[\alpha_n]\colon\pi_1(M)\to \Sft\times_\Integral\Rational$ 
	such that $\alpha_n\to 0$ in $H^1(M;\Real)$.
	Note that $\tau_{\rho[\alpha_n]}$ are all equal to $\tau_\rho$,
	so they are all finite by assumption.
	We also see $\xi_{\rho[\alpha_n]}\in\Rational^V$,
	because every $\pi_1(T_{v,w})$ 
	has virtually central and rational image in $\Sft\times_\Integral\Real$
	under any $\rho[\alpha_n]$.
	Apply Lemma \ref{volume_rational_virtually_central_GM} to all $\rho[\alpha_n]$
	and take the limit.
	Then we obtain the volume formula and the generalized Milnor--Wood inequalities
	in Theorem \ref{volume_virtually_central_GM}.
	Note that $\rho[\alpha_n]$ all lie on the same component of $\mathcal{R}(\pi_1(M),\Sft\times_\Integral\Rational)$
	as that of $\rho$, so they have the same volume of $\Sft\times_\Integral\Real$--representations,
	(see Section \ref{Subsec-volume_of_representations}).
	We obtain	the assertion on rationality, 
	since any $\xi_{\rho[\alpha_n]}$ serves as an asserted solution $X\in\Rational^V$.
	
	This completes the proof of Theorem \ref{volume_virtually_central_GM}.

\section{Perturbation, deformation, and conjugation}\label{Sec-pdc}
	In this section, we develop techniques for modifying certain $\Sft\times_\Integral\Real$--representations of 
	finitely generated abelian groups to obtain certain canonical families.
	The modification families are either sequential or continuous, and converge to the original representation.
	In particular, the modified representations all lie on the path-connected component of the original representation,
	in the space of $\Sft\times_\Integral\Real$--representations.
	We also study commutator factorizations in $\Sft\times_\Integral\Real$ of elements occuring in such modification.
	For given formatted graph manifolds, roughly speaking, 
	these techniques allow us to modify $\Sft\times_\Integral\Real$--representations
	restricted to the edge groups, and in certain circumstances,
	to extend the modification over the adjacent vertex groups.
	For both the modification and the commutator factorization,
	we develop parallel versions for elliptic, hyperbolic, and parabolic representations separately.

	\subsection{Noncentral representations of abelian groups}

	\begin{lemma}\label{ab_rep_ell}
		Let $H$ be a finitely generated abelian group.
		If $\eta\colon H\to \Sft\times_\Integral\Real$ is a representation of elliptic type,
		then there exists a sequence of elliptic representations 
		$\eta_n\colon H\to\Sft\times_\Integral\Real$, indexed by $n\in\Natural$,
		such that the following properties are all satisfied, for all $n\in\Natural$:
		\begin{itemize}
		\item
		For all $h\in H$,
		$\eta_n(h)$ lies in the coset $\eta(h)\Sft$.
		\item
		For all $h\in H$,
		$\eta_n(h)$ converges to $\eta(h)$ as $n$ tends to $\infty$.
		\item
		If $\eta(h)$ lies in the center $\Real$,
		then $\eta_n(h)$ equals $\eta(h)$.
		\item 
		The induced representation $\bar\eta_n\colon H\to \mathrm{PSL}(2,\Real)$ 
		has finite cyclic image.
		\end{itemize}
		In fact, $\eta_n$ can be constructed with image in the centralizer of $\eta(H)$
		in $\Sft\times_\Integral\Real$.
	\end{lemma}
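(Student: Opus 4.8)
The plan is to push the problem into the abelian subgroup of $\Sft\times_\Integral\Real$ lying over a maximal torus of $\mathrm{PSL}(2,\Real)$, and there to perturb $\eta$ by ``rotating in the $\Sft$--direction'' until the induced $\mathrm{PSL}(2,\Real)$--representation has finite image, keeping the perturbation trivial on those elements of $H$ whose $\eta$--image is already central. By hypothesis $\bar\eta(H)$ is a nontrivial subgroup of $\mathrm{PSL}(2,\Real)$ all of whose nontrivial elements are elliptic; being abelian, it centralizes any nontrivial element of itself, hence lies in that element's centralizer, which is the unique maximal torus containing it. Conjugating that torus to $\mathrm{PSO}(2)$ (the image of $\mathrm{SO}(2)$) brings $\bar\eta(H)$ into $\mathrm{PSO}(2)$. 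Every conclusion of the lemma is invariant under conjugating $\eta$ in $\Sft\times_\Integral\Real$ --- the essential winding number is a conjugacy invariant (Lemma~\ref{pre_essential_winding_number}, Definition~\ref{def_essential_winding_number}), lying in a coset of the normal subgroup $\Sft$ is conjugacy invariant, centrality is conjugacy invariant, and conjugation is continuous --- so I may assume $\bar\eta(H)\subseteq\mathrm{PSO}(2)$. Then $\eta(H)$ lies in the closed abelian subgroup $A=\{\widetilde{k}(r)[s]:r,s\in\Real\}$, which is the full preimage of $\mathrm{PSO}(2)$ in the extension \eqref{central_extension_diagram}; and since the centralizer of a nontrivial elliptic element of $\mathrm{PSL}(2,\Real)$ is exactly the $\mathrm{PSO}(2)$ containing it, $A$ is precisely the centralizer of $\eta(H)$ in $\Sft\times_\Integral\Real$. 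Hence it suffices to build $\eta_n$ with image in $A$, which also yields the final assertion.

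Next I would set up coordinates on $A$. The assignment $\widetilde{k}(r)[s]\mapsto(r+s,\,s\bmod\Integral)$ is a well-defined isomorphism of Lie groups $A\cong\Real\times(\Real/\Integral)$. Under it: the first coordinate of an element is its essential winding number (Definition~\ref{def_essential_winding_number}); the second coordinate is its image under the quotient homomorphism $\Sft\times_\Integral\Real\to(\Sft\times_\Integral\Real)/\Sft\cong\Real/\Integral$, so two elements of $A$ lie in the same coset of $\Sft$ exactly when they share the second coordinate; the projection $A\to\mathrm{PSO}(2)\cong\Real/\Integral$ becomes $(u,\bar v)\mapsto(u\bmod\Integral)-\bar v$; and the center $\Real$ of $\Sft\times_\Integral\Real$ sits inside $A$ as $\{(s,\,s\bmod\Integral):s\in\Real\}$, so $(u,\bar v)$ is central precisely when $u\bmod\Integral=\bar v$. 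Writing $\eta=(w,\sigma)$ for the induced homomorphisms $w\colon H\to\Real$ and $\sigma\colon H\to\Real/\Integral$, the induced $\mathrm{PSL}(2,\Real)$--representation is $\bar\eta=\pi_*w-\sigma$, with $\pi_*\colon\Real\to\Real/\Integral$ the quotient map, and $H_0:=\{h\in H:\eta(h)\text{ is central}\}=\ker\bar\eta$. These identifications are routine from the constructions in Section~\ref{Subsec-central_extension}.

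I would then seek $\eta_n$ of the form $(w+\delta_n,\,\sigma)$, where $\delta_n\colon H\to\Real$ is a homomorphism vanishing on $H_0$. Automatically $\eta_n(h)\in\eta(h)\Sft$ for every $h$ (the second coordinate is unchanged), $\eta_n(h)=\eta(h)$ whenever $h\in H_0$, and $\bar\eta_n=\bar\eta+\pi_*\delta_n$ again vanishes on $H_0$, so it factors through $H/H_0$, which I identify with the finitely generated abelian group $Q:=\bar\eta(H)\subseteq\Real/\Integral$. If $Q$ is finite, $\bar\eta$ already has finite cyclic image and $\eta_n:=\eta$ works. Otherwise write $Q\cong Q_{\mathrm{tors}}\oplus\Integral\gamma_1\oplus\cdots\oplus\Integral\gamma_c$ with $c\ge1$; each $\gamma_k$ has infinite order in $\Real/\Integral$, hence admits an irrational lift $\tilde\gamma_k\in\Real$. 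Since $-\tilde\gamma_k+\Rational$ is dense in $\Real$, for each $n$ I can pick $t_{k,n}\in(-\tilde\gamma_k+\Rational)$ with $|t_{k,n}|<1/n$, and let $\delta_n\colon H\to\Real$ be the homomorphism factoring through $H/H_0\cong Q$ that kills $Q_{\mathrm{tors}}$ and sends $\gamma_k\mapsto t_{k,n}$. Then $\bar\eta_n$ takes values in $\Rational/\Integral$ on $Q_{\mathrm{tors}}$ and on each $\gamma_k$ (there it equals $\pi_*(\tilde\gamma_k+t_{k,n})$), so $\bar\eta_n(H)$ is a finitely generated subgroup of $\Rational/\Integral$, hence finite, and as a finite subgroup of $\mathrm{PSO}(2)$ it is cyclic. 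Also $\delta_n\to0$ pointwise, so $\eta_n(h)\to\eta(h)$ in $A$, hence in $\Sft\times_\Integral\Real$, for every $h$. Finally $\bar\eta_n(\gamma_1)=\pi_*(\tilde\gamma_1+t_{1,n})\to\gamma_1\neq0$, so $\bar\eta_n$ is nontrivial --- that is, $\eta_n$ is genuinely of elliptic type --- for all large $n$; discarding or repeating the first finitely many terms makes this hold for all $n$. All of $\eta_n(H)$ lies in $A$, the centralizer of $\eta(H)$.

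I expect the genuine work here to be bookkeeping rather than anything deep. The delicate points are Step~2, where the coordinatization of $A$ must simultaneously expose the essential winding number, the coset of $\Sft$, the induced $\mathrm{PSL}(2,\Real)$--representation, and centrality; and Step~3, where the perturbation $\delta_n$ must vanish on $\ker\bar\eta$ so as not to disturb the central values, while still being free enough to drive the winding numbers of a generating set of $H$ into the locus where $\bar\eta_n$ has finite image --- handling the torsion of $H$ and of $Q$ correctly is the only subtlety. The companion statements for hyperbolic and parabolic $\eta$ will follow the same template, but with $\mathrm{PSO}(2)$ replaced by a one--parameter hyperbolic or parabolic subgroup; the relevant abelian subgroup of $\Sft\times_\Integral\Real$ is then noncompact in both directions and its projection to $\mathrm{PSL}(2,\Real)$ is torsion--free, so there the rational perturbation must be replaced by a continuous deformation of $\eta$ toward a representation that is central on $H$, and those cases will require a separate (but analogous) treatment.
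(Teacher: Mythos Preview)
Your proof is correct and takes essentially the same route as the paper: conjugate into the abelian Lie subgroup $A=\widetilde{\mathrm{SO}}(2)\times_{\Integral}\Real$ lying over a maximal torus of $\mathrm{PSL}(2,\Real)$, then perturb inside $A$ so that the induced $\mathrm{PSO}(2)$--valued homomorphism has finite image. The paper reduces to the case of cyclic $H$ by decomposing $H$ as a product of cyclic factors and perturbing one generator at a time; you instead coordinatize $A\cong\Real\times(\Real/\Integral)$ explicitly and add a correction $\delta_n\colon H\to\Real$ that factors through $Q=H/\ker\bar\eta$. Your organization makes the third bullet---preservation of central values---automatic, since $\delta_n$ vanishes on $\ker\bar\eta$ by design; the paper's factor-by-factor reduction treats this property as inherited from the cyclic case, which is not entirely transparent when $\ker\bar\eta$ is not itself a sum of the chosen cyclic factors.
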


	\begin{proof}
		Since $\eta$ is of elliptic type, the centralizer of $\eta(H)$
		is a conjugate of the closed abelian subgroup $\widetilde{\mathrm{SO}}(2)\times_\Integral\Real$.
		We decompose the finitely generated abelian group $H$ 
		as a Cartesian product of cyclic factors.
		Then it suffices to construct $\eta_n$ for each of the factors,
		and this will determine a sequence of representations $\eta_n$ for $H$ 
		with the asserted properties.
		So the problem reduces to the case when $H$ is cyclic.
		
		If $H$ is finite cyclic, 
		or	if $H$ is infinite cyclic but $\bar\eta(H)$ finite cyclic,
		we simply take $\eta_n$ to be $\eta$ for all $n\in\Natural$.
		
		If $H$ is infinite cyclic and $\bar\eta(H)$ also infinite cyclic,		
		we construct as follows.
		Identify $H$ with the additive group of integers $\Integral$.
		Denote by $g[s]\in\Sft\times_\Integral\Real$ the image $\eta(1)$,
		for some $g\in\Sft$ and $s\in\Real$.
		Since $\bar{\eta}(g)\in \mathrm{PSL}(2,\Real)$ is elliptic, 
		there is a sequence of finite-order elements $\bar{g}_n\in\mathrm{PSL}(2,\Real)$
		which commute with $\bar{\eta}(g)$ and which converge to $\bar{g}\in\mathrm{PSL}(2,\Real)$
		as $n$ tends to $\infty$.
		There is also a sequence of elements $g_n\in\Sft$ which lift $\bar{g}_n\in\mathrm{PSL}(2,\Real)$
		and which converge to $g$ as $n$ tends to $\infty$.
		We construct $\eta_n$ by setting $\eta_n(1)=g_n[s]$, for all $n\in\Natural$.
		Note that $0\in \Integral$ is the only element with image in the center $\Real$,
		since $\bar\eta$ is faithful.
		Then the asserted properties are all obviously satisfied.
	\end{proof}

	\begin{lemma}\label{ab_rep_hyp}
		Let $H$ be a finitely generated abelian group.
		If $\eta\colon H\to \Sft\times_\Integral\Real$ is a representation of hyperbolic type,
		then there exists a continuous family of hyperbolic representations 
		$\eta_t\colon H\to\Sft\times_\Integral\Real$, parametrized by $t\in(0,1]$,
		such that the following properties are all satisfied, for all $t\in(0,1]$:
		\begin{itemize}
		\item
		For all $h\in H$,
		$\eta_t(h)$ lies in the coset $\eta(h)\Sft$.
		\item
		For all $h\in H$,
		$\eta_1(h)$ equals $\eta(h)$.
		\item 
		For all $h\in H$,
		$\eta_t(h)$	converges to 
		the central element $\mathrm{wind}(\eta(h))\in\Real$ as $t$ tends to $0$.
		\item
		If $\eta(h)$ lies in the center $\Real$,
		then $\eta_t(h)$ equals $\eta(h)$.
		\end{itemize}
		In fact, $\eta_t$ can be constructed with image in the centralizer of $\eta(H)$
		in $\Sft\times_\Integral\Real$.
	\end{lemma}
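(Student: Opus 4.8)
The plan is to mimic the structure of the proof of Lemma \ref{ab_rep_ell}, but now using a continuous contraction of hyperbolic elements toward their central part, rather than a discrete approximation by finite-order elliptics. First I would observe that since $\eta$ is of hyperbolic type, the centralizer of $\eta(H)$ in $\Sft\times_\Integral\Real$ is a conjugate of the closed abelian subgroup $\widetilde{A}\times_\Integral\Real$, where $\widetilde{A}$ is the lift of the diagonal one-parameter subgroup $A<\mathrm{PSL}(2,\Real)$ (up to conjugacy we may assume $\bar\eta(H)$ lies in $A$, and each $\bar\eta(h)$ is $\pm\mathrm{diag}(\lambda_h,\lambda_h^{-1})$ with $\lambda_h>0$). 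As in the elliptic case, I would then decompose the finitely generated abelian group $H$ into a product of cyclic factors and reduce to constructing $\eta_t$ on a single cyclic factor; compatibility of these families on the factors yields the family on $H$, and the asserted properties (lying in the coset $\eta(h)\Sft$, agreeing with $\eta$ at $t=1$, fixing central elements, taking values in the centralizer) are inherited factor by factor.

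Next I would treat the cyclic case explicitly. If $H$ is finite cyclic, a hyperbolic element has infinite order, so $\eta$ cannot be of hyperbolic type unless it is trivial on $H$; in that degenerate reading one simply sets $\eta_t=\eta$. The substantive case is $H\cong\Integral$ with $\bar\eta(1)$ a genuine hyperbolic element. Write $\eta(1)=g[s]$ with $g\in\Sft$ projecting to $\bar g\in A$, and let $\xi=\winding(\eta(1))=\widetilde\omega(g)/\pi+s$; note that for hyperbolic $\bar g$ we have $\widetilde\omega(g)\in\pi\Integral$, because $\bar\omega$ collapses all non-elliptic conjugacy classes to $0\bmod\pi\Integral$ (Lemma \ref{pre_essential_winding_number}). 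The natural contraction is to scale inside the abelian centralizer: parametrize $\widetilde{A}$ by a real coordinate in which $g$ corresponds to some value $\ell\in\Real$ (the translation length with sign), and define $g_t\in\widetilde{A}$ to correspond to $t\ell$, so $g_1=g$ and $g_0=\widetilde{k}(\widetilde\omega(g)/\pi)$ is the central element carrying the same $\widetilde\omega$-value. Then set $\eta_t(1)=g_t[s]$. For $t\in(0,1]$ the element $g_t$ is hyperbolic (nonzero translation length), so $\eta_t$ is of hyperbolic type; as $t\to0$, $g_t[s]\to \widetilde{k}(\widetilde\omega(g)/\pi)[s]=\mathrm{id}[\widetilde\omega(g)/\pi+s]=\mathrm{id}[\xi]$, which is exactly the required central limit $\winding(\eta(1))\in\Real$. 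If $\eta(1)$ is already central then $\ell=0$ and $g_t=g$ for all $t$, so $\eta_t=\eta$. All four bulleted properties, and the centralizer condition, hold by construction.

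The main obstacle is bookkeeping the interaction between the $\Sft$-part and the central $\Real$-part so that $\winding$ is exactly the limit, rather than merely something in a previously-used interval $[\underline m,\overline m]$: one must be careful that the "scaling toward the center" is performed inside $\widetilde{A}$ rather than inside $A$, so that the continuous lift of the path of projections stays single-valued and its terminal point in $\Sft$ is the specific central element $\widetilde k(\widetilde\omega(g)/\pi)$ rather than some other lift differing by an element of $Z(\Sft)$. This is where the normalization $\widetilde\omega(\widetilde k(r))=\pi r$ in Lemma \ref{pre_essential_winding_number} does the work. A secondary point to check is that when $H$ is decomposed into cyclic factors, the separate one-parameter families all lie in the one common abelian centralizer $\widetilde{A}\times_\Integral\Real$ and therefore commute, so that assembling them indeed yields a homomorphism $\eta_t\colon H\to\Sft\times_\Integral\Real$ for every $t$; this is immediate once everything is arranged inside that fixed abelian subgroup.
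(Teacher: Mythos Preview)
Your proposal is correct and follows essentially the same approach as the paper: reduce to the infinite cyclic case, then contract the hyperbolic translation length toward zero inside the abelian centralizer while keeping track of the sheet in $\Sft$ so that the limit is precisely the central element $\winding(\eta(h))$. The paper carries this out with explicit diagonal matrices $\bar\eta_t(h)=\{\pm\mathrm{diag}(e^{\lambda h t},e^{-\lambda h t})\}$ and path lifting, whereas you phrase the same contraction abstractly via a parametrization of the lifted one-parameter subgroup; the content is identical.
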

	
	\begin{proof}
		It suffices to argue for $H$ infinite cyclic, for similar reasons as in the proof of Lemma \ref{ab_rep_ell}.
		We identify $H$ as $\Integral$, and denote by $\bar\eta\colon\Integral\to \mathrm{PSL}(2,\Real)$ the induced respresentation of $\eta$.
		Possibly after conjugation, we may assume that
		$$\bar\eta(1)=\left\{\pm\left[\begin{array}{cc} e^\lambda & 0 \\
	  0 & e^{-\lambda}\end{array}\right]\right\}$$
		for some $\lambda\in(1,+\infty)$.
		For any $t\in(0,1]$ and $h\in\Integral$, the expression
		$$\bar\eta_t(h)=\left\{\pm\left[\begin{array}{cc} e^{\lambda ht} & 0 \\
	  0 & e^{-\lambda ht}\end{array}\right]\right\}$$
		defines a continuous family of representations 
		$\bar\eta_t\colon \Integral\to \mathrm{PSL}(2,\Real)$
		parametrized by $t$.
		Note that $\bar{\eta}_t$ are all of hyperbolic type
		with image in the centralizer of $\bar{\eta}(1)$.
		Then for any $h\in\Integral$,
		the path $(0,1]\to \mathrm{PSL}(2,\Real)\colon t\mapsto \eta_t(h)$
		can be lifted uniquely to be a path $(0,1]\to \Sft\times_\Integral\Real \colon t\mapsto \eta_t(h)$,
		such that $\eta_1(h)$ equals $\eta(h)$ and $\eta_t(h)$ lies in the coset $\eta(h)\Sft$.
		We also observe that 
		$\eta_t(h)$ all lie in the centralizer of $\eta(1)$,
		which guarantees that
		$\eta_t\colon \Integral\to \Sft\times_\Integral\Real$ are homomorphisms.
		For any nontrivial $h\in\Integral$,
		$\winding(\eta_t(h))$ remains constant as $t$ ranges in $(0,1]$,
		since 
		$\bar{\eta}_t(h)$ remains hyperbolic.
		Because $\bar{\eta}_t(h)$ tends to the identity element of $\mathrm{PSL}(2,\Real)$
		as $t$ tends to $0+$,
		$\eta_t(h)$ converges to the central element 
		$\winding(\eta(h))\in\Real$ as $t$ tends to $0$.
		Therefore, the representations $\eta_t$ satisfy all the asserted properties,
		as desired.
	\end{proof}

	\begin{lemma}\label{ab_rep_par}
		Let $H$ be a finitely generated abelian group.
		If $\eta\colon H\to \Sft\times_\Integral\Real$ is a representation of parabolic type,
		then there exists a continuous family of elements $g_t\in\Sft$,
		parametrized by $t\in(0,1]$,
		such that the following properties are all satisfied:
		\begin{itemize}
		\item
		At $t=1$, $g_t$ equals the identity.
		\item
		For all $h\in H$, $g_t\eta(h)g_t^{-1}$ converges to 
		the central element $\mathrm{wind}(\eta(h))\in\Real$ as $t$ tends to $0$.
		\end{itemize}
		In fact, $g_t$ can be constructed to normalize the centralizer of $\eta(H)$.
	\end{lemma}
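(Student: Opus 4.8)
The plan is to imitate the hyperbolic case (Lemma~\ref{ab_rep_hyp}), but to replace the deformation of the representation by \emph{conjugation} with a diagonal one-parameter family, which contracts any parabolic subgroup towards the identity. First I would put $\eta$ into a normal form. Since $H$ is abelian and $\bar\eta\colon H\to\mathrm{PSL}(2,\Real)$ is of parabolic type, the image $\bar\eta(H)$ is a nontrivial abelian group consisting of parabolic elements and the identity; any two nontrivial such elements commute, hence share a common fixed point on $\partial_\infty\Hyp^2$, so $\bar\eta(H)$ lies in a single maximal unipotent subgroup. After conjugating by a fixed element of $\Sft$, I may therefore assume $\bar\eta(H)$ is contained in $U=\{\pm\left(\begin{smallmatrix}1&x\\0&1\end{smallmatrix}\right):x\in\Real\}$. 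This reduction is harmless: conjugating the family $g_t$ constructed below by the same fixed element yields a family that works for the original $\eta$, because $\winding$ is conjugation invariant, conjugation fixes central elements, and the value at $t=1$ remains the identity.

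Next I would set $\bar g_t=\pm\left(\begin{smallmatrix}\sqrt t&0\\0&1/\sqrt t\end{smallmatrix}\right)$ in $\mathrm{PSL}(2,\Real)$, for $t\in(0,1]$. Then $\bar g_1$ is the identity, and $\bar g_t\left(\begin{smallmatrix}1&x\\0&1\end{smallmatrix}\right)\bar g_t^{-1}=\left(\begin{smallmatrix}1&tx\\0&1\end{smallmatrix}\right)$, so $\bar g_t\,\bar\eta(h)\,\bar g_t^{-1}$ converges to the identity of $\mathrm{PSL}(2,\Real)$ as $t\to0^+$, for every $h\in H$. Let $g_t\in\Sft$ be the unique lift of $\bar g_t$ with $g_1=\id$; this is a continuous family on $(0,1]$. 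For $h\in H$, write $\eta(h)=u_h[s_h]$ with $u_h\in\Sft$ and $s_h\in\Real$, so that $g_t\eta(h)g_t^{-1}=(g_tu_hg_t^{-1})[s_h]$. The continuous path $t\mapsto g_tu_hg_t^{-1}$ in $\Sft$ covers the path $t\mapsto\bar g_t\,\bar\eta(h)\,\bar g_t^{-1}$ in $\mathrm{PSL}(2,\Real)$, which extends continuously over the closed interval $[0,1]$ with value the identity at $t=0$. By the unique path lifting property of the covering $\Sft\to\mathrm{PSL}(2,\Real)$, the path $t\mapsto g_tu_hg_t^{-1}$ likewise extends continuously over $[0,1]$, and its limit at $t=0$ is a central element $\widetilde k(n_h)$ with $n_h\in\Integral$.

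The crucial step is to identify $n_h$. Here I would use that $\widetilde\omega\colon\Sft\to\Real$ is continuous and conjugation invariant (Lemma~\ref{pre_essential_winding_number}): from $\widetilde\omega(g_tu_hg_t^{-1})=\widetilde\omega(u_h)$ for all $t\in(0,1]$, passing to the limit gives $\pi n_h=\widetilde\omega(\widetilde k(n_h))=\widetilde\omega(u_h)$. Since $\bar u_h$ is parabolic or trivial, $\bar\omega(\bar u_h)=0\bmod\pi\Integral$, so $\widetilde\omega(u_h)/\pi$ is precisely the integer $n_h$. It follows that $g_tu_hg_t^{-1}\to\widetilde k(n_h)$ in $\Sft$, and hence
$$g_t\eta(h)g_t^{-1}=(g_tu_hg_t^{-1})[s_h]\ \longrightarrow\ \widetilde k(n_h)[s_h]=\id[n_h+s_h]=\id\!\left[\winding(\eta(h))\right]$$
as $t\to0^+$, the last equality being Definition~\ref{def_essential_winding_number}. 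This gives both asserted properties. For the final clause, note that the centralizer of $\eta(H)$ in $\Sft\times_\Integral\Real$ is exactly the preimage of $U$ (the centralizer of a nontrivial parabolic in $\mathrm{PSL}(2,\Real)$ being $U$ itself), and that each diagonal $\bar g_t$ normalizes $U$ inside $\mathrm{PSL}(2,\Real)$; therefore each $g_t$ normalizes that preimage.

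I expect the one delicate point to be the convergence in the second and third paragraphs: one must check that the lifted conjugation path $g_tu_hg_t^{-1}$ converges in $\Sft$ itself---not merely its projection to $\mathrm{PSL}(2,\Real)$---and that the limit is the correct central element rather than some other lift of the identity. Both are pinned down by the path lifting property together with the conjugation invariance of $\widetilde\omega$. Everything else parallels the proofs of Lemmas~\ref{ab_rep_ell} and~\ref{ab_rep_hyp}.
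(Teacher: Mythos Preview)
Your proof is correct and follows essentially the same approach as the paper: both conjugate the parabolic image into the upper unipotent subgroup and then shrink it by conjugation with the diagonal one-parameter family $\bar g_t=\pm\mathrm{diag}(t^{1/2},t^{-1/2})$ (the paper uses $\mathrm{diag}(t,t^{-1})$, a cosmetic reparametrization). Your treatment is in fact slightly more careful than the paper's in two places: you work directly with the full abelian $H$ rather than reducing to cyclic factors (which is more natural here, since a single $g_t$ must serve all of $H$), and you explicitly pin down the limiting central element via the conjugation invariance of $\widetilde\omega$, where the paper simply asserts the conclusion.
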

	
	\begin{proof}
		It suffices to argue for $H$ infinite cyclic, for similar reasons as in the proof of Lemma \ref{ab_rep_ell}.
		We identify $H$ as $\Integral$, and denote by $\bar\eta\colon\Integral\to \mathrm{PSL}(2,\Real)$ the induced respresentation of $\eta$.
		Possibly after conjugation, we may assume that
		$$\bar\eta(1)=\left\{\pm\left[\begin{array}{cc} 
		1 & 1 \\
	  0 & 1\end{array}\right]\right\}\mbox{ or }
		\left\{\pm\left[\begin{array}{cc} 
		1 & -1 \\
	  0 & 1\end{array}\right]\right\}.$$
		The expression
		$$\bar{g}_t=
		\left\{\pm\left[\begin{array}{cc} t & 0 \\
	  0 & t^{-1}\end{array}\right]\right\}$$
		defines a path in $\mathrm{PSL}(2,\Real)$
		parametrized by $t\in(0,1]$.
		Observe
		$$\bar{g}_t\bar\eta(h)\bar{g}_t^{-1}=\left\{\pm\left[\begin{array}{cc} 
		1 & ht^2 \\
	  0 & 1\end{array}\right]\right\}\mbox{ or }
		\left\{\pm\left[\begin{array}{cc} 
		1 & -ht^2 \\
	  0 & 1\end{array}\right]\right\},$$
		respectively,
		for all $h\in\Integral$ and $t\in(0,1]$.
		The path $\bar{g}_t$ lifts uniquely to be a path 
		$g_t \in\Sft\times_\Integral\Real$ parametrized by $(0,1]$,
		such that $g_1$ equals the identity.
		The elements $g_t$ satisfy the asserted properties,	as desired.
	\end{proof}

	\subsection{Commutator factorization}
	
	\begin{lemma}\label{comm_ell}
	Suppose $(\gamma_n)_{n\in\Natural}$ 
	is a sequence of elliptic elements in $\Sft$ 
	that converge to the identity element as $n$ tends to $\infty$.
	Then there exist sequences $(\alpha_n)_{n\in\Natural}$ and $(\beta_n)_{n\in\Natural}$
	in $\Sft$, both converging to the identity element as $n$ tends to $\infty$,
	such that the relation
	$$\gamma_n=\alpha_n\beta_n\alpha_n^{-1}\beta_n^{-1}$$
	holds for all but finitely many $n\in\Natural$.
	\end{lemma}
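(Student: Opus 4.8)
The plan is to push the problem into the Lie algebra $\mathfrak{sl}(2,\Real)$ and solve it there by a rescaled perturbation argument, then lift back. For all but finitely many $n$ the element $\gamma_n$ lies in a fixed neighbourhood $U$ of the identity on which the covering $\Sft\to\mathrm{SL}(2,\Real)$ restricts to a diffeomorphism and on which $\log$ is defined; so I may write $\gamma_n=\exp(W_n)$ with $W_n\in\mathfrak{sl}(2,\Real)$ and $W_n\to 0$, and $W_n$ is elliptic, i.e.\ $\det W_n>0$, unless $\gamma_n$ is the identity, in which case the factorization is trivial. It suffices to produce $A_n,B_n\in\mathfrak{sl}(2,\Real)$ with $A_n,B_n\to 0$ and $[\exp A_n,\exp B_n]=\exp W_n$ in $\mathrm{SL}(2,\Real)$: lifting (everything staying inside $U$, where the group commutator of lifts is unambiguous and near $\id$) and setting $\alpha_n=\exp A_n$, $\beta_n=\exp B_n$ then gives the claim.

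The one nontrivial input is an elementary \emph{uniformly bounded commutator factorization}: there is a constant $C>0$ such that every $V\in\mathfrak{sl}(2,\Real)$ with $\|V\|\leq 1$ is a bracket $V=[P,R]$ with $\|P\|,\|R\|\leq C$. Indeed, $\{[P,R]\colon\|P\|,\|R\|\leq 1\}$ is the truncated cone over the compact set $\{[P,R]\colon\|P\|=\|R\|=1\}$, and the latter meets every ray from the origin --- checked separately for spacelike, timelike, and lightlike directions of the Killing form using the bracket relations $[E,F]=H$, $[H,E+F]=2(E-F)$, $[H,E]=2E$ for the standard generators together with $\mathrm{SL}(2,\Real)$--equivariance of the bracket and a rescaling restoring unit norms --- so the cone is a neighbourhood of $0$, and homogeneity of the bracket supplies $C$. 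Now set $\mu_n=\sqrt{\|W_n\|}\to 0$ and write $W_n/\|W_n\|=[P_n,R_n]$ with $\|P_n\|,\|R_n\|\leq C$; then $A_n^{\circ}=\mu_n P_n$ and $B_n^{\circ}=\mu_n R_n$ tend to $0$ and satisfy $[A_n^{\circ},B_n^{\circ}]=\mu_n^{2}[P_n,R_n]=W_n$ exactly.

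It remains to upgrade this bracket identity to a group commutator identity. Consider $f(A,B)=\log\bigl([\exp A,\exp B]\bigr)$ near $(0,0)$; by the Hausdorff formula $f(A,B)=[A,B]+O\bigl((\|A\|+\|B\|)^{3}\bigr)$, so $f(A_n^{\circ},B_n^{\circ})=W_n+Q_n$ with $\|Q_n\|=O(\mu_n^{3})$. The differential of $f$ at $(A_n^{\circ},B_n^{\circ})$ equals $\mu_n L_n+O(\mu_n^{2})$, where $L_n(\delta A,\delta B)=[\delta A,R_n]+[P_n,\delta B]$ is surjective because $[P_n,R_n]\neq 0$ forces $P_n,R_n$ independent, and has a right inverse bounded \emph{uniformly} in $n$ since the pairs $(P_n,R_n)$ range in the compact set $\{\|P\|,\|R\|\leq C,\ \|[P,R]\|\geq 1\}$; hence $Df_{(A_n^{\circ},B_n^{\circ})}$ is a submersion with right inverse of norm $O(1/\mu_n)$, while second derivatives of $f$ stay bounded near the origin. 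A quantitative submersion (Newton--Kantorovich) argument then shows that for $n$ large the value $W_n$, which is within $O(\mu_n^{3})$ of $f(A_n^{\circ},B_n^{\circ})$, is hit by $f$ at some $(A_n,B_n)$ with $\|(A_n,B_n)-(A_n^{\circ},B_n^{\circ})\|=O(\mu_n^{2})$; thus $A_n,B_n\to 0$ and $[\exp A_n,\exp B_n]=\exp W_n=\gamma_n$. The main obstacle is precisely that the commutator map is second--order degenerate at the identity, so the implicit function theorem cannot be applied naively; the rescaling by $\mu_n=\sqrt{\|W_n\|}$ is what resolves this, and the decisive point is that the blow--up $O(1/\mu_n)$ of the inverse differential is beaten by the cubic size $O(\mu_n^{3})$ of the Hausdorff correction. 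It is essential here that $\gamma_n\to \id$ is genuine convergence in $\mathrm{SL}(2,\Real)$, equivalently that one is permitted uniformly bounded commutator factorizations of \emph{all} norm--$\leq 1$ elements rather than only of those conjugate by a bounded group element to a normal form: the elliptic ``axis'' of $\gamma_n$ in $\Hyp^{2}$ may escape to infinity even while $\gamma_n\to\id$, so conjugating $\gamma_n$ to a standard rotation and splitting that would fail to keep the factors bounded.
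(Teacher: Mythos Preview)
Your approach is correct and genuinely different from the paper's. The paper writes down explicit $2\times 2$ matrices $A,B$ (depending on parameters $\lambda,\theta$ encoding the fixed point and rotation angle) satisfying $ABA^{-1}B^{-1}=C(\lambda,\theta)$, and then verifies by hand that $A,B\to\mathbf{1}$ whenever $C(\lambda,\theta)\to\mathbf{1}$, \emph{even if} $\lambda\to 0$ or $\lambda\to\infty$; an $\mathrm{SO}(2)$--conjugation (harmless by compactness) reduces the general elliptic element to this normal form. Your route---push to $\mathfrak{sl}(2,\Real)$, invoke a uniformly bounded bracket factorization, rescale by $\sqrt{\|W_n\|}$, and correct to a group commutator by a quantitative Newton step---is more conceptual and never uses ellipticity, so it proves the sequential statement for all three types at once. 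On the other hand, the paper's explicit formulas depend smoothly on the input and therefore upgrade immediately to the \emph{continuous--family} statements of Lemmas~\ref{comm_hyp} and~\ref{comm_par}; in your argument this would require choosing the bracket factorization $W/\|W\|=[P,R]$ continuously in $W$, which is an extra (though manageable) step.

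One logical step needs tightening. You deduce that the cone $\{[P,R]:\|P\|,\|R\|\le 1\}$ is a neighbourhood of $0$ from the fact that $\{[P,R]:\|P\|=\|R\|=1\}$ meets every ray. That implication is false in general for compact star-shaped sets (spiral-type counterexamples already exist in $\Real^2$). A clean fix: for any $V\neq 0$ write $V=[P,R]$ with $P,R$ linearly independent; at such a point the bracket $\mathfrak{sl}(2,\Real)\times\mathfrak{sl}(2,\Real)\to\mathfrak{sl}(2,\Real)$ is a submersion, since its differential has image $\mathrm{Im}(\mathrm{ad}_P)+\mathrm{Im}(\mathrm{ad}_R)=P^{\perp_K}+R^{\perp_K}=\mathfrak{sl}(2,\Real)$ (every nonzero element of $\mathfrak{sl}_2$ is regular). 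Covering the Euclidean unit sphere by finitely many such submersion charts gives the constant $C$ directly. This is, incidentally, the same fact that later makes your $L_n$ uniformly right-invertible on $\{\|P\|,\|R\|\le C,\ \|[P,R]\|\ge 1\}$. Finally, your closing remark slightly undersells the paper's construction: conjugating to a standard rotation and factoring \emph{does} keep the factors bounded---that is precisely the estimate the paper works out---but your method has the virtue of not needing to discover those formulas.
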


	\begin{proof}
	For any $\lambda\in(0,+\infty)$ and $\theta\in(0,\pi)$,
	we first factorize the matrix 
	$$C(\lambda,\theta)=\left[\begin{array}{cc} \cos(\theta)& \lambda\sin(\theta) \\ -\lambda^{-1}\sin(\theta)& \cos(\theta)\end{array}\right]
	\in\mathrm{SL}(2,\Real)$$
	as a commutator of two particularly constructed matrices in $\mathrm{SL}(2,\Real)$
	which depend continuously on $(\lambda,\theta)$.
	
	To this end, 	
	we construct the following matrices $A,B\in\mathrm{SL}(2,\Real)$,
	which depend continuously on the parameters $x,y\in[0,+\infty)$ and $\mu\in(0,+\infty)$:
	\begin{equation}\label{ell_AB}
	\begin{array}{cc}
	A=\left[\begin{array}{cc} \sqrt{1+y^2}+y\sqrt{1+x^2} & \mu xy \\
	-\mu^{-1} xy & \sqrt{1+y^2}-y\sqrt{1+x^2}\end{array}\right],
	&
	B=\left[\begin{array}{cc} \sqrt{1+x^2}& \mu x \\
	\mu^{-1} x & \sqrt{1+x^2}\end{array}\right].
	\end{array}
	\end{equation}
	Then the inverse matrices are easy to obtain, by switching diagonal entries and negating the off-diagonal ones:
	$$
	\begin{array}{cc}
	A^{-1}=\left[\begin{array}{cc} \sqrt{1+y^2}-y\sqrt{1+x^2} & -\mu xy \\
	\mu^{-1} xy & \sqrt{1+y^2}+y\sqrt{1+x^2}\end{array}\right],
	&
	B^{-1}=\left[\begin{array}{cc} \sqrt{1+x^2}& -\mu x \\
	-\mu^{-1} x & \sqrt{1+x^2}\end{array}\right].
	\end{array}$$
	Direct computation shows
	$$BA^{-1}B^{-1}=\left[\begin{array}{cc} \sqrt{1+y^2}-y\sqrt{1+x^2} & \mu xy \\
	-\mu^{-1} xy & \sqrt{1+y^2}+y\sqrt{1+x^2}\end{array}\right],$$
	and
	$$ABA^{-1}B^{-1}=\left[\begin{array}{cc} 1-2x^2y^2 & 2\mu xy\left(\sqrt{1+y^2}+y\sqrt{1+x^2}\right) \\
	-2\mu^{-1} xy\left(\sqrt{1+y^2}-y\sqrt{1+x^2}\right) & 1-2x^2y^2\end{array}\right].$$
	Therefore,
	given any parameters $(\lambda,\theta)\in(0,+\infty)\times (0,\pi)$,
	it is straightforward to verify the relation
	$$C(\lambda,\theta)=ABA^{-1}B^{-1}$$
	for any $x,y\in[0,+\infty)$ and $\mu\in(0,+\infty)$ 
	that satisfy the following conditions:
	\begin{equation}\label{ell_xy_mu}
	\begin{array}{ccc}
	xy=\sin(\theta/2),&
	\mu=\lambda\times \sqrt{\frac{\sqrt{1+y^2}-y\sqrt{1+x^2}}{\sqrt{1+y^2}+y\sqrt{1+x^2}}}.
	\end{array}
	\end{equation}
	
	We make the following particular assignments
	\begin{equation}\label{ell_xy_def}
	\begin{array}{ccc}
	x=\sqrt{2\tan(\theta/2)/\left(\lambda+\lambda^{-1}\right)}, & y=\sqrt{\left(\lambda+\lambda^{-1}\right)\sin(\theta)/4}.
	\end{array}	
	\end{equation}
	Then we obtain matrices $A,B\in\mathrm{SL}(2,\Real)$ 
	that depend continuously on the parameters $(\lambda,\theta)\in[0,+\infty)\times(0,\pi)$,
	by putting (\ref{ell_AB}), (\ref{ell_xy_mu}), and (\ref{ell_xy_def}) altogether.	
	
	Our particular factorization $C(\lambda,\theta)=ABA^{-1}B^{-1}$ satisfies
	$A\approx\mathbf{1}$ and $B\approx\mathbf{1}$ in $\mathrm{SL}(2,\Real)$
	if $C(\lambda,\theta)\approx\mathbf{1}$.
	In fact,
	if $C(\lambda,\theta)$ lies in a small neighborhood of the identity matrix $\mathbf{1}\in\mathrm{SL}(2,\Real)$,
	we have equivalently $\cos(\theta)\approx1$ and $\lambda^{\pm1} \sin(\theta)\approx 0$, with small error.
	Then we have $x\approx0$ and $y\approx 0$, and therefore, $\mu/\lambda\approx1$.
	This implies the following key point of our construction:
	$$\mu x=(\mu/\lambda)\times \sqrt{\frac{2\lambda\sin(\theta)}{(1+\lambda^{-2})(1+\cos(\theta))}}\approx 0,$$
	and
	$$\mu^{-1} x=(\mu/\lambda)^{-1}\times \sqrt{\frac{2\lambda^{-1}\sin(\theta)}{(1+\lambda^2)(1+\cos(\theta))}}\approx 0.$$
	We see $A\approx\mathbf{1}$ and $B\approx\mathbf{1}$ in $\mathrm{SL}(2,\Real)$
	from (\ref{ell_AB}), (\ref{ell_xy_mu}), and (\ref{ell_xy_def}).
	
	In general, any elliptic element in $\mathrm{PSL}(2,\Real)$ can be conjugated
	to $\{\pm C(\lambda,\theta)\}$ by some element in $\mathrm{SO}(2)/\{\pm\mathbf{1}\}$,
	for some parameters $(\lambda,\theta)\in(0,+\infty)\times(0,\pi)$.
	To see this, one may identify $\mathrm{PSL}(2,\Real)$ 
	with the fractional linear transformation group acting on the upper-half complex plane.
	In terms of hyperbolic geometry,
	$\{\pm C(\lambda,\theta)\}$ represents
	the rotation of angle $2\theta$ counterclockwise about the point $\lambda\mathbf{i}$. 
	The subgroup $\mathrm{SO}(2)/\{\pm\mathbf{1}\}$
	of $\mathrm{PSL}(2,\Real)$ consists of all the rotations about $\mathbf{i}$.
	Therefore, any elliptic element in $\mathrm{PSL}(2,\Real)$
	either fixes $\mathbf{i}$ already, or can be conjugated 
	by some rotation about $\mathbf{i}$ 
	to fix a point on the imaginary axis.
		
	Suppose $(\gamma_n)_{n\in\Natural}$ 
	is a sequence of elliptic elements in $\Sft$ 
	that converge to the identity element as $n$ tends to $\infty$.
	Denote by $\bar{\gamma}_n\in\mathrm{PSL}(2,\Real)$ the image of $\gamma_n$
	under the canonical covering projection $\Sft\to\mathrm{PSL}(2,\Real)$.
	Then we can find $S_n\in\mathrm{SO}(2)$ and $(\lambda_n,\theta_n)\in[1,+\infty)\times(0,\pi)$ such that
	$$\bar{\gamma}_n=\left\{\pm S_nC(\lambda_n,\theta_n)S_n^{-1}\right\},$$
	for all $n\in\Natural$.
	Let $A_n,B_n\in\mathrm{SL}(2,\Real)$ be the matrices as above, 
	such that $C(\lambda_n,\theta_n)=A_nB_nA_n^{-1}B_n^{-1}$.
	Then we obtain a commutator factorization
	$$\bar{\gamma}_n=\bar{\alpha}_n\bar{\beta}_n\bar{\alpha}_n^{-1}\bar{\beta}_n^{-1},$$
	where $\bar\alpha_n=\{\pm S_nA_nS_n^{-1}\}$ and $\bar\beta_n=\{\pm S_nB_nS_n^{-1}\}$
	are elements of $\mathrm{PSL}(2,\Real)$.	
	Because $\mathrm{SO}(2)$ is compact,
	the convergence of $(\gamma_n)_{n\in\Natural}$ implies
	$C(\lambda_n,\theta_n)\to \mathbf{1}$
	and hence,
	$A_n\to\mathbf{1}$ and $B_n\to\mathbf{1}$,
	as $n\to\infty$.
	It follows that for all but finitely many $n\in\Natural$,
	$\bar{\alpha}_n$, $\bar{\beta}_n$, and $\bar{\gamma}_n$ lie in some small neighborhood
	of $\{\pm\mathbf{1}\}\in\mathrm{PSL}(2,\Real)$
	which lifts homeomorphically to a small neighborhood of the identity element in $\Sft$.
	Then we obtain unique lifts
	$\alpha_n,\beta_n\in\Sft$ of $\bar{\alpha}_n,\bar{\beta}_n\in\mathrm{PSL}(2,\Real)$,
	which converge to the identity as $n$ tends to $\infty$,
	and which satisfy
	the relation $\gamma_n=\alpha_n\beta_n\alpha_n^{-1}\beta_n^{-1}$ for all but finitely many $n\in\Natural$,
	as asserted.
	\end{proof}

	\begin{lemma}\label{comm_hyp}
	Suppose $(\gamma_t)_{t\in[0,1)}$ 
	is a continuous family of hyperbolic elements in $\Sft$
	that converge to the identity element as $t$ tends to $1$.
	Then there exist continuous families $(\alpha_t)_{t\in[0,1)}$ and $(\beta_t)_{t\in[0,1)}$
	in $\Sft$, both converging to the identity element as $t$ tends to $1$,
	such that the relation
	$$\gamma_t=\alpha_t\beta_t\alpha_t^{-1}\beta_t^{-1}$$
	holds for all $t\in[0,1)$.
	\end{lemma}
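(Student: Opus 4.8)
The plan is to follow the template of the elliptic statement, Lemma~\ref{comm_ell}, adapted to hyperbolic elements and to a continuous rather than sequential family. There are three parts: an explicit commutator factorization of a standard two-parameter family of hyperbolic elements of $\mathrm{SL}(2,\Real)$, a reduction of a general continuous hyperbolic family to that standard form by $\mathrm{SO}(2)$-conjugation, and a path-lifting step from $\mathrm{PSL}(2,\Real)$ to $\Sft$.

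First I would treat the standard hyperbolic family
$$H(\lambda,\theta)=\left[\begin{array}{cc}\cosh(\theta) & \lambda\sinh(\theta)\\ \lambda^{-1}\sinh(\theta) & \cosh(\theta)\end{array}\right]\in\mathrm{SL}(2,\Real),\qquad (\lambda,\theta)\in(0,+\infty)\times(0,+\infty),$$
which has determinant $1$ and trace $2\cosh(\theta)>2$; in the upper half-plane model it is the hyperbolic translation along the geodesic $|z|=\lambda$, and it plays the role for hyperbolic elements that $C(\lambda,\theta)$ plays for elliptic ones in the proof of Lemma~\ref{comm_ell}. I would exhibit matrices $A=A(\lambda,\theta)$ and $B=B(\lambda,\theta)$ in $\mathrm{SL}(2,\Real)$, depending continuously on $(\lambda,\theta)$, with $H(\lambda,\theta)=ABA^{-1}B^{-1}$, by the same bookkeeping as in (\ref{ell_AB})--(\ref{ell_xy_def}): one writes $A$ and $B$ in a hyperboloid-parametrized shape (diagonal entries built from radicals $\sqrt{1+x^{2}}$, $\sqrt{1+y^{2}}$ and off-diagonal terms of the form $\mu^{\pm1}xy$, $\mu^{\pm1}x$), imposes the scalar relations fixing the auxiliary parameters $x,y,\mu$ in terms of $(\lambda,\theta)$, and checks the commutator identity by direct multiplication. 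The crucial point to carry through is the analogue of the key estimate of Lemma~\ref{comm_ell}: when $H(\lambda,\theta)$ is close to $\mathbf{1}$ --- equivalently $\cosh(\theta)\approx1$ and $\lambda^{\pm1}\sinh(\theta)\approx0$ --- the auxiliary parameters come out with $xy\approx0$ and $\mu^{\pm1}x\approx0$, so that $A(\lambda,\theta)\approx\mathbf{1}$ and $B(\lambda,\theta)\approx\mathbf{1}$.

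For the reduction, let $\bar\gamma_t\in\mathrm{PSL}(2,\Real)$ be the image of $\gamma_t$; each $\bar\gamma_t$ is hyperbolic. As $(S,\lambda)$ ranges over $\mathrm{SO}(2)/\{\pm\mathbf{1}\}\times(0,+\infty)$ the oriented axis of $SH(\lambda,\theta)S^{-1}$ ranges over all oriented geodesics of the hyperbolic plane, and $\theta$ records the translation length, so $(S,\lambda,\theta)\mapsto SH(\lambda,\theta)S^{-1}$ is a continuous bijection onto the hyperbolic locus of $\mathrm{PSL}(2,\Real)$, hence a homeomorphism by invariance of domain --- this is the hyperbolic counterpart of the $\mathrm{SO}(2)$-normalization of elliptic elements used in Lemma~\ref{comm_ell}. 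Writing $\bar\gamma_t=S_t H(\lambda_t,\theta_t)S_t^{-1}$ accordingly, the triple $(S_t,\lambda_t,\theta_t)$ depends continuously on $t\in[0,1)$; and as $\gamma_t\to\mathbf{1}$ we get $H(\lambda_t,\theta_t)=S_t^{-1}\bar\gamma_t S_t\to\mathbf{1}$ because the $S_t$ remain in the compact group $\mathrm{SO}(2)/\{\pm\mathbf{1}\}$, so $A(\lambda_t,\theta_t),B(\lambda_t,\theta_t)\to\mathbf{1}$ by the previous paragraph. Thus $\bar\alpha_t:=S_tA(\lambda_t,\theta_t)S_t^{-1}$ and $\bar\beta_t:=S_tB(\lambda_t,\theta_t)S_t^{-1}$ are continuous families in $\mathrm{PSL}(2,\Real)$ converging to $\mathbf{1}$ as $t\to1$, again using compactness of $\mathrm{SO}(2)$ so that the conjugations preserve smallness, exactly as in Lemma~\ref{comm_ell}. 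Finally, lift $\bar\alpha_t$ and $\bar\beta_t$ along the covering $\Sft\to\mathrm{PSL}(2,\Real)$ to continuous families $\alpha_t,\beta_t$ in $\Sft$, normalized so that $\alpha_t,\beta_t\to\mathbf{1}$ as $t\to1$. Since the commutator of any two lifts of $\bar\alpha_t,\bar\beta_t$ is a well-defined lift of $\bar\gamma_t$ (the commutator map on a central extension does not depend on the chosen lifts), and since $\alpha_t\beta_t\alpha_t^{-1}\beta_t^{-1}\to\mathbf{1}$ together with the given $\gamma_t$, the constant central discrepancy between $\alpha_t\beta_t\alpha_t^{-1}\beta_t^{-1}$ and $\gamma_t$ must vanish; hence $\gamma_t=\alpha_t\beta_t\alpha_t^{-1}\beta_t^{-1}$ for all $t\in[0,1)$. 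The main obstacle is the first part: producing the explicit factorization of $H(\lambda,\theta)$ together with the control on $A,B$ near the identity, which is exactly where the commutator map degenerates; the remaining steps are a continuity-and-lifting argument patterned on Lemma~\ref{comm_ell}.
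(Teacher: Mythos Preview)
Your proposal is correct and follows essentially the same approach as the paper: an explicit commutator factorization of the standard hyperbolic family $H(\lambda,\theta)$ (the paper's $C(\lambda,\tau)$) via matrices of the same shape as in the elliptic case, reduction of a general hyperbolic path to that form by unique $\mathrm{SO}(2)/\{\pm\mathbf{1}\}$-conjugation, and a lift to $\Sft$ with the central discrepancy killed by continuity. Your justifications (invariance of domain for the homeomorphism onto the hyperbolic locus; the constant-central-discrepancy argument for the final identity) are slightly more explicit than the paper's, but the content is the same.
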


\begin{proof}
	For any $\lambda\in(0,+\infty)$ and $\tau\in(0,+\infty)$,
	we first factorize the matrix 
	$$C(\lambda,\tau)=\left[\begin{array}{cc} 
	\cosh(\tau)& \lambda\sinh(\tau) \\ \lambda^{-1}\sinh(\tau)& \cosh(\tau)
	\end{array}\right]
	\in\mathrm{SL}(2,\Real)$$
	as a commutator of two particularly constructed matrices in $\mathrm{SL}(2,\Real)$
	which depends continuously on $(\lambda,\tau)$.
	
	We construct the following matrices $A,B\in\mathrm{SL}(2,\Real)$,
	which depend continuously on the parameters $x\in[0,1]$, $y\in[0,+\infty)$, and $\mu\in(0,+\infty)$:
	\begin{equation}\label{hyp_AB}
	\begin{array}{cc}
	A=\left[\begin{array}{cc} \sqrt{1+y^2}+y\sqrt{1-x^2} & \mu xy \\
	\mu^{-1} xy & \sqrt{1+y^2}-y\sqrt{1-x^2}\end{array}\right],
	&
	B=\left[\begin{array}{cc} \sqrt{1-x^2}& -\mu x \\
	\mu^{-1} x & \sqrt{1-x^2}\end{array}\right].
	\end{array}
	\end{equation}
	Then 
	$$ABA^{-1}B^{-1}=\left[\begin{array}{cc} 1+2x^2y^2 & 2\mu xy\left(\sqrt{1+y^2}+y\sqrt{1-x^2}\right) \\
	2\mu^{-1} xy\left(\sqrt{1+y^2}-y\sqrt{1-x^2}\right) & 1+2x^2y^2\end{array}\right].$$
	Given any parameters $(\lambda,\tau)\in(0,+\infty)\times (0,+\infty)$,
	the relation
	$$C(\lambda,\tau)=ABA^{-1}B^{-1}$$
	holds 
	for any $x\in(0,1]$ and $y\in(0,+\infty)$
	that satisfy the following conditions:
	\begin{equation}\label{hyp_xy_mu}
	\begin{array}{ccc}
	xy=\sinh(\tau/2),&
	\mu=\lambda\times \sqrt{\frac{\sqrt{1+y^2}-y\sqrt{1-x^2}}{\sqrt{1+y^2}+y\sqrt{1-x^2}}}.
	\end{array}
	\end{equation}
	
	We make the following particular assignments
	\begin{equation}\label{hyp_xy_def}
	\begin{array}{ccc}
	x=\sqrt{2\tanh(\tau/2)/(\lambda+\lambda^{-1})}, & y=\sqrt{(\lambda+\lambda^{-1})\sinh(\tau)/4}.
	\end{array}	
	\end{equation}
	Then we obtain matrices $A,B\in\mathrm{SL}(2,\Real)$ 
	that depend continously on the parameters $(\lambda,\tau)\in(0,+\infty)\times(0,+\infty)$,
	by putting (\ref{hyp_AB}), (\ref{hyp_xy_mu}), and (\ref{hyp_xy_def}) altogether.	
	A similar argument as before shows
	$A\approx\mathbf{1}$ and $B\approx\mathbf{1}$ in $\mathrm{SL}(2,\Real)$.
	if $C(\lambda,\tau)\approx\mathbf{1}$.
		
	In general, any hyperbolic element in $\mathrm{PSL}(2,\Real)$ can be conjugated
	to $\{\pm C(\lambda,\tau)\}$ by some unique element in $\mathrm{SO}(2)/\{\pm\mathbf{1}\}$,
	for some unique parameters $(\lambda,\theta)\in(0,+\infty)\times(0,+\infty)$.
	In fact, the conjugation element and the paremeters vary continuously
	as the hyperbolic element varies in $\mathrm{PSL}(2,\Real)$.
	To see this, one may again identify $\mathrm{PSL}(2,\Real)$ 
	with the fractional linear transformation group acting on the upper-half complex plane.
	In terms of hyperbolic geometry,
	$\{\pm C(\lambda,\theta)\}$ represents
	the hyperbolic translation 
	along the geodesic between the ideal endpoints $\{\pm\lambda\}$
	and of distance $2\tau$ toward $+\lambda$.
	The subgroup $\mathrm{SO}(2)/\{\pm\mathbf{1}\}$	
	of $\mathrm{PSL}(2,\Real)$ consists of all the rotations about $\mathbf{i}$.
	Therefore, any hyperbolic element in $\mathrm{PSL}(2,\Real)$
	can be conjugated by some unique rotation about $\mathbf{i}$ 
	to preserve a geodesic perpendicular to the positive imaginary axis,
	such that the ideal point $0$ is moved rightward.
	
	Suppose $(\gamma_t)_{t\in[0,1)}$ 
	is a continuous family of hyperbolic elements in $\Sft$ 
	that converge to the identity element as $t$ tends to $1$.
	Denote by $\bar{\gamma}_t\in\mathrm{PSL}(2,\Real)$ the image of $\gamma_t$
	under the canonical covering projection $\Sft\to\mathrm{PSL}(2,\Real)$.
	Then we can find continuous families 
	$\{\pm S_t\}\in\mathrm{SO}(2)/\{\pm\mathbf{1}\}$
	and 
	$(\lambda_t,\tau_t)\in(0,+\infty)\times(0,\pi)$ such that
	$$\bar{\gamma}_t=\left\{\pm S_tC(\lambda_t,\tau_t)S_t^{-1}\right\},$$
	for all $t\in[0,1)$.
	(One may actually require $S_t\in\mathrm{SO}(2)$
	to depend continuously on $t$, by path lifting.)
	Let $A_t,B_t\in\mathrm{SL}(2,\Real)$ be the matrices as above, 
	such that $C(\lambda_t,\tau_t)=A_tB_tA_t^{-1}B_t^{-1}$.
	Then we obtain a commutator factorization
	$$\bar{\gamma}_t=\bar{\alpha}_t\bar{\beta}_t\bar{\alpha}_t^{-1}\bar{\beta}_t^{-1},$$
	where $\bar\alpha_t=\{\pm S_tA_tS_t^{-1}\}$ and $\bar\beta_t=\{\pm S_tB_tS_t^{-1}\}$
	are elements of $\mathrm{PSL}(2,\Real)$.	
	Because $\mathrm{SO}(2)$ is compact,
	the convergence of $(\gamma_t)_{t\in[0,1)}$ implies
	that $\bar{\alpha}_t$, $\bar{\beta}_t$
	converges to $\{\pm\mathbf{1}\}\in\mathrm{PSL}(2,\Real)$
	as $t$ tends to $1$.
	Then we obtain unique lifts
	$\alpha_t,\beta_t\in\Sft$ of $\bar{\alpha}_t,\bar{\beta}_t\in\mathrm{PSL}(2,\Real)$,
	which converge to the identity as $t$ tends to $1$,
	and which satisfy
	the relation $\gamma_t=\alpha_t\beta_t\alpha_t^{-1}\beta_t^{-1}$ 
	for all $t\in[0,1)$, as asserted.
	\end{proof}

	\begin{lemma}\label{comm_par}
	Suppose $(\gamma_t)_{t\in[0,1)}$ 
	is a continuous family of parabolic elements in $\Sft$
	that converge to the identity element as $t$ tends to $1$.
	Then there exist continuous families $(\alpha_t)_{t\in[0,1)}$ and $(\beta_t)_{t\in[0,1)}$
	in $\Sft$, both converging to the identity element as $t$ tends to $1$,
	such that the relation
	$$\gamma_t=\alpha_t\beta_t\alpha_t^{-1}\beta_t^{-1}$$
	holds for all $t\in[0,1)$.
	\end{lemma}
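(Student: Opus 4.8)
The plan is to follow the template established in Lemmas \ref{comm_ell} and \ref{comm_hyp}: reduce to a one-parameter family of standard parabolic matrices, factorize those explicitly as commutators depending continuously on the parameter, and then lift the factorization to $\Sft$. For the reduction, I would use that every parabolic element of $\mathrm{PSL}(2,\Real)$ fixes a unique ideal point of $\Hyp^2$, together with the fact that the subgroup $\mathrm{SO}(2)/\{\pm\mathbf{1}\}$ of rotations about $\mathbf{i}$ acts freely and transitively on $\partial_\infty\Hyp^2$. Hence every parabolic element can be conjugated, by a unique element of $\mathrm{SO}(2)/\{\pm\mathbf{1}\}$, so as to fix the ideal endpoint of the positive imaginary axis, that is, to
$$C(\sigma)=\left[\begin{array}{cc}1&\sigma\\0&1\end{array}\right]\in\mathrm{SL}(2,\Real)$$
up to the $\pm$ sign, for a unique $\sigma\in\Real\setminus\{0\}$; moreover the conjugating element and $\sigma$ vary continuously with the parabolic element.

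Next I would factorize $C(\sigma)=ABA^{-1}B^{-1}$ using the diagonal matrix $A=\mathrm{diag}(a,a^{-1})$ and the unipotent matrix $B$ having upper-right entry $t$ and diagonal entries $1$. Direct computation gives
$$ABA^{-1}B^{-1}=\left[\begin{array}{cc}1&(a^2-1)\,t\\0&1\end{array}\right],$$
so the relation holds as soon as $(a^2-1)\,t=\sigma$. To force $A$ and $B$ both to approach the identity as $\sigma\to0$, I would set $a=\sqrt{1+|\sigma|^{1/2}}$ and $t=\mathrm{sgn}(\sigma)\,|\sigma|^{1/2}$; then $(a^2-1)\,t=\sigma$, the matrices $A$ and $B$ depend continuously on $\sigma\neq0$, and $A,B\to\mathbf{1}$ in $\mathrm{SL}(2,\Real)$ as $\sigma\to0$. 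The only mild point is that $\mathrm{sgn}(\sigma)$ is locally constant on $\Real\setminus\{0\}$, so the assignment is continuous along any family of nontrivial parabolics.

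Finally I would assemble and lift, exactly as in Lemma \ref{comm_hyp}. Writing $\bar\gamma_t\in\mathrm{PSL}(2,\Real)$ for the image of $\gamma_t$, I would choose a continuous family $S_t\in\mathrm{SO}(2)$ and a continuous $\sigma_t\in\Real\setminus\{0\}$ with $\bar\gamma_t=\left\{\pm S_tC(\sigma_t)S_t^{-1}\right\}$. Since $\mathrm{SO}(2)$ is compact and $\gamma_t\to\mathbf{1}$ as $t\to1$, it follows that $C(\sigma_t)\to\mathbf{1}$, hence $\sigma_t\to0$; so, letting $A_t,B_t$ be the matrices constructed above at $\sigma=\sigma_t$, the elements $\bar\alpha_t=\left\{\pm S_tA_tS_t^{-1}\right\}$ and $\bar\beta_t=\left\{\pm S_tB_tS_t^{-1}\right\}$ of $\mathrm{PSL}(2,\Real)$ converge to $\{\pm\mathbf{1}\}$ and satisfy $\bar\gamma_t=\bar\alpha_t\bar\beta_t\bar\alpha_t^{-1}\bar\beta_t^{-1}$. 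I would then lift $\bar\alpha_t$ and $\bar\beta_t$ to continuous paths $\alpha_t,\beta_t$ in $\Sft$, adjusted by elements of the center $Z(\Sft)$ so that both converge to the identity as $t\to1$. The product $\alpha_t\beta_t\alpha_t^{-1}\beta_t^{-1}$ is then a continuous lift of $\bar\gamma_t$, independent of the choices of lifts since the center is central; it lies close to the identity for $t$ near $1$, hence coincides with $\gamma_t$ there, being the unique lift of $\bar\gamma_t$ near the identity; therefore $\left(\alpha_t\beta_t\alpha_t^{-1}\beta_t^{-1}\right)\gamma_t^{-1}$ is a continuous path in the discrete subgroup $Z(\Sft)\cong\Integral$, hence constantly the identity, and the asserted relation holds for all $t\in[0,1)$.

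Among the three commutator lemmas this parabolic case should be the gentlest, because the standard parabolic carries only one parameter and its factorization uses only diagonal and unipotent matrices. I expect the only steps requiring care to be the verification that the reduction to standard form and the subsequent lift to $\Sft$ are continuous, and the check that the chosen $A$ and $B$ really do tend to the identity---both routine. As in Lemma \ref{comm_hyp}, the sole genuinely structural point is the passage from the commutator relation in $\mathrm{PSL}(2,\Real)$ to one in $\Sft$, which the connectedness-and-discreteness argument above takes care of.
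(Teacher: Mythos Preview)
Your proof is correct and follows essentially the same approach as the paper's: reduce to the standard upper-triangular unipotent via a unique $\mathrm{SO}(2)/\{\pm\mathbf{1}\}$ conjugation, factorize explicitly as a commutator of a diagonal and a unipotent matrix with parameters tending to the identity, then lift to $\Sft$. The only cosmetic differences are that the paper takes $A$ unipotent and $B$ diagonal (you swap their roles), and the paper splits into the two cases $C(u)$ and $C(u)^{-1}$ with $u>0$, handling the second by interchanging $A$ and $B$, whereas you treat both signs uniformly via $\mathrm{sgn}(\sigma)$; your connectedness-and-discreteness argument for passing the commutator relation from $\mathrm{PSL}(2,\Real)$ to $\Sft$ for all $t$ is also spelled out a bit more explicitly than the paper's.
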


\begin{proof}
	For any $u\in(0,+\infty)$, we factorize the matrices
	$$C(u)=\left[\begin{array}{cc} 
	1& u \\ 0& 1
	\end{array}\right]
	\in\mathrm{SL}(2,\Real)$$
	as a commutator
	$$C(u)=ABA^{-1}B^{-1},$$
	where
	\begin{equation}\label{par_AB}
	\begin{array}{cc}
	A=\left[\begin{array}{cc} 1 & \sqrt{u}+u \\
	0 & 1\end{array}\right],
	&
	B=\left[\begin{array}{cc} \frac{1}{\sqrt{1+\sqrt{u}}}& 0 \\
	0 & \sqrt{1+\sqrt{u}}\end{array}\right].
	\end{array}
	\end{equation}
	The matrices $A,B\in\mathrm{SL}(2,\Real)$
	depend continuously on $u\in(0,+\infty)$, 
	and converge to the identity matrix as $u$ tends to $0$.
	Observe $C(u)^{-1}$ factorizes as a commutator $BAB^{-1}A^{-1}$.	
	In general, any parabolic element in $\mathrm{PSL}(2,\Real)$ can be conjugated
	to $\{\pm C(u)\}$ or $\{\pm C(u)^{-1}\}$
	by some unique element in $\mathrm{SO}(2)/\{\pm\mathbf{1}\}$,
	for some unique parameter $u\in(0,+\infty)$.
	In fact, the conjugation element and the paremeter vary continuously
	as the parabolic element varies in $\mathrm{PSL}(2,\Real)$.
	On the upper-half complex plane,
	the conjugation and the parameter are determined
	by the requirement that the ideal fixed point of the parabolic element
	is conjugated to $\infty$.
			
	Suppose $(\gamma_t)_{t\in[0,1)}$ 
	is a continuous family of parabolic elements in $\Sft$ 
	that converge to the identity element as $t$ tends to $1$.
	Denote by $\bar{\gamma}_t\in\mathrm{PSL}(2,\Real)$ the image of $\gamma_t$
	under the canonical covering projection $\Sft\to\mathrm{PSL}(2,\Real)$.
	Then we can find continuous families 
	$\{\pm S_t\}\in\mathrm{SO}(2)/\{\pm\mathbf{1}\}$
	and 
	$u_t\in(0,+\infty)\times(0,\pi)$ such that
	$$\bar{\gamma}_t=\left\{\pm S_tC(u_t)S_t^{-1}\right\}\mbox{ or }
	\left\{\pm S_tC(u_t)^{-1}S_t^{-1}\right\}$$
	for all $t\in[0,1)$.
	Let $A_t,B_t\in\mathrm{SL}(2,\Real)$ be the matrices as above, 
	such that $C(u_t)=A_tB_tA_t^{-1}B_t^{-1}$.
	Possibly after switching $A_t$ and $B_t$,
	we obtain a commutator factorization
	$$\bar{\gamma}_t=\bar{\alpha}_t\bar{\beta}_t\bar{\alpha}_t^{-1}\bar{\beta}_t^{-1},$$
	where $\bar\alpha_t=\{\pm S_tA_tS_t^{-1}\}$ and $\bar\beta_t=\{\pm S_tB_tS_t^{-1}\}$
	are elements of $\mathrm{PSL}(2,\Real)$.	
	Then we obtain unique lifts
	$\alpha_t,\beta_t\in\Sft$ of $\bar{\alpha}_t,\bar{\beta}_t\in\mathrm{PSL}(2,\Real)$,
	which converge to the identity as $t$ tends to $1$,
	and which satisfy
	the relation $\gamma_t=\alpha_t\beta_t\alpha_t^{-1}\beta_t^{-1}$ 
	for all $t\in[0,1)$, as asserted.
	\end{proof}

\section{Reduction to the virtually central case with extra genera}\label{Sec-reduction_vc}

	
		Let $M$ be a formatted graph manifold. 
		For any ordinary fiber $f$ in a JSJ piece of $M$
		with a fibered collar neighborhood parametrized as $f\times D^2$,
		we can construct a new formatted graph manifold:
		$$M\#_{f}\left(f\times T^2\right)= 
		\left(M\setminus\mathrm{int}\left(f\times D^2\right)\right)\cup_{f\times \partial D^2}\left(f\times \left(D^2\# T^2\right)\right),$$
		where $D^2\#T^2$ stands for (a fixed model of) 
		a connected sum of a compact disk $D^2$ with a torus $T^2$.
		Namely, $M\#_f(f\times T^2)$ is obtained from $M$
		by replacing the interior of $f\times D^2$
		with the interior of $f\times (D^2\# T^2)$,
		retaining the topology near $f\times \partial D^2$.
		The construction comes naturally with a degree-one map
		\begin{equation}\label{def_horizontal_pinching_map}
		M\#_{f}\left(f\times T^2\right)\to M,
		\end{equation}
		which is the identity map restricted to $M\setminus\mathrm{int}(f\times D^2)$,
		and which is the product of the identity map $f\to f$ with 
		the pinching map $D^2\# T^2\to D^2$
		restricted to $f\times (D^2\# T^2)$.
		We refer to $M\#_f(f\times T^2)$ as the \emph{fiber connected sum} of $M$ and $f\times T^2$,
		and the map (\ref{def_horizontal_pinching_map})
		as the \emph{horizontally pinching map}.
		
		The JSJ graph of $M\#_f(f\times T^2)$ can be identified with the simplicial JSJ graph $(V,E)$ of $M$,
		and the horizontally pinching map induces the identity map between the JSJ graphs.
		If $f$ is an ordinary fiber $f_v$ of the JSJ piece corresponding to a vertex $v\in V$,
		we observe the simple relations
		\begin{equation}\label{pinch_e_chi}
		\begin{array}{cc}
		\opEuler_{M\#_f(f\times T^2)}=\opEuler_M, &
		\chi_{M\#_f(f\times T^2)}=\chi_M-2v^*,
		\end{array}
		\end{equation}
		(see Notation \ref{notation_xi_tau}).
	
		\begin{definition}\label{def_formatted_pinching}
			A map between formatted graph manifolds is called a \emph{formatted pinching map}
			if it admits a factorization as 
			the composition of finitely many maps between formatted graph manifolds,
			such that each of the factor maps is (formatted homeomorphically)
			conjugate to a horizontally pinching map of the form (\ref{def_horizontal_pinching_map}).
		\end{definition}
	
		\begin{remark}\label{remark_formatted_pinching}\
		\begin{enumerate}
		\item
		Let $M$ be a formatted graph manifold with a simplicial JSJ graph $(V,E)$.
		For any union of finitely many mutually disjoint of ordinary fibers 
		$\mathcal{F}$ in JSJ pieces of $M$,
		we can construct 
		the simultaneous fiber connected sum
		$M\#_{\mathcal{F}}(\mathcal{F}\times T^2)$,
		namely,
		$(M\setminus\mathrm{int}(\mathcal{F}\times D^2))
		\cup_{\mathcal{F}\times \partial D^2}(\mathcal{F}\times(D^2\# T^2))$.
		So there is a simultaneous horizontally pinching map
		$M\#_{\mathcal{F}}(\mathcal{F}\times T^2)\to M$.
		\item 
		In general, every formatted pinching map $M'\to M$
		can be written as a composite map $M'\to M\#_{\mathcal{F}}(\mathcal{F}\times T^2)\to M$,
		where the first factor is a formatted homeomorphism and the second factor is a simultaneous horizontally pinching map.
		It is instructive to look at the composition of two horizontally pinching maps,
		and proceed by induction.
		\item
		One might also be interested in
		the fiber connected sum $M\#_f (f\times T^2)$ where $f$ 
		is an exceptional fiber in a JSJ piece.
		It will give rise to a degree--$1$ map $M\#_f (f\times T^2)\to M$ defined similarly.
		In that case, however,
		the JSJ decomposition of $M\#_f (f\times T^2)$ will have a new JSJ torus $f\times \partial D^2$
		and a new JSJ piece	$f\times\mathrm{int}(D^2\# T^2)$.
		\end{enumerate}
		\end{remark}

	\subsection{Local reduction}

	\begin{theorem}\label{local_reduction_vc}
		Let $M'\to M$ be a formatted pinching map between 
		formatted graph manifolds
		with identified simplicial JSJ graphs $(V,E)$.
		Let $\rho\colon \pi_1(M)\to\Sft\times_\Integral\Real$ 
		be a representation.
		Suppose that
		$C=(V_C,E_C)$ be a maximal connected subgraph of $(V,E)$	over which $\rho$ is abelian.
		Adopt Notations \ref{notation_xi_tau} and \ref{notation_delta}.
		Suppose that the inequality
		$$\chi_{M'}(v)\leq\chi_M(v)-2\cdot\delta_\rho(v;C)$$
		holds for all $v\in V$.
		
		Then, for any constant $\epsilon>0$,
		there exists a representation $\rho'\colon\pi_1(M')\to \Sft\times_\Integral\Real$
		which satisfies all the following properties:
		\begin{itemize}
		\item 
		In the space of representations $\mathcal{R}(\pi_1(M'),\Sft\times_\Integral\Real)$,
		$\rho'$ is path-connected with the pull-back representation of $\rho$.
		\item
		For any $v\in V$,
		$$\left|\xi_{\rho'}(v)-\xi_{\rho}(v)\right|<\epsilon.$$
		\item
		For any $\{v,w\}\in E$,
		$\tau_{\rho'}(v,w)$ equals $\tau_{\rho}(v,w)$ 
		if $\tau_{\rho}(v,w)$ is finite.
		\item
		For any $v\in V_C$, $\delta_{\rho'}(v)$ equals $0$.
		\end{itemize}
		In fact, one may require, furthermore,
		$\xi_{\rho'}(v)=\xi_\rho(v)$ if $v$ is not a vertex of $C$,
		and $\tau_{\rho'}(v,w)=\tau_{\rho}(v,w)$ if neither $v$ nor $w$ are vertices of $C$.		
	\end{theorem}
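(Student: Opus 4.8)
The plan is to reduce the statement to the ``minimal'' pinching, localize the modification near the block $C$, carry out the deformation inside $M_C$ using the abelian-representation lemmas of Section \ref{Sec-pdc}, and absorb the resulting boundary motion over the neighbouring JSJ pieces using the extra handles together with the commutator-factorization lemmas. First, note that every formatted pinching map factors as $M'\to M''\to M$ where $M''$ realizes the equality $\chi_{M''}(v)=\chi_M(v)-2\cdot\delta_\rho(v;C)$ at every vertex and $M'\to M''$ is again a formatted pinching map; since the quantities $\xi_\rho$, $\tau_\rho$, and $\delta_\rho$ are all computed from the JSJ graph and the restrictions of the representation to the JSJ pieces, which are preserved under pull-back along a pinching map whose extra handles carry trivial image, it suffices to treat $M''$ and then pull back. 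By Remark \ref{remark_formatted_pinching} I may therefore assume $M'=M\#_{\mathcal F}(\mathcal F\times T^2)$ where $\mathcal F$ contains exactly $\delta_\rho(w;C)$ ordinary fibers in each $J_w$ with $w\in V\setminus V_C$: concretely, the base $2$--orbifold of $J_w$ gains one extra handle for each edge $\{v,w\}$ with $v\in V_C$ and $\tau_\rho(v,w)=\infty$, and all other pieces are untouched. I will build $\rho'$ agreeing with the pull-back of $\rho$ outside $M_C$ and the adjacent pieces $J_w$, and on each such $J_w$ changing $\rho$ only through the extra handles attached to the bad edges, which automatically gives the final ``in fact'' clause.

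Next I would analyze the block $C$. Because $C$ is maximal abelian, $\rho$ restricted to $\pi_1(M_C)$ has abelian image, hence factors through a finitely generated abelian group and through an abelian subgroup of $\Sft\times_\Integral\Real$ of central, elliptic, hyperbolic, or parabolic type. If it is central, then $\bar\rho(\pi_1(T_{v,w}))$ is trivial for every edge incident to $V_C$ (as $\pi_1(T_{v,w})\subseteq\pi_1(M_C)$ for $v\in V_C$), so all those $\tau_\rho(v,w)$ equal $1$, $\delta_\rho(\cdot;C)\equiv0$, $M'=M$, and $\rho'=\rho$ works. In each of the other three cases I apply Lemma \ref{ab_rep_ell}, \ref{ab_rep_hyp}, or \ref{ab_rep_par} to the induced abelian representation, obtaining a modifying family — a sequence $(\eta_n)$ (elliptic), a continuous family $(\eta_t)_{t\in(0,1]}$ (hyperbolic), or a continuous family of conjugators $(g_t)_{t\in(0,1]}$ (parabolic) — lying in, or normalizing, the centralizer of $\rho(\pi_1(M_C))$, fixing central elements, converging to $\rho$, and driving the image of $\pi_1(M_C)$ toward a finite cyclic group (elliptic) respectively the central subgroup $\Real$ (hyperbolic, parabolic). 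Pulling this back to $\pi_1(M_C)$ gives a family $\rho_\bullet$; each member is path connected to the pull-back of $\rho$ (the family itself is continuous in the hyperbolic and parabolic cases, and in the elliptic case each member is joined to $\rho$ by rotating the rotation numbers of the cyclic factors back inside the centralizer), by Theorem \ref{lifting_representations}. Along the family the essential winding numbers of $\rho_\bullet(f_v)$ converge to $\xi_\rho(v)$, and in the hyperbolic and parabolic cases the winding numbers of $\rho_\bullet(f_w)$ for $w\in V\setminus V_C$ are literally constant, being preserved by the coset or conjugation structure; at the limiting end every $\bar\rho_\bullet(\pi_1(T_{v,w}))$ with $v\in V_C$ is finite.

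It then remains to extend each $\rho_\bullet$ over the neighbouring $J_w$. For each bad edge $\{v,w\}$ with $v\in V_C$, the family changes the image of the boundary subgroup $\pi_1(T_{v,w})\subseteq\pi_1(J_w)$; keeping $\rho_\bullet$ equal to $\rho$ on the generators coming from the original base surface, the exceptional fibers, and the other boundary curves, and prescribing $\rho_\bullet$ on $\pi_1(T_{v,w})$ and on the fiber of $J_w$ from the $M_C$ side, the Seifert relation of $\pi_1(J_w)$ leaves a ``correction'' family in $\Sft$ that starts at the identity and converges to the identity. Organizing the local types at $v$ and $w$ as in Section \ref{Subsec-local_types}, I would arrange this correction (one per bad edge, treated in succession) to be of a single elliptic, hyperbolic, or parabolic type, so that Lemma \ref{comm_ell}, \ref{comm_hyp}, or \ref{comm_par} writes it as $\alpha_\bullet\beta_\bullet\alpha_\bullet^{-1}\beta_\bullet^{-1}$ with $\alpha_\bullet,\beta_\bullet\to\mathrm{id}$; assigning $\alpha_\bullet,\beta_\bullet$ to the two generators of the corresponding extra handle (and the other extra handles trivial) produces $\rho_\bullet|_{\pi_1(J_w)}$ matching the $M_C$-side family on $\pi_1(T_{v,w})$ and reducing to $\rho|_{\pi_1(J_w)}$ initially. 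Gluing the local families — along $M_C$, along the neighbouring $J_w$, and equal to $\rho$ elsewhere, which is consistent because they agree on the shared JSJ tori — yields a path in $\mathcal R(\pi_1(M'),\Sft\times_\Integral\Real)$ from the pull-back of $\rho$ to representations in which $\bar\rho'$ has finite image on $\pi_1(M_C)$. Taking $n$ large or $t$ small, the resulting $\rho'$ satisfies $|\xi_{\rho'}(v)-\xi_\rho(v)|<\epsilon$ for all $v$ (and exactly $0$ for $v\notin V_C$, and for all $v$ in the hyperbolic and parabolic cases), agrees with $\rho$ on finite-$\tau$ edges and on edges disjoint from $V_C$, and has $\delta_{\rho'}(v)=0$ for $v\in V_C$ since every torus adjacent to $V_C$ now has finite $\tau_{\rho'}$.

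The hard part will be Step 3: one must check that the global deformation really does decompose into local pieces each of which reduces to a \emph{single}-type commutator family converging to the identity, so that the lemmas of Section \ref{Sec-pdc} apply verbatim; that exactly one extra handle per bad edge is enough; that the local families glue consistently along the JSJ tori; and that the essential winding numbers of the fibers $f_w$ for $w\notin V_C$ are preserved exactly rather than merely approximately. This is precisely where the case-by-case analysis of the local types of $\rho$ at the vertices (Section \ref{Subsec-local_types}) is genuinely needed, and it constitutes the technical core of the proof.
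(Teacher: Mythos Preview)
Your proposal is essentially the paper's argument: reduce to the minimal pinching, deform $\rho$ over $M_C$ via Lemmas \ref{ab_rep_ell}--\ref{ab_rep_par}, and absorb the resulting boundary discrepancy into the extra handles via Lemmas \ref{comm_ell}--\ref{comm_par}, keeping everything outside untouched. The paper organizes the extension step more cleanly than your Step~3: instead of working inside the full Seifert relation of the enlarged $J'_w$, it carves out a product collar $W_{w,v}=f_v\times\bigl((c_{v,w}\times[0,1])\#T^2\bigr)$ on the outside of each bad torus and places the handle there. This gives $\pi_1(W_{w,v})$ the minimal presentation $[x,y]=c_1c_0^{-1}$ with $f_v$ central, so each correction $\gamma=\rho(c_1)\,\rho_\bullet(c_0)^{-1}$ is isolated from exceptional fibers and from the other bad edges, and its type is manifestly that of $C$; this sidesteps exactly the bookkeeping you flag in your last paragraph. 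For path-connectedness in the elliptic (sequential) case the paper simply invokes local path-connectedness of $\mathcal{R}(\pi_1(M^\dagger),\Sft\times_\Integral\Real)$ from Theorem \ref{lifting_representations}, rather than building an explicit path back.

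One point you should make explicit, since your phrase ``the other boundary curves'' hides it: when $\mathrm{clos}(M_C)$ has a boundary torus with \emph{finite} $\tau_\rho$ (a ``good'' edge from $V_C$ to $V\setminus V_C$), the gluing requires the $M_C$-side deformation to fix that torus subgroup, not just the bad ones. In the hyperbolic and parabolic cases this is automatic, because a finite subgroup of a hyperbolic or parabolic one-parameter group is trivial, hence central, hence fixed by Lemmas \ref{ab_rep_hyp}--\ref{ab_rep_par}. In the elliptic case you need to choose the cyclic decomposition in Lemma \ref{ab_rep_ell} so that the subgroup $K_0=\{h\in H:\bar\eta(h)\text{ has finite order}\}$ lies in the factors on which $\eta_n=\eta$; this is possible since $H/K_0$ is torsion-free and hence $K_0$ is a direct summand.
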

	
	The rest of this subsection is devoted to the proof of Theorem \ref{local_reduction_vc}.

	\subsubsection{The standard local model}
		Let $M$ be any formatted graph manifold with a simplicial JSJ graph $(V,E)$,
		and $\rho\colon \pi_1(M)\to\Sft\times_\Integral\Real$ any representation.
		Let $C=(V_C,E_C)$ be a maximal connected subgraph of $(V,E)$ over which $\rho$ is abelian type.
		Denote by $M_C$ the block submanifold of $M$ over $C$, namely,
		$$M_C=\bigcup_{w\in V_C} J_w\cup \bigcup_{\{w,u\}\in E_C} T_{w,u}.$$				
		Denote by	$\Delta_\rho(C)$
		the set of directed edges	which depart from $V_C$ and arrive into $V\setminus V_C$,
		and which fail to be virtually central with respect to $\rho$,
		namely,
		$$\Delta_\rho(C)=\left\{(w,v)\in V_C\times(V\setminus V_C)\colon \{v,w\}\in E\mbox{ and }\tau_\rho(v,w)=\infty\right\}.$$		
		These directed edges correspond to a subset of outward oriented JSJ tori 
		on the boundary of the closure of $M_C$.
		We construct a particular formatted graph manifold $M^\dagger=M^\dagger(\Delta_\rho(C))$,
		together with a formatted pinching map 
		\begin{equation}\label{blowup}
		M^\dagger\to M,
		\end{equation}
		such that the equality
		\begin{equation}\label{standard_equal}
		\chi_{M^\dagger}(v)=\chi_M(v)-2\cdot\delta_\rho(v,C)
		\end{equation}
		holds for all $v\in V$.
		The construction is as follows.
		
		For any $(w,v)\in \Delta_\rho(C)$,
		take a collar neighborhood of $T_{v,w}$ in $\mathrm{clos}(J_v)$, 
		parametrized as $T_{v,w}\times[0,1]$, 
		where $T_{v,w}\times\{0\}$ is the JSJ torus $T_{v,w}$.
		We require that these $T_{v,w}\times[0,1]$ are mutually disjoint in $M$.
		Take an ordinary fiber $f_v$ of $J_v$ in $T_{v,w}\times(0,1)$.
		We obtain a compact submanifold of $M$ with boundary:
		$$W_C=\mathrm{clos}(M_C)\cup\bigcup_{(w,v)\in \Delta_\rho(C)} T_{v,w}\times[0,1].$$
		Construct a simultaneous fiber connected sum of $W_C$ with trivialized circle bundles over tori:
		$$W^\dagger_C=\mathrm{clos}(M_C)\cup\bigcup_{(w,v)\in \Delta_\rho(C)} \left(T_{v,w}\times[0,1]\right)\#_{f_v}\left(f_v\times T^2\right).$$
		The formatted graph manifold $M^\dagger$ is constructed as
		$$M^\dagger=\left(M\setminus\mathrm{int}(W_C)\right)\cup_{\partial W_C} W^\dagger_C.$$
		The formatted pinching map $M^\dagger\to M$
		is defined as the identity map on
		$M\setminus\mathrm{int}(W_C)$
		together with the simultaneous horizontal pinching map $W^\dagger_C\to W_C$.
		
		We introduce some additional notations to describe the structure of $M^\dagger$.
		The collection of tori $\partial W^\dagger_C$
		decomposes $M^\dagger$ into the interior of $W^\dagger_C$
		together with the connected components $X_q$ of $M\setminus W_C$,
		indexed by $q\in\pi_0(M\setminus W_C)$.
		We denote by $X_{q(v)}$ the component that contains $J_v$.
		The collection of tori $T_{v,w}\times\{0\}$ decomposes 
		the interior of $W^\dagger_C$ into $M_C$ 
		together with the interiors of the fiber connected sums
		$$W_{w,v}=\left(T_{v,w}\times[0,1]\right)\#_{f_v}\left(f_v\times T^2\right).$$
		For each $(w,v)\in\Delta_\rho(C)$,
		fix an oriented slope $c_{v,w}$ on $T_{v,w}$ 
		such that $T_{v,w}$ is parametrized as the product torus $f_v\times c_{v,w}$.
		Then $W_{w,v}$ can be parametrized as a product
		$$W_{w,v}=f_v\times\Omega_{w,v},$$
		where $\Omega_{w,v}$ denotes the connected sum of the annulus $c_{v,w}\times[0,1]$
		with the torus $T^2$.
		
		The following commutative diagram summarizes our above construction 
		about any $(v,w)\in\Delta_\rho(C)$.
		Each arrow indicates an inclusion, and each equality symbol indicates a parametrization:
		\begin{equation}\label{diagram_decompositions}
		\xymatrix{
		f_v\times c_{v,w}\times\{0\} \ar[rd] \ar@{=}[r] & T_{v,w}\times\{0\} \ar[r] \ar[rd] & \mathrm{clos}\left(M_C\right) \ar[rd] & & \\
		& f_v\times \Omega_{w,v} \ar@{=}[r] & W^\dagger_{v,w} \ar[r] & W_C	\ar[r] & M^\dagger\\
		f_v\times c_{v,w}\times\{1\} \ar[ru] \ar@{=}[r] & T_{v,w}\times\{1\} \ar[rr] \ar[ru] & & \mathrm{clos}\left(X_{q(v)}\right) \ar[ru] & 
		}
		\end{equation}
		
		\subsubsection{Associated graph-of-groups decompositions}		
		The structure of $\pi_1(M^\dagger)$ can be described with graph-of-groups decompositions.
		We briefly recall the terminology in group theory for the reader's reference.
		Then we point out necessary choices and elaborate the decomposition.
		
		Recall that a \emph{graph of groups} $\mathcal{G}$ refers to a collection of data as follows:
		A connected graph $\Lambda$, regarded as a connected finite cell $1$--complex;
		a group $G_v$ for every vertex ($0$--cell) $v$ of $\Lambda$;
		a group $G_e$ for every edge ($1$--cell) $e$ of $\Lambda$;
		and an injective homomorphism $i_\delta\colon G_e\to G_v$
		for every end $\delta$ of an edge $e$ attached to a vertex $v$.
		If $\Lambda$ contains a unqiue vertex $v$ and no edges, 
		the fundamental group $\pi_1(\mathcal{G})$ is defined as $G_v$.
		If $\Lambda$ contains at least one edge $e$, 
		$\pi_1(\mathcal{G})$ is defined recursively using free amalgamations and HNN extensions.
		To be precise, choose an orientation of $e$ to distinguish its positive and negative ends $e^{\pm}$.
		When $e$ is separating in $\Lambda$, 
		$\pi_1(\mathcal{G})$ is defined as the free amalgamation
		$$\pi_1\left(\mathcal{G}\right)=
		\left(\pi_1\left(\mathcal{G}_{(\Lambda\setminus e)^-}\right)*\pi_1\left(\mathcal{G}_{(\Lambda\setminus e)^+}\right)\right)
		/
		\left\langle\!\left\langle i_{e^-}(g)^{-1}i_{e^+}(g)\colon g\in G_e \right\rangle\!\right\rangle,$$
		where	$\mathcal{G}_{(\Lambda\setminus e)^\pm}$ stands for
		the subgraph of groups over the connected component $(\Lambda\setminus e)^\pm$ of $\Lambda\setminus e$
		attached on the end $e^\pm$.
		When $e$ is nonseparating in $\Lambda$, 
		$\pi_1(\mathcal{G})$ is defined as the HNN extension
		$$\pi_1\left(\mathcal{G}\right)=
		\left(\pi_1\left(\mathcal{G}_{\Lambda\setminus e}\right)*\langle t_e\rangle\right)
		/
		\left\langle\!\left\langle i_{e^-}(g)^{-1}t_ei_{e^+}(g)t_e^{-1}\colon g\in G_e \right\rangle\!\right\rangle,$$
		where	$\mathcal{G}_{\Lambda\setminus e}$ stands for
		the subgraph of groups over the connected subgraph $\Lambda\setminus e$,
		and where $t_e$ is a distinguished free letter called the \emph{stable letter}.
		Hence the group $\pi_1(\mathcal{G})$ admits a presentation
		whose generators are generators of the vertex groups together with
		$b_1(\Lambda)$ stable letters.
		The relators of the presentation are either relators of the vertex groups
		or the relators arising from the free amalgamations and the HNN extensions,
		which correspond to generators of the edge groups.
		For different choices of the edge $e$ and its orientation,
		the resulting fundamental group $\pi_1(\mathcal{G})$ from the reduction procedure
		is actually unique up to canonical isomorphisms.
		A \emph{graph-of-groups decomposition} of a group $G$ refers to a graph of groups $\mathcal{G}$
		together with an isomorphism $G\cong \pi_1(\mathcal{G})$.		
		
		Suppose that we have chosen	basepoints for all the spaces in the diagram (\ref{diagram_decompositions}),
		and have chosen paths connecting from the included basepoints to the basepoints of the target spaces 
		for all the arrows thereof.
		Then the arrows induce injective homomorphisms between the fundamental groups
		of the pointed subspaces.
		One may adjust the homomorphisms with conjugations in the target groups
		by adjusting of the paths relative to the basepoints.		
		After fixing such choices, 
		we will obtain a graph-of-groups decomposition of $\pi_1(M^\dagger)$,
		whose vertex groups are $\pi_1(W^\dagger_C)$ and the subgroups $\pi_1(X_q)$,
		and whose edge groups are the subgroups $\pi_1(T)$ 
		corresponding to the boundary components $T$ of $W^\dagger_C$.
		We will also obtain a graph-of-groups decomposition of $\pi_1(W^\dagger_C)$,
		whose vertex groups are $\pi_1(M_C)$ and the subgroups $\pi_1(W^\dagger_{v,w})$,
		and whose edge groups are the subgroups $\pi_1(T_{v,w}\times\{0\})$.
		
		We choose basepoints and paths as above for objects in the diagram (\ref{diagram_decompositions}),
		and then identify $\pi_1(M^\dagger)$ and $\pi_1(W^\dagger_C)$ with their graph-of-groups decompositions.
		We actually adjust the paths to make the homomorphisms 
		$\pi_1(T_{v,w}\times\{0\})\to\pi_1(W^\dagger_{w,v})$
		and $\pi_1(T_{v,w}\times\{1\}\to\pi_1(W^\dagger_{w,v})$
		look nicer, as follows.
		Fix a presentation of $\pi_1(\Omega_{w,v})$
		with four generators $x_{v,w},y_{v,w},c_{0,v,w},c_{1,v,w}$
		and one relation
		\begin{equation}\label{relation_Omega}
		x_{v,w}y_{v,w}x_{v,w}^{-1}y_{v,w}^{-1}=c_{1,v,w}c_{0,v,w}^{-1}.
		\end{equation}
		We require that the conjugation classes of $c_{0,v,w}$ and $c_{1,v,w}$ in $\pi_1(\Omega_{w,v})$
		represent	the free homotopy classes of the loops $c_{v,w}\times\{0\}$ and $c_{v,w}\times\{1\}$, 
		in $\Omega_{w,v}$ respectively.
		(The presentation follows immediately from the connected sum decomposition of $\Omega_{w,v}$
		and the van Kampen theorem.)
		This yields a presentation of
		$\pi_1(W^\dagger_{w,v})$ with five generators $f_v,x_{v,w},y_{v,w},c_{0,v,w},c_{1,v,w}$,
		and the five relations,
		namely,
		the relation (\ref{relation_Omega})
		and another four relations saying that $f_v$ commutes with 
		any other generators.
		We also require that the conjugation class of $f_v$ in $\pi_1(W^\dagger_{w,v})$
		represents the free homotopy classes of the ordinary fiber $f_v$.
		After suitable adjustment of the chosen paths,
		we assume that the image of 
		$\pi_1(T_{v,w}\times\{0\})\to \pi_1(W^\dagger_{w,v})$
		is the free abelian subgroup generated by $c_{0,v,w}$ and $f_v$.
		We also assume
		that the image of $\pi_1(T_{v,w}\times\{1\})\to \pi_1(W^\dagger_{w,v})$ is
		the free abelian subgroup generated by $c_{1,v,w}$ and $f_v$.
		
		\subsubsection{Reductions for the standard local model}
		
		\begin{lemma}\label{ell_reduction_vc}
		The statement of Theorem \ref{local_reduction_vc} holds true if 
		$M'$ is the standard local model $M^\dagger$
		and if $\rho$ is of elliptic type over $C$.
		\end{lemma}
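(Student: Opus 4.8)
The plan is to perturb $\rho$ on the block $M_C$ to a representation with finite cyclic image in $\mathrm{PSL}(2,\Real)$, and to absorb the resulting discrepancy across each added‑genus collar $W_{w,v}$ by an elliptic commutator factorization. Before anything else I would record a structural observation: since $\rho$ is of elliptic type over the maximal abelian block $C$, one has $\bar\rho(f_v)=1$ in $\mathrm{PSL}(2,\Real)$ for every vertex $v\notin V_C$ adjacent to $V_C$, and in particular $\rho(f_v)$ lies in the central subgroup $\Real$. Indeed a conjugate of $f_v$ lies in an edge subgroup $\pi_1(T_{v,w})\subseteq\pi_1(M_C)$, so $\bar\rho(f_v)$ is elliptic or trivial; were it a nontrivial elliptic, then $\bar\rho(\pi_1(J_v))$ would centralize it and hence lie in the same maximal rotation subgroup as $\bar\rho(\pi_1(M_C))$, making $\rho$ abelian over $C\cup\{v\}$ and contradicting maximality of $C$. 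This is what will let the two sides of $W_{w,v}$ share a single value $\rho(f_v)$ on the fiber.

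Next I would perturb on $M_C$. The image $A=\rho(\pi_1(M_C))$ lies in a conjugate of the closed abelian subgroup $\widetilde{\mathrm{SO}}(2)\times_\Integral\Real$, so Lemma \ref{ab_rep_ell} applies; I would use it in a refined form keeping the construction equal to the inclusion on the torsion subgroup of $\bar A$ and on $A\cap\Real$ (possible because the finitely many edge images $\rho(\pi_1(T))$ with $\tau_\rho(T)<\infty$ already have finite image in $\mathrm{PSL}(2,\Real)$, and the relevant extension obstruction vanishes). This yields $\rho_n\colon\pi_1(M_C)\to\Sft\times_\Integral\Real$ converging to $\rho|_{\pi_1(M_C)}$, with image in the centralizer of $A$, with $\bar\rho_n(\pi_1(M_C))$ finite cyclic, and agreeing with $\rho$ on every such $\rho(\pi_1(T))$ and on the central elements $\rho(f_v)$. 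Then I would fix $n$ large.

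For the interpolation across a blown‑up edge $(w,v)\in\Delta_\rho(C)$, set $C_{v,w}=\rho(c_{v,w})$ and $C_{v,w}^{n}=\rho_n(c_{v,w})$; then $\gamma_{v,w}=C_{v,w}(C_{v,w}^{n})^{-1}\in\Sft$ is elliptic and tends to the identity. By Lemma \ref{comm_ell} I would choose $\alpha_{v,w},\beta_{v,w}\in\Sft$ near the identity with $\gamma_{v,w}=\alpha_{v,w}\beta_{v,w}\alpha_{v,w}^{-1}\beta_{v,w}^{-1}$, and define a representation of $\pi_1(W_{w,v})=\langle f_v,x_{v,w},y_{v,w},c_{0,v,w}\mid f_v\text{ central}\rangle$ (where $c_{1,v,w}=x_{v,w}y_{v,w}x_{v,w}^{-1}y_{v,w}^{-1}c_{0,v,w}$) by $f_v\mapsto\rho(f_v)$, $c_{0,v,w}\mapsto C_{v,w}^{n}$, $x_{v,w}\mapsto\alpha_{v,w}$, $y_{v,w}\mapsto\beta_{v,w}$; this is a homomorphism because $\rho(f_v)$ is central, and it sends $c_{1,v,w}$ to $\gamma_{v,w}C_{v,w}^{n}=C_{v,w}$, so it matches $\rho_n$ on $T_{v,w}\times\{0\}$ and the unchanged $\rho$ on $T_{v,w}\times\{1\}$. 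I would then assemble $\rho'$ on $\pi_1(M^\dagger)$ from $\rho_n$ on $\pi_1(M_C)$, the pull‑back of $\rho$ on the $\pi_1(X_q)$, and these local maps on the $\pi_1(W_{w,v})$; by van Kampen and the matching on all JSJ tori of $M^\dagger$ this is well defined. For path‑connectedness to the pull‑back $\rho^{\ast}$, I would deform $\rho_n$ back to $\rho|_{\pi_1(M_C)}$ through representations into the connected abelian centralizer of $A$ that stay constant on the torsion of $\bar A$ and on $A\cap\Real$, and propagate this through the continuity of the matrix factorization of Lemma \ref{comm_ell} near the identity, obtaining a path of cocones over the van Kampen diagram ending at $\rho^{\ast}$. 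The remaining bullets would then be checked directly: $\xi_{\rho'}(v)=\xi_\rho(v)$ for $v\notin V_C$ and $|\xi_{\rho'}(v)-\xi_\rho(v)|<\epsilon$ for $v\in V_C$; $\tau_{\rho'}(v,w)=\tau_\rho(v,w)$ whenever $\tau_\rho(v,w)<\infty$; and, since every edge touching $V_C$ now has finite $\tau_{\rho'}$ and the vertices of $V_C$ lie in a common maximal abelian subgraph for $\rho'$ containing $C$, $\delta_{\rho'}(v)=0$ for $v\in V_C$.

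The hard part is the interpolation step: a dense‑image representation of $\pi_1(M_C)$ into a rotation subgroup cannot be deformed to a finite‑image one with the adjacent slope images held fixed, so a discrepancy between the perturbed and the unperturbed sides is inescapable. The new torus summand of $\Omega_{w,v}$ is precisely the device that carries this discrepancy — through its defining commutator relation — and the hypothesis $\chi_{M^\dagger}(v)=\chi_M(v)-2\cdot\delta_\rho(v;C)$ is exactly enough to supply one such handle per bad edge at $v$. Executing the elliptic commutator factorization with controlled size while simultaneously arranging the perturbation on $M_C$ to be the identity on the finitely many finite‑$\tau$ edge subgroups (so that all the pieces glue) is where the real work lies; the hyperbolic and parabolic cases are then handled analogously in the companion lemmas, using Lemmas \ref{ab_rep_hyp}, \ref{comm_hyp} and \ref{ab_rep_par}, \ref{comm_par} respectively.
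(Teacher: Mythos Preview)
Your proposal follows essentially the same route as the paper: perturb $\rho$ on $\pi_1(\mathrm{clos}(M_C))$ via Lemma~\ref{ab_rep_ell}, absorb the resulting discrepancy across each $W_{w,v}$ using the commutator factorization of Lemma~\ref{comm_ell}, and assemble via the graph-of-groups decomposition. Your opening observation that $\bar\rho(f_v)$ is central for $v$ adjacent to $V_C$ is exactly the maximality argument the paper uses.

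There is one point where the paper is simpler than your sketch, and it is worth noting. For path-connectedness, you propose to build an explicit continuous path of representations by deforming $\rho_n$ back to $\rho|_{\pi_1(M_C)}$ and ``propagating through the continuity of the matrix factorization of Lemma~\ref{comm_ell}''. But Lemma~\ref{comm_ell} is a statement about \emph{sequences}, not continuous families; extracting a continuous factorization from its proof requires the extra observation that all your $\gamma_t$ share a common elliptic fixed point, so the conjugating rotation $S$ can be chosen once and for all. The paper sidesteps this entirely: it simply observes that the constructed sequence $\rho^\dagger_n$ converges to the pull-back $\rho^\dagger_{\mathtt{pb}}$ in $\mathcal{R}(\pi_1(M^\dagger),\Sft\times_\Integral\Real)$, and then invokes local path-connectedness of this space (a consequence of Theorem~\ref{lifting_representations}) to conclude that $\rho^\dagger_n$ lies in the path-component of $\rho^\dagger_{\mathtt{pb}}$ for all large $n$. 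This is cleaner and avoids any continuity analysis of the commutator construction.

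Conversely, your ``refined form'' of Lemma~\ref{ab_rep_ell} --- arranging $\eta_n=\eta$ on elements with finite-order $\bar\eta$-image --- is a genuine clarification. The paper asserts $\lim_{n\to\infty}\tau_{\rho^\dagger_n}(v,w)=\tau_\rho(v,w)$, but convergence of orders does not follow automatically from $\eta_n\to\eta$; one needs to choose the cyclic decomposition of $H$ so that the finite-$\bar\eta$-image part is left untouched (e.g., split $H$ along $\bar\eta^{-1}(\Rational/\Integral)$). Your proposal makes this explicit.
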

		
		\begin{proof}
		Let $M$, $(V,E)$, $\rho$, and $C=(V_C,E_C)$ be
		as assumed in Theorem \ref{local_reduction_vc}.
		Suppose that $\rho$ is of elliptic type over $C$.
		Given any constant $\epsilon>0$,
		we show that a representation $\rho'$ of $\pi_1(M^\dagger)$ as asserted
		can be obtained by modifying the pull-back representation	$\rho^\dagger_{\mathtt{pb}}$ of $\rho$.
		Our modification is supported at the vertex subgroup $\pi_1(W^\dagger_C)$ of $\pi_1(M^\dagger)$,
		in the sense of the following description:
		On any vertex group $\pi_1(X_q)$ or any stable letter,
		$\rho'$ is defined as the restriction of $\rho^\dagger_{\mathtt{pb}}$.
		Meanwhile, 
		$\rho'$ is defined on the vertex group $\pi_1(W^\dagger_C)$,
		and equals $\rho^\dagger_{\mathtt{pb}}$ on any incoming edge group,
		namely, the image of any $\pi_1(T_{v,w}\times\{1\})\to \pi_1(W^\dagger_C)$.
		In fact, we construct a sequence of supported modifications
		$\{\rho^\dagger_n\}_{n\in\Natural}$,
		which converges to $\rho^\dagger_{\mathtt{pb}}$	as $n$ tends to $\infty$. 
		For all sufficiently large $n$, 
		we show that $\rho'$ can be taken as $\rho^\dagger_n$.		
		
		The restriction of $\rho^\dagger_{\mathtt{pb}}$ to $\pi_1(\mathrm{clos}(M_C))$
		equals $\rho$, and factors through an elliptic-type representation
		$\eta\colon H\to \Sft\times_\Integral\Real$,
		where $H$ stands for the abelianizaiton of $\pi_1(\mathrm{clos}(M_C))$.
		We apply Lemma \ref{ab_rep_ell},
		and obtain a sequence of perturbed elliptic-type representations
		$\eta_n\colon H\to \Sft\times_\Integral\Real$, indexed by $n\in\Natural$.
		Denote by $\rho^\dagger_n\colon \pi_1(\mathrm{clos}(M_C))\to \Sft\times_\Integral\Real$
		the pull-back representations of $\eta_n$, for all $n\in\Natural$.
		For any $(w,v)\in\Delta_\rho(C)$,
		the maximality of $C$ implies that $\rho$ is of central type at $v$,
		so $\rho^\dagger_{\mathtt{pb}}(f_v)$ is 
		by definition the central element
		$\xi_\rho(v)\in\Real$ of $\Sft\times_\Integral\Real$.
		Then the asserted properties of $\eta_n$ imply that
		$\rho^\dagger_n(f_v)=\xi_\rho(v)$ is central and constant for all $n\in\Natural$.
		We also observe $\rho^\dagger_{\mathtt{pb}}(c_{1,v,w})=\rho^\dagger_{\mathtt{pb}}(c_{0,v,w})$.
		Then the asserted properties of $\eta_n$ imply 
		$\rho^\dagger_{\mathtt{pb}}(c_{1,v,w})\rho^\dagger_n(c_{0,v,w})^{-1}\in\Sft$ for all $n\in\Natural$,
		and $\lim_{n\to\infty}\rho^\dagger_n(c_{0,v,w})=\rho^\dagger_{\mathtt{pb}}(c_{1,v,w})$.
		We apply Lemma \ref{comm_ell} to 
		$\gamma_n=\rho^\dagger_{\mathtt{pb}}(c_{1,v,w})\rho^\dagger_n(c_{0,v,w})^{-1}$,
		and obtain some elements $\alpha_n,\beta_n\in\Sft$,
		for all but finitely many $n\in\Natural$.
		Remember $\gamma_n=\alpha_n\beta_n\alpha_n^{-1}\beta_n^{-1}$
		and $\lim_{n\to\infty}\alpha_n=\lim_{n\to\infty}\beta_n=\mathrm{id}$. 
		Then we can extend the restriction of $\rho^\dagger_n$ to $\pi_1(T_{v,w}\times\{0\})$
		to be a representation of $\pi_1(W^\dagger_{w,v})$,
		defining
		$$\begin{array}{ccc}
		\rho^\dagger_n(x_{v,w})=\alpha_n, & \rho^\dagger_n(y_{v,w})=\beta_n, & \rho^\dagger_n(c_{1,v,w})=\rho^\dagger_{\mathtt{pb}}(c_{1,v,w}),
		\end{array}$$
		for all but finitely many $n\in\Natural$.
		Note that the relation (\ref{relation_Omega}) is preserved under $\rho^\dagger_n$,
		namely,
		$\rho^\dagger(x_{v,w})\rho^\dagger(y_{v,w})\rho^\dagger(x_{v,w})^{-1}\rho^\dagger(x_{v,w})^{-1}
		=\alpha_n\beta_n\alpha_n^{-1}\beta_n^{-1}=\gamma_n
		=\rho^\dagger_n(c_{1,v,w})\rho^\dagger_n(c_{0,v,w})^{-1}$;
		the commutativity relations of $f_v$	with $x_{v,w}$, $y_{v,w}$, $c_{0,v,w}$, and $c_{1,v,w}$ are
		also preserved under $\rho^\dagger_n$, since $\rho^\dagger_n(f_v)=\xi_\rho(v)$ is central.
		Perform such extension for all $(w,v)\in\Delta_\rho(C)$.
		Then, for all but finitely many $n\in\Natural$,
		we obtain representations $\rho^\dagger_n\colon \pi_1(W^\dagger_C)\to \Sft\times_\Integral\Real$,
		which all coincide with $\rho^\dagger_{\mathtt{pb}}$ on the incoming edge groups $\pi_1(T_{v,w}\times\{1\})$.
		Therefore, 
		they extend uniquely to be representations $\rho^\dagger_n\colon \pi_1(M^\dagger)\to \Sft\times_\Integral\Real$,
		which all coincide with $\rho^\dagger_{\mathtt{pb}}$ on any vertex groups $\pi_1(X_q)$ and any stable letter.
		
		Our construction guarantees $\lim_{n\to\infty}\rho^\dagger_n=\rho^\dagger_{\mathtt{pb}}$ 
		in $\mathcal{R}(\pi_1(M^\dagger),\Sft\times_\Integral\Real)$.
		By Proposition \ref{lifting_representations},
		$\mathcal{R}(\pi_1(M^\dagger),\Sft\times_\Integral\Real)$ is locally path-connected.
		Then $\rho^\dagger_n$ lies in the path-connected component of $\rho^\dagger_{\mathtt{pb}}$
		when $n$ is sufficiently large.
		For any $v\in V$,
		if $v\not\in V_C$,
		the supported modification implies $\xi_{\rho^\dagger_n}(v)=\xi_\rho(v)$;
		otherwise,
		the continuity of the essential winding number implies
		$\lim_{n\to\infty}\xi_{\rho^\dagger_n}(v)=\xi_\rho(v)$.		
		For any $\{v,w\}\in E$,
		if $\{v,w\}\cap V_C=\emptyset$,
		the supported modification implies $\tau_{\rho^\dagger_n}(v,w)=\tau_\rho(v,w)$;
		otherwise,		
		the asserted properties of $\eta_n$ implies	
		that $\tau_{\rho^\dagger_n}(v,w)$ is finite, 
		and
		$\lim_{n\to\infty} \tau_{\rho^{\dagger}_n}(v,w)=\tau_\rho(v,w)$,
		so for any sufficiently large $n$,
		$\tau_{\rho^\dagger_n}(v,w)$ equals $\tau_{\rho}(v,w)$ if $\tau_\rho(v,w)$ is finite.
		Therefore, for the given constant $\epsilon>0$, 
		we take $\rho'$ to be $\rho^\dagger_n$ for some sufficiently large $n$,
		and $\rho'$ is as desired.
		\end{proof}
		
		\begin{lemma}\label{hyp_reduction_vc}
		The statement of Theorem \ref{local_reduction_vc} holds true if 
		$M'$ is the standard local model $M^\dagger$
		and if $\rho$ is of hyperbolic type over $C$.
		\end{lemma}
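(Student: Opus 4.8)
The plan is to adapt the proof of Lemma \ref{ell_reduction_vc}, replacing the sequential perturbation by a continuous deformation and using Lemmas \ref{ab_rep_hyp} and \ref{comm_hyp} in place of Lemmas \ref{ab_rep_ell} and \ref{comm_ell}. Given $\epsilon>0$, I will construct $\rho'$ as the endpoint at $t=0$ of a path $(\rho^\dagger_t)_{t\in[0,1]}$ in $\mathcal{R}(\pi_1(M^\dagger),\Sft\times_\Integral\Real)$ that starts at the pull-back representation $\rho^\dagger_{\mathtt{pb}}$ of $\rho$ at $t=1$, and that is supported at the vertex subgroup $\pi_1(W^\dagger_C)$ in exactly the sense used in the elliptic proof: on every $\pi_1(X_q)$ and every stable letter $\rho^\dagger_t$ equals $\rho^\dagger_{\mathtt{pb}}$, and $\rho^\dagger_t$ agrees with $\rho^\dagger_{\mathtt{pb}}$ on every incoming edge group $\pi_1(T_{v,w}\times\{1\})$.

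First I would deform over the block $M_C$. Since $\rho$ is of hyperbolic type over $C$, the restriction of $\rho^\dagger_{\mathtt{pb}}$ to $\pi_1(\mathrm{clos}(M_C))$ equals $\rho$ and factors through a hyperbolic-type representation $\eta\colon H\to\Sft\times_\Integral\Real$ of the abelianization $H$. Lemma \ref{ab_rep_hyp} provides a continuous family $\eta_t\colon H\to\Sft\times_\Integral\Real$, $t\in(0,1]$, with $\eta_1=\eta$, with $\eta_t(h)$ lying in the coset $\eta(h)\Sft$ and equal to $\eta(h)$ whenever $\eta(h)$ is central, with $\winding(\eta_t(h))$ independent of $t$, and with $\eta_t(h)\to\winding(\eta(h))\in\Real$ as $t\to 0$. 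Pulling back and adjoining the central limit $\eta_0(h)=\winding(\eta(h))$ at $t=0$ gives a continuous family $\rho^\dagger_t\colon\pi_1(\mathrm{clos}(M_C))\to\Sft\times_\Integral\Real$, $t\in[0,1]$. Because $\bar\rho|_{\pi_1(M_C)}$ has image in a torsion-free one-parameter subgroup, every boundary torus $T_{v,w}$ of $\mathrm{clos}(M_C)$ has $\tau_\rho(v,w)\in\{1,\infty\}$; the ones with $\tau_\rho(v,w)=\infty$ are precisely the blown-up ones indexed by $\Delta_\rho(C)$, and on all the others $\eta(\pi_1(T_{v,w}))$ is central, so $\eta_t$ fixes those edge groups throughout the deformation — this is what makes the extension below compatible along the un-blown-up tori.

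Next I would extend over each $W^\dagger_{w,v}$ with $(w,v)\in\Delta_\rho(C)$. As in the elliptic case, maximality of $C$ forces $\rho$ to be central at $v$ (the hyperbolic axis over $C$ would otherwise contain $\bar\rho(\pi_1(J_v))$, contradicting maximality), so $\rho^\dagger_{\mathtt{pb}}(f_v)=\xi_\rho(v)\in\Real$ and $\rho^\dagger_{\mathtt{pb}}(c_{0,v,w})=\rho^\dagger_{\mathtt{pb}}(c_{1,v,w})=\rho(c_{v,w})$. I set $\rho^\dagger_t(f_v)=\xi_\rho(v)$, $\rho^\dagger_t(c_{1,v,w})=\rho(c_{v,w})$, $\rho^\dagger_t(c_{0,v,w})=\eta_t([c_{0,v,w}])$ (inherited from the block), and consider
$$\gamma_t=\rho(c_{v,w})\cdot\eta_t([c_{0,v,w}])^{-1}\in\Sft,\qquad t\in[0,1].$$
One checks that $\gamma_1=\mathrm{id}$, that $\gamma_t\to\mathrm{id}$ as $t\to1$, that $\winding(\gamma_t)\equiv0$, and that for $t\in[0,1)$ the element $\gamma_t$ is hyperbolic, its image in $\mathrm{PSL}(2,\Real)$ being the diagonal translation of log-eigenvalue $a(1-t)$, where $a\neq0$ is the log-eigenvalue of $\bar\rho(c_{v,w})$ — nonzero precisely because $(w,v)\in\Delta_\rho(C)$ together with centrality of $v$ forces $\bar\rho(c_{v,w})$ to be hyperbolic. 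Lemma \ref{comm_hyp} then yields continuous families $\alpha_t,\beta_t\in\Sft$, $t\in[0,1)$, with $\gamma_t=\alpha_t\beta_t\alpha_t^{-1}\beta_t^{-1}$ and $\alpha_t,\beta_t\to\mathrm{id}$ as $t\to1$; extending by $\alpha_1=\beta_1=\mathrm{id}$ and setting $\rho^\dagger_t(x_{v,w})=\alpha_t$, $\rho^\dagger_t(y_{v,w})=\beta_t$ respects relation (\ref{relation_Omega}) and the commutativity of $f_v$. The resulting representations of the $\pi_1(W^\dagger_{w,v})$ glue with the block deformation along the $\pi_1(T_{v,w}\times\{0\})$ and with $\rho^\dagger_{\mathtt{pb}}$ along the $\pi_1(T_{v,w}\times\{1\})$ and the $\pi_1(X_q)$, so they assemble into a continuous path $(\rho^\dagger_t)_{t\in[0,1]}$ in $\mathcal{R}(\pi_1(M^\dagger),\Sft\times_\Integral\Real)$ with $\rho^\dagger_1=\rho^\dagger_{\mathtt{pb}}$.

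Finally I would set $\rho'=\rho^\dagger_0$ and verify the four conclusions. Path-connectedness to the pull-back is the path itself. Since the deformation preserves essential winding numbers, $\xi_{\rho^\dagger_t}(v)=\xi_\rho(v)$ for all $t$ and all $v$, so $\xi_{\rho'}=\xi_\rho$ — in particular the bound by $\epsilon$ holds, and the ``furthermore'' clause is automatic. At $t=0$, $\bar\rho'$ is trivial on $\pi_1(M_C)$, and trivial on $\pi_1(T_{v,w})$ for every edge meeting $V_C$ (the blown-up tori acquire central image, and the remaining edges at $V_C$ already had $\tau_\rho=1$), so $\tau_{\rho'}(v,w)=1$ for every such edge, while on edges disjoint from $V_C$ the modification is supported away and $\tau_{\rho'}(v,w)=\tau_\rho(v,w)$. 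Hence $\tau_{\rho'}(v,w)=\tau_\rho(v,w)$ whenever $\tau_\rho(v,w)$ is finite, and every edge at a vertex of $V_C$ has $\tau_{\rho'}=1$, whence $\delta_{\rho'}(v)=0$ for $v\in V_C$. The main obstacle is the compatibility bookkeeping of the third paragraph: one must ensure that $\eta_t$ never disturbs the images of the un-blown-up boundary tori of $M_C$ — this is exactly why torsion-freeness of the hyperbolic image, forcing $\tau_\rho\in\{1,\infty\}$ on boundary edges, is indispensable — and that $\gamma_t$ stays genuinely hyperbolic on $[0,1)$ while degenerating to the trivial commutator at $t=1$, so that the sharp Lemma \ref{comm_hyp}, not a cruder factorization, delivers a single continuous commutator family matching the pinched (hence trivial) boundary behaviour at $t=1$.
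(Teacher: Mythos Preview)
Your proposal is correct and follows essentially the same approach as the paper: build a continuous path $(\rho^\dagger_t)_{t\in[0,1]}$ supported on $\pi_1(W^\dagger_C)$ by applying Lemma~\ref{ab_rep_hyp} over $\mathrm{clos}(M_C)$ and Lemma~\ref{comm_hyp} on each $W^\dagger_{w,v}$, then take $\rho'=\rho^\dagger_0$. You are in fact more careful than the paper on two points it leaves implicit: the compatibility along the un-blown-up boundary tori of $\mathrm{clos}(M_C)$ (where torsion-freeness of the hyperbolic image forces $\tau_\rho=1$, so $\eta_t$ fixes those edge groups), and the verification that $\gamma_t$ is genuinely hyperbolic on $[0,1)$ so that Lemma~\ref{comm_hyp} applies.
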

		
		\begin{proof}
		Suppose that $\rho$ is of hyperbolic type over $C$.
		We follow a similar procedure as in the elliptic case (Lemma \ref{ell_reduction_vc}),
		but use a path of supported modifications of $\rho^\dagger_{\mathtt{pb}}$ instead of a sequence.
		To be precise,
		we apply Lemma \ref{ab_rep_hyp} to obtain a path of hyperbolic-type representations
		$\eta_t\colon H\to \Sft\times_\Integral\Real$, parametrized by $t\in(0,1]$.
		Extend the path continuously to $t=0$ by defining $\eta_0(h)=\winding(\eta(h))$ 
		for all $h\in H$.
		Denote by $\rho^\dagger_t\colon \pi_1(\mathrm{clos}(M_C))\to \Sft\times_\Integral\Real$
		the pull-back representations of $\eta_t$, for all $t\in[0,1]$.
		In particular, $\rho^\dagger_t$ is 
		the restriction of $\rho^\dagger_{\mathtt{pb}}$ to $\pi_1(\mathrm{clos}(M_C))$ at $t=1$.
		For any $(w,v)\in\Delta_\rho(C)$,
		we apply Lemma \ref{comm_hyp} to $\gamma_t=\rho^\dagger_{\mathtt{pb}}(c_{1,v,w})\rho^\dagger_t(c_{0,v,w})^{-1}$,
		and obtain some elements $\alpha_t,\beta_t\in\Sft$, for $t\in[0,1)$.
		Remember $\gamma_t=\alpha_t\beta_t\alpha_t^{-1}\beta_t^{-1}$
		and $\lim_{t\to1}\alpha_t=\lim_{t\to1}\beta_t=\mathrm{id}$,
		so we define $\alpha_0=\beta_0=\mathrm{id}$ at $t=0$.
		Then we can extend the restriction of $\rho^\dagger_t$ to $\pi_1(T_{v,w}\times\{0\})$
		to be a representation of $\pi_1(W^\dagger_{w,v})$,
		defining
		$$\begin{array}{ccc}
		\rho^\dagger_t(x_{v,w})=\alpha_t, & \rho^\dagger_t(y_{v,w})=\beta_t, & \rho^\dagger_t(c_{1,v,w})=\rho^\dagger_{\mathtt{pb}}(c_{1,v,w}),
		\end{array}$$
		for all $[0,1]$.
		Extend this way for all $\pi_1(W^\dagger_{w,v})$,
		and extend further by $\rho^\dagger_{\mathtt{pb}}$ on all $\pi_1(X_q)$ and stable letters.
		Then we obtain representations $\rho^\dagger_t\colon \pi_1(M^\dagger)\to \Sft\times_\Integral\Real$,
		which varies continuously for all $t\in[0,1]$,
		and which equals $\rho_{\mathtt{pb}}$ at $t=1$.
		For any $v\in V$, $\xi_{\rho^\dagger_t}(v)=\xi_\rho(v)$ holds for all $v\in V$ at $t=0$.
		(The constructions in Lemmas \ref{ab_rep_hyp} and \ref{comm_hyp} actually 
		make $\xi_{\rho^\dagger_t}=\xi_\rho(v)$ for all $t\in[0,1]$.)		
		For any $\{v,w\}\in E$,
		if $\{v,w\}\cap V_C=\emptyset$,
		$\tau_{\rho^\dagger_t}(v,w)=\tau_\rho(v,w)$ holds for all $t\in[0,1]$;
		otherwise,
		$\tau_{\rho^\dagger_t}(v,w)=1$ holds at $t=0$,
		while $\tau_\rho(v,w)$ is either $1$ or $\infty$.
		We take $\rho'$ to be $\rho^\dagger_0$, as desired.
		\end{proof}
		
		\begin{lemma}\label{par_reduction_vc}
		The statement of Theorem \ref{local_reduction_vc} holds true if 
		$M'$ is the standard local model $M^\dagger$
		and if $\rho$ is of parabolic type over $C$.
		\end{lemma}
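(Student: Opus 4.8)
The plan is to mimic the elliptic and hyperbolic cases (Lemmas~\ref{ell_reduction_vc} and~\ref{hyp_reduction_vc}): with $M'=M^\dagger$ the standard local model, one has $\chi_{M^\dagger}(v)=\chi_M(v)-2\cdot\delta_\rho(v;C)$ by~(\ref{standard_equal}), so it suffices to build a continuous path of representations $\rho^\dagger_t\colon\pi_1(M^\dagger)\to\Sft\times_\Integral\Real$, $t\in[0,1]$, with $\rho^\dagger_1$ the pull-back $\rho^\dagger_{\mathtt{pb}}$ of $\rho$, and to take $\rho'=\rho^\dagger_0$. First I would record that for each $(w,v)\in\Delta_\rho(C)$ the vertex $v$ is of central type: since $\tau_\rho(v,w)=\infty$, the group $\bar\rho(\pi_1(T_{v,w}))$ is an infinite parabolic subgroup, whose centralizer in $\mathrm{PSL}(2,\Real)$ is the ambient parabolic one-parameter subgroup; were $\bar\rho(f_v)$ nontrivial it would lie in that subgroup (it commutes with $\bar\rho(\pi_1(T_{v,w}))$), forcing $\bar\rho$ to have abelian image on the block $\pi_1(M_C\cup J_v)$, against maximality of $C$. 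Hence $\rho^\dagger_{\mathtt{pb}}(f_v)$ is the central element $\xi_\rho(v)$, and $\rho^\dagger_{\mathtt{pb}}(c_{0,v,w})=\rho^\dagger_{\mathtt{pb}}(c_{1,v,w})=:\sigma_{v,w}$ projects to a nontrivial parabolic.

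Restricted to $\pi_1(\mathrm{clos}(M_C))$ the representation $\rho$ factors through a parabolic-type representation $\eta\colon H\to\Sft\times_\Integral\Real$ of the abelianization $H$. Applying Lemma~\ref{ab_rep_par} I would obtain a continuous family $g_t\in\Sft$ ($t\in(0,1]$) with $g_1=\id$, normalizing the centralizer of $\eta(H)$, and with $g_t\,\eta(h)\,g_t^{-1}$ converging to the central element $\winding(\eta(h))$ as $t\to0$; after conjugating we may assume $\bar\eta(H)$ lies in the upper unipotent subgroup, so $\bar\sigma_{v,w}$ fixes $\infty$, and since $\bar g_t$ normalizes that subgroup it fixes $\infty$ as well. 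Define $\rho^\dagger_t$ on $\pi_1(\mathrm{clos}(M_C))$ to be $g_t(\rho|)g_t^{-1}$ for $t>0$, which extends continuously to $t=0$ with central image, and set $\rho^\dagger_t=\rho^\dagger_{\mathtt{pb}}$ on every $\pi_1(X_q)$ and every stable letter. These are consistent across the non-blown-up boundary tori $T_{v,w}$ of $\mathrm{clos}(M_C)$: there $\tau_\rho(v,w)<\infty$ forces $\tau_\rho(v,w)=1$ by torsion-freeness of the unipotent image, whence $\rho(\pi_1(T_{v,w}))$ is central and fixed by conjugation by $g_t$. It remains to extend $\rho^\dagger_t$ over each $\pi_1(W^\dagger_{w,v})$ with $(w,v)\in\Delta_\rho(C)$, compatibly with $\rho^\dagger_t(c_{0,v,w})=g_t\sigma_{v,w}g_t^{-1}$, $\rho^\dagger_t(c_{1,v,w})=\sigma_{v,w}$, and $\rho^\dagger_t(f_v)=\xi_\rho(v)$; by the relation~(\ref{relation_Omega}) and centrality of $\xi_\rho(v)$ this amounts to realizing $\gamma_t:=\sigma_{v,w}\,g_t\,\sigma_{v,w}^{-1}\,g_t^{-1}$ as a commutator $\rho^\dagger_t(x_{v,w})\rho^\dagger_t(y_{v,w})\rho^\dagger_t(x_{v,w})^{-1}\rho^\dagger_t(y_{v,w})^{-1}$ depending continuously on $t$.

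The decisive departure from the elliptic and hyperbolic cases lies in the behavior of $\gamma_t$. Because the central components cancel in the commutator, $\gamma_t$ lies in the copy of $\Sft$, and its image in $\mathrm{PSL}(2,\Real)$ is the commutator of the $\infty$-fixing elements $\bar\sigma_{v,w}$ and $\bar g_t$, hence $C(u_t)$ in the notation of the proof of Lemma~\ref{comm_par}, with $u_t$ continuous, $u_1=0$, yet $u_0\neq0$ — indeed $g_t\sigma_{v,w}g_t^{-1}\to\winding(\sigma_{v,w})$ shows $\gamma_t\to\sigma_{v,w}\cdot\winding(\sigma_{v,w})^{-1}$, a nontrivial parabolic rather than the identity. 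So Lemma~\ref{comm_par} does not apply verbatim; instead I would use directly the explicit factorization $C(u)=A(u)B(u)A(u)^{-1}B(u)^{-1}$ from its proof, which is valid for all $u\in[0,\infty)$ with $A(0)=B(0)=\mathbf{1}$, together with the continuous lifts $\widetilde A(u),\widetilde B(u)\in\Sft$ obtained by lifting the straight-line paths from the identity. Taking $\rho^\dagger_t(x_{v,w})$ and $\rho^\dagger_t(y_{v,w})$ to be $\widetilde A(|u_t|)$ and $\widetilde B(|u_t|)$ in the order dictated by $\mathrm{sgn}(u_t)$ — which causes no discontinuity, since at any zero of $u_t$ both reduce to the identity — produces a continuous family whose commutator is a continuous $\Sft$-valued lift of $t\mapsto C(u_t)$ agreeing with $\gamma_t$ at $t=1$; by uniqueness of path lifting for the covering $\Sft\to\mathrm{PSL}(2,\Real)$ it coincides with $\gamma_t$ for all $t$. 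Assembling these data over all vertices and edges yields a continuous path $\rho^\dagger_t\colon\pi_1(M^\dagger)\to\Sft\times_\Integral\Real$ with $\rho^\dagger_1=\rho^\dagger_{\mathtt{pb}}$, so $\rho'=\rho^\dagger_0$ lies in the path-connected component of $\rho^\dagger_{\mathtt{pb}}$ by Theorem~\ref{lifting_representations}.

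Finally I would check the listed properties of $\rho'=\rho^\dagger_0$. Conjugation invariance of the essential winding number gives $\xi_{\rho^\dagger_t}(v)=\xi_\rho(v)$ for every $v$ and every $t$, which also covers the final "in fact" clause; and for edges disjoint from $V_C$ nothing is altered, so $\tau_{\rho'}(v,w)=\tau_\rho(v,w)$ there. Every edge incident to a vertex of $V_C$ has $\bar\rho^\dagger_0$ trivial on the corresponding torus subgroup — for edges inside $C$ and for the non-blown-up edges because $\rho^\dagger_0$ is central on $\pi_1(\mathrm{clos}(M_C))$, and for the blown-up edges because both $c_{0,v,w}$ and $f_v$ then map to central elements — so $\tau_{\rho'}(v,w)=1$ for all of them; in particular $\tau_{\rho'}(v,w)=\tau_\rho(v,w)$ whenever $\tau_\rho(v,w)$ is finite (it is then $1$ already), and no edge incident to any $v\in V_C$ has $\tau_{\rho'}=\infty$, whence $\delta_{\rho'}(v)=0$ for all $v\in V_C$. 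The main obstacle, as indicated above, is that Lemma~\ref{ab_rep_par} supplies only a conjugating family escaping to infinity rather than a genuine deformation of $\eta$, which forces the glued-in commutator $\gamma_t$ to limit onto a nontrivial parabolic; producing a continuous commutator factorization of such a family and identifying its correct lift to $\Sft$ is the technical heart, resolved by the explicit matrices in the proof of Lemma~\ref{comm_par} and unique path lifting.
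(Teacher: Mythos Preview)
Your overall strategy coincides with the paper's: define $\eta_t=g_t\eta g_t^{-1}$ via Lemma~\ref{ab_rep_par}, note that this family satisfies verbatim the properties listed in Lemma~\ref{ab_rep_hyp}, and then repeat the hyperbolic argument (Lemma~\ref{hyp_reduction_vc}) word for word, replacing Lemma~\ref{comm_hyp} by Lemma~\ref{comm_par}. Your verification of the four bullets and the ``in fact'' clause is fine.

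However, the ``decisive departure'' you identify is not one. You correctly observe that $\gamma_t=\sigma_{v,w}\,g_t\,\sigma_{v,w}^{-1}\,g_t^{-1}$ is parabolic for $t\in[0,1)$, with $\gamma_1=\mathrm{id}$ and $\gamma_0$ a nontrivial parabolic. But this is \emph{exactly} the hypothesis of Lemma~\ref{comm_par}: a continuous family $(\gamma_t)_{t\in[0,1)}$ of parabolic elements converging to the identity as $t\to 1$. (The same phenomenon already occurs in the hyperbolic case: there too $\gamma_0$ is a nontrivial hyperbolic element, not the identity, and Lemma~\ref{comm_hyp} handles it.) So Lemma~\ref{comm_par} applies verbatim and yields continuous $\alpha_t,\beta_t$ on $[0,1)$ extending by $\alpha_1=\beta_1=\mathrm{id}$; your detour through the explicit matrices $A(u),B(u)$ and the path-lifting argument is correct but redundant. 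In short, your proof works, but the ``main obstacle'' you flag is a misreading of the lemma's parametrization, and once that is corrected your argument collapses to the paper's two-line reduction to the hyperbolic case.
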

		
		\begin{proof}
		Suppose that $\rho$ is of parabolic type over $C$.
		This case is almost completely the same as the hyperbolic case (Lemma \ref{hyp_reduction_vc}).
		Note that if we set $\eta_t=g_t\eta g_t^{-1}$ in the conclusion of Lemma \ref{ab_rep_par},
		then $\eta_t$ satisfies exactly the same properties as listed in Lemma \ref{ab_rep_hyp}.
		Therefore, we simply repeat the argument of the hyperbolic case,
		applying Lemmas \ref{ab_rep_par} and \ref{comm_par}
		instead of Lemmas \ref{ab_rep_hyp} and \ref{comm_hyp}.
		Then we obtain a path of representations 
		$\rho^\dagger_t\colon \pi_1(M^\dagger)\to \Sft\times_\Integral\Real$,
		parametrized by $t\in[0,1]$, which equals $\rho_{\mathtt{pb}}$ at $t=1$,
		and which is central and abelian over $C$ at $t=0$.
		Again $\rho'$ can be taken as $\rho^\dagger_0$ as desired.
		\end{proof}
		
		\subsubsection{Local reduction in general}
		We finish the proof Theorem \ref{local_reduction_vc} as follows.
		Let $M$ be any formatted graph manifold with a simplicial JSJ graph $(V,E)$,
		and $\rho\colon \pi_1(M)\to\Sft\times_\Integral\Real$ any representation.
		Let $C=(V_C,E_C)$ be a maximal connected subgraph of $(V,E)$ 
		over which $\rho$ is noncentral abelian type.
				
		Suppose that $M'\to M$ is a formatted pinching map of a formatted graph manifold $M'$,
		such that the inequality
		$$\chi_{M'}(v)\leq\chi_M(v)-2\cdot\delta_\rho(v;C)$$
		holds for all $v\in V$.
		It follows from (\ref{standard_equal})
		that $\chi_{M'}(v)\leq\chi_{M^\dagger}(v)$ holds for all $v\in V$.
		Therefore,
		$M'\to M$ admits a factorization into a composition of formatted pinching maps
		$M'\to M^\dagger\to M$.
		Given any constant $\epsilon>0$,
		there is some representation $\rho^\dagger$ of $\pi_1(M^\dagger)$
		that satisfies the asserted properties for $M^\dagger\to M$,
		(Lemmas \ref{ell_reduction_vc}, \ref{hyp_reduction_vc}, and \ref{par_reduction_vc}).
		We take $\rho'$ to be the pull-back of $\rho^\dagger$ to $\pi_1(M')$.
		Then $\rho'$ also satisfies the properties, as desired.
		
		This completes the proof of Theorem \ref{local_reduction_vc}.
	
	\subsection{Global reduction}
	
	\begin{theorem}\label{reduction_vc}
		Let $M'\to M$ be a formatted pinching map between 
		formatted graph manifolds
		with identified simplicial JSJ graphs $(V,E)$.
		Adopt Notations \ref{notation_xi_tau} and \ref{notation_delta}.
		Suppose that the inequality
		$$\chi_{M'}(v)\leq\chi_{M}(v)-2\cdot\delta_\rho(v)$$
		holds for all vertices $v\in V$.
		
		Then, 
		for and any constant $\epsilon>0$ and
		any representation $\rho\colon \pi_1(M)\to\Sft\times_{\Integral}\Real$,
		there exists a representation
		$\rho'\colon \pi_1(M')\to\Sft\times_{\Integral}\Real$,
		such that the following conditions are all satisfied:
		\begin{itemize}
		\item 
		In the space of representations $\mathcal{R}(\pi_1(M'),\Sft\times_\Integral\Real)$,
		$\rho'$ is path-connected with the pull-back representation of $\rho$.
		\item
		For all $v\in V$,
		$$\left|\xi_{\rho'}(v)-\xi_{\rho}(v)\right|<\epsilon.$$
		\item 
		For all $\{v,w\}\in E$,
		$\tau_{\rho'}(v,w)$	equals $\tau_\rho(v,w)$ if $\tau_\rho(v,w)$ is finite.
		\item
		For all $v\in V$, $\delta_{\rho'}(v)$ equals $0$.
		Hence $\tau_{\rho'}(v,w)$ is finite for all $\{v,w\}\in E$.
		\end{itemize}
	\end{theorem}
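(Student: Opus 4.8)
The plan is to derive Theorem~\ref{reduction_vc} from the local statement (Theorem~\ref{local_reduction_vc}) by processing the maximal connected abelian subgraphs of $(V,E)$ one at a time. Since the JSJ graph is finite there are only finitely many maximal connected subgraphs over which $\rho$ is abelian; those on which $\bar\rho(\pi_1(M_C))$ is trivial need no attention, because then $\bar\rho$ kills every incident edge subgroup, so $\tau_\rho=1$ there and $\delta_\rho(\cdot;C)\equiv 0$. Let $C_1,\dots,C_m$ be the remaining (noncentral) ones, so that $\delta_\rho(v)=\sum_{i=1}^m\delta_\rho(v;C_i)$ for all $v\in V$ by Notation~\ref{notation_delta}. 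Two structural facts underlie the argument. First, the ``outside'' vertices of an outgoing edge of such a $C$ --- the vertices $v\notin V_C$ with $(w,v)\in\Delta_\rho(C)$ --- are necessarily of central type with $\bar\rho(\pi_1(J_v))$ nonabelian, and hence lie in no $C_i$: if $\bar\rho(\pi_1(J_v))$ were abelian, then since $\rho(\pi_1(T_{v,w}))$ is infinite (as $\tau_\rho(v,w)=\infty$) it would contain a noncentral element whose centralizer in $\Sft\times_\Integral\Real$ is abelian, forcing $\rho(\pi_1(M_C))$ and $\rho(\pi_1(J_v))$ to commute, contrary to maximality of $C$. Second, by the same centralizer argument, distinct $C_i$ can overlap only in central-type vertices $v$ with $\rho(\pi_1(J_v))$ central.

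First I would interpolate the given pinching map $M'\to M$ through a tower adapted to the $C_i$. By Remark~\ref{remark_formatted_pinching}(2) it factors as $M'\to M\#_{\mathcal F}(\mathcal F\times T^2)\to M$ for a finite union $\mathcal F$ of ordinary fibers, and $\#(\mathcal F\cap J_v)=\tfrac12\bigl(\chi_M(v)-\chi_{M'}(v)\bigr)\ge\delta_\rho(v)$ for every $v$ by hypothesis. Inside each $J_v$ I would peel off disjoint subfamilies $\mathcal F_1,\dots,\mathcal F_m,\mathcal F_\infty$ of $\mathcal F$ with $\#(\mathcal F_i\cap J_v)=\delta_\rho(v;C_i)$, set $M_i=M\#_{\mathcal F_1\cup\cdots\cup\mathcal F_i}(\cdots)$, and obtain formatted pinching maps
$$M'\ \longrightarrow\ M_m\ \longrightarrow\ \cdots\ \longrightarrow\ M_1\ \longrightarrow\ M_0=M$$
for which, by (\ref{pinch_e_chi}), $\opEuler_{M_i}=\opEuler_{M_{i-1}}$ and $\chi_{M_i}(v)=\chi_{M_{i-1}}(v)-2\delta_\rho(v;C_i)$, while $M'\to M_m$ pinches the leftover family $\mathcal F_\infty$.

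Then I would run the induction. Set $\rho_0=\rho$, and given $\rho_{i-1}$ on $\pi_1(M_{i-1})$ --- path-connected with the pull-back of $\rho$, with $|\xi_{\rho_{i-1}}(v)-\xi_\rho(v)|<\epsilon$ for all $v$, and with $\tau_{\rho_{i-1}}$ finite on every edge meeting $C_1\cup\cdots\cup C_{i-1}$ --- I would observe that $\rho_{i-1}$ still coincides with the pull-back of $\rho$ on $\pi_1(M_{C_i})$ and on the collars of the outgoing tori of $C_i$: the earlier local reductions modify only the block submanifolds $M_{C_j}$ (where central images are preserved, so the shared central vertices are untouched) and the collars of their outgoing tori, which by the first structural fact lie in JSJ pieces belonging to no $C_i$. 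Consequently $C_i$ is still a maximal connected abelian subgraph for $\rho_{i-1}$, and $\delta_{\rho_{i-1}}(v;C_i)\le\delta_\rho(v;C_i)$ (an outgoing edge of $C_i$ whose $\tau$ was already made finite no longer counts), so $\chi_{M_i}(v)\le\chi_{M_{i-1}}(v)-2\delta_{\rho_{i-1}}(v;C_i)$. Applying Theorem~\ref{local_reduction_vc} to $M_i\to M_{i-1}$, the subgraph $C_i$, the representation $\rho_{i-1}$, and the constant $\epsilon/m$ produces $\rho_i$ with the inductive properties; finiteness of $\tau$ on edges meeting earlier $C_j$ is preserved by the ``furthermore'' clause of Theorem~\ref{local_reduction_vc}, since the modification at $C_i$ leaves $\tau$ untouched on edges disjoint from $V_{C_i}$. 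Finally $\rho'$ is the pull-back of $\rho_m$ along $M'\to M_m$, which induces the identity on JSJ graphs; it inherits path-connectedness, the estimate $|\xi_{\rho'}(v)-\xi_\rho(v)|<\epsilon$, and finiteness of $\tau_{\rho'}$ on every edge meeting some $C_i$. Since every edge with $\tau_{\rho'}=\infty$ has a noncentral endpoint (if both $v$ and $w$ were central, $\bar\rho'$ would kill $f_v$ and $f_w$, which span $H_1(T_{v,w};\Rational)$ because $b_{v,w}\neq 0$, forcing $\bar\rho'(\pi_1(T_{v,w}))$ finite), and a noncentral vertex lies in some $C_i$ (Section~\ref{Subsec-local_types}), it follows that $\tau_{\rho'}$ is finite on \emph{all} edges, hence $\delta_{\rho'}(v)=0$ for all $v$.

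The hard part will be precisely the structural bookkeeping: one must verify carefully that the maximal abelian blocks are essentially disjoint, that the pieces modified by a local reduction avoid every other block (which is what the centralizer/maximality arguments of the first paragraph deliver), and --- most delicately --- that after truncating the earlier modifications the subgraph $C_i$ is still recognized as maximal abelian for $\rho_{i-1}$ and still carries the genus prescribed for it, so that the single inequality $\chi_{M'}(v)\le\chi_M(v)-2\delta_\rho(v)$ suffices to feed all $m$ invocations of Theorem~\ref{local_reduction_vc}. The telescoping tower of the second paragraph is what packages the last point; getting the first two right is where the care is needed.
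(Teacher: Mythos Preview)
Your proof follows the same strategy as the paper's---iterate Theorem~\ref{local_reduction_vc} over the maximal abelian subgraphs---but you organize the iteration statically (fixing $C_1,\dots,C_m$ for $\rho$ in advance and allocating exactly $\delta_\rho(v;C_i)$ genus to step $i$), which makes the genus bookkeeping tighter than the paper's dynamic version and matches the hypothesis $\chi_{M'}(v)\le\chi_M(v)-2\delta_\rho(v)$ on the nose.

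There is one gap in your last sentence. You argue that an edge with $\tau_{\rho'}=\infty$ must have a $\rho'$-noncentral endpoint, and then assert that this endpoint lies in some $C_i$; but the $C_i$ were chosen as maximal abelian subgraphs for $\rho$, not for $\rho'$, and you have not linked the two notions. The cleanest repair is to run the argument for $\rho$ rather than $\rho'$: an edge $\{v,w\}$ disjoint from every $C_i$ has both endpoints with nonabelian $\rho$-image (otherwise the abelian endpoint would sit inside one of the $C_i$), hence both $v$ and $w$ are of central type for $\rho$, and your own parenthetical argument---applied to $\rho$---gives $\tau_\rho(v,w)<\infty$; this finiteness is then preserved through every step by the third bullet and the ``furthermore'' clause of Theorem~\ref{local_reduction_vc}, since neither $v$ nor $w$ lies in any $V_{C_i}$. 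Alternatively one can check from the proofs of Lemmas~\ref{ell_reduction_vc}--\ref{par_reduction_vc} that the local reductions never convert a central-type vertex into a noncentral one (at outside vertices $\rho^\dagger(f_v)=\xi_\rho(v)$ stays central, and on $V_{C_j}$ the perturbation lemmas fix central values), so $\rho'$-noncentral forces $\rho$-noncentral and hence membership in some $C_i$.
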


	\begin{proof}
		Set $(M'_0,\rho'_0)=(M,\rho)$.
		Given any constant $\epsilon>0$,
		we construct $(M'_n,\rho'_n)$ and $M'_n\to M'_{n-1}$ inductively, 
		for all $n\in\Natural$ applicable, as follows.
				
		Suppose that $(M'_{n-1},\rho'_{n-1})$ has been constructed,
		where $M'_{n-1}$ is a formatted graph manifold with a simplicial JSJ graph identified with $(V,E)$,
		and where
		$\rho'_{n-1}\colon\pi_1(M'_{n-1})\to \Sft\times_\Integral\Real$
		is a representation with $\tau_{\rho'_{n-1}}(v_{n-1},w_{n-1})=\infty$ for some $\{v_{n-1},w_{n-1}\}\in E$.
		Take a maximal connected subgraphs $C_{n-1}=(V_{n-1},E_{n-1})$ of $(V,E)$ with $\{v_{n-1},w_{n-1}\}\in E_{n-1}$,
		such that $\rho'_{n-1}$ is of noncentral abelian type on $C_{n-1}$.
		Construct a formatted graph manifold $M'_n$ and a formatted pinching map
		$M'_n\to M'_{n-1}$, such that
		$$\chi_{M'_n}(v)=\chi_{M'_{n-1}}(v)-2\cdot\delta_\rho(v;C_{n-1}),$$
		for all $v\in V$.
		For example, $M'_n$ can be obtained as a suitable simultaneous fiber connected sum of $M'_{n-1}$,
		(Remark \ref{remark_formatted_pinching}).
		Apply Theorem \ref{local_reduction_vc}
		to construct a representation $\rho'_n$ of $\pi_1(M'_n)$ with $\tau_{\rho'_n}(v_{n-1},w_{n-1})<\infty$.
		Moreover, $\rho'_n$ lies
		in the path-connected component of the pull-back of $\rho'_{n-1}$.
		The construction guarantees
		$|\xi_{\rho'_n}(v)-\xi_{\rho'_{n-1}}(v)|<2^{-n}\epsilon$ for all $v\in V$.
		It also guarantees
		$\tau_{\rho'_n}(v,w)=\tau_{\rho'_{n-1}}(v,w)$
		for all $\{v,w\}\in E$ 
		unless $\{v,w\}\cap V_{n-1}\neq\emptyset$ and $\tau_\rho(v,w)=\infty$.
		
		The above construction terminates at some step $n$ with $\tau_{\rho'_n}(v,w)$ finite
		for all $\{v,w\}\in E$. For this $M'_n$, 
		we obtain a formatted pinching map $M'_n\to M$ 
		as the composite map $M'_n\to M'_{n-1}\to\cdots \to M'_0$.
		Observe
		$$\chi_{M'_n}(v)=\chi_M(v)-2\cdot\mathrm{valence}_{(V,E)}(v)$$
		for all $v\in V$.
		We estimate
		$$\left|\xi_{\rho'_n}(v)-\xi_\rho(v)\right|
		\leq\sum_{l=1}^{n} \left|\xi_{\rho'_l}(v)-\xi_{\rho'_{l-1}}(v)\right|
		<\sum_{l=1}^{n} 2^{-l}\epsilon
		<\epsilon.$$
		Moreover, any finite $\tau_{\rho}(v,w)$ remains unchanged throughout the construction,
		so $\tau_{\rho'_n}(v,w)$ equals $\tau_\rho(v,w)$ if $\tau_\rho(v,w)$ is finite.
		
		By the assumption about $M'$,
		we have $\chi_{M'}(v)\leq \chi_{M'_n}(v)$ for all $v\in V$,
		so the formatted pinching map $M'\to M$ factorizes as a composition of formatted pinching maps
		$M'\to M'_n\to M$.
		Take $\rho'$ to be the pull-back of $\rho'_n$ to $\pi_1(M')$.
		It follows that $\rho'$ satisfies the properties as asserted.
		\end{proof}
		
\section{Seifert representations for graph manifolds: the proof}\label{Sec-proof_volume_general_GM}
	In this section, we prove Theorem \ref{volume_general_GM}.
	Let $M$ be a formatted graph manifold with a simplicial JSJ graph $(V,E)$,
	and $\rho\colon\pi_1(M)\to \Sft\times_\Integral\Real$ be a representation. 
	
	For each vertex $v\in V$, 
	we take $\delta_\rho(v)$ mutually disjoint ordinary fibers
	in the corresponding JSJ piece $J_v$,
	(see Notation \ref{notation_delta}).
	Construct a formatted graph manifold $M'$
	as the simultaneous fiber connected sum of $M$ with trivial circle bundles over tori,
	with respect to the union of all these fibers, (see Remark \ref{remark_formatted_pinching}).
	Denote by $M'$ the resulting formatted graph manifold and
	$M'\to M$ the associated formatted pinching map,
	namely, the simultaneous horizontally pinching of $M'$ onto $M$.
	We obtain the relation
	$$\chi_{M'}(v)=\chi_{M}(v)-2\cdot\delta_\rho(v)$$
	for all $v\in V$.
	Moreover, we obtain the relation 
	$$\opEuler_{M'}=\opEuler_M$$
	in $\mathrm{End}(\Real^V)$.
	These obvious relations are the simultaneous version of (\ref{pinch_e_chi}).
	
	We apply Theorem \ref{reduction_vc}
	to obtain a sequence representation $\rho'_n$ of $\pi_1(M')$, index by $n\in\Natural$,
	which are all path-connected to the pull-back representation of $\rho$,
	in the space of representations
	$\mathcal{R}(\pi_1(M'),\Sft\times_\Integral\Real)$.
	We may require
	$$\lim_{n\to\infty} \xi_{\rho'_n}(v)=\xi_\rho(v)$$
	for all $v\in V$
	and 
	$$\tau_{\rho'_n}(v,w)=\tau_\rho(v,w)$$ for all $\{v,w\}\in E$
	with $\tau_\rho(v,w)$ finite.
	Moreover,
	we may require $\tau_{\rho'_n}(v,w)$ to be finite for all $\{v,w\}\in E$ and all $n\in\Natural$.
	
	Since $\rho'_n$ are all path-connected with the pull-back of $\rho$,
	it follows that
	$$\mathrm{vol}_{\Sft\times_\Integral\Real}\left(M,\rho\right)=
	\mathrm{vol}_{\Sft\times_\Integral\Real}\left(M',\rho'_n\right).$$
	By Theorem \ref{volume_virtually_central_GM},
	we obtain
	$$\mathrm{vol}_{\Sft\times_\Integral\Real}\left(M',\rho'_n\right)=
	4\pi^2\cdot\left(\xi_{\rho'_n},\opEuler_{M'}\xi_{\rho'_n}\right).$$
	Passing to the limit as $n$ tends to $\infty$, 
	we obtain the volume formula
	$$\mathrm{vol}_{\Sft\times_\Integral\Real}(M,\rho)=
	4\pi^2\cdot\left(\xi_{\rho},\opEuler_M\xi_{\rho}\right),$$
	as asserted in Theorem \ref{volume_general_GM}.
	
	Moreover, the asserted rationality in Theorem \ref{volume_general_GM} about $(M,\rho)$ 
	follows from the rationality about $(M',\rho')$.
	After all, 
	neither the Euler operator nor the volume has changed.
	
	It remains to establish the asserted upper bound of 
	$|(\opEuler_M\xi_\rho)(v)|$ for all $v\in V$.
	At this point, 
	we notice that the estimate for $|(\opEuler_{M'}\xi_{\rho'_n})(v)|$
	from Theorem \ref{volume_virtually_central_GM} 
	only implies a weaker upper bound:
	\begin{equation}\label{estimate_with_delta}
	|(\opEuler_{M}\xi_{\rho})(v)|
	\leq \max\left\{0,-\chi_M(v)+2\cdot\delta_\rho(v)-\sum_{\{v,w\}\in E}\frac{1}{\tau_{\rho}(v,w)}\right\}.
	\end{equation}
	To get rid of the unwanted term with $\delta_\rho(v)$,
	we again appeal to the covering trick, as we did in Subsection \ref{Subsec-covering_trick}.
	This is done as follows.
	
	Denote by $\bar\rho\colon \pi_1(M)\to \mathrm{PSL}(2,\Real)$
	the induced representation of $\rho$.
	Since $\bar\rho(\pi_1(M))$ is a finitely generated subgroup of a linear group,
	it is residually finite.
	Recall that $\tau_\rho(v,w)$ is the order of $\bar{\rho}(\pi_1(T_{v,w}))$,
	for any $\{v,w\}\in E$.
	Therefore, for any positive integer $D\in\Natural$, 
	we can construct some quotient homomorphism 
	$\bar\rho(\pi_1(M))\to \Gamma$ onto a finite group $\Gamma$ (depending on $D$),
	with the following properties:
	For any $\{v,w\}\in E$, if $\tau_\rho(v,w)$ is finite, 
	then $\bar{\rho}(\pi_1(T_{v,w}))$ injects $\Gamma$;
	otherwise, $\bar{\rho}(\pi_1(T_{v,w}))$ 
	surjects a subgroup of $\Gamma$	of order at least $D$.
	Take $M^*$ to be the regular finite cover of $M$ that corresponds
	to the kernel of the composite homomorphism
	$\pi_1(M)\to \bar\rho(\pi_1(M))\to \Gamma$.
	Take a characteristic finite cover of the JSJ graph of $M^*$
	which is a simplicial graph.
	Take $M''$ to be the pull-back cover of $M^*$.
	Then $M''$ is a characteristic finite cover of $M^*$,
	and therefore, a regular finite cover of $M$.
	We furnish $M''$ with the formatted graph manifold structure
	that lifts from $M$.
	
	The construction makes sure that 
	$M''\to M$
	is a formatted covering projection between
	formatted graph manifolds.	
	Denote by $(V'',E'')$ the JSJ graph of $M''$.
	Denote by $\rho''\colon\pi_1(M'')\to\Sft\times_\Integral\Real$ 
	the pull-back representation of $\rho$.
	For any covering pair of JSJ pieces $J''_{v''}\to J_v$,
	the same computations as in Subsection \ref{Subsec-covering_trick} work,
	yielding the formula
	\begin{equation}\label{tmp_delta_LHS}
	\left|\left(\opEuler_{M''}\xi_{\rho''}\right)(v'')\right|=
	\frac{[J''_{v''}:J_{v}]}{[f''_{v''}:f_{v}]}\times 
	\left|\left(\opEuler_{M}\xi_{\rho}\right)(v)\right|,
	\end{equation}
	and the comparison
	\begin{equation}\label{tmp_delta_RHSa}
	-\chi_{M''}(v'')-\sum_{\{v'',w''\}\in E''}\frac{1}{\tau_{\rho''}(v'',w'')}
	\,\leq\,
	\frac{[J''_{v''}:J_v]}{[f''_{v''}:f_v]}\times \left(-\chi_M(v)-\sum_{\{v,w\}\in E}\frac{1}{\tau_{\rho}(v,w)}\right).
	\end{equation}
	Note that terms with $\tau_{\rho'}(v',w')=\tau_{\rho}(v,w)=\infty$ have no contribution on both sides of the inequality.
	
	To compare $\delta_{\rho''}(v'')$ and $\delta_\rho(v)$, we divide into two cases according to the type of $\rho$ at $v$.
	If $\rho$ is abelian at $v$, $\rho''$ must also be abelian at $v''$,
	then $\delta_{\rho''}(v'')=\delta_{\rho}(v)=0$.
	If $\rho$ is nonabelian at $v$, 
	then for any JSJ torus $T_{v,w}$ of $M$ with $\tau_{\rho}(v,w)=\infty$,
	the number of JSJ tori $T''_{v'',w''}$ in $M''$ that cover $T_{v,w}$
	is at most $[J''_{v''}:J_v]/[f''_{v''}:f_v]$ divided by $D$.
	In fact, the precise number should be $[J''_{v''}:J_v]/[T_{v'',w''}:T_{v,w}]$,
	but we observe $[T_{v'',w''}:T_{v,w}]\geq D$ from the construction,
	and observe $[f''_{v''}:f_v]=1$, 
	since $\bar\rho(f_v)$ has to be trivial	in the nonabelian case.
	In both cases, we reach the following comparison:
	\begin{equation}\label{tmp_delta_RHSb}
	\delta_{\rho''}(v'')\leq \frac{[J''_{v''}:J_v]}{[f''_{v''}:f_v]}\times \frac{1}{D}\times \delta_{\rho}(v).
	\end{equation}
	
	We apply the weaker upper bound (\ref{estimate_with_delta}) to $(M'',\rho'')$. Then:
	$$
	\left|(\opEuler_{M''}\xi_{\rho''})(v'')\right|
	\leq \max\left\{0,-\chi_{M''}(v'')+2\cdot\delta_{\rho''}(v'')-\sum_{\{v'',w''\}\in E''}\frac{1}{\tau_{\rho''}(v'',w'')}\right\}.$$
	Together with (\ref{tmp_delta_LHS}), (\ref{tmp_delta_RHSa}), and (\ref{tmp_delta_RHSb}), 
	this yields:
	$$|(\opEuler_{M}\xi_{\rho})(v)|
	\leq \max\left\{0,-\chi_{M}(v)+\frac{2\cdot\delta_{\rho}(v)}{D}-\sum_{\{v,w\}\in E}\frac{1}{\tau_{\rho}(v,w)}\right\}.$$
	Note that $D$ is an arbitrary positive integer, and $\delta_{\rho}(v)$ is at most the number of edges in $(V,E)$.
	Therefore, we obtain the upper bound
	$$|(\opEuler_{M}\xi_{\rho})(v)|
	\leq \max\left\{0,-\chi_{M}(v)-\sum_{\{v,w\}\in E}\frac{1}{\tau_{\rho}(v,w)}\right\},$$
	as desired.
	
	This completes the proof of Theorem \ref{volume_general_GM}.

\section{Application to strictly diagonally dominant graph manifolds}\label{Sec-strictly_diagonally_dominant}
	As our first application of Theorem \ref{volume_general_GM},
	we exhibit a class of graph manifolds 
	whose covering Seifert volume (see (\ref{def_CSV})) can be effectively bounded.
	Example \ref{CSV_Mgabcd} follows immediately as a special case.
	
	\begin{theorem}\label{bound_diagonally_dominant}
		Let $M$ be a formatted graph manifold with a simplicial JSJ graph $(V,E)$.
		Adopt Notation \ref{notation_xkb}.
		Suppose that the Euler operator $\opEuler_M$ is strictly diagonally dominant,
		namely, the inequality
			$$|k_v|> \sum_{\{v,w\}\in E} |1/b_{v,w}|$$
		holds for all vertices $v\in V$.
		Then 
		$$\mathrm{CSV}(M)\leq\sum_{v\in V} \frac{4\pi^2\chi_v^2}{|k_v|-\sum_{\{v,w\}\in E}\left|1/b_{v,w}\right|}.$$
	\end{theorem}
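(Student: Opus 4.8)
The plan is to bound $\mathrm{SV}(M')$ for every finite cover $M'$ of $M$ and then divide by the covering degree, using the volume formula and the generalized Milnor--Wood inequalities of Theorem~\ref{volume_general_GM}. First I would reduce to formatted covering projections: given any finite cover $M'\to M$, pass to a further cover $M''$ whose JSJ graph is simplicial, obtained from a large-girth finite cover of the JSJ graph of $M'$ (which exists by residual finiteness of finitely generated free groups). Since $M$ is formatted, the JSJ pieces of $M''$ cover those of $M$ with oriented Seifert fibrations over oriented bases, so $M''$ carries a formatted structure making $M''\to M$ a formatted covering projection. By the degree inequality $\mathrm{SV}(M'')\geq[M'':M']\cdot\mathrm{SV}(M')$ for the Seifert volume, we get $\mathrm{SV}(M'')/[M'':M]\geq\mathrm{SV}(M')/[M':M]$, so $\mathrm{CSV}(M)$ equals the supremum of $\mathrm{SV}(M')/[M':M]$ taken over formatted covering projections $M'\to M$.

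Next, fix a formatted covering projection $M'\to M$ with JSJ graph $(V',E')$, and set $d_v=|k_v|-\sum_{\{v,w\}\in E}|1/b_{v,w}|>0$, and likewise $d_{v'}$ for $M'$. Using parts (1)--(3) of Proposition~\ref{virtual_xkb} together with the torus-counting identity~(\ref{T_degree_fixing_J}), I would check that for a covering pair $J'_{v'}\to J_v$ one has $|k_{v'}|=([J'_{v'}:J_v]/[f'_{v'}:f_v]^2)\,|k_v|$ and $\sum_{\{v',w'\}\in E'}1/|b_{v',w'}|=([J'_{v'}:J_v]/[f'_{v'}:f_v]^2)\sum_{\{v,w\}\in E}1/|b_{v,w}|$, hence $d_{v'}=([J'_{v'}:J_v]/[f'_{v'}:f_v]^2)\,d_v>0$; in particular $\opEuler_{M'}$ is again strictly diagonally dominant. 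Combining with $\chi_{v'}^2=([J'_{v'}:J_v]^2/[f'_{v'}:f_v]^2)\,\chi_v^2$ gives $\chi_{v'}^2/d_{v'}=[J'_{v'}:J_v]\,\chi_v^2/d_v$, and summing over the $v'$ lying over a fixed $v$ (whose degrees $[J'_{v'}:J_v]$ sum to $[M':M]$) yields $\sum_{v'\in V'}\chi_{v'}^2/d_{v'}=[M':M]\sum_{v\in V}\chi_v^2/d_v$.

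The heart of the argument is an elementary estimate for symmetric strictly diagonally dominant operators. For any representation $\rho\colon\pi_1(M')\to\Sft\times_\Integral\Real$, put $X=\xi_\rho\in\Real^{V'}$ and $\eta=\opEuler_{M'}X$, so that $\mathrm{vol}_{\Sft\times_\Integral\Real}(M',\rho)=4\pi^2(X,\eta)$ by Theorem~\ref{volume_general_GM}, while the generalized Milnor--Wood inequalities give $|\eta(v')|\leq\max\{0,-\chi_{v'}\}\leq|\chi_{v'}|$ for all $v'$. From $\eta(v')=k_{v'}X(v')-\sum_{w'}X(w')/b_{v',w'}$ and $2|ab|\leq a^2+b^2$ one obtains $\sum_{v'}|X(v')|\,|\eta(v')|\geq\sum_{v'}d_{v'}X(v')^2$; a weighted Cauchy--Schwarz inequality $\sum_{v'}|X(v')|\,|\eta(v')|\leq(\sum_{v'}d_{v'}X(v')^2)^{1/2}(\sum_{v'}\eta(v')^2/d_{v'})^{1/2}$ then forces $\sum_{v'}d_{v'}X(v')^2\leq\sum_{v'}\eta(v')^2/d_{v'}$, whence $|(X,\eta)|\leq\sum_{v'}|X(v')|\,|\eta(v')|\leq\sum_{v'}\eta(v')^2/d_{v'}\leq\sum_{v'}\chi_{v'}^2/d_{v'}$. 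Thus $|\mathrm{vol}_{\Sft\times_\Integral\Real}(M',\rho)|\leq4\pi^2\sum_{v'}\chi_{v'}^2/d_{v'}=4\pi^2[M':M]\sum_{v\in V}\chi_v^2/d_v$ for every $\rho$, so $\mathrm{SV}(M')\leq4\pi^2[M':M]\sum_{v\in V}\chi_v^2/d_v$; dividing by $[M':M]$ and taking the supremum over formatted covers gives the claimed bound on $\mathrm{CSV}(M)$.

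I expect the main friction to be the covering bookkeeping in the second step: what appears in the strict diagonal dominance hypothesis is the sum of absolute values $\sum_{w'}|1/b_{v',w'}|$, not the absolute value of the signed sum treated in part (4) of Proposition~\ref{virtual_xkb}, so one has to re-run that computation with absolute values, using~(\ref{T_degree_fixing_J}) to count the tori lying over a given JSJ torus correctly. Some care is also needed in the reduction to formatted covers, to make the further cover simultaneously kill short loops in the JSJ graph and stay compatible with the fibration orientations pulled back from $M$. By comparison, the algebraic inequality and the application of Theorem~\ref{volume_general_GM} are routine.
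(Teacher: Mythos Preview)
Your argument is correct and is actually more direct than the paper's. The paper first proves a weaker inequality (Lemma~\ref{weak_bound_diagonally_dominant}) by a spectral route: Gershgorin's theorem bounds the eigenvalues of $\opEuler_M$ away from zero, and then $|(\xi,\opEuler_M\xi)|\leq\|\opEuler_M^{-1}\|_{\mathtt{op}}\,\|\opEuler_M\xi\|_{\ell^2}^2$ yields only $\mathrm{SV}(M)\leq 4\pi^2\bigl(\sum_v\chi_v^2\bigr)/\min_v d_v$. Because this weak bound does not scale multiplicatively under covers, the paper then invokes the cover-construction lemma (Lemma~\ref{degree_typed_cover}) to manufacture, above any given finite cover $M^*$, a formatted cover $M'$ in which all the $d_{v'}$ are equal to a common constant; on such a cover the weak bound coincides with the desired one, and one concludes by the degree inequality.

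Your elementary inequality bypasses both steps. The combination of the triangle inequality, $2|ab|\leq a^2+b^2$, and weighted Cauchy--Schwarz gives directly $|(\xi,\opEuler_{M'}\xi)|\leq\sum_{v'}\chi_{v'}^2/d_{v'}$, which is already the sharp vertexwise bound; since you then verify (via Proposition~\ref{virtual_xkb} and the torus-counting identity~(\ref{T_degree_fixing_J})) that $\sum_{v'}\chi_{v'}^2/d_{v'}=[M':M]\sum_v\chi_v^2/d_v$ for \emph{any} formatted cover, no special cover is needed. The paper's route has the virtue of recording the spectral fact that $\opEuler_M$ is invertible under strict diagonal dominance, and of illustrating the use of Lemma~\ref{degree_typed_cover}; your route is shorter, avoids the forward reference to that lemma, and makes the covering-invariance of the bound transparent. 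Your caveat about redoing Proposition~\ref{virtual_xkb}(4) with absolute values is exactly right and is handled by the same computation.
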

	
	To prove Theorem \ref{bound_diagonally_dominant},
	we make use of the well-known Gershgorin circle theorem in matrix analysis:
	
	\begin{theorem}[{\cite[Chapter 6, Theorem 6.1.1]{HJ-book}}]\label{Gershgorin_circle_theorem}
		Let $A=(a_{i,j})_{n\times n}$ be a square matrix of size $n$ with entries in $\Complex$.
		Then the eigenvalues of $A$ all lie in the union of disks $D_1\cup D_2 \cup \cdots\cup D_n$,
		where 
		$$D_i=\left\{z\in\Complex\colon |z-a_{i,i}|\leq \sum_{j\neq i} |a_{i,j}|\right\},$$
		for $i=1,2,\cdots,n$.
	\end{theorem}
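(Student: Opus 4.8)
The plan is to prove the statement directly from the definition of an eigenvalue, using the classical argument that picks out the coordinate of the eigenvector of largest modulus. Let $\lambda\in\Complex$ be any eigenvalue of $A$, and fix a nonzero eigenvector $x=(x_1,\dots,x_n)^{\mathsf T}$ with $Ax=\lambda x$. Since $x\neq 0$, I can choose an index $i\in\{1,\dots,n\}$ for which $|x_i|=\max_{1\le j\le n}|x_j|$, and this maximum is strictly positive. The goal is to show $\lambda\in D_i$; since $\lambda$ was arbitrary, this places every eigenvalue in $D_1\cup\cdots\cup D_n$, which is exactly the assertion.

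Next I would extract the $i$-th scalar equation from $Ax=\lambda x$, namely $\sum_{j=1}^n a_{i,j}x_j=\lambda x_i$, and isolate the diagonal contribution to obtain
$$(\lambda-a_{i,i})\,x_i=\sum_{j\neq i} a_{i,j}\,x_j.$$
Taking absolute values and applying the triangle inequality on the right-hand side gives
$$|\lambda-a_{i,i}|\,|x_i|\;\le\;\sum_{j\neq i}|a_{i,j}|\,|x_j|\;\le\;\Bigl(\sum_{j\neq i}|a_{i,j}|\Bigr)\,|x_i|,$$
where the last step uses $|x_j|\le|x_i|$ for every $j$ by the choice of $i$. Dividing through by $|x_i|>0$ yields $|\lambda-a_{i,i}|\le\sum_{j\neq i}|a_{i,j}|$, i.e. $\lambda\in D_i$, completing the argument.

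There is essentially no serious obstacle here: the only point requiring care is ensuring $|x_i|>0$ so that the final division is legitimate, which is guaranteed because $x$ is a nonzero vector and $|x_i|$ is its maximal coordinate modulus. One may optionally remark that the disks $D_i$ are closed and bounded, hence compact, so their union is a closed set, but this is not needed for the inclusion statement as worded. Since this is a standard textbook fact cited from \cite{HJ-book}, a brief self-contained proof along these lines suffices, and no results from earlier in the paper are invoked.
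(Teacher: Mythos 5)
Your proof is correct: it is the standard argument for the Gershgorin circle theorem (select the maximal-modulus coordinate of an eigenvector, isolate the diagonal term in that row, apply the triangle inequality, and divide by the nonzero maximal coordinate). The paper itself gives no proof --- it simply cites Horn--Johnson --- and your argument is precisely the one found in that reference, so there is nothing to compare beyond noting the match.
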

	
	\begin{lemma}\label{weak_bound_diagonally_dominant}
		Under the assumptions of Theorem \ref{bound_diagonally_dominant},
		the Seifert volume of $M$ satisfies the inequality
		$$\mathrm{SV}(M)\leq 
		\frac{4\pi^2\cdot \sum_{v\in V} \chi_v^2}{\min\left\{|k_v|-\sum_{\{v,w\}\in E}\left|1/b_{v,w}\right|\colon v\in V\right\}}.$$		
	\end{lemma}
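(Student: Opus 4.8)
The plan is to combine the volume formula and the generalized Milnor--Wood inequalities of Theorem \ref{volume_general_GM} with a spectral estimate for the Euler operator $\opEuler_M$ coming from the Gershgorin circle theorem (Theorem \ref{Gershgorin_circle_theorem}).

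First I would fix an arbitrary representation $\rho\colon\pi_1(M)\to\Sft\times_\Integral\Real$ and set $y=\opEuler_M\xi_\rho\in\Real^V$. From the generalized Milnor--Wood inequalities, together with the elementary remark that $1/\tau_\rho(v,w)\geq0$ for every edge, one gets $|y(v)|=|(\opEuler_M\xi_\rho)(v)|\leq\max\{0,-\chi_M(v)\}\leq|\chi_v|$ for every vertex $v$. Summing over $V$ yields the norm bound $\|y\|^2=\sum_{v\in V}y(v)^2\leq\sum_{v\in V}\chi_v^2$, where $\|\cdot\|$ is the standard Euclidean norm on $\Real^V$.

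Next I would invoke strict diagonal dominance. With respect to the standard unordered basis, $\opEuler_M$ has diagonal entries $k_v$ and off-diagonal entries $-1/b_{v,w}$, so by Theorem \ref{Gershgorin_circle_theorem} every eigenvalue $\lambda$ of $\opEuler_M$ satisfies $|\lambda-k_v|\leq\sum_{\{v,w\}\in E}|1/b_{v,w}|$ for some $v\in V$. The strict inequality $|k_v|>\sum_{\{v,w\}\in E}|1/b_{v,w}|$ forces $|\lambda|\geq|k_v|-\sum_{\{v,w\}\in E}|1/b_{v,w}|\geq m$, where $m:=\min\{|k_v|-\sum_{\{v,w\}\in E}|1/b_{v,w}|\colon v\in V\}>0$. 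In particular $0$ is not an eigenvalue, so $\opEuler_M$ is invertible; and since $\opEuler_M$ is a symmetric real operator, its inverse has operator norm $\|\opEuler_M^{-1}\|_{\mathrm{op}}=1/\min_i|\lambda_i|\leq1/m$.

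Finally, using invertibility I would rewrite the volume formula of Theorem \ref{volume_general_GM} as
$$\mathrm{vol}_{\Sft\times_\Integral\Real}(M,\rho)=4\pi^2\cdot(\xi_\rho,\opEuler_M\xi_\rho)=4\pi^2\cdot(\opEuler_M^{-1}y,y),$$
and estimate by Cauchy--Schwarz:
$$\left|\mathrm{vol}_{\Sft\times_\Integral\Real}(M,\rho)\right|\leq4\pi^2\cdot\|\opEuler_M^{-1}y\|\cdot\|y\|\leq4\pi^2\cdot\|\opEuler_M^{-1}\|_{\mathrm{op}}\cdot\|y\|^2\leq\frac{4\pi^2\cdot\sum_{v\in V}\chi_v^2}{m}.$$
Since this holds for every $\rho$, taking the supremum over $\mathcal{R}(\pi_1(M),\Sft\times_\Integral\Real)$ and recalling the definition \eqref{SV_definition} of $\mathrm{SV}(M)$ gives precisely the asserted inequality. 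The one point that requires care, and really the conceptual crux, is the passage from the \emph{componentwise} Milnor--Wood bounds to a global bound on the quadratic form $(\xi_\rho,\opEuler_M\xi_\rho)$: this works only because we express the form through $y=\opEuler_M\xi_\rho$ — on which the known bounds live — rather than through $\xi_\rho$ itself, and it is exactly for this re-expression that invertibility of $\opEuler_M$, hence strict diagonal dominance, is indispensable.
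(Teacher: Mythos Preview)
Your proof is correct and follows essentially the same approach as the paper: both combine the Gershgorin circle theorem to bound the eigenvalues of $\opEuler_M$ away from zero, the generalized Milnor--Wood inequalities to bound $\|\opEuler_M\xi_\rho\|^2$ by $\sum_v\chi_v^2$, and the volume formula rewritten through $\opEuler_M^{-1}$. The paper states the key inequality $|(\xi,\opEuler_M\xi)|\leq\|\opEuler_M^{-1}\|_{\mathrm{op}}\cdot\|\opEuler_M\xi\|^2$ directly from the spectral characterization of the operator norm, whereas you route it through Cauchy--Schwarz, but the content is identical.
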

	
	\begin{proof}
		The operator $\opEuler_M$ has a symmetric square matrix over the standard basis of $\Real^V$.
		Theorem \ref{Gershgorin_circle_theorem} implies that its eigenvalues $\lambda$ are all nonzero, and indeed,
		$$\left|\lambda\right|\geq \min\left\{|k_v|-\sum_{\{v,w\}\in E}\left|1/b_{v,w}\right|\colon v\in V\right\}.$$
		In particular, $\opEuler_M$ is invertible.
		Note that the operator norm of $\opEuler_M^{-1}$ on the inner product vector space $\Real^V$
		is the largest $|1/\lambda|$.
		For any vector $\xi\in\Real^V$ with $|(\opEuler_M\xi)(v)|\leq -\chi_v$ for all $v\in V$,
		we estimate
		$$\left(\xi,\opEuler_M\xi\right)\leq \left\|\opEuler_M^{-1}\right\|_{\texttt{op}}\times \left\|\opEuler_M\xi\right\|^2_{\ell^2}
		\leq\frac{\sum_{v\in V}\chi_v^2}{\min\left\{|k_v|-\sum_{\{v,w\}\in E}\left|1/b_{v,w}\right|\colon v\in V\right\}}.$$
		Then by Theorem \ref{volume_general_GM} we obtain the asserted inequality.
	\end{proof}
	
	\begin{proof}[{Proof of Theorem \ref{bound_diagonally_dominant}}]
	Suppose that $M^*\to M$ is any given finite cover of $M$.
	Then Lemma \ref{degree_typed_cover} applies and yields some positive integers $m^*$ and $r^*$.
	Take $m$ to be $m^*$, and for any vertex $v\in V$, take $r_v$ to be some positive integral multiple of $r^*$,
	such that the product
	$$C=r_v\times\left(|k_v|-\sum_{\{v,w\}\in E}\left|1/b_{v,w}\right|\right)$$
	becomes constant independent of $v$.
	Let $M'$ be a formatted cover of $M$ as guaranteed in Lemma \ref{degree_typed_cover},
	with respect to $m$ and these $r_v$.
	We observe
	$$|k_{v'}|-\sum_{\{v',w'\}\in E'}\left|1/b_{v',w'}\right|
	=r_v\times\left(|k_{v}|-\sum_{\{v,w\}\in E}|1/b_{v,w}|\right)=C,$$
	for any covering pair of JSJ pieces $J'_{v'}\to J_v$,
	by Proposition \ref{virtual_xkb}.
	Lemma \ref{weak_bound_diagonally_dominant} applies to $M'$, and yields
	\begin{eqnarray*}
	\mathrm{SV}(M')&\leq& \frac{4\pi^2\cdot \sum_{v'\in V'} \chi_{v'}^2}{C}\\
	&=&\sum_{v\in V} \frac{[M':M]}{m^2r_v}\times \frac{m^2r_v^2\times 4\pi^2\chi_v^2}{r_v\times \left(|k_{v}|-\sum_{\{v,w\}\in E}|1/b_{v,w}|\right)}\\
	&=&[M':M]\times \sum_{v\in V} \frac{4\pi^2\chi_v^2}{|k_v|-\sum_{\{v,w\}\in E}\left|1/b_{v,w}\right|}.
	\end{eqnarray*}
	Since $M'\to M$ factors through the given cover $M^*$, we obtain
	$$
	\frac{\mathrm{SV}(M^*)}{[M^*:M]}\leq \frac{\mathrm{SV}(M')}{[M':M]}
	\leq\sum_{v\in V} \frac{4\pi^2\chi_v^2}{|k_v|-\sum_{\{v,w\}\in E}\left|1/b_{v,w}\right|}.$$
	Since the finite cover $M^*\to M$ is arbitrary,
	we obtain the estimate for $\mathrm{CSV}(M)$
	as asserted in Theorem \ref{bound_diagonally_dominant}.
	\end{proof}

\section{Virtual existence of generic volume values}\label{Sec-virtual_existence}
	Provided Theorem \ref{volume_general_GM}, 
	it becomes suitable to ask which values in $[0,+\infty)$ actually arise
	as volume of $\Sft\times_\Integral\Real$--representations for a given graph manifold.
	In this section,
	we offer a virtual and generic answer to this question,
	as Theorem \ref{volume_existence_general_GM}.
	It is virtual because of the finite cover in the conclusion.
	It is generic because of the strict inequality in the hypothesis.

	\begin{theorem}\label{volume_existence_general_GM}
		Let $M$ be a formatted graph manifold 
		with a simplicial JSJ graph $(V,E)$.
		Adopt Notation \ref{notation_xi_tau}.
		Let $\xi\in\Rational^V$ be any vector such that 
		the following inequality holds for all $v\in V$:
		$$|(\opEuler_M\xi)(v)|<-\chi_M(v).$$
		Then, 
		there exist a formatted finite cover $M'\to M$ 
		and a representation
		$\rho'\colon\pi_1(M')\to \Sft\times_\Integral\Real$
		with the property
		$$\frac{\mathrm{vol}_{\Sft\times_\Integral\Real}\left(M',\rho'\right)}{[M':M]}=4\pi^2\cdot(\xi,\opEuler_M\xi).$$
	\end{theorem}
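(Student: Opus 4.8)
The plan is to realize the prescribed number by a representation of central type (in the sense of Subsection \ref{Subsec-local_types}) on a suitable formatted finite cover of $M$, so that Theorem \ref{volume_virtually_central_GM}---equivalently, the additivity principle together with the circle-bundle computation of Theorem \ref{volume_central_bundle}---evaluates its volume for free. The whole content is then the reverse-engineering problem: given $\xi\in\Rational^V$, build a formatted finite cover $M'\to M$ and a central-type representation $\rho'\colon\pi_1(M')\to\Sft\times_\Integral\Real$ whose winding data is the pull-back of $\xi$, i.e. with $\xi_{\rho'}(v')=[f'_{v'}:f_{v}]\cdot\xi(v)$ for each $v'$ over $v$; once such $(M',\rho')$ is produced, the identity comes out by the covering computation already carried out in Subsection \ref{Subsec-covering_trick}.

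First I would pass to a deep formatted finite cover $M'\to M$ chosen so that the base $2$--orbifolds of the JSJ pieces become honest punctured surfaces of very large genus while the valences in the JSJ graph stay bounded; this is possible by standard covering constructions for graph manifolds (compare the discussion following Theorem \ref{volume_general_GM}). Going deep helps because the hypothesis $|(\opEuler_M\xi)(v)|<-\chi_M(v)$ is scale-invariant under covering: by Proposition \ref{virtual_xkb} both $|(\opEuler_{M'}\xi_{\rho'})(v')|$ and $-\chi_{M'}(v')$ get multiplied by the same factor $[J'_{v'}:J_{v}]/[f'_{v'}:f_{v}]=\chi_{v'}/\chi_v$, whereas the additive correction terms that will intervene below---the valence of $v'$ and reciprocals of cone-point orders---do not scale, hence become negligible against the genus. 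Requiring the covering degrees divisible enough will also force each scaled quantity $(\chi_{v'}/\chi_v)\,(\opEuler_M\xi)(v)$ to be an integer.

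Next I would construct $\rho'$ vertex by vertex, reversing the analysis behind Lemma \ref{volume_rational_central_GM}. Central type dictates, on every JSJ torus $T_{v',w'}$, the value $\rho'(s_{v',w'})=(\xi_{\rho'}(w')-a_{v',w'}\xi_{\rho'}(v'))/b_{v',w'}$, which is rational, and hence a unique primitive kernel slope $h_{v',w'}$ on which $\rho'$ must vanish; this slope is the same read from either side, so the partial representations to be built will automatically agree on the torus subgroups. For each vertex $v'$, the representation restricted to $\pi_1(J'_{v'})$ should be a central-type lift of a $\mathrm{PSL}(2,\Real)$--representation of its base (punctured) surface group that kills all boundary loops and therefore descends to the closed surface $\mathcal{P}_{v'}$ obtained by filling the punctures (as in the proof of Lemma \ref{volume_central_filled}); by Lemma \ref{e_M_v} the Euler number that representation must carry on $\mathcal{P}_{v'}$ is $(\opEuler_{M'}\xi_{\rho'})(v')$, which by the previous paragraph is an integer, and whose absolute value is at most $-\chi(\mathcal{P}_{v'})$ thanks to the strictness of the hypothesis and the deep cover. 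Goldman's theorem \cite{Goldman-components} (together with the Fuchsian-group analysis of Jankins--Neumann \cite{JN} if any cone points survive) then realizes such a representation; twisting it to make $\winding(\rho'(f'_{v'}))$ equal the prescribed $\xi_{\rho'}(v')$ and checking (via Theorem \ref{lifting_representations}) that the assembled $\bar\rho'$ has torsion Euler class on $\pi_1(M')$, one obtains the desired central-type representation $\rho'\colon\pi_1(M')\to\Sft\times_\Integral\Real$, trivial on every $h_{v',w'}$. Vertices with $\xi_{\rho'}(v')=0$ or $(\opEuler_{M'}\xi_{\rho'})(v')=0$ contribute nothing and are handled by a representation factoring through the base.

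Finally, since $\rho'$ is of central type (so $\tau_{\rho'}$ is identically $1$), the additivity principle (Theorem \ref{additivity_principle}) and Lemma \ref{volume_central_filled} give $\mathrm{vol}_{\Sft\times_\Integral\Real}(M',\rho')=4\pi^2\sum_{v'}\xi_{\rho'}(v')\,(\opEuler_{M'}\xi_{\rho'})(v')$; grouping the vertices of $M'$ over those of $M$ and invoking Proposition \ref{virtual_xkb} exactly as in Subsection \ref{Subsec-covering_trick} rewrites the right-hand side as $[M':M]\cdot 4\pi^2(\xi,\opEuler_M\xi)$, which is the assertion. I expect the main obstacle to be executing the second and third steps uniformly: producing a single formatted cover along which every local Euler number $(\opEuler_{M'}\xi_{\rho'})(v')$ is simultaneously an integer within its Goldman range, so that the kernel slopes and the local representations can all be chosen and glued consistently. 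This is exactly where the strict inequality is indispensable, and where the interplay of covering degrees, cone-point orders and base-surface genera has to be bookkept with care.
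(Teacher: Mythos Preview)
Your proposal is correct and follows essentially the same route as the paper: pass to a formatted cover where the scaled data become integral and the strict inequalities absorb the valence correction, then build a central-type representation piece by piece with the prescribed fiber values and glue along the JSJ tori. The paper packages the two steps as Lemma \ref{degree_typed_cover} (the explicit cover with $[T'_{v',w'}:T_{v,w}]=m^2$ and $[J'_{v'}:J_v]=m^2r_v$, no exceptional fibers) and Lemma \ref{volume_existence_special} (the local construction), and carries out the local step directly in $\Sft$ via the commutator factorization of \cite{EHN} rather than via Goldman plus a lift-and-twist; this sidesteps your torsion-Euler-class check entirely, since the global representation is assembled already in $\Sft$.
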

	
	The rest of this section is devoted to the proof of Theorem \ref{volume_existence_general_GM}.

	\begin{lemma}\label{volume_existence_special}
		Let $M$ be a formatted graph manifold 
		with a simplicial JSJ graph $(V,E)$.
		Adopt Notation \ref{notation_xi_tau}.
		Suppose that there are no exceptional fibers in any JSJ piece of $M$.
		Suppose that $\xi\in\Integral^V$ is a vector,
		such that 
		$b_{v,w}$ divides $\xi(v)$ for all $v\in V$ and $\{v,w\}\in E$,
		and such that
		$$|(\opEuler_M\xi)(v)|\leq-\chi_M(v)-\mathrm{valence}_{(V,E)}(v)$$
		holds for all $v\in V$.
		Then, there exists a representation
		$\rho\colon\pi_1(M)\to \Sft$,
		such that $\rho(f_v)$ lies in the center $\Integral$ and equals $\xi(v)$,
		for all $v\in V$.
	\end{lemma}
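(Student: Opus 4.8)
The strategy is to realize $\rho$ by gluing together central-type representations of the JSJ pieces. Since $M$ has no exceptional fibers, each JSJ piece $J_v$ is a circle bundle over a compact orientable surface $\Sigma_v$ of genus $g_v$ with $n_v=\mathrm{valence}_{(V,E)}(v)$ boundary components. When $n_v\ge1$ the surface $\Sigma_v$ has boundary, so this bundle is trivial; I identify $\pi_1(J_v)\cong\pi_1(\Sigma_v)\times\langle f_v\rangle$ and choose the Waldhausen basis $([f_v],[s_{v,w}])$ on each $T_{v,w}$ with $[s_{v,w}]$ represented by a boundary curve of a section $\Sigma_v\hookrightarrow J_v$, so that $([f_v],[s_{v,w}])$ is in fact an \emph{integral} basis of $H_1(T_{v,w};\Integral)$ and the coefficient $a_{v,w}$ in $[f_w]=a_{v,w}[f_v]+b_{v,w}[s_{v,w}]$ is an integer. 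Writing the center $Z(\Sft)$ multiplicatively as $\langle z\rangle$ with $z=\widetilde{k}(1)$, I first fix gluing data along each edge: since $b_{v,w}$ divides $\xi(v)$ by hypothesis and also divides $\xi(w)$ (because $b_{w,v}=b_{v,w}$), the number
$$\mu_{v,w}=\frac{\xi(w)-a_{v,w}\xi(v)}{b_{v,w}}$$
is an integer; equivalently $\mu_{v,w}$ is the value on $[s_{v,w}]$ of the unique homomorphism $\lambda_{v,w}\colon H_1(T_{v,w};\Integral)\to\Integral$ sending $[f_v]\mapsto\xi(v)$ and $[f_w]\mapsto\xi(w)$ (uniqueness holds because $[f_v],[f_w]$ span a finite-index subgroup).

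Next I build, for each $v$, a representation $\rho_v\colon\pi_1(J_v)\to\Sft$ with $\rho_v(f_v)=z^{\xi(v)}$ and $\rho_v(s_{v,w})=z^{\mu_{v,w}}$ for every neighbour $w$ of $v$. Take the standard presentation $\pi_1(\Sigma_v)=\langle a_1,b_1,\dots,a_{g_v},b_{g_v},c_1,\dots,c_{n_v}\mid \prod_i[a_i,b_i]\prod_j c_j\rangle$ with $c_j$ the boundary curve corresponding to the neighbour $w_j$; set $\rho_v(c_j)=z^{\mu_{v,w_j}}$ and $\rho_v(f_v)=z^{\xi(v)}$, which are central, and look for $\rho_v(a_i),\rho_v(b_i)\in\Sft$ making the relator map to the identity, that is
$$\prod_{i=1}^{g_v}[\rho_v(a_i),\rho_v(b_i)]=\Big(\prod_{j}z^{\mu_{v,w_j}}\Big)^{-1}=z^{-\sum_w\mu_{v,w}}.$$
The key computation, using $k_v=\sum_w a_{v,w}/b_{v,w}$, gives
$$\sum_{w}\mu_{v,w}=\sum_{w}\frac{\xi(w)}{b_{v,w}}-\xi(v)\sum_{w}\frac{a_{v,w}}{b_{v,w}}=\sum_{w}\frac{\xi(w)}{b_{v,w}}-k_v\xi(v)=-(\opEuler_M\xi)(v),$$
so the required product of commutators equals $z^{(\opEuler_M\xi)(v)}$. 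Here $\chi_M(v)=2-2g_v-n_v$, so the hypothesis $|(\opEuler_M\xi)(v)|\le-\chi_M(v)-n_v$ reads $|(\opEuler_M\xi)(v)|\le 2g_v-2$; in particular $g_v\ge1$, and by Goldman's description of $\mathrm{PSL}(2,\Real)$--representation varieties of closed surface groups (the realized Euler numbers of $\pi_1(\Sigma_{g_v})$ being exactly $0,\pm1,\dots,\pm(2g_v-2)$; see the discussion preceding Theorem \ref{volume_central_bundle}) there exist $\tilde a_i,\tilde b_i\in\Sft$ with $\prod_i[\tilde a_i,\tilde b_i]=z^{(\opEuler_M\xi)(v)}$. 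Taking these as $\rho_v(a_i),\rho_v(b_i)$ defines $\rho_v$ on $\pi_1(\Sigma_v)$, and extending by $f_v\mapsto z^{\xi(v)}$ is legitimate because the latter is central. (If $n_v=0$, so $V=\{v\}$ and $M$ is a closed Seifert fibered space, the same argument works with $k_v$ replaced by the integral Euler number of $J_v\to\Sigma_v$.)

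Finally I glue. In the graph-of-groups decomposition of $\pi_1(M)$ dual to the JSJ decomposition, the vertex groups are the $\pi_1(J_v)$ and the edge groups are the $\pi_1(T_{v,w})$. On $\pi_1(T_{v,w})$ the restriction of $\rho_v$ is $n\mapsto z^n$ applied to a homomorphism $H_1(T_{v,w};\Integral)\to\Integral$ sending $[f_v]\mapsto\xi(v)$, $[f_w]\mapsto\xi(w)$; the restriction of $\rho_w$ is likewise $z^{(\cdot)}$ applied to a homomorphism sending $[f_w]\mapsto\xi(w)$, $[f_v]\mapsto\xi(v)$ — the latter because $\mu_{w,v}$ was chosen so that $a_{w,v}\xi(w)+b_{w,v}\mu_{w,v}=\xi(v)$. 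By the uniqueness of $\lambda_{v,w}$ these two restrictions coincide, so the $\rho_v$ assemble; for the stable letters attached to the non-tree edges of a spanning tree we may use the identity element, since the associated HNN relations only conjugate central elements. This produces a homomorphism $\rho\colon\pi_1(M)\to\Sft$ with $\rho(f_v)=z^{\xi(v)}\in Z(\Sft)=\Integral$ equal to $\xi(v)$ for every $v$, as required. The genuinely delicate point is keeping all edge-group images inside the \emph{discrete} center of $\Sft$ while they remain mutually compatible — this is exactly what the divisibility hypothesis on $b_{v,w}$ buys — together with the Milnor--Wood/Goldman existence of surface-group representations of any prescribed Euler number of size at most $2g_v-2$, which the bound $|(\opEuler_M\xi)(v)|\le-\chi_M(v)-\mathrm{valence}_{(V,E)}(v)$ makes available.
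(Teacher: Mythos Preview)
Your proof is correct and follows essentially the same route as the paper's: trivialize each JSJ piece as $f_v\times\Sigma_v$, prescribe central values $\xi(v)$ on fibers and $\mu_{v,w}=(\xi(w)-a_{v,w}\xi(v))/b_{v,w}$ on sections, compute $\sum_w\mu_{v,w}=-(\opEuler_M\xi)(v)$, and invoke the Milnor--Wood/Goldman range (the paper cites \cite{EHN} and alternatively Theorem~\ref{volume_central_bundle}) to write $z^{(\opEuler_M\xi)(v)}$ as a product of $g_v$ commutators, then glue via the graph-of-groups decomposition. The only cosmetic differences are your presentation convention $\prod[a_i,b_i]\prod c_j=1$ versus the paper's $\prod[x_i,y_i]=s_1\cdots s_n$, and your explicit invocation of uniqueness of $\lambda_{v,w}$ where the paper simply says the edge restrictions are ``obviously the same''.
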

	
	\begin{proof}
		We choose auxiliary basepoints of the JSJ pieces and the JSJ tori, 
		and choose auxiliary paths to connect them,
		so the fundamental group $\pi_1(M)$ 
		decomposes into a graph of groups accordingly.
		We construct representations $\pi_1(J_v)\to \Sft$ for all $v\in V$
		such that the restricted representations to $\pi_1(T_{v,w})$ has image in the center $\Integral$.
		We make sure that the restricted representations $\pi_1(T_{v,w})\to\Integral$
		and $\pi_1(T_{w,v})\to \Integral$ are equal for each $\{v,w\}\in E$.
		Then these representations extends to a representation of $\pi_1(M)$,
		by any arbitrary assignments for the stable letters.
				
		Since there are no exceptional fibers in any JSJ piece $J_v$ of $M$,
		we can identify $\mathrm{clos}(J_v)$ as a product $f_v\times\Sigma_v$,
		where $\Sigma_v$ is a compact oriented surface.
		The components of $\partial \Sigma_v$ naturally correspond to the edges $\{v,w\}$
		incident to $v$. With the induced orientations, they give rise to oriented slopes $s_{v,w}$ on 
		the corresponding JSJ tori $T_{v,w}$.
		Therefore, we obtain a Waldhausen basis $[f_v],[s_{v,w}]$ for $H_1(T_{v,w};\Real)$,
		which actually generates the integral lattice $H_1(T_{v,w};\Integral)$.
		The equation $[f_w]=a_{v,w}[f_v]+b_{v,w}[s_{v,w}]$ on $H_1(T_{v,w};\Integral)$
		determines a unique integer $a_{v,w}\in\Integral$.
		We obtain a unique homomorphism $\eta_{v,w}\colon H_1(T_{v,w};\Integral)\to \Integral$
		which sends $[f_v]$ to $\xi(v)$ and $[f_w]$ to $\xi(w)$.
		In fact, $[s_{v,w}]$ must go to $(\xi(w)-a_{v,w}\xi(v))/b_{v,w}$,
		which lies in $\Integral$ because $b_{v,w}$ divides both $\xi(v)$ and $\xi(w)$.
		We observe the relation
		$$(\opEuler_M\xi)(v)=k_v\xi(v)-\sum_{\{v,w\}\in E}\frac{1}{b_{v,w}}\times\xi(w)
		=(-1)\cdot\sum_{\{v,w\}\in E}\eta_{v,w}\left([s_{v,w}]\right).$$
		
		To simplify notations, 
		we focus on a vertex $v$,
		and suppose that $\Sigma_v$ is of genus $g$ with $n$ boundary components.
		We enumerate the vertices incident to $v$ as $w_1,w_2,\cdots,w_n$.
		The fundamental group $\pi_1(\Sigma_v)$ admits a well-known presentation of $2g+n$ generators
		$x_1,y_1,x_2,y_2,\cdots,x_g,y_g,s_1,\cdots,s_n$,
		with a single relation
		$$[x_1,y_1][x_2,y_2]\cdots[x_g,y_g]=s_1s_2\cdots s_n,$$
		where $[x_i,y_i]$ stands for the commutator $x_iy_ix_i^{-1}y_i^{-1}$.
		Then $\pi_1(J_v)$ is generated as a direct product of $\pi_1(\Sigma_v)$ and 
		the infinite cyclic group $\langle f_v\rangle$.
		We assume that the conjugacy class of $s_j$ corresponds to the free homotopy class
		of the oriented slope $s_{v,w_j}$. 
		To define a homomorphism $\rho_v\colon \pi_1(J_v)\to \Sft$,
		we first assign $\rho_v(f_v)$ to be the central element $\xi(v)\in\Integral$,
		and $\rho_v(s_j)$ to be $\eta_{v,w_j}([s_{v,w_j}])\in\Integral$,
		for $j=1,2,\cdots,n$.
		Therefore, the assumption just says
		$$\left|\rho_v(s_1s_2\cdots s_n)\right|\leq 2g-2.$$
		In this case, $\rho_v(s_1s_2\cdots s_n)$ can be written as a product of $g$
		commutators $[\alpha_1,\beta_1][\alpha_2,\beta_2]\cdots[\alpha_g,\beta_g]$,
		where $\alpha_1,\beta_1,\alpha_2,\beta_2,\cdots,\alpha_g,\beta_g$ are some elements of $\Sft$.
		In fact, this is a special case of \cite[Theorems 2.3 and 4.1]{EHN}.
		Alternatively, 
		one may apply Theorem \ref{volume_central_bundle} to 
		an oriented circle bundles $N$ 
		over closed oriented surfaces of genus $g$ with Euler number $1$:
		Since $\pi_1(N)$ can be generated with $2g+1$ generators $X_1,Y_1,X_2,Y_2,\cdots,X_g,Y_g,Z$,
		where $Z=[X_1,Y_1][X_2,Y_2]\cdots[X_g,Y_g]$ is central,
		$H^2(N;\Integral)$ has no torsion,
		and there is a representation $\phi\colon \pi_1(N)\to\Sft$ which sends $f$ to 
		our $\rho_v(s_1s_2\cdots s_n)\in\Integral$.
		Hence $\alpha_i$ and $\beta_i$ can be taken as $\phi(X_i)$ and $\phi(Y_i)$, respectively.
		(Yet another manual proof can be derived from
		the explicit constructions in Lemmas \ref{comm_ell}, \ref{comm_hyp}, and \ref{comm_par}.)
		Anyways, taking a commutator factorization of $\rho(s_1s_2\cdots s_n)$ of length $g$ as above,
		we assign $\rho_v(x_i)=\alpha_i$ and $\rho_v(y_i)=\beta_i$.
		This defines a homomorphism $\rho_v$ of $\pi_1(J_v)$,
		whose restriction to the subgroup $\pi_1(T_{v,w_j})=\langle s_j,f_v\rangle$
		is the pull-back of $\eta_{v,w_j}$ via the abelianization isomorphism 
		$\pi_1(T_{v,w_j})\cong H_1(T_{v,w_j};\Integral)$.
		
		Perform the above construction for all $v\in V$,
		and obtain representations $\rho_v\colon\pi_1(J_v)\to \Sft$.
		Then the restricted representations of $\rho_v$ and $\rho_w$ 
		to $\pi_1(T_{v,w})=\pi_1(T_{w,v})$ are obviously the same for any $\{v,w\}\in E$.
		As explained, we can assemble these $\rho_v$ and obtain a representation
		$\rho\colon\pi_1(M)\to \Sft$ as desired.
	\end{proof}

	\begin{lemma}\label{degree_typed_cover}
		Let $M$ be a formatted graph manifold with a simplicial JSJ graph $(V,E)$.
		Adopt Notation \ref{notation_xi_tau}.
		Suppose $\chi_M(v)<0$ holds for all $v\in V$.
		Then, for
		any formatted covering projection $M^*\to M$,
		there exist positive integers $m^*$ and $r^*$ with the following property:
		
		For any positive integer $m$ divisible by $m^*$, and for any positive integers $r_v$ divisible by $r^*$,
		indexed by $v\in V$,
		there exist
		a formatted graph manifold $M'$, and
		a formatted covering projection $M'\to M$,
		such that the following conditions are all satisfied:
		\begin{itemize}
		\item Every JSJ torus $T'_{v',w'}$ of $M'$ projects a JSJ torus $T_{v,w}$ of $M$
		as a characteristic cover of degree $m^2$.
		\item Every JSJ piece $J'_{v'}$ of $M'$ projects a JSJ piece $J_v$ of $M$
		as a cover of degree $m^2r_v$.
		\item There are no exceptional fibers in any JSJ pieces of $M'$.
		\item The projection $M'\to M$ factors through $M^*$.		
		\end{itemize}
	\end{lemma}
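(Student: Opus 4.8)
The plan is to construct $M'$ in two stages: first pass to a convenient preliminary cover that absorbs $M^*$ and standardizes all JSJ pieces, and then build on top of it a cover realizing the prescribed degrees.

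For the first stage I would fix, once and for all, a regular formatted covering $M^\sharp\to M$ that factors through the given $M^*\to M$ and in which every JSJ piece is a trivial circle bundle $S^1\times\Sigma$ over a compact orientable surface $\Sigma$ of genus at least one, with simplicial JSJ graph. Such an $M^\sharp$ is obtained by combining standard constructions: a cover making all JSJ pieces product circle bundles (as in \cite[Proposition 4.4]{LW}, where the hypothesis $\chi_M(v)<0$ for all $v$ is what makes the base $2$--orbifolds good), a further cover of the base surfaces raising their genera to at least one, a cover of the JSJ graph removing loops and multiple edges (residual finiteness of finitely generated free groups), and a common regular refinement dominating $M^*$. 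Passing to $M^\sharp$ fixes its covering data over $M$: a fiber degree $p_v$ and a base degree $q_v$ for each $v\in V$, and a torus covering degree $t_{v,w}$ for each $\{v,w\}\in E$. I would then take $m^*$ to be a common multiple of all the $p_v,q_v,t_{v,w},b_{v,w}$ with $2m^*\ge 5$, and $r^*$ a common multiple of all the products $2p_vq_v$ and of the denominators of all the $\chi_M(v)$; the divisibility by $2p_vq_v$ will be used to dispose of a parity issue below.

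For the second stage, given $m$ divisible by $m^*$ and each $r_v$ divisible by $r^*$, I would describe the cover $M'\to M$ by decreeing in advance that over every JSJ torus $T_{v,w}$ of $M$ the cover is the unique characteristic subcover $mH_1(T_{v,w};\Integral)$, of degree $m^2$. The crucial point of this decree is that it makes gluing automatic: a boundary torus of a piece-cover lying over $T_{v,w}$ carries this one canonical cover no matter which adjacent piece it is seen from, so piece-covers with the prescribed boundary behavior always fit together, and the only remaining bookkeeping is to choose a connected finite cover of the simplicial JSJ graph with the prescribed local valences, which exists. It then suffices to produce, for each $v$, a single connected cover $J'_{v'}\to J_v$ of degree $m^2r_v$, with no exceptional fibers, restricting to the characteristic $m^2$--cover on each boundary torus. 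Inside the intermediate piece $S^1\times\Sigma^\sharp_v$ of $M^\sharp$ this decomposes into a vertical cyclic cover of the $S^1$--factor of degree $m/p_v$, which exists trivially and is automatically consistent on all boundary tori, and a cover of surfaces $\Sigma'_v\to\Sigma^\sharp_v$ of degree $mr_v/q_v$ in which the boundary circle corresponding to an edge $\{v,w\}$ is covered by circles of degree $mp_v/t_{v,w}$; the divisibility built into $m^*$ and $r^*$ makes all these quantities integers, and a direct bookkeeping with Notation \ref{notation_xkb} (together with Proposition \ref{virtual_xkb}, which also gives $b_{v',w'}=b_{v,w}$, $k_{v'}=r_vk_v$ and $\chi_{v'}=mr_v\chi_v$) shows that the vertical and horizontal parts combine to the total degree $m^2r_v$ and to the characteristic $m^2$--cover on every boundary torus, with no exceptional fibers because $\Sigma'_v$ is an honest surface. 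If $M$ is a single closed Seifert piece the boundary conditions are vacuous and any degree-$m^2r_v$ surface cover of the base works.

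The step I expect to be the main obstacle is the existence of the surface cover $\Sigma'_v\to\Sigma^\sharp_v$ with prescribed boundary degrees and prescribed total degree $N=mr_v/q_v$. Since $\Sigma^\sharp_v$ has nonempty boundary, $\pi_1(\Sigma^\sharp_v)$ is free, and such a cover amounts to a transitive homomorphism $\phi\colon\pi_1(\Sigma^\sharp_v)\to\mathrm{Sym}(N)$ sending each boundary loop into a prescribed conjugacy class; in a standard free basis all boundary loops but one are assigned freely, and the last one is forced to equal $\bigl(\prod_{i=1}^{g}[\phi(x_i),\phi(y_i)]\,\prod_{j}\phi(\delta_j)\bigr)^{-1}$, where $g=\mathrm{genus}(\Sigma^\sharp_v)\ge 1$. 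Because each boundary circle has been arranged to have an \emph{even} multiplicity $N/(mp_v/t_{v,w})=r_vt_{v,w}/(p_vq_v)$ (this is exactly where $2p_vq_v\mid r^*$ is used), every boundary loop is sent into the alternating group; and since every even permutation is a commutator in $\mathrm{Sym}(N)$ for $N\ge 5$ (Ore's theorem, or alternatively the explicit factorizations of Lemmas \ref{comm_ell}, \ref{comm_hyp} and \ref{comm_par}, or Theorem \ref{volume_central_bundle}), the factor $\prod_{i=1}^{g}[\phi(x_i),\phi(y_i)]$ can be taken to be any even permutation, so the forced last boundary loop can be placed in the desired conjugacy class. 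A further small adjustment of the genus generators makes $\phi$ transitive, giving a connected cover. Reassembling the resulting piece-covers along the canonical torus covers over the chosen simplicial graph cover then yields the formatted graph manifold $M'$ and the formatted covering projection $M'\to M^\sharp\to M$ with all four asserted properties.
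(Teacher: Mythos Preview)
Your two-stage strategy matches the paper's, and most of the second stage is sound, but there is a genuine gap in how you pass from the intermediate cover $M^\sharp$ to $M'$. You decree that every JSJ torus $T'$ of $M'$ projects to $T_{v,w}$ as the characteristic cover $m\,\pi_1(T_{v,w})$, and then build each piece cover $J'\to J^\sharp_v=S^1\times\Sigma^\sharp_v$ as a product of a vertical cyclic cover of degree $m/p_v$ and a horizontal surface cover. The trouble is that a product cover of $S^1\times\Sigma^\sharp_v$ has boundary-torus subgroup of the form $m\mathbb Z f_v + d\,\mathbb Z\,\partial^\sharp$ inside $\pi_1(T_{v,w})$, where $\partial^\sharp$ is the boundary slope of $\Sigma^\sharp_v$. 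Writing $\partial^\sharp=\alpha f_v+\beta s$ in a basis $(f_v,s)$ of $\pi_1(T_{v,w})$, one always has $|\beta|=t_{v,w}/p_v$, but the lattice $m\mathbb Z f_v + d\,\mathbb Z\,\partial^\sharp$ equals $m\,\pi_1(T_{v,w})$ only when in addition $|\beta|$ divides $\alpha$. Nothing in your hypotheses on $M^\sharp$ forces this: for instance if $\pi_1(T^\sharp)=\mathbb Z(2f_v)+\mathbb Z(f_v+2s)$ (so $p_v=2$, $t_{v,w}=4$, $\alpha=1$, $\beta=2$), then with $d=mp_v/t_{v,w}=m/2$ the element $d\,\partial^\sharp=(m/2)f_v+ms$ lies in your $\pi_1(T')$ but not in $m\mathbb Z^2$. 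The fix is exactly what the paper does in its first stage: arrange that every JSJ torus of the intermediate cover already covers the corresponding $T_{v,w}$ \emph{characteristically}, of a common degree $(m^*)^2$ (this is available by \cite[Corollary 4.5]{DLW-cs_sep_vol}); then $\partial^\sharp\in m^*\pi_1(T_{v,w})$ forces $|\beta|=m^*\mid\alpha$, and your product construction goes through.

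Once that is repaired, your symmetric-group/Ore argument for the horizontal surface cover is correct but heavier than needed. The paper instead arranges in the first stage that every $\Sigma^\sharp_v$ has positive genus and an \emph{even} number of boundary components; then the required surface cover---each boundary circle covered with the same degree $l$, total degree any prescribed multiple of $l$---is produced by two explicit cyclic covers (one dual to arcs meeting each boundary component once, one dual to a nonseparating simple closed curve), with no appeal to commutator width in $S_N$. Incidentally, your parenthetical citing Lemmas \ref{comm_ell}--\ref{comm_par} and Theorem \ref{volume_central_bundle} as alternatives to Ore is misplaced: those results factor elements of $\Sft$, not permutations.
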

	
	\begin{proof}
		Since every JSJ piece $J_v$ of $M$ fibers over a $2$--orbifold of negative Euler characteristic,
		we may replace $M^*$ with some finite cover,
		and assume that every JSJ piece of $M^*$ 
		is homeomorphic to the product of a circle and a surface of positive genus 
		and with an even number of punctures.
		For example, this can be done immediately by \cite[Proposition 4.2]{DLW-cs_sep_vol}, 
		(applied to $M^*$ and a collection of product covers of the JSJ pieces as described).
		Moreover, we may replace $M^*$ with some finite cover,
		and assume that $M^*$ is regular over $M$,
		and that the JSJ tori of $M^*$ are all characteristic over 
		JSJ tori of $M$ of the equal degree.
		For example, this is a special case of \cite[Corollary 4.5]{DLW-cs_sep_vol},
		(applied to $M^*\to M$ as $N\to M$ thereof).
		
		With these simplification assumptions,
		we take $m^*$ and $r^*$ according to the following conditions:
		The product $m^*\times m^*$ equals the covering degree 
		$[T^*_{v^*,w^*}:T_{v,w}]$, for some (hence any) covering pair of JSJ tori in $M^*$ and $M$;
		and the product $m^*\times m^*r^*$ equals the least common multiple
		of $[J^*_{v^*}:J_v]$, ranging over all the covering pairs of JSJ pieces in $M^*$ and $M$.
		
		For any positive integers $m$ an $l$, we observe the following simple constructions:
		Given any orientable compact surface $\Sigma$ of positive genus
		with an even number of boundary components,
		we can always construct a finite cover $\Sigma'$ of $\Sigma$,
		such that every component of $\partial \Sigma'$ covers a component $\partial \Sigma$
		of degree $l$, and $\Sigma'$ covers $\Sigma$ of degree $s$.
		For example, 
		one may first take an $l$--cyclic cover of $\Sigma$
		dual to a union of disjoint simple arcs that intersects every component of $\partial \Sigma$
		in exactly one point,
		and then take an $s$--cyclic cover of that cover
		dual to a nonseparating simple closed curve.
		It follows that the product of a circle with $\Sigma$
		always admits a finite cover of degree $l^2s$,
		such that the boundary components projects as characteristic covers of degree $l^2$.
		
		Suppose that $m$ and $r_v$, indexed by $v\in V$,
		are any given positive integers.
		For each JSJ piece $J^*_{v^*}$ of $M^*$,
		the above construction
		yields a finite cover $\mathrm{clos}(J'_{v'})\to \mathrm{clos}(J^*_{v^*})$,
		which has degree $(r_v/r^*)\times[J^*_{v^*}:J_v]$,
		and which is characteristic of degree $(m/m^*)^2$ restricted to each boundary component.
		Take $D$ to be a common multiple for all $[J'_{v'}:J^*_{v^*}]$.
		Take $D/[J'_{v'}:J^*_{v^*}]$ copies of each $\mathrm{clos}(J'_{v'})$.
		We may obtain a finite cover $M'$ over $M^*$ by gluing up these copies,
		identifying the common boundary components via covering transformations over 
		the JSJ tori of $M^*$.
		Possibly after discarding extra connected components, we may require that $M'$ is connected.
		Possibly after passing to a pull-back cover induced by a cover of the JSJ graph, 
		we may require that $M'$ has a simplicial JSJ graph.
		It is straighforward to verify 
		that the composite covering projection $M'\to M^*\to M$ is as desired.
	\end{proof}
		
	We are ready to prove Theorem \ref{volume_existence_general_GM} 
	using Lemmas \ref{volume_existence_special} and \ref{degree_typed_cover}.
	
	Let $M$ be a formatted graph manifold with a simplicial graph $(V,E)$.
	Note that the assumption in Theorem \ref{volume_existence_general_GM} about $\xi\in\Rational^V$ 
	implies $\chi_M(v)<0$ for all $v\in V$.
	Let $m^*$ and $r^*$ be the positive integers as provided by Lemma \ref{degree_typed_cover},
	with respect to the trivial cover $M^*=M$.
	For all $v\in V$, we simply take $r_v$ to be $r^*$. 
	On the other hand,
	we take $m$ to be a sufficiently large positive integral multiple of $m^*$.
	To be precise,
	we require 
	$$m\cdot\xi(v)/b_{v,w}\in\Integral$$
	for all $v\in V$ and $\{v,w\}\in E$,
	and moreover, 
	we require the inequality
	$$\left|(\opEuler_M\xi)(v)\right|\leq -\chi_M(v)-\frac{1}{m}\times\mathrm{valence}_{(V,E)}(v)$$
	for all $v\in V$.
	By Lemma \ref{degree_typed_cover},
	there is a finite cover $M'\to M$ satisfying the listed conditions thereof.
	
	In particular, 
	we have $[J'_{v'}:J_v]=m^2r^*$ and $[f'_{v'}:f_v]=m$ 
	for any covering pair of JSJ pieces $J'_{v'}\to J_v$,
	and $[T'_{v',w'}:T_{v,w}]=m^2$ for
	for any covering pairs of JSJ tori $T'_{v',w'}\to T_{v,w}$.
	So we obtain
	$$\begin{array}{ccc}
	\chi_{M'}(v')=mr^*\times\chi_{M}(v), &
	k_{v'}=r^*\times k_v, &
	\mathrm{valence}_{(V',E')}(v')=r^*\times\mathrm{valence}_{(V,E)},
	\end{array}$$	
	and
	$$b_{v',w'}=b_{v,w},$$
	(see Proposition \ref{virtual_xkb}).
	
	Let $\xi'\in\Integral^{V'}$ be the vector defined as
	$$\xi'(v')=m\times \xi(v)$$
	for any $v'\in V'$, lying over a vertex $v\in V$.
	Hence
	$$k_{v'}\xi'(v')-\sum_{\{v',w'\}\in E'} \frac{\xi'(w')}{b_{v',w'}}
	=mr^*\times k_v\xi(v)-\sum_{\{v,w\}\in E} r^*\times\frac{m\times\xi(w)}{b_{v,w}},$$
	or equivalently,
	$$\left(\opEuler_{M'}\xi'\right)(v')=mr^*\times(\opEuler_{M}\xi)(v).$$
	It follows that
	$$\left|\left(\opEuler_{M'}\xi'\right)(v')\right|\leq -\chi_{M'}(v')-\mathrm{valence}_{(V',E')}(v')$$
	holds for all $v'\in V'$.
	Therefore,
	Lemma \ref{volume_existence_special} applies to $M'$ and $\xi'$.
	We obtain some representation $\rho'\colon\pi_1(M')\to\Sft$,
	such that
	$$\mathrm{vol}_{\Sft\times_\Integral\Real}\left(M',\rho'\right)
	=4\pi^2\cdot\left(\xi',\opEuler_{M'}\xi'\right)
	=4\pi^2\cdot\left(\xi,\opEuler_{M}\xi\right)\times[M':M].$$
	In other words, $(M',\rho')$ is as desired.
	
	This completes the proof of Theorem \ref{volume_existence_general_GM}.

\section{Application to constant cyclic graph manifolds}\label{Sec-cyclic}
	We apply Theorems \ref{volume_general_GM} and \ref{volume_existence_general_GM}
	to a family of graph manifolds, 
	and determine their covering Seifert volume (see (\ref{def_CSV})).
	As a quick consequence,
	we also obtain Example \ref{CSV_Mgm} (see Remark \ref{cyclic_k_b_remark}).
			
	\begin{theorem}\label{cyclic_k_b}
		Let $M$ be a formatted graph manifold with a cyclic JSJ graph of $n$ vertices,
		$n\geq3$.	Adopt Notation \ref{notation_xkb}.
		Enumerate the vertices as $j$	and the edges as $\{j,j+1\}$, 
		where $j$ ranges over $\Integral/n\Integral$.
		Suppose there are rational numbers $\chi<0$, $b\neq0$, and $k$, such that
		$$\begin{array}{ccc}
		\chi_{j}=\chi, &b_{j,j+1}=b, &k_{j}=k,
		\end{array}$$
		for all $j\in\Integral/n\Integral$.
		Then
		$$\mathrm{CSV}(M)=\begin{cases}4n\pi^2\chi^2/(|k|-|2/b|)&\mbox{if }|k|>|2/b|\\+\infty&\mbox{if }|k|\leq|2/b|\end{cases}$$
	\end{theorem}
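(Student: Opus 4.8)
The plan is to pin $\mathrm{CSV}(M)$ between the upper bound furnished by Theorem~\ref{bound_diagonally_dominant} and a matching lower bound produced by Theorem~\ref{volume_existence_general_GM}, after reducing to suitable cyclic covers. Identify $\Real^V$ with $\Real^{\Integral/n\Integral}$, so that $\opEuler_M$ becomes the circulant operator $(\opEuler_M\xi)(j)=k\,\xi(j)-b^{-1}\bigl(\xi(j-1)+\xi(j+1)\bigr)$, with eigenvalues $\lambda_l=k-2b^{-1}\cos(2\pi l/n)$ for $l\in\Integral/n\Integral$; the reverse triangle inequality gives $|\lambda_l|\ge|k|-|2/b|$ for all $l$. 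For any $d\ge1$ the graph covering $\Integral/dn\Integral\to\Integral/n\Integral$, $j\mapsto j\bmod n$, induces a formatted covering $M_d\to M$ of degree $d$ onto a constant cyclic graph manifold with $dn$ vertices and the same constants $\chi,b,k$: the JSJ pieces and tori are covered homeomorphically, so all fiber degrees are $1$ and Proposition~\ref{virtual_xkb} gives the claim, while the Euler operator of $M_d$ is the circulant on $\Integral/dn\Integral$ with eigenvalues $k-2b^{-1}\cos(2\pi l/dn)$.

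Consider first the case $|k|>|2/b|$. The upper bound $\mathrm{CSV}(M)\le 4n\pi^2\chi^2/(|k|-|2/b|)$ is immediate from Theorem~\ref{bound_diagonally_dominant}, since $\opEuler_M$ is then strictly diagonally dominant with $\sum_{\{v,w\}\in E}|1/b_{v,w}|=|2/b|$ at every vertex and with $\chi_v^2=\chi^2$ summing to $n\chi^2$. For the lower bound I pass to the double cover $M_2\to M$, whose JSJ graph is the $2n$-cycle. There both the constant vector $\mathbf 1$ and the alternating vector $\epsilon(j)=(-1)^j$ are eigenvectors of $\opEuler_{M_2}$, with eigenvalues $k-2/b$ and $k+2/b$ respectively; since $|k|>|2/b|$, exactly one of these, call it $\lambda_\ast$, satisfies $|\lambda_\ast|=|k|-|2/b|$, with eigenvector $\epsilon_\ast\in\{\pm1\}^{2n}$. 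For rational $\delta$ with $0<\delta<|\chi|$ put $\xi_\delta=\tfrac{|\chi|-\delta}{\lambda_\ast}\,\epsilon_\ast\in\Rational^{V_2}$. Then $\opEuler_{M_2}\xi_\delta=(|\chi|-\delta)\epsilon_\ast$, so $|(\opEuler_{M_2}\xi_\delta)(v)|=|\chi|-\delta<-\chi_{M_2}(v)$ for all $v$, and $(\xi_\delta,\opEuler_{M_2}\xi_\delta)=2n(|\chi|-\delta)^2/\lambda_\ast$, of absolute value $2n(|\chi|-\delta)^2/(|k|-|2/b|)$. Applying Theorem~\ref{volume_existence_general_GM} to $(M_2,\xi_\delta)$ produces a finite cover $M''\to M_2$ carrying a representation $\rho''$ with $|\mathrm{vol}(M'',\rho'')|=[M'':M_2]\cdot 4\pi^2\,|(\xi_\delta,\opEuler_{M_2}\xi_\delta)|$, whence $\mathrm{SV}(M'')/[M'':M]\ge 4\pi^2 n(|\chi|-\delta)^2/(|k|-|2/b|)$; letting $\delta\to0$ gives $\mathrm{CSV}(M)\ge 4n\pi^2\chi^2/(|k|-|2/b|)$, matching the upper bound.

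For the case $|k|\le|2/b|$ the plan is to show $\mathrm{CSV}(M)=+\infty$ by exploiting near-kernel directions on the covers $M_d$ as $d\to\infty$. Since $|bk/2|\le1$, the continuous function $\theta\mapsto k-2b^{-1}\cos\theta$ vanishes at some $\theta_0\in[0,\pi]$, and the eigenvalues of $\opEuler_{M_d}$ sample it at the points $2\pi l/dn$; hence for $dn$ even one can choose $l^\ast=l^\ast(d)\notin\{0,dn/2\}$ with $0<|\lambda_{l^\ast}|\le C/(dn)$, where $C$ depends only on $b,k$. Let $u^{(d)}(j)=\cos(2\pi l^\ast j/dn)$, a real eigenvector of $\opEuler_{M_d}$ for $\lambda_{l^\ast}$ with $\|u^{(d)}\|_\infty\le1$ and $\|u^{(d)}\|_2^2=dn/2$, and set $\xi=\tfrac{|\chi|}{2\lambda_{l^\ast}}\,u^{(d)}$. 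Then $\|\opEuler_{M_d}\xi\|_\infty\le|\chi|/2$ and $|(\xi,\opEuler_{M_d}\xi)|=\tfrac{|\chi|^2\,dn}{8|\lambda_{l^\ast}|}\ge\tfrac{|\chi|^2(dn)^2}{8C}$. As $\Rational^{V_d}$ is dense and both $\zeta\mapsto\opEuler_{M_d}\zeta$ and $\zeta\mapsto(\zeta,\opEuler_{M_d}\zeta)$ are continuous, I may replace $\xi$ by a rational $\tilde\xi$ with $\|\opEuler_{M_d}\tilde\xi\|_\infty<|\chi|=-\chi_{M_d}$ and $|(\tilde\xi,\opEuler_{M_d}\tilde\xi)|\ge\tfrac{|\chi|^2(dn)^2}{16C}$; Theorem~\ref{volume_existence_general_GM} applied to $(M_d,\tilde\xi)$ then yields a cover $M''\to M_d$ with $\mathrm{SV}(M'')/[M'':M]\ge 4\pi^2\,|(\tilde\xi,\opEuler_{M_d}\tilde\xi)|/d\ge\pi^2|\chi|^2 d n^2/(4C)$, which tends to $\infty$ with $d$. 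Hence $\mathrm{CSV}(M)=+\infty$.

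The routine parts are the circulant diagonalization and the degree bookkeeping via Proposition~\ref{virtual_xkb} together with the covering property $\mathrm{CSV}(M')=[M':M]\cdot\mathrm{CSV}(M)$. I expect the main obstacle to be the case $|k|\le|2/b|$: one must check that the discrete eigenvalues genuinely approach $0$ at rate $O(1/d)$ — which is exactly where the hypothesis $|k|\le|2/b|$ enters, ensuring the limiting curve meets $0$ — that the rounding of the irrational eigenvector $u^{(d)}$ to a rational vector (forced by the rationality requirement in Theorem~\ref{volume_existence_general_GM}) costs only a bounded factor, and that the resulting growth $|(\tilde\xi,\opEuler_{M_d}\tilde\xi)|=\Omega\bigl((dn)^2\bigr)$ still beats the covering degree $d$, so that the normalized Seifert volume diverges.
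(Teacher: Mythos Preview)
Your proposal is correct and follows essentially the same route as the paper: the upper bound in the finite case comes from Theorem~\ref{bound_diagonally_dominant}, the matching lower bound from plugging the constant or alternating eigenvector into Theorem~\ref{volume_existence_general_GM} (the paper passes to a $2$-cover only when $n$ is odd, you always do, which is harmless), and the infinite case is handled exactly as in the paper by finding on the $d$-fold cyclic cover a nonzero circulant eigenvalue of size $O(1/d)$, rescaling the corresponding cosine eigenvector, rationally approximating, and observing that the resulting quadratic form grows like $d^2$ against covering degree $d$. The only points to tidy are the ones you already flagged: making the choice of $l^\ast$ with $0<|\lambda_{l^\ast}|\le C/(dn)$ precise (use monotonicity of the sampled values between $k-2/b$ and $k+2/b$ together with the mean-value bound on consecutive differences, and step to an adjacent index if one eigenvalue happens to vanish exactly), and noting that the rational perturbation can be taken small enough to preserve both the strict inequality $\|\opEuler_{M_d}\tilde\xi\|_\infty<|\chi|$ and a fixed fraction of $|(\xi,\opEuler_{M_d}\xi)|$.
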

	
	\begin{remark}\label{cyclic_k_b_remark}
		We derive Example \ref{CSV_Mgm} as follows.
		Since $g>0$, any $M(g;m,1,m^2-1,m)$ admits a $2$--fold cyclic cover $M'$
		which restricts to a connected $2$--fold cover to each of the JSJ pieces $J_\pm$.
		Take $M''$ to be the $2$--fold cover of $M'$ induced by the $2$--fold cover of its JSJ graph.
		Then $M''$ will satisfy the hypothesis of Theorem \ref{cyclic_k_b},
		with $\chi=2(1-2g)$, $b=1$, $k=2m$, and $n=4$.
		One may compute $\mathrm{CSV}(M(g;m,1,m^2-1,m))$ as $\mathrm{CSV}(M'')/4$.
	\end{remark}
	
	Below we divide into two cases:
	The finite case is done in Lemma \ref{cyclic_k_b_finite},
	and the infinite case in Lemma \ref{cyclic_k_b_infinite}.
	Together they make a complete proof of Theorem \ref{cyclic_k_b}.

	\begin{lemma}\label{cyclic_k_b_finite}
		Adopt the notations of Theorem \ref{cyclic_k_b}.
		If $|k|>|2/b|$, then $\mathrm{CSV}(M)=4n\pi^2\chi^2/(|k|-|2/b|)$.
	\end{lemma}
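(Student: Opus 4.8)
The plan is to prove the two inequalities $\mathrm{CSV}(M)\le 4n\pi^2\chi^2/(|k|-|2/b|)$ and $\mathrm{CSV}(M)\ge 4n\pi^2\chi^2/(|k|-|2/b|)$ separately; since $\mathrm{CSV}$ is a supremum (see (\ref{def_CSV})) and satisfies the covering property $\mathrm{CSV}(M')=[M':M]\cdot\mathrm{CSV}(M)$, the two together yield the claimed equality. For the upper bound, note that in a cyclic JSJ graph on $n$ vertices each vertex $j$ has exactly the two incident edges $\{j-1,j\}$ and $\{j,j+1\}$, both carrying $b_{v,w}=b$, so $\sum_{\{v,w\}\in E}|1/b_{v,w}|=|2/b|$ at every vertex. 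Hence the hypothesis $|k|>|2/b|$ is precisely the statement that the Euler operator $\opEuler_M=kI-b^{-1}A$, with $A$ the adjacency matrix of the $n$--cycle (Notation \ref{notation_xi_tau}), is strictly diagonally dominant. Theorem \ref{bound_diagonally_dominant} then applies and gives immediately
\[
\mathrm{CSV}(M)\ \le\ \sum_{j\in\Integral/n\Integral}\frac{4\pi^2\chi^2}{|k|-|2/b|}\ =\ \frac{4n\pi^2\chi^2}{|k|-|2/b|}.
\]

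For the lower bound, the idea is to feed Theorem \ref{volume_existence_general_GM} a rational vector that is, up to scale, an eigenvector of the circulant operator $\opEuler_M$ whose eigenvalue has absolute value exactly $|k|-|2/b|$. There are two candidates: the all-ones vector $\mathbf 1$, with $\opEuler_M\mathbf 1=(k-2/b)\mathbf 1$, and, when the number of vertices is even, the alternating vector $\mathrm{alt}$ (value $(-1)^j$ at vertex $j$), with $\opEuler_M\mathrm{alt}=(k+2/b)\mathrm{alt}$. Since $|k|>|2/b|$, exactly one of $|k-2/b|$ and $|k+2/b|$ equals $|k|-|2/b|$: the former when $kb>0$, the latter when $kb<0$. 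In the case $kb<0$, I would first replace $M$ by its connected double cover — the cover associated to the index--two subgroup of $\pi_1$ of the cyclic JSJ graph; by Proposition \ref{virtual_xkb} (all fiber and torus covering degrees equal to $1$) this is again a constant cyclic graph manifold, now on $2n$ vertices and with the same constants $\chi,b,k$, of degree $2$ over $M$, and the identity for $M$ is equivalent to the one for the double cover by the covering property of $\mathrm{CSV}$ — so that one may assume the relevant eigenvector, say $e$ (namely $\mathbf 1$ if $kb>0$ and $\mathrm{alt}$ if $kb<0$), is defined on the JSJ graph, with $\opEuler_M e=\lambda e$, $|\lambda|=|k|-|2/b|$, $(e,e)=n$. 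Given a rational $\epsilon\in(0,|\chi|)$, set $\xi=\lambda^{-1}(|\chi|-\epsilon)\,e\in\Rational^V$; then $\opEuler_M\xi=(|\chi|-\epsilon)e$, so $|(\opEuler_M\xi)(v)|=|\chi|-\epsilon<-\chi_M(v)$ for every $v$, and
\[
\bigl|(\xi,\opEuler_M\xi)\bigr|\ =\ \frac{(|\chi|-\epsilon)^2\,(e,e)}{|\lambda|}\ =\ \frac{(|\chi|-\epsilon)^2\,n}{|k|-|2/b|}.
\]
Theorem \ref{volume_existence_general_GM} then yields a finite cover $M'\to M$ and a representation $\rho'$ with $\mathrm{vol}_{\Sft\times_\Integral\Real}(M',\rho')/[M':M]=4\pi^2(\xi,\opEuler_M\xi)$, so $\mathrm{SV}(M')\ge|\mathrm{vol}_{\Sft\times_\Integral\Real}(M',\rho')|=4\pi^2|(\xi,\opEuler_M\xi)|\,[M':M]$ and hence $\mathrm{CSV}(M)\ge 4\pi^2(|\chi|-\epsilon)^2 n/(|k|-|2/b|)$. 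Letting $\epsilon\to0$ gives the desired lower bound, and combined with the upper bound this proves the lemma.

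The main obstacle is the case $kb<0$ with $n$ odd, where $M$ itself genuinely cannot realize the bound: by Theorem \ref{volume_general_GM} the Seifert volume of a constant cyclic graph manifold is controlled by the operator norm of $\opEuler_M^{-1}$, i.e.\ by $\min_j|k-2b^{-1}\cos(2\pi j/n)|$, and on an odd cycle this minimum is strictly larger than $|k|-|2/b|$, so Gershgorin's estimate (the engine of Theorem \ref{bound_diagonally_dominant}) is not attained. One is therefore forced to pass to a cover whose JSJ cycle has even length, and the one point that needs care is the verification that the double cover of the cyclic JSJ graph lifts to a \emph{formatted} covering of the graph manifold — which it does, with each JSJ piece covering itself by the identity — so that Proposition \ref{virtual_xkb} applies with trivial fiber and torus degrees and all the structural constants $\chi,b,k$ are preserved.
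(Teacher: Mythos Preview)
Your proof is correct and follows essentially the same approach as the paper: the upper bound via Theorem~\ref{bound_diagonally_dominant}, and the lower bound via Theorem~\ref{volume_existence_general_GM} applied to the constant vector when $kb>0$ and the alternating vector when $kb<0$, after passing to a double cover to make the cycle even. The only cosmetic difference is that the paper reduces to even $n$ at the outset rather than only in the $kb<0$ case, and phrases the test vectors directly rather than as circulant eigenvectors.
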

	
	\begin{proof}
		Without loss of generality, we assume the number of vertices $n$ to be even.
		In fact, for any odd $n$, 
		we may work with the $2$--fold cyclic cover $M'$ of $M$ induced by 
		that of the JSJ graph, since $\mathrm{CSV}(M')=2\cdot\mathrm{CSV}(M)$ and $n'=2n$. 
		We actually make use of the parity assumption only when $k,b$ have opposite signs.		
	
		Under the assumption $|k|>|2/b|$,
		the Euler operator $\opEuler_M$ is strongly diagonally dominant.
		So we apply Theorem \ref{bound_diagonally_dominant} to obtain
		$\mathrm{CSV}(M)\leq 4n\pi^2\chi^2/(|k|-|2/b|)$.
		On the other hand, suppose first that $k$ and $b$ have the same sign.
		Note that the Euler operator $\opEuler_M\in\mathrm{End}_\Real(\Real^{\Integral/n\Integral})$ 
		can be explicitly written down, as
		$$(\opEuler_M\eta)(j)=k\cdot\eta(j)-\frac{\eta(j-1)+\eta(j+1)}{b},$$
		for all $\eta\in\Real^{\Integral/n\Integral}$ and $j\in\Integral/n\Integral$.
		For any $y\in\Rational$ and $|y|<|\chi|/|k-2/b|$,
		we construct $\eta\in\Rational^{\Integral/n\Integral}$ such that
		$\eta(j)=y$ for all $j\in\Integral/n\Integral$.
		Then Theorem \ref{volume_existence_general_GM} applies to this case,
		so we obtain $\mathrm{CSV}(M)\geq 4n\pi^2y^2\cdot|k-2/b|$.
		The right-hand side converges to to $4n\pi^2\chi^2/(|k|-|2/b|)$
		as $y$ tends to $|\chi|/|k-2/b|$.
		Therefore, we obtain $\mathrm{CSV}(M)=4n\pi^2\chi^2/(|k|-|2/b|)$
		as asserted	when $k$ and $b$ have the same sign.
		
		When $k$ and $b$ have different signs,
		for any $y\in\Rational$ and $|y|<|y|/|k+2/b|$,
		we construct instead $\eta(j)=(-1)^jy$.
		This is well-defined on $\Integral/n\Integral$ since $n$ is even.
		Apply Theorem \ref{volume_existence_general_GM} again,
		and let $y$ tend to $|\chi|/|k+2/b|$,
		then we obtain $\mathrm{CSV}(M)=4n\pi^2\chi^2/(|k|-|2/b|)$
		as asserted	when $k$ and $b$ have different signs.
	\end{proof}

	\begin{lemma}\label{cyclic_k_b_infinite}
		Adopt the notations of Theorem \ref{cyclic_k_b}.
		If $|k|\leq|2/b|$, then $\mathrm{CSV}(M)=+\infty$.
	\end{lemma}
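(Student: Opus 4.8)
The plan is to produce finite covers $M'$ of $M$ with $\mathrm{SV}(M')/[M':M]$ arbitrarily large by choosing, on suitable cyclic covers, a vector $\xi$ that is enormous in the direction of an eigenvector of the Euler operator with a very small nonzero eigenvalue, yet whose image under the Euler operator stays uniformly below $|\chi|$. Theorem~\ref{volume_existence_general_GM} then converts such a $\xi$ into a finite cover of the required volume.

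First I would set up a tower of cyclic covers. For each $d\ge 1$, the $d$-fold cyclic cover of the cyclic JSJ graph $\Integral/n\Integral$ induces a formatted finite covering $M_d\to M$ of degree $d$, and $M_d$ is again a constant cyclic graph manifold with the same constants $\chi,b,k$ but with $dn$ vertices; indeed, since these covers come from graph covers, all the relevant fiber and torus degrees in Proposition~\ref{virtual_xkb} equal $1$. Its Euler operator is the circulant operator $(\opEuler_{M_d}\eta)(j)=k\,\eta(j)-b^{-1}\big(\eta(j-1)+\eta(j+1)\big)$ on $\Real^{\Integral/dn\Integral}$, with eigenvalues $\lambda_{d,m}=k-2b^{-1}\cos(2\pi m/dn)$, $m\in\Integral/dn\Integral$, and real eigenvectors $j\mapsto\cos(2\pi mj/dn)$ and $j\mapsto\sin(2\pi mj/dn)$.

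The key point is a spectral observation. Since $|k|\le|2/b|$, the number $0$ lies in the interval $[k-2/|b|,\,k+2/|b|]$, which is exactly the range of the continuous map $\theta\mapsto k-2b^{-1}\cos\theta$; hence this map vanishes at some $\theta_0$, and as it has only finitely many zeros, $\theta_0$ is isolated. As $d\to\infty$ the grid $\{2\pi m/dn\}$ becomes arbitrarily fine, so there is a grid point $\theta_d^\ast\to\theta_0$ with $k-2b^{-1}\cos\theta_d^\ast\ne 0$; consequently $\lambda_d^\ast:=\min\{|\lambda_{d,m}|:\lambda_{d,m}\ne 0\}\to 0$. For $d$ large the minimizer may be taken at some $m_d$ with $0<m_d<dn/2$ — the indices $m=0$ and $m=dn/2$ contribute the constants $k-2/b$ and $k+2/b$, which for large $d$ are either zero (hence excluded) or nonzero and too large to be minimal — so the eigenvector $w_d(j)=\cos(2\pi m_d j/dn)$ satisfies $\|w_d\|_\infty\le 1$ and $(w_d,w_d)=dn/2$.

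Finally I would assemble the estimate. Put $\xi_d=\tfrac{|\chi|}{2\lambda_{d,m_d}}w_d$, so $\opEuler_{M_d}\xi_d=\tfrac{|\chi|}{2}w_d$, whence $|(\opEuler_{M_d}\xi_d)(v)|\le|\chi|/2<|\chi|=-\chi_{M_d}(v)$ for every vertex $v$, while $|(\xi_d,\opEuler_{M_d}\xi_d)|=\tfrac{|\chi|^2}{4\lambda_d^\ast}(w_d,w_d)=\tfrac{|\chi|^2 dn}{8\lambda_d^\ast}$. A small perturbation makes $\xi_d$ a rational vector $\xi_d'\in\Rational^{\Integral/dn\Integral}$ while preserving the strict inequalities and keeping $|(\xi_d',\opEuler_{M_d}\xi_d')|\ge\tfrac{|\chi|^2 dn}{8\lambda_d^\ast}-1$. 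Theorem~\ref{volume_existence_general_GM}, applied to $M_d$ and $\xi_d'$, yields a formatted finite cover $M_d'\to M_d$ and a representation $\rho_d'$ with $\mathrm{vol}_{\Sft\times_\Integral\Real}(M_d',\rho_d')=4\pi^2\,(\xi_d',\opEuler_{M_d}\xi_d')\cdot[M_d':M_d]$; viewing $M_d'$ as a cover of $M$ of degree $d\cdot[M_d':M_d]$ gives
$$\mathrm{CSV}(M)\ \ge\ \frac{\mathrm{SV}(M_d')}{[M_d':M]}\ \ge\ \frac{4\pi^2\,|(\xi_d',\opEuler_{M_d}\xi_d')|}{d}\ \ge\ \frac{\pi^2|\chi|^2 n}{2\lambda_d^\ast}-\frac{4\pi^2}{d},$$
and letting $d\to\infty$, so that $\lambda_d^\ast\to 0^+$, forces $\mathrm{CSV}(M)=+\infty$. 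The main obstacle is the spectral step — verifying cleanly that $\lambda_d^\ast\to 0$ and exhibiting a usable (non-constant, rationally approximable) eigenvector of small nonzero eigenvalue; the covering-degree bookkeeping through Proposition~\ref{virtual_xkb} and the rational approximation are routine but must be tracked with care.
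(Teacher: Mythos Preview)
Your proof is correct and follows essentially the same route as the paper's: pass to cyclic covers $M_d$ induced by the $d$-fold cover of the JSJ graph, diagonalize the circulant Euler operator, locate a small nonzero eigenvalue, scale the corresponding cosine eigenvector so that $\opEuler_{M_d}\xi$ is bounded by $|\chi|$ pointwise, rationally approximate, and invoke Theorem~\ref{volume_existence_general_GM}. The only substantive difference is quantitative: the paper bounds the small eigenvalue explicitly by $C_1/d$ (using that consecutive eigenvalues differ by at most $|8\pi/bnd|$), whereas you argue qualitatively that $\lambda_d^\ast\to 0$ by density of the grid near a zero of $\theta\mapsto k-2b^{-1}\cos\theta$; either suffices for the conclusion.
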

	
	\begin{proof}
		Again we assume without loss of generality that the number of vertices $n$ is even.
		Let $M$ be a formatted graph manifold in Theorem \ref{cyclic_k_b}.
		For any $d\in\Natural$,
		we take the $d$--cyclic cover $M^*_d$ of $M$ 
		as induced by the $d$--cyclic cover of its JSJ graph of degree $d$.
		Since 
		$\mathrm{CSV}(M^*_d)=d\cdot\mathrm{CSV}(M)$, 
		it suffices to show that
		$\mathrm{CSV}(M^*_d)/d$ is unbounded as $d$ increases.
		
		To this end,		
		enumerate the vertices of the JSJ graph of $M^*_d$ as $j\in\Integral/nd\Integral$,
		and the edges as $\{j,j+1\}$.
		So the covering projection between the JSJ graphs of $M^*_d$ and $M$
		takes $j\in\Integral/nd\Integral$
		to $j\bmod n\in\Integral/n\Integral$,
		and takes $\{j,j+1\}$ to $\{j\bmod n,j+1\bmod n\}$.
		Denote by $\opEuler_d\in\mathrm{End}_\Real(\Real^{\Integral/nd\Integral})$ 
		the Euler operator of $M^*_d$.
		For any $\eta\in\Real^{\Integral/nd\Integral}$ and $j\in\Integral/nd\Integral$,
		we obtain
		$$(\opEuler_d\eta)(j)=k\cdot\eta(j)-\frac{\eta(j-1)+\eta(j+1)}{b}.$$

		The eigenvalues and eigenvectors of $\opEuler_d$ can be explicitly determined.
		The eigenvalues of $\opEuler_d$ are precisely
		$$\lambda_m
		=k-\frac{2}{b}\cdot\cos\left(\frac{2m\pi}{nd}\right)
		=\left(k-\frac{2}{b}\right)+\frac{4}{b}\cdot\sin^2\left(\frac{m\pi}{nd}\right),$$
		where $m$ ranges over $\{0,1,2,\cdots,nd/2\}$.
		The bottom eigenvalue $\lambda_0$ and the top eigenvalue $\lambda_{nd/2}$ both have multiplicity $1$,
		and there are corresponding normalized eigenvectors
		$$\begin{array}{cc}\Phi_0(j)=\sqrt{1/nd},&\Phi_{nd/2}(j)=(-1)^{j}\sqrt{1/nd}.\end{array}$$
		Any other eigenvalue $\lambda_m$ has multiplicity $2$,
		with an orthonormal pair of eigenvectors
		$$\begin{array}{cc}
		\Phi_m(j)=\sqrt{2/nd}\cdot\cos(2mj\pi/nd),&
		\Psi_m(j)=\sqrt{2/nd}\cdot\sin(2mj\pi/nd).
		\end{array}$$
		These eigenvalues and eigenvectors can be checked by direct computation.
		We also remark that when $nd$ is odd,
		the spectrum of $\opEuler_d$ will be the same as above except dropping the top eigenvalue,
		and the corresponding eigenvector will be gone.
		In fact, when $k=2$ and $b=-1$,
		one may identify $\opEuler_d$ with the graph-theoretic Laplacian $\mathcal{L}_{nd}$ 
		of the cyclic graph of order $nd$, (see \cite[Chapter 7]{Nica-book}).
		In general,
		$\opEuler_d$ is a scalar multiple of $\mathcal{L}_{nd}$ plus a constant.
		
		Under the assumption $|k|\leq|2/b|$, there exists some $\lambda_m$
		(other than the top or the bottom eigenvalues) such that
		$$0<|\lambda_m|<C_1\cdot d^{-1},$$
		where $C_1=|8\pi/bn|$ is independent of $d$.
		This is because the eigenvalues are monotonically ordered
		between $\lambda_0=k-2/b$ and $\lambda_{nd/2}=k+2/b$, 
		and the difference between any consecutive pair of eigenvalues
		is at most $|8\pi/bnd|$, by elementary estimation.
		Take any such $\lambda_m$, 
		and take $\eta_d=\sqrt{nd/2}\cdot\chi\cdot\lambda_m^{-1}\cdot\Phi_m$ in $\Real^{\Integral/nd\Integral}$.
		We estimate
		$$\left|\left(\opEuler_d\eta_d\right)(j)\right|=|\lambda_m\cdot\eta_d(j)|\leq |\chi|,$$
		for all $j\in\Integral/nd\Integral$.
		
		For any $0<\epsilon<1$, we can approximate $(1-\epsilon)\cdot\eta_d\in\Real^{\Integral/nd\Integral}$
		by elements in $\Rational^{\Integral/nd\Integral}$.
		Therefore,
		Theorem \ref{volume_existence_general_GM} applies to the approximating elements,
		and yields the estimation
		$$\mathrm{CSV}(M^*_d)\geq |4\pi^2\cdot\left(\eta_d,\opEuler_d\eta_d\right)|=2nd\pi^2\chi^2\cdot|\lambda_m|^{-1}>C_2\cdot d^2,$$
		where $C_2=2n\pi^2\chi^2\cdot C_1^{-1}$ is independent of $d$.
		This shows that $\mathrm{CSV}(M^*_d)/d$ is unbounded as $d$ increases,
		as desired, so the proof is complete.
	\end{proof}

\appendix
\section{Normalization of the volume in Seifert geometry}\label{Sec-normalization_SV}
	Brooks and Goldman defined the Seifert volume in terms of the Godbillon--Vey invariant \cite{GV}
	of certain transversly projective codimension--$1$ foliation \cite{BG2}.
	On the other hand, the representation volume associated to $\Sft\times_\Integral\Real$
	is determined only after fixing a left-invariant volume form on the homogeneous space $\Sft$,
	(see \cite[Section 2]{DLSW-rep_vol}).
	In this section of appendix, we figure out the normalization explicitly.
	
	Recall the definition of Brooks and Goldman as follows.
	Suppose that $M$ is a closed oriented smooth $3$--manifold
	and $\bar{\rho}\colon \pi_1(M)\to \mathrm{PSL}(2,\Real)$ is a representation.
	Then we obtain a fiber bundle $M\times_{\bar\rho}\Real P^1$ over $M$,
	whose fiber is modeled on $\Real P^1$ with structure group $\mathrm{PSL}(2,\Real)$,
	so the bundle space $M\times_{\bar\rho}\Real P^1$ is equipped with a codimension--$1$
	foliation $\mathfrak{F}_{\bar\rho}$ transverse to the fibers. 
	The Godbillon--Vey class $\mathrm{gv}(\mathfrak{F}_{\bar\rho})$ lives in 
	$H^3(M\times_{\bar\rho}\Real P^1;\Real)$.
	When the Euler class of $\bar{\rho}$ is torsion (see Theorem \ref{lifting_representations}),
	one may naturally identify $H_3(M;\Real)$ with a direct summand of 
	$H_3(M\times_{\bar\rho}\Real P^1;\Real)\cong 
	H_3(M;\Real)\oplus H_2(M;\Real)$,
	which corresponds to the $(0,3)$--summand of the Serre spectral decomposition.
	Denote by $[M]'\in H_3(M\times_{\bar\rho}\Real P^1;\Real)$ the image of the fundamental class of $M$.
	Then the Seifert volume $\mathrm{SV}(M)$ of $M$ in the sense of Brooks and Goldman is defined as
	the maximum of $|\langle\mathrm{gv}(\mathfrak{F}_{\bar\rho}),[M]'\rangle|$
	where $\bar\rho$ ranges over all the $\mathrm{PSL}(2,\Real)$--representations
	of $\pi_1(M)$ whose Euler class is torsion,
	(see \cite[\S 3]{BG2}).
	
	We identify the Lie algebra of $\Sft$ as 
	$\mathfrak{sl}(2,\Real)$,
	consisting of all traceless real $2\times2$--matrices.
	Denote by 
	$$\begin{array}{ccc}
	{H=\left[\begin{array}{cc}1&0\\0&-1\end{array}\right],} &
	{E=\left[\begin{array}{cc}0&1\\0&0\end{array}\right],} &
	{F=\left[\begin{array}{cc}0&0\\1&0\end{array}\right]} 
	\end{array}$$
	the usual basis of $\mathfrak{sl}(2,\Real)$.
	They satisfy the relations
	$[H,E]=2E$, $[H,F]=-2F$, and $[E,F]=H$.
	Denote by $\check{E},\check{F},\check{H}$ the dual basis 
	of $\mathfrak{sl}(2,\Real)^{\vee}=\mathrm{Hom}_{\Real}(\mathfrak{sl}(2,\Real),\Real)$.
	The exterior form $\check{H}\wedge\check{E}\wedge\check{F}$ 
	can be regarded as an alternating trilinear function on $\mathfrak{sl}(2,\Real)$
	such that $\check{H}\wedge\check{E}\wedge\check{F}(H,E,F)=1$,
	and it can also be naturally regarded 
	as a left-invariant differential $3$--form on $\Sft$.
	
	\begin{proposition}\label{SV_normalization}
	The Seifert volume in the sense of Brooks and Goldman coincides
	with the representation volume with respect to the triple
	$(G,X,\omega_X)$ as defined in \cite{DLSW-rep_vol},
	where
	$$\begin{array}{ccc}
	G=\Sft\times_\Integral\Real,&
	X=\Sft,&
	\omega_{X}=\pm4\check{H}\wedge\check{E}\wedge\check{F}.
	\end{array}$$
	\end{proposition}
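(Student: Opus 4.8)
Recall the two quantities to be reconciled. On one side, Brooks and Goldman's $\mathrm{SV}(M)$ is the supremum, over $\mathrm{PSL}(2,\Real)$--representations $\bar\rho\colon\pi_1(M)\to\mathrm{PSL}(2,\Real)$ with torsion Euler class, of $\bigl|\langle\mathrm{gv}(\mathfrak{F}_{\bar\rho}),[M]'\rangle\bigr|$. On the other side, the representation volume attached to $(G,X,\omega_X)=(\Sft\times_\Integral\Real,\Sft,\omega_X)$ assigns to $\rho\colon\pi_1(M)\to\Sft\times_\Integral\Real$ the number $\int_M s^*\widehat{\omega}_X$, where $\widehat{\omega}_X$ is the closed $3$--form on the flat bundle $E_\rho=\widetilde{M}\times_{\pi_1(M)}\Sft$ obtained by extending $\omega_X$ by zero along the flat leaves, and $s\colon M\to E_\rho$ is any section (which exists, and is unique up to homotopy, because $\Sft$ is contractible). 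The representations $\bar\rho$ with torsion Euler class are precisely those lifting to $\Sft\times_\Integral\Real$ (Theorem~\ref{lifting_representations}), and, since the volume function is locally constant while the twisting action of the connected group $H^1(M;\Real)$ permutes lifts within a path component (compare the proof of Proposition~\ref{volume_general_GM_PSL}), $\int_M s^*\widehat{\omega}_X$ depends only on $\bar\rho$. So the proposition amounts to the pointwise identity
$$\langle\mathrm{gv}(\mathfrak{F}_{\bar\rho}),[M]'\rangle\ =\ \pm\int_M s^*\widehat{\omega}_X\qquad(\omega_X=4\,\check H\wedge\check E\wedge\check F),$$
the sign being immaterial as only absolute values enter the definitions.

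The heart of the matter is a computation on the model space $\Sft$. Take the left-invariant coframe $\theta^H,\theta^E,\theta^F$ dual to $H,E,F$; the Maurer--Cartan equations read $d\theta^H=-\theta^E\wedge\theta^F$, $d\theta^E=-2\,\theta^H\wedge\theta^E$ and $d\theta^F=2\,\theta^H\wedge\theta^F$, and $\check H\wedge\check E\wedge\check F=\theta^H\wedge\theta^E\wedge\theta^F$ is a bi-invariant $3$--form. The canonical codimension--one foliation of $\Sft$ --- the foliation with leaves parallel to the $\Hyp^2$--factor under $\Sft\cong\Hyp^2\times\widetilde{\Real P^1}$, equivalently the foliation by left cosets of the stabilizer of $\widetilde 0\in\widetilde{\Real P^1}$ --- is cut out by $\alpha=\theta^E$, because that stabilizer is tangent to $\mathrm{span}_\Real(H,F)=\ker\check E$. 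Writing $d\alpha=\alpha\wedge\beta$ with $\beta=2\,\theta^H$, the Godbillon--Vey $3$--form is
$$\mathrm{gv}\ =\ \beta\wedge d\beta\ =\ 4\,\theta^H\wedge d\theta^H\ =\ -4\,\theta^H\wedge\theta^E\wedge\theta^F\ =\ -4\,\check H\wedge\check E\wedge\check F\ =\ -\omega_X .$$
Thus the Godbillon--Vey form of the canonical Seifert foliation is exactly $-\omega_X$ with $\omega_X=4\,\check H\wedge\check E\wedge\check F$.

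It then remains to transport this local identity to the two global constructions. The flat $\Sft$--bundle $E_\rho$ inherits from the canonical foliation of $\Sft$ a codimension--one foliation $\widehat{\mathfrak{F}}$; comparing $E_\rho$ with Brooks and Goldman's flat $\Real P^1$--bundle $M\times_{\bar\rho}\Real P^1$ and using naturality of the Godbillon--Vey construction, one identifies $\mathrm{gv}(\widehat{\mathfrak{F}})$ with (the pullback of) $\mathrm{gv}(\mathfrak{F}_{\bar\rho})$. In a flat local trivialization $U\times\Sft$ of $E_\rho$ both $\mathrm{gv}(\widehat{\mathfrak{F}})$ and $\widehat{\omega}_X$ are pulled back from the $\Sft$--factor, where the computation above gives $\mathrm{gv}=-\omega_X$; hence $\mathrm{gv}(\widehat{\mathfrak{F}})=-\widehat{\omega}_X$ as closed $3$--forms, and evaluating against the section class corresponding to $[M]'$ yields $\langle\mathrm{gv}(\mathfrak{F}_{\bar\rho}),[M]'\rangle=-\int_M s^*\widehat{\omega}_X$, which is the required identity up to sign.

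The main obstacle is precisely this last step: making rigorous the passage between the Brooks--Goldman picture, where $\mathrm{gv}(\mathfrak{F}_{\bar\rho})$ and $[M]'$ live on the flat $\Real P^1$--bundle, and the representation-volume picture on the flat $\Sft$--bundle. This requires careful bookkeeping with the covering $\widetilde{\Real P^1}\to\Real P^1$ and with the transverse holonomy pseudogroups (there is in general no fiberwise map $\Sft\to\Real P^1$ factoring through $\mathrm{PSL}(2,\Real)$), with the role of the torsion Euler class (it is exactly torsionality that makes $[M]'$, and a lift $\rho$, available), and with the fact that the relevant top-degree cohomology is one-dimensional, so that matching the two closed $3$--forms along a single section suffices to match their pairings with $[M]$. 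A variant that avoids part of this bookkeeping is to observe that both Seifert-volume functions are multiplicative under degree of maps between closed oriented $3$--manifolds and then to compare them on the Seifert-geometric circle bundles $\Sigma\times_{\phi_{\mathtt{base}}}\Real P^1$, whose Brooks--Goldman volume $4\pi^2 e(\phi_{\mathtt{base}})$ is computed in the proof of Theorem~\ref{volume_central_bundle}; in either approach the decisive input is the Maurer--Cartan identity $\mathrm{gv}=-4\,\check H\wedge\check E\wedge\check F$ on $\Sft$.
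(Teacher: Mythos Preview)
Your approach is genuinely different from the paper's, and the core computation is correct. The paper does not compute the Godbillon--Vey form on $\Sft$ at all; instead it argues that any two left-invariant top forms on $\Sft$ differ by a scalar, so it suffices to match the two volume functions on a single test manifold with nonzero volume. The paper takes $\mathrm{UT}(\Sigma)$ for a closed hyperbolic surface $\Sigma$, quotes Brooks--Goldman's value $-4\pi^2\chi(\Sigma)$, and then finds the Riemannian metric on $\mathfrak{sl}(2,\Real)$ for which $\mathrm{PSL}(2,\Real)\to\Hyp^2$ is a Riemannian submersion with circle fibers of length $2\pi$: the orthonormal basis $H/2,(E+F)/2,(E-F)/2$ does this, and one reads off $\omega_X=\pm 4\,\check H\wedge\check E\wedge\check F$ from $|\omega_X(H/2,(E+F)/2,(E-F)/2)|=1$. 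Your Maurer--Cartan computation $\mathrm{gv}=-4\,\check H\wedge\check E\wedge\check F$ is a cleaner way to see the same constant, and it explains \emph{why} this particular form appears.

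There is, however, a real gap in your global step beyond the bookkeeping you flag. You assert that $E_\rho=\widetilde M\times_\rho\Sft$ inherits a codimension--one foliation $\widehat{\mathfrak{F}}$ from the canonical foliation of $\Sft$, but that foliation is \emph{not} invariant under the full $\Sft\times_\Integral\Real$--action $g[r].h=gh\,\widetilde k(r)$. Concretely, in the model $\Sft\cong\Hyp^2\times\widetilde{\Real P^1}$ the action reads $[g,r].(p,s)=(g.p,\,g\widetilde k_p(r).s)$, and the second coordinate depends on $p$ when $r\notin\Integral$; so a leaf $\Hyp^2\times\{s_0\}$ is not sent to a single leaf. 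The foliation is invariant only under the subgroup $\Sft$. Consequently $\widehat{\mathfrak{F}}$ is well-defined on $E_\rho$ only when $\rho$ lands in $\Sft$, i.e.\ when the Euler class of $\bar\rho$ is trivial rather than merely torsion. (By contrast $\widehat\omega_X$ is always well-defined, since $\omega_X$ is bi-invariant on the unimodular group $\Sft$.)

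This is repairable: when $e(\bar\rho)$ is torsion of order $d$, pass to a $d$--fold cover of $M$ on which the pullback lifts to $\Sft$; there your argument gives the pointwise identity, and both sides scale by $d$ under the cover, yielding the identity on $M$. With that fix in place, your map $E_\rho\to M\times_{\bar\rho}\Real P^1$ (fiberwise $h\mapsto h.0$) is $\Sft$--equivariant, $\widehat{\mathfrak{F}}$ is the pullback of $\mathfrak{F}_{\bar\rho}$, and naturality of Godbillon--Vey plus your local identity finishes the proof. Your own ``variant'' via Seifert-geometric circle bundles is precisely the paper's strategy; either route works, but your direct identification $\mathrm{gv}=-\omega_X$ on $\Sft$ is the more illuminating one.
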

	
	\begin{proof}
		For any oriented closed hyperbolic surface $\Sigma$,
		the unit vector bundle $\mathrm{UT}(\Sigma)$ is an oriented circle bundle
		over $\Sigma$ of Euler number $\chi(\Sigma)<0$.
		The Seifert volume of $\mathrm{UT}(\Sigma)$ equals $-4\pi^2\chi(\Sigma)$,
		according to the definition of Brook and Goldman;
		moreover, it is realized by any discrete faithful representation 
		$\bar\rho\colon \pi_1(\mathrm{UT}(\Sigma))\to\Sft$,
		which must be central and induces a holonomy representation $\pi_1\Sigma\to\mathrm{PSL}(2,\Real)$.
		(See \cite[Proposition 2]{BG1} and \cite[Theorem 3]{BG2};
		or Theorem \ref{volume_central_bundle}).
		As the discrete faithful representation realizes $\mathrm{UT}(\Sigma)$
		as a $\Sft$--geometric manifold, 
		it suffices to check that the asserted invariant volume form
		$4\check{H}\wedge\check{E}\wedge\check{F}$ on $\Sft$
		results in the same volume value $-4\pi^2\chi(\Sigma)$ for $\mathrm{UT}(\Sigma)$.
		
		To this end, we observe
		$$\begin{array}{ccc}
		e^{tH}=\left[\begin{array}{cc} e^t& 0\\0 &e^{-t}\end{array}\right],&
		e^{t(E+F)}=\left[\begin{array}{cc} \cosh(t)& \sinh(t)\\ \sinh(t) &\cosh(t)\end{array}\right],&
		e^{t(E-F)}=\left[\begin{array}{cc} \cos(t)& \sin(t)\\ -\sin(t) &\cos(t)\end{array}\right].
		\end{array}$$
		Using the upper-half complex plane model of the hyperbolic plane $\Hyp^2$,
		it follows that 
		the tangent map of 
		$\mathrm{PSL}(2,\Real)\to \Hyp^2\colon g\mapsto g.\mathbf{i}$
		takes $H$ and $E+F$ to the tangent vectors $2\mathbf{i}$ and $2$ at $\mathbf{i}$, respectively.
		(Here we canonically identify the tangent space of $\Complex$ at $\mathbf{i}$ with $\Complex$.)
		The parametrized subgroup $e^{t(E-F)}$ rotates any unit vector at $\mathbf{i}$
		counterclockwise at constant angular speed $2$.
		Therefore, if $\mathfrak{sl}(2,\Real)$ is endowed with an inner product
		such that $H/2,(E+F)/2,(E-F)/2$ form an orthonormal basis,
		then the induced left-invariant Riemannian metric on $\mathrm{PSL}(2,\Real)$ 
		will have constant total length $2\pi$ for all left-cosets of compact subgroup $K=\mathrm{SO}(2)/\{\pm1\}$, 
		and the homogeneous space $\mathrm{PSL}(2,\Real)/K$ with the induced left-invariant metric
		will be isometric to $\Hyp^2$.
		
		This means that the desired normalized volume form
		$\omega_{\Sft}$ must make
		$|\omega_{\Sft}(H/2,(E+F)/2,(E-F)/2)|=1$.
		Since 
		$\check{H}\wedge\check{E}\wedge\check{F}(H/2,(E+F)/2,(E-F)/2)=-1/4$,
		we obtain	$\omega_{\Sft}=\pm4\check{H}\wedge\check{E}\wedge\check{F}$,
		as asserted.
	\end{proof}
	
	The sign ambiguity is inessential.
		It arises only because the Seifert volume is insensitive of orientation.
		However, it can be resolved if we orient the unit vector bundle $M=\mathrm{UT}(\Sigma)$
		of any closed oriented hyperbolic surface $\Sigma$
		by a fixed orientation of $\mathrm{PSL}(2,\Real)\cong\mathrm{UT}(\Hyp^2)$,
		and if we require
		$\mathrm{vol}_{G,X,\omega_X}(M)=
		\langle\mathrm{gv}(\mathfrak{F}_{\bar\rho}),[M]'\rangle$,
		where $\bar\rho\colon \pi_1(M)\to\mathrm{PSL}(2,\Real)$
		is induced by any holonomy representation 
		$\rho\colon \pi_1(M)\to\Sft\times_\Integral\Real$.
		This way $\omega_X$ must agree with the fixed orientation of $\mathrm{PSL}(2,\Real)$,
		so the positive sign must apply if 
		we orient $\mathfrak{sl}(2,\Real)$ with the ordered basis $H,E,F$.
		
	In conclusion, we may (and do) fix the invariant volume form $\omega_{X}$ 
	to be $4\check{H}\wedge\check{E}\wedge\check{F}$.

\bibliographystyle{amsalpha}

\end{document}